\def\smallddots{\mathinner{\raise7pt\hbox{.}\raise4pt\hbox{.}\raise1pt\hbox{.}}}
\def\smallsdots{\mathinner{\raise1pt\hbox{.}\raise4pt\hbox{.}\raise7pt\hbox{.}}}
\DeclareMathOperator{\diag}{diag}
\DeclareMathOperator{\rank}{rank}
\DeclareMathOperator{\nrank}{nrank}
\DeclareMathOperator{\nul}{nul}
\DeclareMathOperator{\nnul}{nnul}
\DeclareMathOperator{\nmb}{nmb}
\numberwithin{equation}{section}
\numberwithin{table}{section}
\newtheorem{theorem}{Theorem}[section]
\newtheorem{lemma}{Lemma}[section]
\newtheorem{corollary}{Corollary}[section]
\newtheorem{fact}{Fact}[section]
\newtheorem{algorithm}{Algorithm}[section]
\newtheorem{definition}{Definition}[section]
\newtheorem{remark}{Remark}[section]
\begin{document}

\title{\bf New Studies of Randomized Augmentation and 
Additive Preprocessing
\thanks {Some results of this paper have been presented at the 
 ACM-SIGSAM International 
Symposium on Symbolic and Algebraic Computation (ISSAC '2011), San Jose, CA, 2011,
the
3nd International Conference on Matrix Methods in Mathematics and 
Applications (MMMA 2011) in
Moscow, Russia, June 22-25, 2011, 
the 7th International Congress on Industrial and Applied Mathematics 
(ICIAM 2011), in Vancouver, British Columbia, Canada, July 18-22, 2011,
the SIAM International Conference on Linear Algebra,
in Valencia, Spain, June 18-22, 2012, and 
the Conference on Structured Linear and Multilinear Algebra Problems (SLA2012),
in  Leuven, Belgium, September 10-14, 2012.}}

\author{Victor Y. Pan$^{[1, 2],[a]}$
and 
Liang Zhao$^{[2],[b]}$
\and\\
$^{[1]}$ Department of Mathematics and Computer Science \\
Lehman College of the City University of New York \\
Bronx, NY 10468 USA \\
$^{[2]}$ Ph.D. Programs in Mathematics  and Computer Science \\
The Graduate Center of the City University of New York \\
New York, NY 10036 USA \\
$^{[a]}$ victor.pan@lehman.cuny.edu \\
http://comet.lehman.cuny.edu/vpan/  \\
$^{[b]}$ 
lzhao1@gc.cuny.edu \\
} 
 \date{}

\maketitle


\begin{abstract}
\begin{itemize}  
  \item
A standard Gaussian random matrix 
has full rank with probability 1 and  
 is well-conditioned
with a probability quite close to 1 and
converging to 1 fast as the matrix 
deviates from square shape and becomes more rectangular. 
 \item
If we append sufficiently many standard Gaussian random rows or columns
 to any matrix $A$, such that $||A||=1$, then  
 the augmented matrix 
has full rank with probability 1
 and is well-conditioned  with a probability close to 1,
even if the matrix $A$ is rank deficient or ill-conditioned.
 \item
We specify and prove these properties of augmentation and 
extend them 
 to additive preprocessing, that is, to adding 
a product of two rectangular Gaussian matrices.
 \item
By applying our randomization techniques
to a matrix that has
numerical rank $\rho$,
we accelerate the known algorithms for 
the approximation of its leading and trailing 
singular spaces associated with its $\rho$ largest and 
with all its remaining singular values,
respectively.
 \item
Our algorithms use much fewer random parameters and run much faster 
 when various random sparse and  structured preprocessors replace
Gaussian. Empirically the outputs of the resulting algorithms  
is as accurate as the outputs under Gaussian preprocessing.
  \item
Our  novel {\em duality techniques} provides
 formal support, so far missing, for these 
empirical observations and opens door to {\em derandomization} of 
our preprocessing and to
further acceleration and simplification of our algorithms
 by using more efficient sparse and structured preprocessors.
\item
Our techniques and our progress can
be applied to various other fundamental 
matrix computations
 such as the celebrated low-rank approximation of a matrix by means of
random sampling.
\end{itemize} 
\end{abstract}


\paragraph{\bf 2000 Math. Subject Classification:}
65F05, 65F35, 15A06, 15A52, 15A12 


\paragraph{\bf Key Words:}
Randomized matrix algorithms;
Gaussian random matrices; Singular spaces of a matrix; 
Duality; Derandomization; Sparse and structured preprocessors


\section{Introduction}\label{sintro}


\subsection{Randomized augmentation: outline}\label{sraugout}


A standard Gaussian $m\times n$ random matrix, $G$ 
(hereafter referred to just as {\em Gaussian}),
has full rank with probability 1 (see Theorem \ref{thdgr}).
Furthermore the expected spectral norms $||G||$
and
$||G^+||$, $G^+$ denoting 
 the Moore-Penrose generalized inverse,   
satisfy the following estimates (see Theorems \ref{thsignorm} and \ref{thsiguna}): 
\begin{itemize}  
  \item
$\mathbb E(||G||)\approx 2\sqrt h$, for $h=\max\{m,n\}$, 
\noindent and
\item
$\mathbb E(||G^+||)\le \frac{e\sqrt{l}}{|m-n|}$ 
provided that $l=\min\{m,n\}$, $m\neq n$, and $e=2.71828\dots$. 
\end{itemize}
Thus, for moderate or reasonably large integers $m$ and $n$,
the matrix $G$ can be considered well-conditioned
with the confidence growing fast as the integer $|m-n|$ increases from 0.
By virtue of  part 2 of 
Theorem \ref{thsiguna},
the matrix $G$ can be viewed as  well-conditioned even  for 
$m=n$, although
 with a grain of salt, depending on context.

Motivated by this information, we append sufficiently but 
reasonably many Gaussian rows or columns
 to any matrix $A$,
possibly rank deficient or ill-conditioned,
but normalized, such that $||A||=1$.
(Our approach requires attention to various pitfalls,
and in particular it fails without normalization 
of an input matrix.)
Then we prove that the cited properties of a Gaussian matrix
also hold for the augmented matrix $K$ and similarly for 
the matrix $C=A+UV^T$ where  $U$ and $V$ are  Gaussian matrices.

We, however, prove and confirm empirically that 
 {\em randomized augmentation}  $A\rightarrow K$ above
is likely to produce matrices with smaller condition  numbers than
{\em randomized additive preprocessing} $A\rightarrow C=A+UV^T$
and than augmentation by appending to 
a matrix $A$ two blocks of rows and columns simultaneously.
These results should help direct properly 
our randomization. 

Its main application area
is the computations with rank deficient and ill-conditioned matrices.
In particular, suppose we are given a matrix $A$ that has  a 
numerical rank $\rho$ and seek approximate bases for 
we approximate closely 
the leading and trailing singular spaces        
associated with the $\rho$ largest and with all the 
remaining singular values of that matrix, respectively.

The known numerical algorithms solve these problems
by applying pivoting, orthogonalization, or the Singular Value Decomposition 
(SVD). Orthogonalization and particularly SVD are more costly (and more reliable),
but even pivoting takes its toll -- it interrupts the stream of arithmetic operations 
with foreign operations of comparison,
involves book-keeping, compromises data locality, 
increases communication overhead and data dependence, 
readily destroys matrix structure and 
sparseness, and threatens or undermines application of block matrix algorithms.

In the next two sections we solve these problems 
by applying randomized augmentation or additive preprocessing
at  a much lower randomized computational cost
versus the expensive known techniques. 


\subsection{Randomized sparse and structured preprocessing}\label{srasps}


Our study has some similarity with 
the celebrated work on 
{\em low-rank approximation} of a matrix by means of random 
 sampling (cf. \cite{HMT11}) and with 
randomized preprocessing of Gaussian elimination without 
pivoting\footnote{Hereafter we use the acronym {\em GENP}.} in \cite{PQY15}.  
In particular, similarly to randomized low-rank approximation
in \cite[Section 11]{HMT11},
 our techniques, algorithms and their analysis can be extended to 
the case where  preprocessing with Gaussian matrices 
is replaced by preprocessing with
{\em Semisample Random Fourier Transform}\footnote{Hereafter 
we use the acronym {\em SRFT}.} structured matrices, 
defined in our Appendix \ref{ssrft} 
and \cite[Section 11]{HMT11}.
 
The  transition from Gaussian to SRFT preprocessing greatly simplifies 
the computations, but increases
the estimated probability of failure.
This estimate, however, seems to be overly pessimistic 
for most inputs because empirical frequency of failure 
(observed consistently in our tests and in the 
 tests covered in \cite{HMT11})
was about the same in the cases of Gaussian and SRFT
 preprocessing.

More generally, our tests (as well as the tests 
for randomized low-rank approximation 
 by many authors and the tests
for GENP in  \cite{PQY15}) have  consistently produced
 similar outputs with about the same accuracy  when  preprocessing 
with various random sparse and structured matrices
(including SRFT matrices 
as a special subclass)
replaced Gaussian preprocessing (cf. Table \ref{tabprec}).

Formal support for such empirical observations
has been a challenge for quite a while, 
and our simple but novel insight enables us to provide it
finally:
we prove  that {\em the  known estimates for the impact
 of preprocessing with a Gaussian multiplier 
onto any input matrix can be extended to
preprocessing with any well-conditioned multiplier
of full rank 
onto average input matrix} and consequently 
onto a statistically typical, that is,  almost any input matrix
with a narrow class of exceptions.
In this basic {\em Duality Theorem} 
we assume that 
average matrix is defined  under the 
Gaussian probability distribution. Such a  
provision is  customary,
and it is quite natural
in view of the Central Limit Theorem.

Regarding the class of allowed  multipliers,
the restriction in the theorem is the
{\em mildest possible} and allows us to 
select sparse and structured multipliers
which can be generated and multiplied 
by an input matrix as fast as one could wish.
Thus, besides providing
 formal support, so far missing, for the cited 
empirical observations, our results open door to {\em derandomization} of 
our preprocessing and to
further {\em acceleration and simplification} of the known algorithms
by using more efficient sparse and structured preprocessors.

Our reports \cite{PZa} and \cite{PZb}
have furnished such a simple but novel duality techniques 
also for low-rank approximation and
GENP with further extension to Fast Multipole and Conjugate Gradient  
celebrated algorithms.


\subsection{Some related works and further research directions}\label{sextin}


Our present study
continues and enhances 
the progress in the works \cite[Section 2.13]{BP94}, \cite{PY07}, \cite{PMRT07}, 
\cite{W07}, {PIMR08a}, \cite{PIMR08b}, 
\cite{PGMQ08}, \cite{PY09},
 \cite{PIMR10}, \cite{PQ10}, \cite{PQ12}, \cite{PQY15}, \cite{PQZC}, \cite{PQZ13},
 and  \cite{PY09}
on increasing the efficiency of
matrix algorithms by means of randomized preprocessing.
Unlike these earlier works, we support the favorable
results of our extensive tests with detailed 
 formal analysis.

Our Algorithms 
3.1t  and 3.1t+ show that
the power of randomized multiplication, studied extensively in 
 \cite[Section 2.13]{BP94},
 \cite[Section 12.2]{PGMQ08},
 \cite{PY09}, \cite{HMT11}, \cite{PQZ13}, 
 \cite{PQY15},  \cite{PZ15},  \cite{PZa},  \cite{PZb}, 
and the references therein,
 can be 
enhanced when we combine it with randomized augmentation
or additive preprocessing.

The search for such synergistic combinations is a 
natural and important research  challenge.
As we have pointed out already, our work 
should motivate bolder application of 
sparse and structured preprocessing
towards simplification  and acceleration of
matrix computations. Our progress should motivate efforts for 
the extension of our techniques and results to other 
fundamental matrix computations, by following 
the first steps in these directions in
  \cite{PZa} and \cite{PZb}.


\subsection{Organization of the paper}\label{sorg}


We organize our paper as follows.

In the next 
subsection 
and in the Appendix we cover some definitions and auxiliary results. 
In Sections \ref{saptr} and \ref{saptrl} we  
  approximate leading and trailing singular spaces of a matrix
that has smaller numerical rank
 by applying our randomization techniques.
These two sections
make up Part I of our paper, devoted to our algorithms.

In Sections \ref{saug} and \ref{sapaug} we
 estimate the
impact of Gaussian augmentation and additive 
preprocessing on the condition number of a matrix,
these estimates imply correctness of our algorithms
of Sections \ref{saptr} and \ref{saptrl}.
In Section \ref{sweak} we extend our study 
to the case of sparse and structured randomization
and present our results on dual randomization.
Sections \ref{saug}--\ref{sweak}
form Part II of our paper, devoted to the analysis of our algorithms.

Section \ref{sexp} covers our numerical tests,
which are the contribution of the second author.
 In Section \ref{srel} we summarize our study
and discuss some directions for further research.
Sections \ref{sexp} and \ref{srel}
make up Part III of our paper, devoted to tests, summary, 
and extension of our algorithms.


\subsection{Some basic definitions}\label{sdef}




Except for Appendix \ref{ssrft}, we work in the field $\mathbb R$ of real numbers,
but a large part of our study can be extended to the computations in the field 
$\mathbb C$ of complex numbers (cf.  \cite{E88}, \cite{ES05}, \cite{CD05}).

Hereafter 
 the concepts ``large", ``small", ``near", ``close", ``approximate", 
``ill-conditioned" and ``well-conditioned" are 
quantified in the context. By saying ``likely" 
we mean with a probability close to 1.
  
$(B_1~|~\dots~|~B_k)=(B_j)_{j=1}^k$ 
 denotes a $1\times k$ block matrix with the blocks $B_1,\dots,B_k$.

$I$ and $I_{k}$ denote the $k\times k$  identity matrix.

$O$ and $O_{k,l}$ denote the $k\times l$ matrix filled with zeros.

 $||M||=||M||_2$ is the spectral norm of a matrix $M$.

  
For a matrix $M$ having full column rank,
 $Q(M)$ denotes a unique orthogonal matrix 
defined by the  
QR factorization $M=QR$  
where $R=R(M)$ is a unique 
 upper triangular square matrix 
with positive diagonal entries 
(cf. \cite[Theorem 5.2.3]{GL13}).


$\mathcal  G^{m\times n}$ is the class of Gaussian $m\times n$ matrices.

See some additional definitions 
in Section \ref{sbss} 
and the Appendix. 

\medskip
 
\medskip

\medskip


{\Large \bf \em PART I: Randomized Matrix Algorithms}


\section{Approximation of the Leading Singular Spaces}\label{saptr}


\subsection{Left 
inverses, matrix bases, nmbs,  and
singular spaces}\label{sbss}


An $m\times n$  matrix $M$ has an $n\times m$ 
{\em left 
inverse} matrix $X=M^{(I)}$ such that  $XM=I_n$
 if and only  if it has full column rank $n$. 
(We 
can  
compute 
at first  
 QR factorization $M=QR$
for orthogonal $m\times n$
matrix $Q$ and then 
a left inverse $M^{(I)}=R^{-1}Q^T$,
by performing $O(mn^2)$ flops
overall.)

A matrix having full column rank is a
{\em matrix basis} for its range.
A matrix basis $B$ for
the null 
space
 $\mathcal N(M)$
is a {\em null matrix basis} or a {\em nmb} 
for the matrix $M$, denoted $\nmb(M)$.
In  other words  $B=\nmb(M)$
if the matrix $B$ has full column rank and if 
$\mathcal R(B) =\mathcal N(M)$.

Suppose that we are given three integers $k$, $m$ and $n$, $1<k<\min\{m,n\}$, 
an $m\times n$ matrix $M$ of rank $\rho$, and its 
{\em  SVD} 
\begin{equation}\label{eqsvd}
M=S_M\Sigma_MT_M^T,
\end{equation}
where  $S_M$ and  $T_M$ are
 square orthogonal matrices,
$\Sigma_M=\diag(\widehat \Sigma_M,O_{m-\rho,n-\rho})$ 
is the diagonal matrix of the singular values,
$$\widehat \Sigma_M=\diag(\sigma_j(M))_{j=1}^{\rho},~\sigma_1=||M||,~{\rm and}~ 
\sigma_1\ge \sigma_2\ge \cdots \ge \sigma_{\rho}>0.$$ 

 Partition the matrices $S_M$,
$\Sigma_M$,
and $T_M$  into  their leading and trailing parts as follows,
\begin{equation}\label{eqsvdpart}
S_M=(S_{k,M}~|~S_{M,k}),~\Sigma_M=\diag(\Sigma_{k,M},\Sigma_{M,k}),
~{\rm and}~T_M=(T_{k,M}~|~T_{M,k}),
\end{equation}
where $S_{k,M}\in \mathbb R^{m\times k}$,
$T_{M,k}\in \mathbb R^{n\times k}$, $S_{M,k}\in \mathbb R^{m\times (m-k)}$, 
$T_{M,k}\in \mathbb R^{n\times (n-k)}$,
 $\Sigma_{k,M}=\diag(\sigma_j(M))_{j=1}^k$, and
$\Sigma_{M,k}=\diag(\diag(\sigma_j(M))_{j=k+1}^{\rho},O_{m-\rho,n-\rho})$.

Now write $\mathbb S_{k,M}=\mathcal R(S_{k,M})$,
$\mathbb T_{k,M}=\mathcal R(T_{k,M})$,
$\mathbb S_{M,k}=\mathcal R(S_{M,k})$, and
$\mathbb T_{M,k}=\mathcal R(T_{M,k})$.

If $\sigma_k>\sigma_{k+1}$, 
then 
$\mathbb S_{k,M}$ and $\mathbb T_{k,M}$ are
the leading left and right singular spaces
    associated with the $k$ largest 
singular values of the matrix $M$, respectively,
and $\mathbb S_{M,k}$,  and $\mathbb T_{M,k}$ are
the trailing left and right singular spaces
    associated with the remaining 
singular values, respectively.

For $k=\rho$, we arrive at  {\em compact SVD},
$M=S_{\rho,M}\Sigma_{\rho,M}T_{\rho,M}^T$ where $\Sigma_{\rho,M}=\widehat \Sigma_M$.


For a positive tolerance $\eta$, a  matrix $M$ has 
$\eta$-rank $ \rho$,  $\rho=\rank_{\eta}(M)$,
if  $\sigma_{ \rho}(M)<\eta\le\sigma_{ \rho+1}(M)$
or, equivalently, if the matrix $M$ can be approximated 
within the norm bound $\eta$ by a matrix
of rank $ \rho$, but not by a matrix of rank $ \rho-1$.
Note that
$$\rank_{\eta}(M)\le\rank_{\eta'}(M)\le \rank (M)~{\rm if}~\eta\ge\eta'.$$
 
 $\eta$-rank is said to be {\em numerical rank}
 if $\eta$ is small (in context).


\subsection{Linking approximation of a matrix and of its leading singular space}\label{smtrldng}


For
 an $m\times n$ matrix $A$  having
 numerical rank $\rho$,  
seek approximation to its leading singular space  
$\mathbb T_{\rho,A}$. 
The following theorem links closely this task to
the celebrated task 
of low-rank approximation of a matrix $A$,
extensively covered in \cite{HMT11}.


\begin{theorem}\label{thlrappr} 
Let $\nrank(A)=\rho$. Write $\Delta=Q-T_{\rho,A}V$
for  a $\rho\times \rho_+$ orthogonal matrix $V$
where $\rho_+\ge \rho$. Then 
$$\frac{||AQQ^T-A||}{||A||}\le (2+||\Delta||)||\Delta||+\frac{\sigma_{\rho+1}(A)}{\sigma_{1}(A)}.$$ 
\end{theorem}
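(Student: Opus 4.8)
The plan is to decompose the error $AQQ^T-A$ using the SVD of $A$ and the fact that $Q$ is close to the matrix $T_{\rho,A}V$, whose columns span exactly the leading right singular space $\mathbb{T}_{\rho,A}$. Write $A=S_A\Sigma_A T_A^T$ and split $\Sigma_A$ and $T_A$ into leading and trailing parts as in \eqref{eqsvdpart} with $k=\rho$, so that $A=A_\rho+E$ where $A_\rho=S_{\rho,A}\Sigma_{\rho,A}T_{\rho,A}^T$ is the best rank-$\rho$ approximation and $\|E\|=\sigma_{\rho+1}(A)$. The key observation is that $A_\rho T_{\rho,A}=S_{\rho,A}\Sigma_{\rho,A}$, so $A_\rho$ acts as the identity (up to the isometry $T_{\rho,A}$) on $\mathbb{T}_{\rho,A}$; equivalently $A_\rho(I-T_{\rho,A}T_{\rho,A}^T)=O$, i.e.\ $A_\rho$ annihilates the orthogonal complement of $\mathbb{T}_{\rho,A}$.

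**The main estimate.** Substitute $Q=T_{\rho,A}V+\Delta$ into $AQQ^T-A=-A(I-QQ^T)$. Since $I-QQ^T$ is the orthogonal projector onto the orthogonal complement of $\mathcal R(Q)$, and $I-T_{\rho,A}T_{\rho,A}^T$ is the projector onto the orthogonal complement of $\mathbb{T}_{\rho,A}$, I will compare these two projectors. The cleanest route is to write
$$\|A(I-QQ^T)\|=\|A(I-QQ^T)\|\le \|A_\rho(I-QQ^T)\|+\|E(I-QQ^T)\|,$$
bound the second term by $\|E\|=\sigma_{\rho+1}(A)$ using $\|I-QQ^T\|\le 1$, and for the first term use $A_\rho=A_\rho T_{\rho,A}T_{\rho,A}^T$ to get $\|A_\rho(I-QQ^T)\|=\|A_\rho T_{\rho,A}T_{\rho,A}^T(I-QQ^T)\|\le\|A_\rho\|\,\|T_{\rho,A}^T(I-QQ^T)\|\le\|A\|\,\|T_{\rho,A}^T(I-QQ^T)\|$. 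It then remains to show $\|T_{\rho,A}^T(I-QQ^T)\|\le(2+\|\Delta\|)\|\Delta\|$. For this, note $T_{\rho,A}=(Q-\Delta)V^T$ since $V$ is orthogonal of size $\rho\times\rho_+$ (so $VV^T=I_\rho$ when $\rho_+=\rho$; if $\rho_+>\rho$ one instead uses $T_{\rho,A}VV^T=T_{\rho,A}$ only when $VV^T=I$, so I would assume $\rho_+=\rho$ or that $V$ has orthonormal rows — the statement's phrasing ``$\rho\times\rho_+$ orthogonal'' with $\rho_+\ge\rho$ should be read as $VV^T=I_\rho$). Then $T_{\rho,A}^T(I-QQ^T)=V(Q-\Delta)^T(I-QQ^T)=-V\Delta^T(I-QQ^T)$ because $Q^T(I-QQ^T)=Q^T-Q^T=O$, giving $\|T_{\rho,A}^T(I-QQ^T)\|\le\|\Delta\|$, which is even stronger than needed.

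**Reconciling with the stated bound.** The computation above actually yields the sharper bound $\|AQQ^T-A\|/\|A\|\le\|\Delta\|+\sigma_{\rho+1}(A)/\sigma_1(A)$, so the factor $(2+\|\Delta\|)$ in the theorem is slack — the authors presumably obtain it via a cruder triangle-inequality argument that does not exploit $Q^T(I-QQ^T)=O$. To match their statement I would instead bound $\|A(I-QQ^T)\|$ by comparing the two projectors directly: $\|(I-QQ^T)-(I-T_{\rho,A}T_{\rho,A}^T)\|=\|QQ^T-T_{\rho,A}T_{\rho,A}^T\|$, and expand $QQ^T-T_{\rho,A}VV^TT_{\rho,A}^T=QQ^T-(Q-\Delta)(Q-\Delta)^T=Q\Delta^T+\Delta Q^T-\Delta\Delta^T$, whose norm is at most $2\|\Delta\|+\|\Delta\|^2=(2+\|\Delta\|)\|\Delta\|$; combined with $A_\rho(I-T_{\rho,A}T_{\rho,A}^T)=O$ and $\|A_\rho\|\le\|A\|$ this gives exactly the claimed inequality.

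**Main obstacle.** The only real subtlety is the bookkeeping around the shape of $V$ and the precise meaning of ``orthogonal'' for a non-square matrix: one must be careful whether $V^TV=I$ or $VV^T=I$, since only the latter makes $T_{\rho,A}VV^TT_{\rho,A}^T=T_{\rho,A}T_{\rho,A}^T$ work. Everything else is a routine application of unitary invariance of the spectral norm, the identity $A_\rho(I-T_{\rho,A}T_{\rho,A}^T)=O$, and the triangle inequality; no deep input is needed beyond the SVD split already set up in Section \ref{sbss}.
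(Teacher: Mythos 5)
Your alternative route (the one reproducing the stated $(2+||\Delta||)||\Delta||$ factor) is correct and genuinely different from the paper's. The paper expands $AQQ^T-A$ directly: using $VV^T=I_\rho$ and $AT_{\rho,A}T_{\rho,A}^T=A_\rho$, it obtains
$$AQQ^T-A=-\bar A_\rho+AT_{\rho,A}V\Delta^T+A\Delta\,(V^TT_{\rho,A}^T+\Delta^T),$$
where $\bar A_\rho$ is the trailing part of $A$ with $||\bar A_\rho||=\sigma_{\rho+1}(A)$, and then bounds the three summands by $\sigma_{\rho+1}(A)$, $||A||\,||\Delta||$, and $||A||\,||\Delta||\,(1+||\Delta||)$. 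You instead write $AQQ^T-A=-A(I-QQ^T)$, split $A=A_\rho+\bar A_\rho$, use $A_\rho(I-T_{\rho,A}T_{\rho,A}^T)=O$ to replace $I-QQ^T$ by $T_{\rho,A}T_{\rho,A}^T-QQ^T$ in the $A_\rho$ piece, and bound $||QQ^T-T_{\rho,A}T_{\rho,A}^T||=||Q\Delta^T+\Delta Q^T-\Delta\Delta^T||\le(2+||\Delta||)||\Delta||$. This is a cleaner conceptual framing (a projector comparison) and yields the same inequality.

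Two remarks. First, your route silently uses that $Q$ has orthonormal columns: you invoke $||I-QQ^T||\le1$ to control the $\bar A_\rho$ term, and $||Q||\le1$ to bound $||Q\Delta^T+\Delta Q^T-\Delta\Delta^T||$. The theorem statement does not formally assume $Q$ orthogonal, and the paper's direct expansion never touches $||Q||$ or $||I-QQ^T||$, so it holds for arbitrary $Q$. In all applications of the theorem (Remark~\ref{rertsvd}, Section~\ref{sovercmpr}) $Q$ is a QR factor and hence orthogonal, so this costs nothing in practice, but the paper's version is marginally more general. Second, your observation that the factor $(2+||\Delta||)$ is slack when $Q$ is orthogonal is correct: the identity $Q^T(I-QQ^T)=O$ together with $T_{\rho,A}=(Q-\Delta)V^T$ yields the sharper bound $||\Delta||+\sigma_{\rho+1}(A)/\sigma_1(A)$, a worthwhile improvement.
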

\begin{proof} 
Deduce from the equation $T_{A}^TT_{\rho,A}=(I_{\rho}~|~O_{n-\rho,\rho})^T$
 that  
$$AT_{\rho,A}T_{\rho,A}^T=
S_{A}\Sigma_{A}T_A^TT_{\rho,A}T_{\rho,A}^T=
S_{\rho,A}\Sigma_{\rho,A}T_{\rho,A}^T=A_{\rho}.$$

Recall that $Q=T_{\rho,A}V+\Delta$,
 $T_{A}^TT_{\rho,A}=(I_{\rho}~|~O_{n-\rho,\rho})^T$,
 and $A=A_{\rho}+\bar A_{\rho}$
where \\
 $\bar A_{\rho}=S_{\rho,A}\Sigma_{\rho,A}T_{\rho,A}^T$
 and $||\bar A_{\rho}||\le \sigma_{\rho+1}(A)$.
Combine 
  the above equations  
and obtain $$AQQ^T-A=
-\bar A_{\rho}+AT_{\rho,A}V\Delta^T+A\Delta (V^TT_{\rho,A}^T+\Delta^T).$$
 
Now substitute
$||\bar A_{\rho}||=\sigma_{\rho+1}(A)$ and $||T_{\rho,A}||=||V||=1$
and obtain
$$||AQQ^T-A||\le \sigma_{\rho+1}(A)+(2+||\Delta||)||\Delta||~||A||.$$
The theorem follows because
$||A||=\sigma_{1}(A)$. 
\end{proof} 


\begin{remark}\label{rertsvd}  
If the error norm $||\Delta||$ of the 
approximation to
the leading singular space $\mathbb T_{\rho,A}$
 is  small, then, by virtue of Theorem \ref{thlrappr}, 
  the relative error 
of rank-$\rho$ approximation of the matrix $A$ by 
$AQQ^T$ is also small.
Conversely,  
if the ratio $\frac{||AQQ^T-A||}{||A||}$ is small, then by applying  
 \cite[Algorithm 5.1]{HMT11} one can 
approximate the matrices $S_{\rho,A}\approx QS_{Q^TA}$ and $T_{\rho,A}\approx QS_{AQ}$
of the leading singular vectors 
essentially at the cost of computing compact SVDs of the matrices $Q^TA$
 and $AQ$ of smaller sizes. Having the matrix $S_{\rho,A}$ approximated,
we can readily approximate at first the matrix $\Sigma_{\rho,A}T^T_{\rho,A}=S_{\rho,A}^TA$
and then the matrices $\Sigma_{\rho,A}$ and $T_{\rho,A}$
(thus approximating the leading part of SVD of the matrix $A$), and similarly if we are given 
an approximation of the matrix $T_{\rho,A}$. Based on these observations,
 \cite[Section 10.2]{HMT11} readily extends \cite[Algorithm 4.1]{HMT11} 
to {\rm randomized computation of the numerical rank of a matrix}.
\end{remark}


\subsection{Randomized approximation of a leading singular space}\label{sldng}


\begin{definition}\label{defenu} 
$\mathbb E(v)$ denotes the expected value of 
a random variable $v$.
$\nu_{m,n}$, $\nu_{F,m,n}$, $\nu_{m,n}^+$, and $\kappa_{m,n}$ denote the 
 random variables
 $||G||$, $||G||_F$ (Frobenius norm of $G$), 
 $||G^+||$, and $\kappa(G)=||G||~||G^+||$, respectively,
and $\nu_{n}^+=\nu^+_n(A)$ denote the norm $||(A+G)^{+}||$
provided that $A\in \mathbb  R^{n\times n}$ and
$G\in \mathcal  G^{n\times n}$. 
\end{definition}
Note that $\nu_{n,m}=\nu_{m,n}$,
$\nu_{n,m}^+=\nu_{m,n}^+$, and $\kappa_{n,m}=\kappa_{m,n}$,
and assume that the random variables $\nu_{m,n}$,  $\nu_{m,n}^+$,  $\nu_{F,m,n}$, 
and $\nu_{n}^+$ turn into 1 if $m=0$ or $n=0$.

By virtue of
\cite[Theorem 3.3]{SST06}, for $n\ge 2$, a real $x>0$, and a matrix 
$A\in \mathbb  R^{n\times n}$, it holds that
\begin{equation}\label{eqsst06}
  {\rm Probability}~\{\nu_{n}^+\ge x\}\le 2.35 {\sqrt n}/x.
\end{equation} 

Our next task is the  approximation of a leading singular space $\mathbb T_{\rho, A}$
of  a matrix $A$ that has numerical rank $\rho$.
We adopt the technique of {\em random sampling}, that is, 
approximate $\mathbb T_{\rho, A}$ by the range of the matrix $A^TH$ for a
Gaussian $m\times \rho_+$ matrix $H$ and for a nonnegative 
but not large integer $\rho_+-\rho$.
This technique has been
studied in \cite{HMT11}
for low-rank approximation of such a matrix $A$, but 
our error analysis is a little different because 
we approximate the space $\mathbb T_{\rho, A}$ rather than the matrix $A$.
The following theorem estimates the approximation error.


\begin{theorem}\label{thtrfrld} (Cf. Definition \ref{defenu}.)
Suppose that
 an $m\times n$ matrix $A$  has
 numerical rank $\rho$,
$H$ is an $n\times \rho_+$ Gaussian matrix, 
$H\in \mathcal G^{n\times \rho_+}$,
and $m\ge n\ge \rho_+\ge \rho>0$.
Then with probability 1 there exists an 
$n\times \rho_+$ matrix $X$ of rank $\rho$
such that  
$\mathcal R(X)=\mathbb T_{\rho,A}$
and  $||A^TH-X||\le\sigma_{\rho+1}(A)\nu_{n,\rho_+}$.
\end{theorem}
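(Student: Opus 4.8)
The plan is to use the SVD of $A$ from~\eqref{eqsvd} and~\eqref{eqsvdpart} to split $A^TH$ into a ``leading'' part supported on the leading right singular space $\mathbb{T}_{\rho,A}$ and a ``trailing'' remainder whose norm is controlled by $\sigma_{\rho+1}(A)$. Concretely, write $A=S_A\Sigma_A T_A^T$ and hence $A^T=T_A\Sigma_A^T S_A^T$. Splitting $\Sigma_A=\diag(\widehat\Sigma_A,O)$ according to the numerical rank $\rho$ (i.e. keeping only $\sigma_1,\dots,\sigma_\rho$ in one block and $\sigma_{\rho+1},\dots,\sigma_n$ together with the zero block in the other), we get $A^T = A_\rho^T + \bar A_\rho^T$ where $A_\rho^T = T_{\rho,A}\widehat\Sigma_{\rho,A} S_{\rho,A}^T$ has range exactly $\mathbb{T}_{\rho,A}$ and $\|\bar A_\rho^T\|=\|\bar A_\rho\|\le\sigma_{\rho+1}(A)$ (this is precisely the decomposition already used in the proof of Theorem~\ref{thlrappr}). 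Then set $X := A_\rho^T H = T_{\rho,A}\widehat\Sigma_{\rho,A}S_{\rho,A}^T H$, so that $A^TH - X = \bar A_\rho^T H$.

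Next I would bound $\|A^TH-X\| = \|\bar A_\rho^T H\| \le \|\bar A_\rho^T\|\,\|H\| \le \sigma_{\rho+1}(A)\,\|H\|$, and since $H\in\mathcal G^{n\times\rho_+}$ we have $\|H\|=\nu_{n,\rho_+}$ by Definition~\ref{defenu}, giving the claimed inequality $\|A^TH-X\|\le\sigma_{\rho+1}(A)\,\nu_{n,\rho_+}$. The remaining point is the rank/range claim for $X$. By construction $\mathcal R(X)\subseteq\mathcal R(T_{\rho,A})=\mathbb{T}_{\rho,A}$, which has dimension $\rho$ (here one uses $\sigma_\rho(A)>0$, which holds since $\rho=\nrank(A)$ is at least the exact rank wherever the trailing singular values are nonzero, or more simply since $\sigma_\rho>\eta>\sigma_{\rho+1}\ge 0$ forces $\sigma_\rho>0$). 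To get equality $\mathcal R(X)=\mathbb{T}_{\rho,A}$ and $\rank(X)=\rho$ with probability $1$, observe that $X = T_{\rho,A}(\widehat\Sigma_{\rho,A}S_{\rho,A}^T H)$, where $T_{\rho,A}$ has full column rank $\rho$ and $\widehat\Sigma_{\rho,A}$ is an invertible $\rho\times\rho$ matrix; hence $\rank(X)=\rank(S_{\rho,A}^TH)$. Now $S_{\rho,A}^T H$ is a $\rho\times\rho_+$ matrix: since $S_{\rho,A}$ has orthonormal columns and $H$ is Gaussian, $S_{\rho,A}^TH$ is itself a Gaussian $\rho\times\rho_+$ matrix (orthogonal transformations preserve the standard Gaussian distribution), so by Theorem~\ref{thdgr} it has full rank $\min\{\rho,\rho_+\}=\rho$ with probability $1$. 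Therefore $\rank(X)=\rho$ almost surely, and combined with $\mathcal R(X)\subseteq\mathbb{T}_{\rho,A}$ of dimension $\rho$ this yields $\mathcal R(X)=\mathbb{T}_{\rho,A}$.

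The only mild subtlety — and the step I would treat most carefully — is the rank argument: one must be sure that multiplying the fat Gaussian $H$ on the left by the tall orthonormal-columned matrix $S_{\rho,A}$ again produces a (genuinely standard) Gaussian matrix, and that ``full rank with probability $1$'' from Theorem~\ref{thdgr} applies in the correct orientation ($\rho\le\rho_+$). Everything else is the norm submultiplicativity bound plus the already-established SVD splitting $A=A_\rho+\bar A_\rho$ with $\|\bar A_\rho\|\le\sigma_{\rho+1}(A)$; no new estimates are needed beyond what the proof of Theorem~\ref{thlrappr} already records. I would present the proof in the order: (i) recall the SVD splitting and define $X$; (ii) verify $\mathcal R(X)\subseteq\mathbb{T}_{\rho,A}$ and reduce the rank question to that of a small Gaussian matrix, invoking Theorem~\ref{thdgr}; (iii) bound $\|A^TH-X\|$ by submultiplicativity and Definition~\ref{defenu}.
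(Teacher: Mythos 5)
Your proposal is correct and follows essentially the same route as the paper's own proof: split $A^TH$ via the SVD into $A_\rho^TH=T_{\rho,A}\Sigma_{\rho,A}(S_{\rho,A}^TH)$ plus a remainder $\bar A_\rho^TH$ of norm at most $\sigma_{\rho+1}(A)\,\|H\|$, set $X=A_\rho^TH$, observe that $S_{\rho,A}^TH$ is Gaussian (the paper invokes Lemma~\ref{lepr3} for this) and hence of full rank $\rho$ with probability 1 by Theorem~\ref{thdgr}, and conclude $\mathcal R(X)=\mathbb T_{\rho,A}$. The only minor difference is that you spell out the dimension-counting argument for equality of ranges and explicitly flag the orthogonal-invariance step, which the paper dispatches with a one-line reference to Lemma~\ref{lepr3}.
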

\begin{proof}
Recall equations (\ref{eqsvd}) and (\ref{eqsvdpart}), for $M=A$ and
$k=\rho= \nrank (A)$, and write

 $$A^TH=A^T_{\rho}H+\bar A^T_{\rho}H,~ 
 \bar A_{\rho}=S_{A,\rho}\Sigma_{A,\rho}T_{A,\rho}^T,~{\rm and}~
A_{\rho}=S_{\rho,A}\Sigma_{\rho,A}T_{\rho,A}^T.
$$
Then 
\begin{equation}\label{edtrnrm}
 A^T_{\rho}H=T_{\rho,A}\Sigma_{\rho,A}B~{\rm and}~
||\bar A^T_{\rho}H||\le ||\bar A^T_{\rho}||~||H||=\sigma_{\rho+1}(A)\nu_{n,\rho_+}
\end{equation}
 where
$B=S^T_{\rho,A}H$ is a $\rho \times \rho_+$
Gaussian  matrix by virtue of Lemma \ref{lepr3}.

   Now the theorem follows for $X=A^T_{\rho}H$ because
the matrix $\Sigma_{\rho,A}$ is nonsingular by assumption,
and with probability 1 the matrix $B$ has full rank,  
by virtue of Theorem \ref{thdgr}.
\end{proof}

The bound $\sigma_{\rho+1}(A)\nu_{n,\rho_+}$ 
of (\ref{edtrnrm})
can be large only with a  probability close to 0, 
and one can monitor the  approximation error 
 by estimating the ratio $\frac{||AQQ^T-A||}{||A||}$
(see Remark \ref{rertsvd}). 
Probabilistic estimates for this ratio
in \cite[Sections 10.2 and 10.3]{HMT11}
have order $\sigma_{\rho+1}(A)$ and hold with
 a probability  $1-3/p^{p}$ for an oversampling integer 
$p=\rho_+-\rho$ if $p\ge 20$. 

 
Theorem
\ref{thtrfrld} implies correctness of the following simple randomized algorithm,
which is a subalgorithm of \cite[Algorithm 4.1]{HMT11}.


\begin{algorithm}\label{algldbs} (Cf. Remarks \ref{regap} and \ref{relfts}.)


\begin{description}


\item[{\sc Input:}] 
Three integers $m$, $n$, and $\rho_+$ 
such that $m\ge n\ge \rho_+>0$, and an
$m\times n$ matrix  $A$ having numerical rank $\rho\le \rho_+$.


\item[{\sc Output:}] 
An orthogonal $n\times \rho_+$ matrix $X$, whose range 
is likely to approximate
 the leading singular space $\mathbb T_{\rho,A}$.


\item[{\sc Computations}:]

\begin{enumerate}
\item 
Generate a  Gaussian $n\times \rho_+$ matrix $H$.
\item 
Compute and output the  $n\times \rho_+$ matrix $X=A^TH$.

\end{enumerate}


\end{description}


\end{algorithm}

 
The algorithm 
 generates $n\rho_+$ i.i.d. Gaussian values
and then performs $(2n-1)m\rho_+$ flops, but
we need only $n+\rho_+$ random parameters 
and $O(mn\log(\rho_+)+n\rho_+^2)$ flops
if we replace the $n\times \rho_+$ Gaussian multiplier $H$ with an
$n\times \rho_+$ 
SRFT 
structured multiplier. Hereafter we
 refer to Algorithm \ref{algldbs} with a SRFT multiplier
as {\bf Algorithm 3.1+}. 
 Then again we can monitor its output error norm 
by estimating the ratio  $\frac{||AQQ^T-A||}{||A||}$.
According to the study
of SRFT multipliers
 in  \cite[Section 11]{HMT11}, 
the ratio is large with
a probability in $O(1/r))$ if 
  $\rho_+$ has order $(\rho+\log(n))\log (\rho)$, but
 empirically even the choice of  $\rho_+=\rho+20$ 
``is  adequate in almost all applications".


\begin{remark}\label{regap} (Cf. \cite[Theorem 9.2]{HMT11}.)
The approximation of a basis for the leading (as well as trailing) singular spaces  
is facilitated
as the gaps increase between the singular
values of the input matrix $A$.
This motivates preprocessing of an input matrix $A$ by means of
the power transforms $A\Longrightarrow B_h=(AA^T)^hA$
 for positive integers $h$
because $\sigma_j(B_h)=(\sigma_j(A))^{2h+1}$ for all $j$.
\end{remark}


\begin{remark}\label{relfts} 
By applying the algorithms of this subsection to the transpose $A^T$ 
we can approximate the left singular spaces of our input matrix $A$.
If, however, an approximation $Q_T=T_{\rho,A}V+\Delta_T$
to a matrix basis for the right singular space $\mathbb T_{\rho,A}$
is  already available,
then we can readily compute an approximation $AQ_T$ to the matrix basis
for the left singular space $\mathbb S_{\rho,A}$.
Indeed $AQ_T=S_A\Sigma_A T^T_A(T_{\rho,A}V+\Delta_T)=S_A\Sigma_A T^T_AT_{\rho,A}V+
A\Delta_T=S_{\rho,A}\Sigma_{\rho,A}V+A\Delta_T$, and 
so the matrix $Q_S=AQ_T$ is an approximate matrix basis $S_{\rho,A}U$ for 
the left singular space $\mathbb S_{\rho,A}$ within the error norm bound
$||\Delta_S||\le ||A||~||\Delta_T||$.
Furthermore we can compute the matrix $Q_S^TAQ_T=U^T\Sigma_{\rho,A}V+\Delta_{\Sigma}$
where $\Delta_{\Sigma}=\Delta_S^TAQ_T+Q_S^TA\Delta_T-\Delta_S^TA\Delta_T$,
and so $\frac{||\Delta_{\Sigma}||}{||A||}\le ||\Delta_S||+||\Delta_T||+||\Delta_S||||\Delta_T||$.
Then the singular values of the $\rho\times \rho$ matrix
$Q_S^TAQ_T$ approximate those of the matrix $A$.
\end{remark}


\subsection{Oversampling and compression}\label{sovercmpr}


If we know numerical rank $\rho$ of the input matrix $A$, we can 
apply Algorithm \ref{algldbs} or 3.1+, for $\rho_+=\rho$.
Otherwise we can compute $\rho$ by applying Algorithm \ref{algldbs} 
or 3.1+ in a binary search
process. Indeed, let $X$ denote the output matrix of the algorithm. Then
 the norm $||AQQ^T-A||$ has order of $\sigma_{\rho+1}(A)$
for $Q=Q(X)$ if $\rho_+\ge \rho$
(cf. Theorems \ref{thlrappr} and \ref{thtrfrld}), but 
is at least $\sigma_{\rho}(A)$
if $\rho_+<\rho$. 

Alternatively, having applied  Algorithm \ref{algldbs} or 3.1+,  for $\rho_+>\rho$,
 we can compress the $n\times \rho_+$ output matrix $X$
into $n\times \rho$ orthogonal matrix by means of computing a rank-revealing 
QR factorization, a UTV  factorization, or SVD of the matrix $X$
(see \cite[Section 5.4]{GL13} and \cite[Section 5.4]{S98} for these factorizations).
Such computations are relatively inexpensive if $\rho_+\ll \min\{m,n\}$,
and are routine in the extension of the 
algorithm to low-rank approximation of the matrix $A$.

For the task of the approximation of the singular space 
 $\mathbb T_{\rho,A}$, however, estimated approximation error norms
of these  computations
are a little larger than $\nu_{n,\rho_+}\sigma_{\rho+1}$, even where we compute 
the matrix $S_{\rho,X}$ of the $\rho$ leading left singular vectors 
of the matrix $X=A^TH$ and  output it
as an approximate matrix basis for the space $\mathbb T_{\rho,A}$. 
Here are some relevant estimates. 


\begin{theorem}\label{thldld} 
Under the assumptions of Theorem
\ref{thtrfrld}, write
\begin{equation}\label{edphi} 
 \phi=\sqrt{n-\rho}~\sigma_{\rho+1}(A)\nu_{F,n,\rho_+}||(A^T_{\rho}H)^+||.
\end{equation}
Let the matrix $A^T_{\rho}H$ have full rank $\rho$ and let
$\phi\ge 1$.
Then the $n\times \rho$ orthogonal matrix $S_{A^T_{\rho}H}$ of the $\rho$  
leading left singular vectors of the matrix $A^T_{\rho}H$ 
approximates a matrix basis of the leading singular space 
$\mathbb T_{\rho,A}$ of the matrix $A$ within the Frobenius error norm 
$4\phi$.
\end{theorem}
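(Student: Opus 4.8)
The plan is to reduce the statement to a standard perturbation bound for invariant subspaces (singular subspaces), applied to the two matrices $A^T_\rho H$ and $A^T H$. By Theorem~\ref{thtrfrld}, with $X = A^T_\rho H = T_{\rho,A}\Sigma_{\rho,A}B$ (in the notation of its proof, with $B = S_{\rho,A}^T H$ Gaussian of full rank), we know $\mathcal R(X) = \mathbb T_{\rho,A}$ and $\|A^T H - X\| \le \sigma_{\rho+1}(A)\,\nu_{n,\rho_+}$; moreover the matrix $S_{A^T_\rho H}$ of the $\rho$ leading left singular vectors of $X$ is, by construction, exactly a matrix basis for $\mathcal R(X) = \mathbb T_{\rho,A}$. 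So what I actually need is to bound the distance between the leading left singular subspaces of $A^T_\rho H$ and of $A^T H$, i.e.\ to control how $S_{A^T_\rho H}$ moves under the perturbation $E := A^T H - A^T_\rho H = \bar A^T_\rho H$.

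First I would invoke the Wedin-type $\sin\Theta$ theorem (cf.\ \cite[Section 5.4]{GL13}): if $M$ has a gap in its singular spectrum after the $\rho$th value and $\tilde M = M + E$, then the Frobenius distance between the $\rho$-dimensional leading left singular subspaces of $M$ and $\tilde M$ is at most $\|E\|_F$ divided by the relevant gap $\gamma$. Here $M = A^T_\rho H$ has rank exactly $\rho$, so $\sigma_{\rho+1}(M) = 0$ and the gap is $\gamma = \sigma_\rho(A^T_\rho H) = 1/\|(A^T_\rho H)^+\|$. Thus the subspace perturbation is bounded by $\|E\|_F \,\|(A^T_\rho H)^+\|$. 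Next I would bound $\|E\|_F = \|\bar A^T_\rho H\|_F \le \|\bar A^T_\rho\|\,\|H\|_F \le \sigma_{\rho+1}(A)\,\nu_{F,n,\rho_+}$, where the first inequality is the standard submultiplicativity of the Frobenius norm by the spectral norm, and I pay the factor $\sqrt{n-\rho}$ to pass from the actual rank of $\bar A_\rho$ (at most $n-\rho$) into whatever form is needed — this is exactly where the $\sqrt{n-\rho}$ in the definition of $\phi$ comes from. Combining these gives a subspace error essentially $\phi$, and the constant $4$ absorbs the standard slack in converting between $\sin\Theta$, the chosen orthonormal bases (via $Q(\cdot)$), and the condition $\phi \ge 1$ (which lets one replace any additive lower-order terms, or the $\sqrt{2}$ or $\sqrt{\rho}$ factors relating operator and Frobenius metrics on the Grassmannian, by a multiplicative constant).

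The main obstacle is bookkeeping rather than a genuinely hard step: there are several candidate forms of Wedin's theorem, and I must choose the version whose gap hypothesis is met here (the clean case $\sigma_{\rho+1}(M)=0$ makes it simplest, since the residual on the other side vanishes), and then track the constants carefully so that everything is dominated by $4\phi$ given $\phi \ge 1$. A secondary subtlety is that ``approximates a matrix basis of $\mathbb T_{\rho,A}$'' must be made precise: I would state it as $\|S_{A^T_\rho H} - S_{A^T H}\,W\|_F \le 4\phi$ for a suitable $\rho\times\rho$ orthogonal $W$ aligning the two orthonormal bases, matching the $\Delta$-style formulation used in Theorem~\ref{thlrappr} and Remark~\ref{relfts}. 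No step requires new machinery beyond what is already cited.
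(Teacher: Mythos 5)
Your approach matches the paper's essentially verbatim: set $M = A^T_\rho H$ (rank exactly $\rho$, hence gap $\delta = 1/\|(A^T_\rho H)^+\|$), $E = \bar A^T_\rho H$, and invoke a singular-subspace perturbation theorem — the paper cites specifically \cite[Theorem 8.6.5]{GL13}, which supplies the constant $4$ directly, rather than a generic $\sin\Theta$ bound from Section 5.4. Your hedge about the $\sqrt{n-\rho}$ is resolved by noting that the Frobenius bound on $E$ is obtained as $\|\bar A^T_\rho H\|_F \le \|\bar A^T_\rho\|_F\,\|H\|_F \le \sqrt{n-\rho}\,\sigma_{\rho+1}(A)\,\nu_{F,n,\rho_+}$, which is exactly the numerator of $\phi$.
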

\begin{proof}
Equation (\ref{edtrnrm}) implies that $\mathcal R(T_{\rho,A})=\mathcal R(S_{A_{\rho}^TH})$.
Recall  that $A^TH=A^T_{\rho}H+\bar A^T_{\rho}H$ and combine 
the upper bound (\ref{edtrnrm}) on the norm $||\bar A^T_{\rho}H||$
 with \cite[Theorem 8.6.5]{GL13} where $E=\bar A^T_{\rho}H$ and 
$A$ is replaced by $A^T_{\rho}H$
(which implies that $\delta=\frac{1}{||(A^T_{\rho}H)^+||}$ in that theorem).
\end{proof}


\begin{theorem}\label{thldld1} 
Under the assumptions of Theorem
\ref{thtrfrld}, it holds that
$$||(A^T_{\rho}H)^+||\le \frac{\nu^+_{\rho, \rho_+}}{\sigma_{\rho}(A)}.$$
\end{theorem}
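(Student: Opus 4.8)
The plan is to factor the matrix $A^T_{\rho}H$ so that its smallest singular value is controlled by $\sigma_{\rho}(A)$ times the smallest singular value of a Gaussian-type factor. Recall from equation~(\ref{edtrnrm}) in the proof of Theorem~\ref{thtrfrld} that $A^T_{\rho}H=T_{\rho,A}\Sigma_{\rho,A}B$, where $B=S^T_{\rho,A}H$ is a $\rho\times\rho_+$ Gaussian matrix by Lemma~\ref{lepr3}, and $\Sigma_{\rho,A}=\widehat\Sigma_A$ is the $\rho\times\rho$ nonsingular diagonal matrix $\diag(\sigma_j(A))_{j=1}^{\rho}$. Since $T_{\rho,A}$ has orthonormal columns, it does not affect singular values on the left, so the nonzero singular values of $A^T_{\rho}H$ coincide with those of $\Sigma_{\rho,A}B$.

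First I would take the Moore--Penrose inverse: because $A^T_{\rho}H$ has full column rank $\rho$ by hypothesis and $T_{\rho,A}^TT_{\rho,A}=I_\rho$, we have $(A^T_{\rho}H)^+=(\Sigma_{\rho,A}B)^+T^T_{\rho,A}$, hence $\|(A^T_{\rho}H)^+\|=\|(\Sigma_{\rho,A}B)^+\|$. Next I would use submultiplicativity of the spectral norm for the generalized inverse of the product: since $\Sigma_{\rho,A}$ is square and invertible and $B$ has full row rank $\rho$, one gets $(\Sigma_{\rho,A}B)^+=B^+\Sigma_{\rho,A}^{-1}$, so that $\|(\Sigma_{\rho,A}B)^+\|\le\|B^+\|\,\|\Sigma_{\rho,A}^{-1}\|$. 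Finally $\|\Sigma_{\rho,A}^{-1}\|=1/\sigma_{\rho}(A)$ and $\|B^+\|=\nu^+_{\rho,\rho_+}$ by Definition~\ref{defenu}, since $B$ is a $\rho\times\rho_+$ Gaussian matrix. Combining these three facts yields
$$\|(A^T_{\rho}H)^+\|\le\frac{\nu^+_{\rho,\rho_+}}{\sigma_{\rho}(A)},$$
which is the claimed bound.

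The one place that needs care is the identity $(\Sigma_{\rho,A}B)^+=B^+\Sigma_{\rho,A}^{-1}$ (and more generally $(A^T_\rho H)^+=(\Sigma_{\rho,A}B)^+T^T_{\rho,A}$): the reverse-order law for pseudoinverses does not hold for arbitrary products, so I would justify it from the rank conditions — $T_{\rho,A}$ has orthonormal columns, $\Sigma_{\rho,A}$ is nonsingular, and $B$ has full row rank — each of which individually guarantees the relevant factor can be pulled through the pseudoinverse. Alternatively, and perhaps more cleanly, I would avoid the reverse-order law entirely and argue directly with singular values: $\sigma_{\min}(A^T_\rho H)=\sigma_\rho(\Sigma_{\rho,A}B)\ge\sigma_\rho(\Sigma_{\rho,A})\,\sigma_\rho(B)=\sigma_\rho(A)\,\sigma_{\min}(B)=\sigma_\rho(A)/\|B^+\|$, using the standard inequality $\sigma_{\min}(CD)\ge\sigma_{\min}(C)\sigma_{\min}(D)$ for $C$ square nonsingular; taking reciprocals gives $\|(A^T_\rho H)^+\|=1/\sigma_{\min}(A^T_\rho H)\le\|B^+\|/\sigma_\rho(A)=\nu^+_{\rho,\rho_+}/\sigma_\rho(A)$. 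This singular-value route is the main obstacle only in the bookkeeping sense of getting the inequality direction and the role of the nonsingular square factor exactly right; there is no deep difficulty here.
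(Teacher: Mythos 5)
Your proposal is correct and follows essentially the same route as the paper: both start from the factorization $A^T_\rho H=T_{\rho,A}\Sigma_{\rho,A}B$ from (\ref{edtrnrm}), reduce via the orthonormal-columns factor $T_{\rho,A}$ to bounding $\|(\Sigma_{\rho,A}B)^+\|$, and then split off $\|\Sigma_{\rho,A}^{-1}\|=1/\sigma_\rho(A)$ and $\|B^+\|=\nu^+_{\rho,\rho_+}$. The paper does this by writing out compact SVDs of $F=\Sigma_{\rho,A}B$ and $B$ and exhibiting $F^+=T_B\Sigma_B^{-1}S_B^T\Sigma_{\rho,A}^{-1}$ explicitly, whereas you cite the (correctly justified) reverse-order law or, in your alternative, a singular-value inequality — cosmetic variants of the same argument.
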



\begin{proof}
Recall that $A^T_{\rho}H=T_{\rho,A}\Sigma_{\rho,A}B$ for $B\in \mathcal G^{\rho\times \rho_+}$
(cf. (\ref{edtrnrm})).

Write $F=\Sigma_{\rho,A}B$  and let $F=S_F\Sigma_FT_F^T$
and $B=S_B\Sigma_BT_B^T$
be compact SVDs. 

$T_{\rho,A}S_F$ is an orthogonal matrix because
$S_F$ is an $\rho \times \rho$ orthogonal matrix. 

Now write  $S_{A^T_{\rho}H}=T_{\rho,A}S_F$ and note that
$A^T_{\rho}H=S_{A^T_{\rho}H}\Sigma_FT_F^T$ 
is a  compact SVD.

Consequently 
$||(A^T_{\rho}H)^+||=||F^+||$.

Furthermore $F=\Sigma_{\rho,A}S_B\Sigma_BT_B^T$
where $\Sigma_{\rho,A}$, $S_B$, and $\Sigma_B$
are $\rho \times \rho$ nonsingular matrices. 

Therefore $F^+=
T_B\Sigma_B^{-1}S_B^{T}\Sigma_{\rho,A}^{-1}$, 
where  
$||S_B||=||T_B||$=1.

It follows that
$||(A^T_{\rho}H)^+||=||F^+||\le||\Sigma_B^{-1}||~||\Sigma_{\rho,A}^{-1}||=
\frac{\nu^+_{\rho, \rho_+}}{\sigma_{\rho}(A)}$.
\end{proof}

The latter two theorems together imply the following corollary.


\begin{corollary}\label{coldld} 
Under the assumption of Theorem
\ref{thtrfrld}, 
write
\begin{equation}\label{edphi+} 
\phi_+=\sqrt{n-\rho}~\nu_{F,n,\rho_+}\nu^+_{\rho,\rho_+}\frac{\sigma_{\rho+1}(A)}{\sigma_{\rho}(A)}. 
\end{equation}
Then, with a probability at least 
${\rm Probability}\{5\phi_+\le 1\},$ 
the $n\times \rho$  matrix $S_{A^T_{\rho}H}$ of the $\rho$ leading  
left singular vectors of the matrix $A^T_{\rho}H$ 
approximates
 a matrix basis of the leading singular space 
$\mathbb T_{\rho,A}$ of the matrix $A$ within the Frobenius error norm $4\phi_+$. 
\end{corollary}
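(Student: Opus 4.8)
The plan is to combine Theorem \ref{thldld} and Theorem \ref{thldld1} and then pass to a probabilistic statement about the product of Gaussian norm variables. First I would recall the deterministic bound of Theorem \ref{thldld}: under the stated hypotheses, whenever the matrix $A^T_{\rho}H$ has full rank $\rho$ and the quantity
$$\phi=\sqrt{n-\rho}~\sigma_{\rho+1}(A)\nu_{F,n,\rho_+}||(A^T_{\rho}H)^+||$$
satisfies $\phi\ge 1$ (equivalently, the conclusion holds whenever $\phi$ controls the error, i.e.\ the error is bounded by $4\phi$), the matrix $S_{A^T_{\rho}H}$ approximates a basis of $\mathbb T_{\rho,A}$ within Frobenius error $4\phi$. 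Then I would substitute the estimate $||(A^T_{\rho}H)^+||\le \nu^+_{\rho,\rho_+}/\sigma_{\rho}(A)$ from Theorem \ref{thldld1}, which replaces $\phi$ by the larger quantity
$$\phi_+=\sqrt{n-\rho}~\nu_{F,n,\rho_+}\nu^+_{\rho,\rho_+}\frac{\sigma_{\rho+1}(A)}{\sigma_{\rho}(A)}\ge \phi.$$
Since $4\phi\le 4\phi_+$, the same matrix $S_{A^T_{\rho}H}$ approximates a basis of $\mathbb T_{\rho,A}$ within Frobenius error $4\phi_+$, provided $A^T_{\rho}H$ has full rank.

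Next I would handle the probabilistic bookkeeping. By Theorem \ref{thtrfrld} (via Lemma \ref{lepr3} and Theorem \ref{thdgr}), the matrix $B=S^T_{\rho,A}H$, and hence $A^T_{\rho}H=T_{\rho,A}\Sigma_{\rho,A}B$, has full rank $\rho$ with probability $1$; so conditioning on that event costs nothing. The only genuine requirement for invoking Theorem \ref{thldld} with the weaker bound is that the error is indeed dominated by $4\phi_+$, which, tracing the proof of Theorem \ref{thldld} through \cite[Theorem 8.6.5]{GL13}, holds on the event where the relevant perturbation quantity is at most $1$; replacing $\phi$ by $\phi_+$ and absorbing the constant, this is implied by the event $\{5\phi_+\le 1\}$. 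Hence on the event $\{5\phi_+\le 1\}$ (intersected with the probability-$1$ full-rank event) the desired approximation holds, so the success probability is at least ${\rm Probability}\{5\phi_+\le 1\}$, as claimed.

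The one step that needs care — and which I expect to be the main obstacle — is matching the constant: Theorem \ref{thldld} is stated with the hypothesis $\phi\ge 1$ and conclusion $4\phi$, whereas the corollary wants the event $\{5\phi_+\le 1\}$ and still conclusion $4\phi_+$. One must check that the passage from the $GL13$ perturbation bound (where $\delta=1/||(A^T_{\rho}H)^+||$ and the governing ratio is $||E||/\delta = \sigma_{\rho+1}(A)\nu_{n,\rho_+}||(A^T_{\rho}H)^+||$ in spectral norm, or the slightly larger Frobenius-norm version $\phi$) to the bound $\phi_+$ introduces only a benign loss that is covered by the factor $5$ rather than $4$; intuitively the extra unit of slack in $5\phi_+\le 1$ versus $\phi\le 1$ accounts for the inequality $\phi\le\phi_+$ together with the $\sqrt{n-\rho}$/Frobenius-versus-spectral slack already present, so that $4\phi\le 4\phi_+$ remains valid on $\{5\phi_+\le 1\}$. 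I would make this explicit by writing out the chain $\phi\le\phi_+<1/5<1$, which legitimizes the application of Theorem \ref{thldld} and yields the error bound $4\phi\le 4\phi_+$. No further probabilistic estimate of $\nu_{F,n,\rho_+}$ or $\nu^+_{\rho,\rho_+}$ is needed here, since the corollary leaves the probability in the implicit form ${\rm Probability}\{5\phi_+\le 1\}$.
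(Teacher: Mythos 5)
Your proposal is correct and follows the same route the paper intends: the paper gives no argument beyond the single sentence ``The latter two theorems together imply the following corollary,'' and you carry out exactly that combination — substitute the bound $\|(A^T_{\rho}H)^+\|\le \nu^+_{\rho,\rho_+}/\sigma_{\rho}(A)$ from Theorem~\ref{thldld1} into the definition of $\phi$ to get $\phi\le\phi_+$, use the full-rank-with-probability-one fact to dispose of that side condition, and observe that on the event $\{5\phi_+\le 1\}$ the hypothesis of Theorem~\ref{thldld} is met and the error bound $4\phi\le 4\phi_+$ follows. You also correctly flag the oddity that Theorem~\ref{thldld} is stated with ``$\phi\ge 1$'' (evidently a slip for ``$\phi\le 1$'' or a similar smallness condition — the written inequality would contradict the event $\{5\phi_+\le 1\}$ outright, since that event forces $\phi\le\phi_+\le 1/5$), and your chain $\phi\le\phi_+\le 1/5<1$ is exactly what makes the corollary consistent; the factor $5$ rather than $4$ is just a safety margin and your reading of it as absorbing the passage from the perturbation condition in the cited GL13 theorem is reasonable.
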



\begin{remark}\label{rephi+e} 
For $\rho_+>\rho$, equation (\ref{edphi+}) and Theorems \ref{thsignorm} and  \ref{thsiguna} together imply that

$$\mathbb E(\phi_+)< e~
(1+\sqrt n+\sqrt{\rho_+})~\sqrt{n-\rho}~\frac{\sigma_{\rho+1}(A)}{\sigma_{\rho}(A)}\frac{\sqrt{n\rho_+
\rho}}{\rho_+-\rho},~{\rm for}~e=2.71282\dots.$$
\end{remark}


\subsection{Leading singular spaces via the maximum volume}\label{smxv}


One can alternatively 
approximate
 leading singular spaces by applying
 the algorithm
of \cite{GOSTZ10}, devised for the approximation of 
the so called CUR
decomposition of a matrix. 
The algorithm is  heuristic, 
but consistently converges  fast according to
its extensive tests by the authors.

It accesses only 
a small fraction of the entries of the input matrix.
This makes it particularly efficient for sparse matrices. 
The algorithm interchanges rows and columns of 
an input matrix, destroying Toeplitz-like, Hankel-like,
and even Van\-der\-monde-like matrix structures,
but one can 
fix this deficiency by means of the back and forth
transition to Cauchy-like matrices  \cite{P15},
whose structure is invariant in row and column 
interchange.

The algorithm relies on the following  result
where we write 
$v_\rho(M)=\max_X|\det (X)|$  with the maximum 
over all $\rho\times \rho$ submatrices $X$
of a matrix $M$, and we call $v_\rho(M)$ the {\em maximal volume} of
all $\rho\times \rho$ submatrices of the matrix $M$. 


\begin{theorem}\label{thgt}  \cite[Corollary 2.3]{GT01}.
Let an $n\times m$ matrix 
$A^T=\begin{pmatrix} A_{11} & A_{12} \\
 A_{21} & A_{22}
\end{pmatrix}$
have
a nonsingular $\rho\times \rho$  leading block $A_{11}$.
Write  $\nu=\frac{v_\rho(A)}{|\det A_{11}|}$, 
$C=\begin{pmatrix} A_{11}  \\
 A_{21} 
\end{pmatrix}$, and
$R=(A_{11}~|~ A_{12})$ and let
$||\cdot||_C$ denote the element-wise (Chebyshev) norm,
$||M||\le \sqrt{mn}~||M||_C$ for a matrix $M\in \mathbb R^{m\times n}$.
Then $$||A-CA_{11}^{-1}R||_C\le(\rho+1)\sigma_{\rho+1}(A))\nu.$$
\end{theorem}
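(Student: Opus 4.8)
**The plan is to reduce this to a statement about a single maximal-volume submatrix and then exploit the optimality of that submatrix, following the line of Goreinov--Tyrtyshnikov.** Fix the $\rho\times\rho$ leading block $A_{11}$, which is nonsingular by hypothesis. The key object is the Schur-type residual $S = A_{22} - A_{21}A_{11}^{-1}A_{12}$; a direct block computation shows that $A - CA_{11}^{-1}R$ has the block form with zero blocks everywhere except the bottom-right, which equals $S$. So it suffices to bound $\|S\|_C = \max_{i,j}|S_{ij}|$ by $(\rho+1)\sigma_{\rho+1}(A)\,\nu$.

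First I would bound a single entry $S_{ij}$. For a fixed entry of $S$, consider the $(\rho+1)\times(\rho+1)$ submatrix $B$ of $A$ obtained by bordering $A_{11}$ with the row and column of $A$ meeting at that entry; Cramer's rule / cofactor expansion along the last row gives $|\det B| = |S_{ij}|\cdot|\det A_{11}|$, hence $|S_{ij}| = |\det B|/|\det A_{11}|$. By the definition of the maximal volume, $|\det B'| \le v_\rho(A)$ for every $\rho\times\rho$ submatrix $B'$ of $A$, so in particular $B$ cannot have a $\rho\times\rho$ minor exceeding $v_\rho(A)$; combined with the choice $\nu = v_\rho(A)/|\det A_{11}|$, this already suggests the shape of the answer, but the determinant of the $(\rho+1)\times(\rho+1)$ matrix $B$ still has to be controlled by $\sigma_{\rho+1}(A)$, not just by products of $\rho\times\rho$ minors.

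The mechanism for that control is the following: $|\det B|$ is, up to the factor $|\det A_{11}|$, the distance from the last column of $B$ to the span of the first $\rho$ columns, measured in a norm tied to $A_{11}$; more precisely, expand $\det B$ by its last column and recognize the coefficients as the $\rho\times\rho$ minors of the bordered matrix, each of absolute value at most $v_\rho(A)$. Summing $\rho+1$ such terms and using that the last column of $B$ has a suitable small component orthogonal to $\mathcal R(A_{11})$ — controlled by $\sigma_{\rho+1}(A)$ because $A$ has $\rho+1$-st singular value $\sigma_{\rho+1}(A)$ — yields $|\det B| \le (\rho+1)\,\sigma_{\rho+1}(A)\,v_\rho(A)/|\det A_{11}|\cdot|\det A_{11}|$; dividing by $|\det A_{11}|$ gives $|S_{ij}| \le (\rho+1)\sigma_{\rho+1}(A)\,\nu$. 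Taking the maximum over $i,j$ gives $\|A - CA_{11}^{-1}R\|_C \le (\rho+1)\sigma_{\rho+1}(A)\,\nu$, as claimed.

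The main obstacle is the middle step: turning the bound ``every $\rho\times\rho$ minor of the bordered matrix is at most $v_\rho(A)$'' into a bound on the $(\rho+1)\times(\rho+1)$ determinant that genuinely brings in $\sigma_{\rho+1}(A)$. This requires identifying $|\det B|/|\det A_{11}|$ with a residual of a least-squares-type projection onto the column space of $A_{11}$ and then invoking the variational characterization $\sigma_{\rho+1}(A) = \min_{\rank(M)\le\rho}\|A-M\|$ to see that this residual cannot exceed roughly $\sqrt{\rho+1}$ — or here, $\rho+1$ after passing to the Chebyshev norm — times $\sigma_{\rho+1}(A)$. Since the theorem is quoted verbatim from \cite[Corollary 2.3]{GT01}, I would in practice cite it directly; the sketch above is the reconstruction of why it holds.
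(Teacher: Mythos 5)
The paper does not prove this statement: Theorem~\ref{thgt} is quoted verbatim with a citation to \cite[Corollary~2.3]{GT01} and no proof is given. So there is no in-paper argument to compare against. Your fallback---citing the source directly---is therefore what the paper itself does.

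Regarding your reconstruction, the opening reduction is correct: $A-CA_{11}^{-1}R$ is zero except in the bottom-right block, which is the Schur complement $S=A_{22}-A_{21}A_{11}^{-1}A_{12}$, and for each entry $S_{ij}$ the bordered $(\rho+1)\times(\rho+1)$ submatrix $B$ satisfies $|S_{ij}|=|\det B|/|\det A_{11}|$. However, the mechanism you propose for the crucial middle step does not work. Laplace expansion of $\det B$ along its last column gives $\rho+1$ terms whose cofactors are $\rho\times\rho$ minors bounded by $v_\rho(A)$, but the column \emph{entries} multiplying them are entries of $A$, not quantities controlled by $\sigma_{\rho+1}(A)$. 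Likewise, the variational characterization $\sigma_{\rho+1}(A)=\min_{\rank(M)\le\rho}\|A-M\|$ gives the inequality $\sigma_{\rho+1}(A)\le\|S\|$ (since $CA_{11}^{-1}R$ has rank $\rho$), which is the \emph{opposite} direction of what you need. The Goreinov--Tyrtyshnikov argument instead bounds $|\det B|$ through the singular values of $B$ itself: write $|\det B|=\prod_{l=1}^{\rho+1}\sigma_l(B)$, use interlacing to get $\sigma_{\rho+1}(B)\le\sigma_{\rho+1}(A)$, and use the $\rho$-th compound matrix to get $\prod_{l=1}^{\rho}\sigma_l(B)\le(\rho+1)v_\rho(B)\le(\rho+1)v_\rho(A)$---the largest singular value of the compound is the product $\prod_{l=1}^{\rho}\sigma_l(B)$, it is at most the compound's Frobenius norm, and that norm involves $(\rho+1)^2$ minors of order $\rho$, each at most $v_\rho(B)$. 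Putting these together and dividing by $|\det A_{11}|$ yields the claimed bound. You correctly flagged this step as the main obstacle; the takeaway is that the $(\rho+1)$ factor comes from the compound-matrix/Frobenius count, not from a least-squares residual argument.
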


By virtue of the theorem, the rank-$\rho$ matrix $CA_{11}^{-1}R$ approximates 
the matrix $A$ within a factor 
of $(\rho+1)\nu~\sqrt{mn}$ from
the optimal error bound $\sigma_{\rho+1}(A))$. 
($CA_{11}^{-1}R$ is a CUR decomposition if the
matrices $A_{11}$ and $U=A_{11}^{-1}$ are unitary.)

In the authors' tests, the iterative algorithm of \cite{GOSTZ10} 
has consistently produced $\rho\times \rho$ submatrices 
of the matrix $A$ that have 
 reasonably bounded ratios $\nu$. 
This work is linked to our study
because a nearly optimal rank-$\rho$ approximation  $CA_{11}^{-1}R$
to the matrix $A$ induces close approximations by the matrices $C$ and $CA_{11}^{-1}$
to $n\times \rho$ matrix bases of the leading singular space $\mathbb T_{\rho,A^T}$. 


\section{Approximation of the Trailing Singular Spaces}\label{saptrl}


\subsection{The basic theorems}\label{sthm}

 
The following results from 
\cite{PQ10} and \cite{PQ12}
are basic for
 the   
approximation of the
trailing singular space 
$\mathbb T_{A,\rho}$.
We assume that we have already computed the numerical rank $\rho$, e.g.,  
 by applying Algorithms \ref{algldbs} or 3.1+
(cf. Remark \ref{rertsvd}). 



\begin{theorem}\label{thn}
Suppose that
 $A \in \mathbb{R}^{m \times n}$,  
$V\in \mathbb{R}^{n\times s}$, 
$\widehat K=
\begin{pmatrix}
   V^T   \\
   A
\end{pmatrix}$,
$\rank (V)=s$,
$\rank (\widehat K)=n$,
$m\ge n$.
Write $\widehat Y=\widehat K^{(I)} 
\begin{pmatrix}
I_{s}  \\
O_{m,s}
\end{pmatrix}$.  Then

(a) $\mathcal N(A) \subseteq \mathcal R(\widehat Y)$,

(b) $\mathcal N(A)= \mathcal R(\widehat Y)$ if $s+\rank (A)=n$,

(c) $\mathcal N(A)= \mathcal R(\widehat Y \widehat Z)$ if 
 $\mathcal R(\widehat Z)=\mathcal N(A\widehat Y)$.
\end{theorem}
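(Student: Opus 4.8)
The plan is to analyze the augmented matrix $\widehat K = \begin{pmatrix} V^T \\ A \end{pmatrix}$ and its left inverse directly. First I would unpack the defining identity of the left inverse: since $\rank(\widehat K) = n$, the matrix $\widehat K^{(I)}$ satisfies $\widehat K^{(I)} \widehat K = I_n$. Writing $\widehat K^{(I)}$ in block form as $(\widehat Y \mid W)$ where $\widehat Y = \widehat K^{(I)} \begin{pmatrix} I_s \\ O_{m,s} \end{pmatrix}$ is the first $s$ columns and $W$ is the remaining $m$ columns, the identity $\widehat K^{(I)} \widehat K = I_n$ expands to $\widehat Y V^T + W A = I_n$. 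This single relation is the engine behind all three parts.

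For part (a), take any $x \in \mathcal N(A)$, so $Ax = 0$. Multiplying the identity $\widehat Y V^T + WA = I_n$ on the right by $x$ gives $\widehat Y V^T x = x$, which exhibits $x$ as a vector in $\mathcal R(\widehat Y)$; hence $\mathcal N(A) \subseteq \mathcal R(\widehat Y)$. For part (b), I need the reverse inclusion under the hypothesis $s + \rank(A) = n$. The clean way is a dimension count: $\mathcal R(\widehat Y) \subseteq \mathbb R^n$ has dimension at most $s$ (it is spanned by $s$ columns), while $\dim \mathcal N(A) = n - \rank(A) = s$ by hypothesis; combined with the inclusion from (a), both spaces have dimension exactly $s$ and coincide. (One should note in passing that $\mathcal R(\widehat Y)$ actually has dimension exactly $s$, since $\widehat Y$ is a submatrix of the full-column-rank matrix $\widehat K^{(I)}$ — in fact $\widehat K$ having rank $n$ forces $\widehat K^{(I)}$ to have rank $n$ and its columns are linearly independent, so any $s$ of them are independent too; but for (b) the inequality direction suffices.)

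For part (c), the hypothesis is $\mathcal R(\widehat Z) = \mathcal N(A\widehat Y)$, and I want $\mathcal N(A) = \mathcal R(\widehat Y \widehat Z)$. One inclusion: if $y = \widehat Z z \in \mathcal R(\widehat Z)$ then $A\widehat Y y = 0$, so $\widehat Y \widehat Z z \in \mathcal N(A)$, giving $\mathcal R(\widehat Y \widehat Z) \subseteq \mathcal N(A)$. For the reverse, take $x \in \mathcal N(A)$; by part (a) write $x = \widehat Y w$ for some $w$, and then $A\widehat Y w = Ax = 0$ shows $w \in \mathcal N(A\widehat Y) = \mathcal R(\widehat Z)$, so $w = \widehat Z z$ and $x = \widehat Y \widehat Z z \in \mathcal R(\widehat Y \widehat Z)$. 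This closes the argument; note it uses only part (a), not part (b), so no rank assumption on $A$ is needed for (c).

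The main obstacle, such as it is, is bookkeeping rather than depth: one must be careful that the block partition of $\widehat K^{(I)}$ is consistent (the $\begin{pmatrix} I_s \\ O_{m,s} \end{pmatrix}$ selects the first $s$ columns precisely because $\widehat K$ stacks the $s$-row block $V^T$ on top of the $m$-row block $A$), and in part (a) one should make sure the map $w \mapsto \widehat Y w$ is well-defined on the ambient space and that $V^T x$ lands in $\mathbb R^s$ as required. The only genuinely substantive point is recognizing that $\widehat Y V^T + WA = I_n$ is all one needs — everything else is elementary linear algebra on ranges, null spaces, and dimensions.
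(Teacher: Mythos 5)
Your proof is correct, and the central observation---that partitioning $\widehat K^{(I)}=(\widehat Y\mid W)$ and expanding $\widehat K^{(I)}\widehat K=I_n$ gives $\widehat YV^T+WA=I_n$---is exactly the engine one expects here. The paper itself outsources the proof to the correctness argument for Algorithm~6.1 in \cite{PQ12}, so there is no in-document proof to compare against, but your argument is self-contained and each of (a), (b), (c) goes through as written: (a) by multiplying the identity on the right by $x\in\mathcal N(A)$, (b) by the dimension count $s=\dim\mathcal N(A)\le\dim\mathcal R(\widehat Y)\le s$, and (c) by the two elementary inclusions (which, as you note, use only (a) and not (b)).

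One small but genuine slip appears in your parenthetical aside in part (b). You claim that $\widehat K$ having rank $n$ forces $\widehat K^{(I)}$ to have linearly independent \emph{columns}. That cannot be right: $\widehat K^{(I)}$ is $n\times(s+m)$ with $s+m\ge n$ (and typically $s+m>n$), so its $s+m$ columns live in $\mathbb R^n$ and cannot all be independent. What does follow from $\widehat K^{(I)}\widehat K=I_n$ is full \emph{row} rank, i.e.\ $\rank(\widehat K^{(I)})=n$, which says nothing about a generic $n\times s$ column submatrix having rank $s$. Fortunately you explicitly flag that only the inequality $\dim\mathcal R(\widehat Y)\le s$ is needed for (b), so the error does not propagate into the actual proof; still, the remark as stated is false and should be deleted or replaced with a correct justification (e.g.\ one can show $\widehat Y$ has full column rank $s$ under the hypothesis of (b) \emph{after} the fact, from $\mathcal R(\widehat Y)=\mathcal N(A)$ having dimension $s$, but not by the column-independence route you sketched).
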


\begin{proof}
See \cite[Correctness proof of Algorithm 6.1]{PQ12}.
\end{proof}


\begin{theorem}\label{thnwe}
Suppose that
 $A \in \mathbb{R}^{m \times n}$,  
$U\in \mathbb{R}^{m\times q}$, 
$V\in \mathbb{R}^{n\times s}$, 
$W\in \mathbb{R}^{s\times q}$, $K=
\begin{pmatrix}
W   &   V^T   \\
U   &   A
\end{pmatrix}$,
$\rank(W)=q\ge \nul (A)$, $\rank (K)=n+q$,
$m\ge n$.
Write $\bar Y=(O_{n,q}~|~I_n)K^{(I)} 
\begin{pmatrix}
O_{s,q}  \\
U
\end{pmatrix}$.  
 Then 

(a) $\mathcal N(A) \subseteq \mathcal R(\bar Y)$,

(b)  $\mathcal N(A)=\mathcal R(\bar Y)$ if $\rank (U)+\rank(A)=n$,

(c)  $\mathcal N(A)=\mathcal R(\bar Y\bar Z)$
if $\mathcal R(\bar Z)=\mathcal N(A\bar Y)$.
\end{theorem}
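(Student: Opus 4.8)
The plan is to mirror the proof of Theorem \ref{thn}, now for the doubly augmented matrix $K$, obtained from $A$ by placing $V^T$ above $A$, $U$ to the left of $A$, and the corner block $W$ in the top-left. First I would partition a left inverse conformally with $K$, writing $K^{(I)}=\begin{pmatrix}Y_{11}&Y_{12}\\Y_{21}&Y_{22}\end{pmatrix}$ with blocks of sizes $q\times s$, $q\times m$, $n\times s$, and $n\times m$, so that $\bar Y=Y_{22}U$. Reading off the four block identities contained in $K^{(I)}K=I_{n+q}$, I would single out $Y_{21}W+Y_{22}U=O$, i.e. $\bar Y=-Y_{21}W$, and $Y_{21}V^T+Y_{22}A=I_n$.

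For part (a), fix $x\in\mathcal N(A)$. The crux is to produce a vector $z\in\mathbb R^q$ with $Wz=-V^Tx$; this is precisely where the hypotheses $\rank(W)=q\ge\nul(A)$ and $\rank(K)=n+q$ do their work, guaranteeing that this system is consistent for every $x\in\mathcal N(A)$ (equivalently, $V^T(\mathcal N(A))\subseteq\mathcal R(W)$). Granting such a $z$, the equation $Ax=0$ yields
\[
K\begin{pmatrix}z\\x\end{pmatrix}=\begin{pmatrix}Wz+V^Tx\\Uz+Ax\end{pmatrix}=\begin{pmatrix}O_{s,q}\\U\end{pmatrix}z ,
\]
so applying $K^{(I)}$ and then the projection $(O_{n,q}\mid I_n)$ onto the trailing $n$ coordinates gives $\bar Y z=(O_{n,q}\mid I_n)K^{(I)}K\begin{pmatrix}z\\x\end{pmatrix}=x$, whence $x\in\mathcal R(\bar Y)$. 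As with $\widehat Y$ in Theorem \ref{thn}, this step is valid for any left inverse $K^{(I)}$.

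For part (b), I would bound $\rank(\bar Y)\le\rank\begin{pmatrix}O_{s,q}\\U\end{pmatrix}=\rank(U)$, and the extra hypothesis $\rank(U)+\rank(A)=n$ together with rank--nullity for $A$ (which has $n$ columns) forces $\rank(U)=\nul(A)$. Hence $\dim\mathcal R(\bar Y)\le\nul(A)=\dim\mathcal N(A)\le\dim\mathcal R(\bar Y)$ by part (a), so the inclusion of part (a) sharpens to the equality $\mathcal N(A)=\mathcal R(\bar Y)$. Part (c) then follows from part (a) exactly as in Theorem \ref{thn}(c): if $\mathcal R(\bar Z)=\mathcal N(A\bar Y)$, then $\mathcal R(\bar Y\bar Z)=\bar Y\big(\mathcal N(A\bar Y)\big)=\{\bar Y w:\bar Y w\in\mathcal N(A)\}=\mathcal R(\bar Y)\cap\mathcal N(A)=\mathcal N(A)$, the last step invoking part (a).

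The step I expect to be the real obstacle is the one flagged in part (a): showing that $Wz=-V^Tx$ is always solvable, i.e. that the corner block $W$ couples the row and column augmentations tightly enough that every $V^Tx$ with $x\in\mathcal N(A)$ lands in $\mathcal R(W)$. Everything else is block bookkeeping that parallels the correctness proof of Algorithm 6.1 in \cite{PQ12}, from which Theorems \ref{thn} and \ref{thnwe} are drawn.
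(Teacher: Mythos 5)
You have the right skeleton: the block identities extracted from $K^{(I)}K=I_{n+q}$, the conclusion $\bar Yz=x$ once a suitable $z$ is produced, and the derivations of (b) and (c) from (a) are all sound. But the step you flag at the end as the obstacle---existence of $z$ with $Wz=-V^Tx$, equivalently $V^T(\mathcal N(A))\subseteq\mathcal R(W)$---really is a gap, and the assertion made mid-proof that the listed hypotheses ``guarantee that this system is consistent'' is false. Take $m=n=1$, $q=1$, $s=2$, $A=(0)$, $U=(1)$, $V=(0,\,1)$, $W=\begin{pmatrix}1\\0\end{pmatrix}$. Then $\rank W=1=q\ge\nul(A)=1$, $m\ge n$, and $K=\begin{pmatrix}1&0\\0&1\\1&0\end{pmatrix}$ has rank $2=n+q$, so every hypothesis holds; yet $V^T(\mathcal N(A))=\mathrm{span}\{(0,1)^T\}$ meets $\mathcal R(W)=\mathrm{span}\{(1,0)^T\}$ only at the origin, and with the Moore--Penrose left inverse $K^+=\begin{pmatrix}1/2&0&1/2\\0&1&0\end{pmatrix}$ one gets $\bar Y=0$, hence $\mathcal R(\bar Y)=\{0\}$ while $\mathcal N(A)=\mathbb R$: part (a) fails.

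What actually closes the gap is a stronger requirement on $W$, namely that it be surjective ($\rank W=s$), which combined with $\rank W=q$ forces $W$ to be a square nonsingular $q\times q$ block ($s=q$); then $\mathcal R(W)=\mathbb R^s$ and $Wz=-V^Tx$ is always solvable, after which your argument for (a), and hence (b) and (c), goes through unchanged. That is the regime in which the rest of the paper actually applies the theorem (e.g.\ $W=I_r$ in the algorithms). The paper itself does not reprove the result but cites \cite[Theorems 11.2 and 11.3]{PQ12}, where $W$ presumably carries that stronger hypothesis; your write-up assumes that the weaker conditions quoted here already suffice, and they do not.
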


\begin{proof}
See \cite[Theorems 11.2 and  11.3]{PQ12}.
\end{proof}


\begin{theorem}\label{thnmb1} \cite[Theorem 3.1 and Corollary 3.1]{PQ10}.
Suppose 
a matrix $A\in \mathbb R^{m\times n}$ has rank $\rho$, 
$U\in \mathbb R^{m\times r}$, $V\in \mathbb R^{n\times r}$,
and 
the matrix $C=A+UV^T$  
has full rank $n$. 
Write $Y=C^{(I)}U$. 
 Then 

(a) $\mathcal N(A) \subseteq \mathcal R(Y)$ and $r\ge n-\rho$,   

(b) $\mathcal N(A)=\mathcal R(Y)$ if $r+\rho=n$,

(c) $ \mathcal N(A)=\mathcal R(YZ)$ if $\mathcal R(Z)=\mathcal N(AY)$.
\end{theorem}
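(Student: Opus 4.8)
The plan is to exploit the single algebraic relation that the hypothesis on $C=A+UV^T$ makes available: since $C$ has full column rank $n$, it has a left inverse $C^{(I)}$ with $C^{(I)}C=I_n$ (but no two-sided inverse, so only this relation may be used). First I would prove the inclusion in part (a). For any $x\in\mathcal N(A)$ we have $Ax=0$, hence $Cx=Ax+UV^Tx=U(V^Tx)$; applying $C^{(I)}$ and using $C^{(I)}C=I_n$ gives $x=C^{(I)}Cx=C^{(I)}U(V^Tx)=Y(V^Tx)\in\mathcal R(Y)$. This already establishes $\mathcal N(A)\subseteq\mathcal R(Y)$.

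Next I would settle the dimension claims. By rank--nullity $\dim\mathcal N(A)=n-\rho$, while $\mathcal R(Y)$ is spanned by the $r$ columns of $Y=C^{(I)}U$, so $\dim\mathcal R(Y)\le r$. Combining these two facts with the inclusion just proved yields $n-\rho\le r$, the second assertion of (a). For part (b), the extra hypothesis $r+\rho=n$ forces $\dim\mathcal R(Y)\le r=n-\rho=\dim\mathcal N(A)$; together with $\mathcal N(A)\subseteq\mathcal R(Y)$ this pins all these dimensions to the common value $n-\rho$, and an inclusion of subspaces of equal finite dimension is an equality, so $\mathcal N(A)=\mathcal R(Y)$.

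For part (c) I would prove both inclusions directly. If $w\in\mathcal R(YZ)$, write $w=Yz$ with $z\in\mathcal R(Z)=\mathcal N(AY)$; then $Aw=AYz=0$, so $\mathcal R(YZ)\subseteq\mathcal N(A)$. Conversely, given $x\in\mathcal N(A)$, part (a) supplies a vector $t$ with $x=Yt$, and then $AYt=Ax=0$ puts $t\in\mathcal N(AY)=\mathcal R(Z)$, say $t=Zs$, whence $x=YZs\in\mathcal R(YZ)$. Hence $\mathcal N(A)\subseteq\mathcal R(YZ)$ and equality follows.

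I do not expect a serious obstacle here, since the argument is elementary linear algebra; the ``hard part'' is mostly a matter of discipline: using only $C^{(I)}C=I_n$ and never $CC^{(I)}=I$, keeping track of which hypotheses are actually invoked in each of (a)--(c) (for instance, (a) and (c) do not require $U$ or $V$ to have full rank, only that $C$ does, whereas (b) uses the rank condition $r+\rho=n$), and noticing that the crude bound $\dim\mathcal R(Y)\le r$ is all that is needed rather than any exact rank formula for $C^{(I)}U$.
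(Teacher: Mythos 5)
Your proof is correct and complete. The paper does not give its own proof of this theorem but instead cites \cite[Theorem 3.1 and Corollary 3.1]{PQ10}; the argument you give — using only the left-inverse relation $C^{(I)}C=I_n$ together with the identity $Cx=U(V^Tx)$ for $x\in\mathcal N(A)$, followed by dimension counting for (a) and (b) and a two-sided inclusion for (c) — is the natural and standard one for this lemma, and matches what the cited source does.
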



\begin{remark}\label{retrail}
Given a matrix $A$ and its numerical rank $\rho$, 
set to zero all but the $\rho$ largest singular values of the matrix $A$
and arrive at a  matrix $A-E$ of rank $\rho$ such that
$||E||=\sigma_{\rho+1}(A)$.
 By virtue of our next theorem, the matrix 
$T_{A,\rho}$ approximates a nmb of a matrix $A-E$ of rank $\rho$
within the norm in $O(\sigma_{\rho+1}(A))$. Therefore 
 nmb$(A-E)$ can serve as an approximate basis for the trailing singular space 
$\mathbb T_{A,\rho}$, and  
we can approximate a basis for $\mathbb T_{A,\rho}$
within $O(\sigma_{\rho+1}(A))$
by applying
the expressions of Theorems 
\ref{thn}--\ref{thnmb1} 
to the matrix $A$ rather than  to $A-E$
as long as the auxiliary matrices $\widehat K$, $K$ and $C$ in these
theorems (i) have full rank  and (ii) are well-conditioned. 
For Gaussian matrices $U$, $V$, and $W$,
property (i) above follows with probability 1 by virtue of Theorem \ref{thdgr}, 
and in Sections \ref{saug}--\ref{sweak} we specify  
our probability bounds close to 1
with which property (ii) holds.
\end{remark}


\begin{theorem}\label{thtrail}
Suppose that $m\ge n$,
an $m\times n$ matrix $A$ has 
numerical rank $\rho=n-r$, and 
the matrices
$C$, $K$, $\widehat K$ of Theorems 
\ref{thn}--\ref{thnmb1}
 have full rank and are
 well-conditioned.
Define the matrices
 $Y$, $\bar Y$, and $\widehat Y$
by the expressions of 
 Theorems 
\ref{thn}--\ref{thnmb1}.
Then 
there exist three  orthogonal $r\times r$ matrices $X$,
$Z$, and $\widehat Z$
and a scalar $c$ 
independent of $A$, $U$, $V$, $W$, $m$, $n$ and $\rho$ such that

(i) $||Q(Y)X-T_{A,\rho}||\le c\sigma_{\rho+1}(A)||U||$,

(ii) $||Q(\bar Y)Z-T_{A,\rho}||\le c\sigma_{\rho+1}(A)||\bar Y||$,

(iii) $||Q(\widehat Y)\widehat Z-T_{A,\rho}||\le c\sigma_{\rho+1}(A)||\widehat Y||$.
\end{theorem}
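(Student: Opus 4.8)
The plan is to reduce Theorem \ref{thtrail} to a perturbation argument built on top of Theorems \ref{thn}--\ref{thnmb1} applied to the exact rank-$\rho$ matrix $A-E$, where $E$ zeroes out all but the $\rho$ leading singular values of $A$, so that $\|E\|=\sigma_{\rho+1}(A)$, exactly as outlined in Remark \ref{retrail}. First I would fix attention on statement (i); the other two are entirely analogous, so I would prove (i) in detail and indicate the changes needed for (ii) and (iii). Write $A_0=A-E$, so $\rank(A_0)=\rho=n-r$ and $\mathcal N(A_0)=\mathbb T_{A_0,r}$ is spanned by the trailing right singular vectors, which for a rank-$\rho$ matrix coincide with $T_{A,\rho}$ up to the perturbation $E$. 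Form $C_0=A_0+UV^T$ and $Y_0=C_0^{(I)}U$. By Theorem \ref{thnmb1}(b) (using $r+\rho=n$) we get $\mathcal R(Y_0)=\mathcal N(A_0)$ exactly, hence $Q(Y_0)$ and a matrix basis of $\mathcal N(A_0)$ differ only by an orthogonal $r\times r$ factor; and since $A_0$ has rank exactly $\rho$ with trailing right singular vectors forming $T_{A_0,\rho}$, standard singular-subspace perturbation (the $\sin\Theta$-type bound, or \cite[Theorem 8.6.5]{GL13} as already invoked for Theorem \ref{thldld}) gives $\|Q(Y_0)X_0-T_{A,\rho}\|\le c_1\sigma_{\rho+1}(A)\cdot(\text{something involving }\|A^{(I)}\|\text{-type quantities})$ for some orthogonal $X_0$.

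Next I would control the gap between $Y=C^{(I)}U$ (built from $A$, $C=A+UV^T$) and $Y_0=C_0^{(I)}U$. The key point is that $C=C_0+E$ with $\|E\|=\sigma_{\rho+1}(A)$ small, both $C$ and $C_0$ have full column rank $n$, and $C$ is assumed well-conditioned, so $\|C^{(I)}\|$ is bounded by a quantity of order $\|C^+\|$; since $C_0=C-E$ and $\|E\|$ is small relative to $\sigma_n(C)$ (which is $\gtrsim \sigma_\rho(A)\gg\sigma_{\rho+1}(A)$ by the numerical-rank hypothesis and the well-conditioning of $C$), $C_0$ inherits a comparable conditioning. Then the standard left-inverse (pseudoinverse) perturbation estimate gives $\|C^{(I)}-C_0^{(I)}\|\le c_2\|C^+\|^2\|E\|\le c_2'\,\sigma_{\rho+1}(A)$, whence $\|Y-Y_0\|\le c_3\sigma_{\rho+1}(A)\|U\|$. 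Because $\mathcal R(Y_0)=\mathcal N(A_0)$ is $r$-dimensional and $Y_0$ has full column rank with $\|Y_0^+\|$ bounded (again via the conditioning of $C_0$), a perturbation of size $c_3\sigma_{\rho+1}(A)\|U\|$ in $Y$ changes $Q(Y)$ from $Q(Y_0)$ by at most $c_4\sigma_{\rho+1}(A)\|U\|$ in norm, up to an orthogonal $r\times r$ rotation; combining this with the bound from the previous paragraph and absorbing everything into a single constant $c$ yields (i).

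For (ii) and (iii) the same scheme applies with $C$ replaced by $K$ (resp. $\widehat K$) and $U$ replaced by the appropriate block, using Theorem \ref{thnwe}(b) and Theorem \ref{thn}(b) in place of Theorem \ref{thnmb1}(b); the only difference is bookkeeping of the block structure in the extended matrices and the fact that the relevant multiplier factor becomes $\|\bar Y\|$ (resp. $\|\widehat Y\|$) rather than $\|U\|$, which is precisely what the stated bounds record. The main obstacle I anticipate is making the constant $c$ genuinely independent of $A$, $U$, $V$, $W$, $m$, $n$, $\rho$: this requires being careful that the well-conditioning hypothesis on $C$, $K$, $\widehat K$ (and the consequent bounds on $\|C^+\|$, $\|Y^+\|$, etc.) is used only through bounds that are uniform, and that the singular-gap ratio $\sigma_{\rho+1}(A)/\sigma_\rho(A)$ entering the subspace-perturbation step is absorbed correctly — one must verify that the numerical-rank assumption $\sigma_\rho(A)\gg\sigma_{\rho+1}(A)$ is exactly what keeps the denominators in the $\sin\Theta$ bound from blowing up, so that no hidden dependence on the spectrum of $A$ sneaks into $c$.
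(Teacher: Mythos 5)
Your proposal matches the paper's own proof: both apply Theorems \ref{thn}--\ref{thnmb1} to the nearby rank-$\rho$ matrix $A-E$ from Remark \ref{retrail} (so that $\mathcal R(Y_0)=\mathcal N(A-E)$ exactly), then propagate the perturbation of size $\|E\|=\sigma_{\rho+1}(A)$ through the pseudoinverse, the QR factor via Theorem \ref{thpertq}, and the trailing singular subspace, absorbing the conditioning-dependent factors into the constant $c$ under the well-conditioning hypothesis. Your version merely spells out the intermediate pseudoinverse-perturbation step $\|C^{(I)}-C_0^{(I)}\|=O(\|C^+\|^2\|E\|)$ that the paper leaves implicit, and your closing caveat about the uniformity of $c$ reflects a real (and unaddressed) looseness in the paper's own statement.
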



\begin{proof} 
Apply  Theorem \ref{thnmb1} to the matrix $A-E$ of Remark \ref{retrail}
such that $rank(A-E)=\rho$ and $||E||=\sigma_{\rho+1}(A)$.
 Deduce 
that  $T_{A-E,\rho}=Q((C-E)^+U)X$, for $C=A+UV^T$ and 
 an orthogonal $r\times r$ matrix $X$,
and note that the norm $||(C-E)^+||$ is not large 
because  the matrix $C$ has full rank and is well-conditioned. 
In order to prove part (i), it remains to deduce from Theorem \ref{thpertq} 
that  $||Q((C-E)^+U)-Q(C^+U)||=O(\sigma_{\rho+1}(A)||U||)$ and
$||T_{A-E,\rho}-T_{A,\rho}Q||=O(\sigma_{\rho+1}(A))$.

Similarly we prove parts (ii) and (iii).
\end{proof}


\subsection{Randomized approximation of a trailing singular space}\label{salg}


Assume that  $m\ge n$ and we are given an $m\times n$ matrix $A$
  and its
 numerical rank $\rho=n-r$ and seek an
 approximate  basis for the trailing singular space
$\mathbb T_{A,\rho}$. This can be also viewed as  
the search for approximate solution of the homogeneous linear system 
$A{\bf z}={\bf 0}$).
 
We can compute at first 
 an approximate matrix basis $B$ for the leading singular space 
 $\mathbb T_{A,\rho}$, by applying randomized Algorithm  \ref{algldbs}
 or 3.1+ (which involve $n\rho_+$ random parameters and $(2n-1)\rho_+$
flops or $n+\rho_+$ parameters and $O(mn\log(\rho_++n\rho_+^2))$
flops, respectively), and then
an approximate matrix basis nmb$(B)$ for the trailing singular space
$\mathbb T_{A,\rho}$. 
We refer to these algorithms as {\bf Algorithms 3.1t} and {\bf 3.1t+}.

At the stage of computing a nmb$(B)$, we can apply 
 the  algorithms supporting Theorems \ref{thn}--\ref{thtrail},
but in  this application to $m\times \rho$ matrix $AH$, for $\rho<m$,
they are superseded by  \cite[Algorithm 4.1]{PQ12},
which generates an
$n\times n$ Gaussian multiplier and then performs about 
$2(n+\rho)n\rho$ flops. 

If we apply a SRFT multiplier
of Appendix \ref{ssrft} instead of the Gaussian one, then
 we would generate  only $n+\rho_+$ random values 
for $\rho_+$ of order $(\rho+\log(n))\log (\rho)$ and 
 would perform
$O((\rho_+^2+ \log (n)) n)$  flops, but  
the estimated  
 failure probability would increase from 
$3/p^p$ to the order 
$1/\rho$ (see Remark \ref{refprob} and Section \ref{sweak1}).  

Next we describe some randomized alternatives for direct approximation
of a basis for the trailing singular space $\mathbb T_{A,\rho}$, 
which rely on 
Theorems \ref{thn}--\ref{thtrail} and Remark \ref{retrail}.  
They can fail like Algorithms  \ref{algldbs} and 3.1+
and can run into numerical problems,
but in both cases
only with a probability close to 0
(according to our estimates in Sections \ref{saug}--\ref{sweak})
and never in our extensive tests.
Moreover, we can detect the failure
by following the recipe of Remark \ref{rertsvd}.


\begin{algorithm}\label{algbasap} {\bf An approximate basis for the
 trailing singular space by using randomized  
preprocessing.}


\begin{description}


\item[{\sc Input:}] 
A normalized matrix 
$ A\in \mathbb R^{m\times n}$ for $m\ge n$, 
its numerical rank $\rho=n-r$, possibly computed by 
 Algorithm  \ref{algldbs} or 3.1+
(cf. Remark \ref{rertsvd}),
 and a tolerance value $\tau\gg \sigma_{\rho+1}( A)$.


\item[{\sc Output:}] 
An approximate matrix 
basis $B$
of the trailing singular space $\mathbb T_{A,\rho}$
within a relative error norm bound
$\tau$.


\item{\sc Initialization}: $~$
Choose one of Theorems 
\ref{thn}--\ref{thnmb1}
and generate the auxiliary Gaussian matrices $U$, 
$U$ and $V$, or $U$, $V$, and $W$
involved into it.


\item{\sc Computations}: $~$ 


\begin{enumerate}


\item 
Compute an approximate orthogonal matrix basis $X$ for 
the trailing singular space $\mathbb T_{A,\rho}$
by setting $X=Y$, $X=\bar Y$, or $X=\widehat Y$ and using the expression 
of the selected theorem.
Compute the matrix $AX$.


\item 
Output $B=X$ and stop
if $|| A X||\le \tau || A||$.
Otherwise output FAILURE and stop.


\end{enumerate}


\end{description}


\end{algorithm}


We have three options for proceeding with any of 
three Theorems 
\ref{thn}--\ref{thnmb1}
and  thus arrive at the three variants of the algorithm.  
Hereafter we refer to them
as {\bf Algorithms 4.1.1, 4.1.2}, and {\bf 4.1.3}.

The algorithms generate $nr$, $(m+n+r)r$, and $(m+n)r$
i.i.d. Gaussian parameters,
respectively, and then perform order of $(m+r)n^2$, $(m+r)(n+r)^2$,
and $(n+r)mn$
flops, respectively.

By choosing SRFT  matrices $U$, $V$
and $W$,
we can decrease the number of random parameters involved to 
$m+r_+$, $m+n+r_+$, and  $m+n+r_+$, respectively, 
for $r_+$ of order $(r+\log(n))\log(r)$,  $(r+\log(m+n))\log(r)$, and 
 $(r+\log(m+n))\log(r)$, respectively,
and then the order of
 the estimated upper bound on the
failure probability  would increase from  
$3/p^p$, for $p=r_+-r$, to the order $1/r$. 


\begin{remark}\label{releft}
One can compute
 nmbs,  matrix bases, and approximate 
matrix bases of the  left
 trailing singular spaces of a matrix $A$
as the  nmbs, matrix bases and approximate 
matrix bases of the
 trailing singular spaces
of the transposed matrix $A^T$ 
or sometimes by simpler means
(see  Remark \ref{relfts}).
\end{remark}


\begin{remark}\label{renmb}
In the case where $m=n$ 
the computations are
simplified and stabilized numerically.
We can reduce to this case the
computation for 
a rectangular matrix $A$, e.g., by observing that  
\begin{itemize}
  \item
 $\mathcal N(A)=\mathcal N(A^TA)$,  
\item
$\mathcal N(A)=\mathcal N(B^TA)$ if $A,B\in \mathbb R^{m\times n}$
 and if the matrix $B$ has full rank $m\le n$,  
\item
 $(A~|~O_{m,m-n}){\bf u}={\bf 0}_{m}$
if and only if $A\widehat {\bf u}={\bf 0}_{m}$ provided that $m\ge n$ and 
$\widehat {\bf u}=(I_n~|~O_{n,m-n}){\bf u}$,
\item
$(A^T~|~O_{n,m-n}){\bf v}={\bf 0}_{n}$ if and only if 
$\widehat {\bf v}={\bf 0}_{n}^T$ provided  that $m< n$ and 
$\widehat {\bf v}=
(I_m~|~O_{n-m,m}){\bf v}$.   
\end{itemize}
Furthermore, here is an alternative option. Represent
an $m\times n$ matrix $A$ for $m>n$ 
as a block vector 
$A=(B_1^T~|~B_2^T~|~\dots ~|~B_h^T)^T$
for
  $k_i\times n$ blocks $B_i$, $i=1,\dots,h$, and
$\sum_{i=1}^hk_i= m$. Note that $\mathcal N(A)=\cap_{i=1}^h \mathcal N(B_i)$ 
and apply \cite[Theorem 6.4.1]{GL13} to
 compute the intersection of null spaces. 
\end{remark}


\begin{remark}\label{rerecr}
{\rm Recursive randomized approximation of the bases of singular spaces.}
 Given a matrix $A$ and two small positive values $\eta$ and $\eta'<\eta$, suppose that
we have 
computed the integers $\rho=\rank_{\eta}(A)$   
and $\rho'=\rank_{\eta'}(A)$, by applying Algorithm  \ref{algldbs} or 3.1+, as
well as 
 an approximate basis $Y=Y_{\eta}$ for the trailing singular space $\mathbb T_{A,\rho}$,
by applying Algorithm 3.1t, 3.1t+,  4.1.1, 4.1.2,  or 4.1.3.
Now suppose that we seek   
an approximate matrix basis $Y'=Y_{\eta'}$ for the trailing singular space $\mathbb T_{\rho',A}$.
Then again we can apply one of  these algorithms 
to the matrix $A$, 
but we can apply it to the matrix $AY$ instead,
by increasing
 the precision $u$
of computing
to $u'>u$ 
such that 
$2^{u'}=O(\sigma_{\rho'+1}(A))$, 
but decreasing the arithmetic cost 
 by a factor of $n/\rho$,
which is substantial if $\rho\ll n$.
Correctness of this recipe follows from 
Theorems \ref{thn}--\ref{thtrail},
and the
 approach can be extended recursively.
\end{remark}


\medskip

\medskip

\medskip

\medskip

{\Large \bf \em PART II: Augmentation and Additive Preprocessing}


\section{Analysis of Randomized Augmentation}\label{saug}


Our algorithms of the previous 
two sections rely on the power of randomized augmentation
and additive preprocessing, which we prove in this and the next two sections. 

Row and column permutations make no impact on the singular values of a matrix,
 and so we restrict our next study to western, northern and northwestern augmentation,
that is, to appending Gaussian rows on the top of a matrix or Gaussian columns on the left of it.
Furthermore, western  augmentation for a matrix turns into northern  augmentation
for its transpose and vice versa, and so it is sufficient to analyze 
 western and northwestern augmentation. 

In the next two subsections we  prove the same quite reasonable upper bound 
on the condition numbers of two matrices
obtained from the same ill-conditioned matrix by means of
western and northwestern  augmentation, respectively, but in order to 
yield this upper bound,  the  northwestern 
augmentation requires about twice as many random parameters.
Our tests in Section \ref{sexp} complement these results by 
clearly showing superior performance 
of 
western versus northwestern augmentation
as well as versus additive preprocessing.  
Some potential applications, however, may require 
northwestern rather than western augmentation (see, e.g., the end of Section \ref{srel}).  


\subsection{Analysis of western and northern augmentation}\label{srwn}

 

{\bf Assumption 1.} 
We will
simplify our presentation by omitting the restriction 
"with probability 1".  For example, by saying that a random matrix $A$ has 
full rank or showing an estimate for the norm $||A^+||$, we will
assume by default (although will not state explicitly)
 that these property or 
estimate hold with
probability 1.

\medskip


\begin{theorem}\label{thwst}  (Cf. Remark \ref{rewcnd} and Definition \ref{defenu}.)
Assume
that an $m\times n$  matrix $A$ 
is normalized and has numerical rank $\rho$.
Define its randomized 
western augmentation 
by the map
$A\Longrightarrow K=(U~|~A)$ 
for  $U\in \mathcal G^{m\times q}$.
Then 
\begin{equation}\label{eqnrmk}
||K||\le ||A||+||U||= 1+\nu_{m,q},
\end{equation}
for the random variable $\nu_{m,q}$
defined in Section \ref{sldng}
and Appendix \ref{srrm}. 
Furthermore 

(i) the matrix $K$ is 
 rank deficient or ill-conditioned if $q+\rho<l=\min \{m,n\}$.

(ii) Otherwise 
it has full rank 
and 

(iii) satisfies the following bound,

\begin{equation}\label{eqk+} 
||K^+||\le n^+_{m,q,\rho,A}=\max\{1,\nu_{m-\rho,q}^+\}\frac{1+\nu_{\rho,m-\rho}}{\sigma_{\rho}(A)}.
\end{equation}
\end{theorem}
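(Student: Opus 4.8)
The plan is to analyze the three claims separately, working from the SVD $A = S_A \Sigma_A T_A^T$ with the partition into leading and trailing parts as in (\ref{eqsvdpart}) for $k = \rho$. The norm bound (\ref{eqnrmk}) is immediate from the triangle inequality applied to the block structure $K = (U~|~A)$, since $\|(U~|~A)\| \le \|U\| + \|A\|$ (appending a zero block and using subadditivity), and $\|A\| = 1$ by normalization, $\|U\| = \nu_{m,q}$ by definition. For part (i), I would observe that $\rank(K) \le \rank(U) + \rank(A) \le q + \rho$, so if $q + \rho < \min\{m,n\} = l$ then $K$ (which is $m \times (n+q)$, hence could have as many as $l + q$ singular values... ) — more carefully, I'd argue that the restriction of $K$ to the orthogonal complement of $\mathcal R(U)$ combined with the trailing singular directions of $A$ forces $\sigma_{q+\rho+1}(K)$ to be at most $\sigma_{\rho+1}(A)$, which is tiny; combined with $\|K\| \ge \sigma_\rho(A)$ being non-negligible, either $K$ is rank deficient or $\kappa(K)$ is huge. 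I'd be slightly careful about whether "rank deficient" versus "ill-conditioned" is the right dichotomy depending on whether $\sigma_{\rho+1}(A)$ is exactly zero, and state it as an "or".

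For parts (ii) and (iii), which are the substance, the idea is to write $A = A_\rho + \bar A_\rho$ where $A_\rho = S_{\rho,A}\Sigma_{\rho,A}T_{\rho,A}^T$ has rank $\rho$ and $\|\bar A_\rho\| \le \sigma_{\rho+1}(A)$, small. First I'd handle the rank-$\rho$ "model" matrix $K_0 = (U~|~A_\rho)$: multiply on the right by the orthogonal matrix $\diag(I_q, S_A)$ (acting on the $A$-block) — actually by $\diag(I_q, T_A)$ to rotate the columns of $A_\rho$ into coordinates, turning $A_\rho$ into $(\Sigma_{\rho,A} T_{\rho,A}^T T_A) = $ a matrix whose only nonzero columns are the first $\rho$. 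So $K_0$ becomes, up to an orthogonal column transformation, a block matrix $(U~|~W_\rho~|~O)$ where $W_\rho = S_{\rho,A}\Sigma_{\rho,A}$ is $m \times \rho$. Then I'd further rotate $U$ using the left singular structure: write $U = S_A (S_A^T U)$ and split $S_A^T U = \binom{S_{\rho,A}^T U}{S_{A,\rho}^T U}$. The key point is that $S_{A,\rho}^T U$ is an $(m-\rho) \times q$ Gaussian matrix (by the rotational invariance of Gaussians, Lemma \ref{lepr3}), and this block is "fresh" in the directions orthogonal to $\range(W_\rho)$. So after the orthogonal changes of basis, $K_0$ is unitarily equivalent to $\begin{pmatrix} \Sigma_{\rho,A} + \text{(stuff)} & * \\ 0 & G' \end{pmatrix}$-type structure with $G' = S_{A,\rho}^T U \in \mathcal G^{(m-\rho)\times q}$ occupying the bottom, whence $\sigma_{\min}(K_0) \ge$ a product/minimum of $\sigma_{\min}(G')$ and $\sigma_\rho(A)$-type quantities, giving $\|K_0^+\| \le \max\{1, \nu_{m-\rho,q}^+\} \cdot \frac{1 + \nu_{\rho, m-\rho}}{\sigma_\rho(A)}$; the factor $1 + \nu_{\rho,m-\rho}$ comes from bounding the off-diagonal coupling block $S_{\rho,A}^T U$, which is $\rho \times q$ but I should double-check the dimensions give $\nu_{\rho,m-\rho}$ rather than $\nu_{\rho,q}$ — this matching of the random-variable indices to the formula in (\ref{eqk+}) is exactly the delicate bookkeeping I want to get right. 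Finally, since $q + \rho \ge l$ and $G'$ has full rank with probability 1 (Theorem \ref{thdgr}), $K_0$ has full column rank, and then $K = K_0 + (O~|~\bar A_\rho)$ with $\|\bar A_\rho\|$ small inherits full rank and almost the same $\|K^+\|$ — here I'd invoke a standard perturbation bound for the pseudoinverse (of the flavor of Theorem \ref{thpertq} or Wedin's theorem) to pass from $K_0$ to $K$, absorbing the $O(\sigma_{\rho+1})$ perturbation.

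The main obstacle I anticipate is getting the two orthogonal reductions to interlock cleanly so that the lower-triangular-in-blocks structure genuinely emerges with the Gaussian block $S_{A,\rho}^T U$ isolated in the rows orthogonal to $\range(S_{\rho,A})$; in particular, controlling $\sigma_{\min}$ of a block $2\times 2$ operator matrix $\begin{pmatrix} D & B \\ 0 & G'\end{pmatrix}$ with $D$ square invertible requires a lemma like $\sigma_{\min} \ge \sigma_{\min}(D)\sigma_{\min}(G')/(\|B\| + \sigma_{\min}(D) + \sigma_{\min}(G'))$ or a Schur-complement argument, and then simplifying the resulting bound down to exactly the stated $n^+_{m,q,\rho,A}$ (rather than something messier) is where the constants and the $\max\{1, \cdot\}$ truncation need care. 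A secondary subtlety is the degenerate case $\rho = 0$ or $m = \rho$, where the random-variable conventions ("turn into 1 if $m=0$ or $n=0$") from Definition \ref{defenu} must be invoked so the formula still reads correctly.
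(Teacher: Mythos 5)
Your skeleton is essentially the paper's: Gaussian diagonalization via the SVD of $A$ (replacing $A$ by $\Sigma_A$ and $U$ by $S_A^T U$, still Gaussian by Lemma \ref{lepr3}), followed by reducing to the rank-$\rho$ model and working with the block matrix $\begin{pmatrix} U_0 & \Sigma_{\rho}\\ \bar U & O\end{pmatrix}$ where $U_0\in\mathcal G^{\rho\times q}$ and $\bar U\in\mathcal G^{(m-\rho)\times q}$. Parts (i) and (ii) and the norm bound are handled the same way, and your perturbation step for absorbing $\bar A_{\rho}$ is if anything more careful than what the paper writes.

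The gap is exactly the one you flagged. If you stop after the first diagonalization and bound $\sigma_{\min}$ of the block matrix directly (e.g.\ via a right inverse $\begin{pmatrix}\Sigma_\rho^{-1} & -\Sigma_\rho^{-1}U_0\bar U^+\\ O & \bar U^+\end{pmatrix}$, or via the additive $\sigma_{\min}$ lemma you quote), the coupling block that survives is the full $\rho\times q$ Gaussian $U_0$, and you get $1+\nu_{\rho,q}$, not $1+\nu_{\rho,m-\rho}$. The paper gets the stated sharper index by a \emph{second} Gaussian diagonalization: write the SVD $\bar U=S_{\bar U}\Sigma_{\bar U}T_{\bar U}^T$, rotate the $q$ augmented columns by $T_{\bar U}$ so that $\bar U$ becomes $(\Sigma'_{\bar U}~|~O)$ with $\Sigma'_{\bar U}$ a square $(m-\rho)\times(m-\rho)$ diagonal, and note that the rotated $U_0T_{\bar U}=(U_{00}~|~U_{01})$ is still Gaussian with $U_{00}\in\mathcal G^{\rho\times(m-\rho)}$. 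Now \emph{delete} the $q+\rho-m$ columns carrying $U_{01}$ (this can only decrease $\sigma_{\min}$, by Lemma \ref{faccondsub}), leaving a square $m\times m$ matrix $\bar K=\begin{pmatrix} U_{00} & \Sigma_\rho\\ \Sigma'_{\bar U} & O\end{pmatrix}$ whose inverse factors cleanly as $\diag(I,\Sigma_\rho^{-1})\cdot\begin{pmatrix} O & I\\ I & -U_{00}\end{pmatrix}\cdot\diag(I,(\Sigma'_{\bar U})^{-1})$. Taking norms of the three factors gives precisely $\max\{1,\nu_{m-\rho,q}^+\}\,(1+\nu_{\rho,m-\rho})/\sigma_\rho(A)$, so no Schur-complement or additive $\sigma_{\min}$ lemma is needed --- the multiplicative structure of (\ref{eqk+}) comes out of this factorization, and the $\nu_{\rho,m-\rho}$ index comes out of the column deletion.
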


\begin{proof}
 Readily verify (\ref{eqnrmk})
and
part (i). 
Deduce part (ii) from Theorem \ref{thdgr}. 
 
It remains to prove bound (\ref{eqk+})
provided that $l\le q+\rho$.

 With no loss of generality, we can replace the matrix  
$A$ by the diagonal matrix $\Sigma_A$ of its singular values,
that is, we can write 
   
$$A=
\Sigma_A=\diag(\Sigma_{\rho},\Sigma'_{m-\rho,n-\rho})~{\rm and}~
K=\begin{pmatrix}
U_0   &  \Sigma_{\rho}  & O_{\rho,n-\rho}  \\
\bar U   &   O_{m-\rho,\rho}  & \Sigma' _{m-\rho,n-\rho}
\end{pmatrix}$$ where $||\Sigma'_{m-\rho,n-\rho})||=\sigma_{\rho+1}(A)$,
$\Sigma_{\rho}=\Sigma_{\rho,A}=\diag(\sigma_j(A))_{j=1}^{\rho}$,
$U_0\in \mathcal G^{\rho\times q}$, and
$\bar U\in \mathcal G^{(m-\rho)\times q}$.
Indeed, we arrive at these equations by applying the orthogonal map 
$K\rightarrow S_A^TK\diag(I_q, T_A)$, which also induces the map
$A\rightarrow S_A^TAT_A=\Sigma_A$. Here $S_A$ and $T_A$
are the matrices  of the singular vectors in SVD $A=S_A\Sigma_AT_A^T$,
and we note that $S_A^TU\diag(I_q, T_A)\in \mathcal G^{m\times q}$
by virtue of  Lemma \ref{lepr3} and that the map 
preserves all singular values of the matrix $K$. 
We call such maps {\em Gaussian diagonalization}. 

Furthermore with no loss of generality we can assume that $n=\rho=l\le m$
and that $$K=
\begin{pmatrix}
U_0   &  \Sigma_{\rho}    \\
\bar U  &   O_{m-\rho,\rho}
\end{pmatrix}\in \mathbb R^{m\times (n+q)}$$
because  the $n-q$ rightmost columns of the matrix $K$ are filled with 
zeros, and we could just delete them.
Note that Theorem  \ref{thdgr} and Assumption 1 together imply that
the $(m-\rho)\times q$ matrix $\bar U$ has full rank, and so
 $\rank(K)=m$ because $q+\rho\ge m$.

Then again apply Gaussian diagonalization by 
writing 
$$\widehat K=\diag(I_{\rho},S_{\bar U}^T)K \diag(T_{\bar U}^T,I_{\rho})=
\begin{pmatrix}
U_{00} & U_{01}   &  \Sigma_{\rho}\\
 \Sigma_{\bar U}'  & O_{m-\rho,q+\rho-m} &   O_{m-\rho,\rho}  
\end{pmatrix}$$ where
$\bar U=S_{\bar U}\Sigma_{\bar U}T_{\bar U}^T$ is SVD, 
$\Sigma_{\bar U}=(\Sigma'_{\bar U}~|~O_{m-\rho,q+\rho-m})$,
$(U_{00} ~|~ U_{01})=U_0 T_{\bar U}^T\in \mathcal G^{\rho\times q}$ 
by virtue of Lemma \ref{lepr3},  and $U_{00}\in \mathcal G^{\rho\times(m-\rho)}$.
Note that $||K^+||=||\widehat K^+||$.

The  $m\times m$
submatrix $$\bar K=\begin{pmatrix}
U_{00}   &  \Sigma_{\rho}\\
 \Sigma_{\bar U}' &   O_{m-\rho,\rho}  
\end{pmatrix}$$ of the matrix $\widehat K$,
 obtained by deleting the submatrix 
$\begin{pmatrix}
 U_{01}   \\
 O_{m-\rho,q+\rho-m}   
\end{pmatrix}$, is nonsingular 
by virtue of Theorem \ref{thdgr} (cf. Assumption 1).
Moreover 
 $||K^+||=||\widehat K^+||\le ||\bar K^{-1}||$
by virtue of Lemma \ref{faccondsub}.

Now observe that
$$\bar K^{-1}=
\begin{pmatrix}
  O_{m-\rho,\rho}  & (\Sigma_{\bar U}')^{-1} \\
  \Sigma_{\rho}^{-1}  & -\Sigma_{\rho}^{-1}U_{00} (\Sigma_{\bar U}')^{-1} 
\end{pmatrix}=\diag(I_{\rho},\Sigma_{\rho}^{-1})\begin{pmatrix} 
  O_{m-\rho,\rho}  & I_{\rho} \\
  I_{m-\rho,m-\rho}  & U_{00}  
\end{pmatrix}\diag(I_{\rho},\Sigma_{\bar U}')^{-1}),$$
$$\Big|\Big|\begin{pmatrix} 
  O_{m-\rho,\rho}  & I_{\rho} \\
  I_{m-\rho,m-\rho}  & U_{00}  
\end{pmatrix}\Big|\Big|\le 1+||U_{00}||=1+\nu_{\rho,m-\rho},$$ 
$$||\diag(I_{\rho},(\Sigma_{\bar U}')^{-1})||=\max\{1,||(\Sigma_{\bar U}')^{-1}||\}=
\max\{1,||\bar U^{+}||\}=\max\{1,\nu_{m-\rho,q}^+\},$$
 $$||\diag(I_{\rho},\Sigma_{\rho}^{-1})||=\max\{1,||\Sigma_{\rho}^{-1}||\}=
\frac{1}{\sigma_{\rho}(A)}.$$ 
The latter equation follows because $\sigma_{\rho}(A)\le||A||$ and
because $||A||=1$ by assumption.

Combine the above observations with the bound $||K^+||\le ||\bar K^{-1}||$
 and obtain (\ref{eqk+}).
\end{proof}
Next we combine Theorems \ref{thwst}, \ref{thsignorm}, and \ref{thsiguna}
 and obtain the following  bounds on the
expected values of the norms $||K||$ and $||K^+||$
 (excluding the case where $q+\rho=m$
and the auxiliary random variable $\nu^+_{m-\rho,q}$ of (\ref{eqk+dl})
has no expected value).

\begin{corollary}\label{cocndpr} Under the assumptions of Theorem \ref{thwst}, it holds that
$$\mathbb E(||K||)<2+\sqrt m+\sqrt q,$$ and if $q+\rho>l=\min\{m,n\}$, then
\begin{equation}\label{eqk+e}
\mathbb E(||K^+||)< \frac{2+\sqrt {\rho}+\sqrt {l-\rho}}{\sigma_{\rho}(A)}~
\max\{1,\frac{e\sqrt {(l-\rho)}}{q+\rho-l}\},~{\rm for}~e=2.71828\dots.
\end{equation}
\end{corollary}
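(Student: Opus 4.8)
The plan is to take expected values in the deterministic bounds (\ref{eqnrmk}) and (\ref{eqk+}) of Theorem \ref{thwst} and then substitute the known estimates for the expected norm of a Gaussian matrix and of its Moore-Penrose inverse.

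For the first inequality I would simply apply linearity of expectation to (\ref{eqnrmk}), obtaining $\mathbb E(||K||)\le 1+\mathbb E(\nu_{m,q})$, and then invoke Theorem \ref{thsignorm}, which bounds $\mathbb E(\nu_{m,q})$ by $\sqrt m+\sqrt q$; the claimed strict inequality $\mathbb E(||K||)<2+\sqrt m+\sqrt q$ follows at once since $1<2$. This step is routine.

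For the second inequality the starting point is $||K^+||\le \max\{1,\nu^+_{m-\rho,q}\}\,(1+\nu_{\rho,m-\rho})/\sigma_{\rho}(A)$ from (\ref{eqk+}). Since $\sigma_{\rho}(A)$ is deterministic, it suffices to bound $\mathbb E\big(\max\{1,\nu^+_{m-\rho,q}\}\,(1+\nu_{\rho,m-\rho})\big)$. The key observation is that the two random variables $\nu^+_{m-\rho,q}$ and $\nu_{\rho,m-\rho}$ are \emph{independent}: tracing the construction in the proof of Theorem \ref{thwst}, the first is the norm $||\bar U^{+}||$ of the pseudoinverse of the Gaussian block $\bar U$, while the second is the norm $||U_{00}||$, where the $\rho\times q$ matrix $U_0$ is an independent Gaussian block of the original multiplier $U$ and, by Lemma \ref{lepr3}, $(U_{00}~|~U_{01})=U_0T_{\bar U}^{T}$ is, conditionally on $\bar U$, still Gaussian and independent of $\bar U$. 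Hence the expectation of the product factors as the product of the expectations, and I would bound $\mathbb E(1+\nu_{\rho,m-\rho})\le 1+\sqrt{\rho}+\sqrt{m-\rho}$ by Theorem \ref{thsignorm} and $\mathbb E(\max\{1,\nu^+_{m-\rho,q}\})\le \max\{1,e\sqrt{l-\rho}/(q+\rho-l)\}$ by the relevant part of Theorem \ref{thsiguna}; here the reduction to the normalized shape $n=\rho=l\le m$ carried out inside the proof of Theorem \ref{thwst} is what lets us state the dimensions in terms of $l$, and the rectangularity requirement $q\neq m-\rho$ of Theorem \ref{thsiguna} --- equivalently $q+\rho\neq m$ --- is exactly the case excluded in the statement, since a square Gaussian pseudoinverse has no finite expectation. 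Multiplying the two bounds and dividing by $\sigma_{\rho}(A)$ gives (\ref{eqk+e}).

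The main obstacle, and the only point needing care beyond bookkeeping, is the independence claim used to split the expectation of the product: one must confirm that the two norms in (\ref{eqk+}) really arise from disjoint, independent Gaussian blocks of $U$ and that the orthogonal factor $T_{\bar U}$ from the SVD of $\bar U$ does not destroy this independence --- which is precisely where Lemma \ref{lepr3} (invariance of the Gaussian distribution under multiplication by an independent orthogonal matrix) is invoked. A lesser point is ensuring that the packaged form of Theorem \ref{thsiguna} applies to $\mathbb E(\max\{1,\nu^+_{m-\rho,q}\})$ rather than only to $\mathbb E(\nu^+_{m-\rho,q})$; were only the latter available, one would instead use $\max\{1,X\}\le 1+X$ for $X\ge 0$ and absorb the extra unit into the additive constant.
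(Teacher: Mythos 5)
Your overall strategy matches what the paper's one-line ``combine Theorems \ref{thwst}, \ref{thsignorm}, and \ref{thsiguna}'' instruction must intend: take expectations in (\ref{eqnrmk}) and (\ref{eqk+}), and plug in the known moment bounds. The first inequality is fine (modulo the minor misquote: Theorem \ref{thsignorm}(ii) gives $\mathbb E(\nu_{m,q})<1+\sqrt m+\sqrt q$, not $\sqrt m+\sqrt q$, so the arithmetic is $1+(1+\sqrt m+\sqrt q)=2+\sqrt m+\sqrt q$). Your genuinely valuable contribution is the independence argument for the two factors in (\ref{eqk+}): $U_{00}$ and $\bar U$ do come from disjoint Gaussian blocks of $U$, and Lemma \ref{lepr3} plus the fact that the conditional law of $U_0T_{\bar U}^T$ given $\bar U$ is the same for every $\bar U$ indeed makes $U_{00}$ independent of $\bar U$. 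That is the non-obvious step that the paper never spells out, and you have it right; without it the product expectation cannot be split.

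Two gaps remain, both of which you partially flag but do not resolve. First, Theorem \ref{thsiguna} bounds only $\mathbb E(\nu^+_{m-\rho,q})$, not $\mathbb E(\max\{1,\nu^+_{m-\rho,q}\})$, and by Jensen $\mathbb E(\max\{1,X\})\ge\max\{1,\mathbb E(X)\}$, so the corollary's factor $\max\{1,e\sqrt{l-\rho}/(q+\rho-l)\}$ is not obtainable this way; the rigorous fallback $\max\{1,X\}\le 1+X$ replaces it with $1+e\sqrt{l-\rho}/(q+\rho-l)$, which gives a slightly weaker (larger) bound than the one printed. You should state the result with the $1+\cdots$ factor rather than suggest the $\max$ form might be salvageable. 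Second, bound (\ref{eqk+}) is stated with $m-\rho$ in both subscripts, while the corollary uses $l-\rho$; your appeal to the ``normalized shape $n=\rho=l\le m$'' cannot justify this, because under that reduction $l-\rho=0$, which trivializes the expression. The passage from $m$ to $l$ is only immediate when $m\le n$ (so $l=m$); for $m>n$ it requires an extra argument that neither you nor the paper supplies, and it is worth noting explicitly that the corollary is clean only in the $m\le n$ case or under an additional reduction.
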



\begin{remark}\label{rewcnd}
Theorem \ref{thwst} and the corollary show that
 the western augmentation is likely to output
a well-conditioned matrix $K$,  
particularly if the ratio $\frac{e\sqrt {(l-\rho)}}{q+\rho-l}$ is small.
We can partly control this ratio by choosing the integer parameter $q$. 
If  we  decrease the ratio below 1, 
then it would hold that 
$\mathbb E(||K^+||)<\frac{2+\sqrt {\rho}+\sqrt {l-\rho}}{\sigma_{\rho}(A)}$,
where the value $\sigma_{\rho}(A)$ is not small by assumption.
\end{remark}

By applying Theorem \ref{thwst} and the corollary to the matrix $A^T$, 
we can extend them
to  northern augmentation,
that is, to appending a Gaussian block of $s\ge n-\rho$ rows on the top of the matrix $A$.


\subsection{Analysis of northwestern augmentation}\label{srnw}


\begin{theorem}\label{thnrthwst} 
Assume that an $m\times n$ matrix $A$
is normalized and has numerical rank $\rho$.
Define its 
 randomized northwestern augmentation 
by the map
\begin{equation}\label{eqnwaug} 
A\rightarrow K=
\begin{pmatrix}
W   &   V^T   \\
 U   &  A
\end{pmatrix} 
\end{equation}
where $W\in \mathcal G^{s\times q}$, 
$U\in \mathcal G^{m\times q}$, $V\in \mathcal G^{n\times q}$,  and
the matrices $U$, $V$, and $W$ are filled with i.i.d. Gaussian variables.
Then 
\begin{equation}\label{eqnrmknw}
||K||\le ||A||+\min\{||U||+||(W~|~V^T)||,||V||+
\Big |\Big | 
\begin{pmatrix}
W      \\
 U   
\end{pmatrix} 
\Big |\Big |= 
1+\min\{\nu_{m,q}+\nu_{s,n+q},\nu_{s,n}+\nu_{m+s,q}\}.
\end{equation}
Furthermore 

(i) the matrix $K$ is 
 rank deficient or ill-conditioned if $q+\rho<m$ and if $s+\rho<n$.

(ii) Otherwise 
it has full rank 
and 

(iii) satisfies the following bounds, 
\begin{equation}\label{eqk+nwm} 
||K^+||\le n^+_{m,q,\rho,A},~
{\rm for}~q+\rho\ge m,~~{\rm and} 
\end{equation}
\begin{equation}\label{eqk+nwn} 
||K^+||\le n^+_{n,s,\rho,A^T},~{\rm for}~
s+\rho \ge n, 
\end{equation}
where the random variables
$n^+_{m,q,\rho,A}$ and $n^+_{n,s,\rho,A^T}$
are
defined by equation (\ref{eqk+}).
\end{theorem}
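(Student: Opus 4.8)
\textbf{Proof plan for Theorem \ref{thnrthwst}.}

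The plan is to reduce the northwestern case to the western case already handled in Theorem \ref{thwst}, exploiting the block structure of $K$ in (\ref{eqnwaug}). For the norm bound (\ref{eqnrmknw}), I would write $K$ as a sum of the block-diagonal matrix $\diag(O_{s,q},A)$ and a matrix supported on the first $q$ columns plus one supported on the first $s$ rows; grouping the Gaussian blocks the two natural ways gives $||K||\le||A||+||U||+||(W~|~V^T)||$ and $||K||\le||A||+||V||+||(W^T~|~U^T)^T||$, and taking the minimum together with $||A||=1$ yields (\ref{eqnrmknw}); the identifications of the block norms with the random variables $\nu_{\cdot,\cdot}$ follow since stacking Gaussian blocks produces a Gaussian matrix of the combined shape (Lemma \ref{lepr3}). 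Part (i) is immediate: if $q+\rho<m$ then the first $q+n$ columns $\begin{pmatrix}W\\U\end{pmatrix}$ and the rank of $A$ cannot together span $\mathbb R^{m+s}$ in the relevant direction — more precisely the bottom block $(U~|~A)$ has the deficiency of Theorem \ref{thwst}(i), and symmetrically for $s+\rho<n$; part (ii) follows from Theorem \ref{thdgr} and Assumption 1 once either inequality is reversed.

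The substance is (iii), the bound (\ref{eqk+nwm}) on $||K^+||$ when $q+\rho\ge m$ (the case $s+\rho\ge n$ being symmetric, obtained by transposing $K$, which swaps the roles of $(U,q,m,A)$ and $(V,s,n,A^T)$). Here I would again use the submatrix trick of Theorem \ref{thwst}: by Lemma \ref{faccondsub}, deleting columns of $K$ to reach a matrix $K'$ with full row rank only increases $||K'^{+}||$ over $||K^{+}||$ is the wrong direction — rather, $||K^+||\le||K'^{-1}||$ for a suitable \emph{square} nonsingular submatrix $K'$ formed by keeping \emph{all} rows of $K$ and a well-chosen set of columns. The key observation is that the block $\begin{pmatrix}W\\U\end{pmatrix}$ has $m+s$ rows and $q$ columns with $q\ge m$ (since $q+\rho\ge m$ and $\rho\le m$ forces nothing directly, but for the extreme case $\rho=0$ we need $q\ge m$; for $\rho>0$ we instead keep $q+\rho-?$ columns of $W,U$ together with appropriate columns of $(V^T,A)$). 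Concretely, after a Gaussian diagonalization $K\mapsto \diag(I_s,S_A^T)\,K\,\diag(I_q,T_A)$ — orthogonal on both sides, hence preserving $||K^+||$ and turning $A$ into $\Sigma_A$ — the matrix $K$ becomes $\begin{pmatrix}W & V_0^T & V_1^T\\ U_0 & \Sigma_\rho & O\\ \bar U & O & \Sigma'\end{pmatrix}$ with the new Gaussian blocks of the sizes dictated by Lemma \ref{lepr3}. Since $||\Sigma'||=\sigma_{\rho+1}(A)$ is negligible under Assumption-1-style reasoning (the matrix has full rank regardless), one deletes the $n-\rho$ columns carrying $\Sigma'$ and is left with exactly the western-augmentation configuration of Theorem \ref{thwst} applied to the normalized rank-$\rho$ matrix $\Sigma_\rho$ augmented on the west by the $(m+s)\times(q)$ Gaussian block $\begin{pmatrix}W\\U_0\\\bar U\end{pmatrix}$ (after reabsorbing $W$ and the remaining $V^T$ columns into the augmenting block, which stays Gaussian of the right shape). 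The bound (\ref{eqk+}) of Theorem \ref{thwst}, read with $(m,q)$ there equal to $(m+s,q)$ here but with the \emph{rank} still $\rho$ and $\sigma_\rho$ unchanged, collapses to exactly $n^+_{m,q,\rho,A}$ because the extra $s$ rows only enlarge the Gaussian block, and the bound in (\ref{eqk+}) depends on that block only through $\nu^+_{m-\rho,q}$ and $\nu_{\rho,m-\rho}$, whose indices are governed by the $A$-side dimensions, not the augmenting-block height.

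The main obstacle I anticipate is bookkeeping the column-deletion step so that the resulting square submatrix is provably nonsingular \emph{and} its inverse norm is controlled by precisely the named random variables $n^+_{m,q,\rho,A}$ rather than by something slightly larger. The temptation is to delete too many or too few columns; one must check that after removing the $\Sigma'$-columns the remaining matrix still has $m+s$ rows worth of independent columns, which uses $q+\rho\ge m$ together with the genericity of the Gaussian entries (Theorem \ref{thdgr}). A secondary subtlety is that the Schur-complement / explicit-inverse computation that produced the clean factorization $\bar K^{-1}=\diag(I_\rho,\Sigma_\rho^{-1})(\cdots)\diag(I,\Sigma')^{-1}$ in the proof of Theorem \ref{thwst} must be re-run with the $W$-block present; I expect it still telescopes because $W$ sits in the corner that, after diagonalizing $\bar U$, merges into the generic $U_{00}$-type block and contributes only to the $1+\nu_{\rho,m-\rho}$ factor. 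Once these two points are pinned down, (\ref{eqk+nwm}) and (\ref{eqk+nwn}) follow and the proof is complete.
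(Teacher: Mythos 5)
Your handling of the norm bound, part (i), and part (ii) matches the paper. For part (iii), however, the paper takes a cleaner route than the one you sketch, and your version has a real gap.

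The paper's argument for (\ref{eqk+nwm}) never re-runs the Schur-complement computation of Theorem \ref{thwst}. Instead it observes that
\[
K^T=\bigl(\widehat U~|~\widehat A\bigr),\qquad
\widehat U=\begin{pmatrix}W^T\\ V\end{pmatrix}\in\mathcal G^{(n+q)\times s},\qquad
\widehat A=\begin{pmatrix}U^T\\ A^T\end{pmatrix}=(U~|~A)^T,
\]
so $K^T$ is a western augmentation of $\widehat A$ by the Gaussian block $\widehat U$. Crucially, $\widehat A^T=(U~|~A)$ is itself a western augmentation of $A$, so $\widehat A$ already has full column rank $m$ and $||\widehat A^+||\le n^+_{m,q,\rho,A}$ by Theorem \ref{thwst}. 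Applying Theorem \ref{thwst} once more to the outer augmentation, with the substitutions $\widehat m=n+q$, $\widehat n=m$, $\widehat q=s$, $\widehat\rho=m$, then delivers the bound; the two Gaussian blocks $W$ and $V^T$ are fused into the single augmenting block $\widehat U$, and they never interact with the $\Sigma_\rho$-side of the reduction. This is a black-box double application of Theorem \ref{thwst} under transposition — not an in-place re-derivation.

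Your proposal instead Gaussian-diagonalizes $A$ in place to reach
$\begin{pmatrix}W&V_0^T&V_1^T\\ U_0&\Sigma_\rho&O\\ U_1&O&\Sigma'\end{pmatrix}$
and re-runs the argument with the $W$-block present. Two things go wrong. First, the claim that the bound \emph{"depends on that block only through $\nu^+_{m-\rho,q}$ and $\nu_{\rho,m-\rho}$, whose indices are governed by the $A$-side dimensions, not the augmenting-block height"} is not true: the index $m-\rho$ in (\ref{eqk+}) is the number of \emph{rows} of the augmented matrix minus the rank. In the northwestern case the matrix has $s+m$ rows, so re-running the same computation with $(m,q)\mapsto(m+s,q)$ would produce $\nu^+_{m+s-\rho,q}$ and $\nu_{\rho,m+s-\rho}$, not the quantities appearing in $n^+_{m,q,\rho,A}$. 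Second, after your column deletion the surviving matrix is $(s+m)\times(q+\rho)$ and its right-hand ("$A$-side") columns still carry the Gaussian block $V_0^T$, so it is not a fixed normalized input of the shape Theorem \ref{thwst} assumes; the "reabsorption" you propose would need $W$ and $V_0^T$ to join the augmenting block, leaving the $A$-side fixed, but they sit in different columns and different rows and cannot be so merged. The transpose trick in the paper resolves exactly this: it places $W$ and $V^T$ into a single Gaussian column block $\widehat U$ of the transposed matrix, while the contribution of $U$ and $A$ is entirely absorbed into $||\widehat A^+||$, which is then controlled by the already-proven western bound. Without that trick, the bookkeeping obstacle you flag in your final paragraph is not merely a subtlety but a genuine obstruction to obtaining the indices $(m-\rho,q)$ and $(\rho,m-\rho)$ as stated.
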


\begin{proof}
Readily verify (\ref{eqnrmk})
and
part (i). 
Deduce part (ii) from Theorem \ref{thdgr}. 
It remains to prove
bounds (\ref{eqk+nwm}) and (\ref{eqk+nwn}). 

Define western augmentation $$\widehat A\rightarrow K=(\widehat U~|~\widehat A),$$
for $K$ of (\ref{eqnwaug}),
$$~\widehat U=\begin{pmatrix}
W^T  \\
 V  
\end{pmatrix}~{\rm and}~\widehat A=\begin{pmatrix}
 U^T   \\
 A^T
\end{pmatrix}.$$ 
Replace the matrix $A$ by $\widehat A$
and
the integers
$m$, $n$, $l$, $q$, and  $\rho$
by $\widehat m=n+q$, $\widehat n=m$, $\widehat l=\min\{n+q,m\}$,
$\widehat q=s$, and  $\widehat \rho=m$,
respectively. 
Note that $n+q\ge q+\rho\ge m$, and so $\widehat l=m$ and
$\widehat \rho+\widehat q=m+s\ge \widehat l=m$, 
which implies  extension of bound (\ref{eqk+})
to this case.
 Obtain (\ref{eqk+nwm}) because 
$\sigma_{\widehat \rho}(\widehat A)=||\widehat A^+||=n^+_{m,q,\rho,A}$.

Likewise define western augmentation 
$$\bar A\rightarrow  K=(\bar U~|~\bar A),$$
for $K$ of (\ref{eqnwaug}),
$$\widehat U=\begin{pmatrix}
W  \\
 U  
\end{pmatrix} 
~{\rm and}~\bar A=\begin{pmatrix}
 V^T   \\
 A
\end{pmatrix}.$$  
Replace the matrix $A$ by $\bar A$
and
the integers
$m$, $n$, $l$, $q$, and  $\rho$
by $\bar m=m+s$, $\bar n=n$, $\bar l=\min\{n,m+s\}$,
$\bar q=q$, and  $\bar \rho=\min\{\rho+s,n\}$,
respectively. Note that 
$m+s\ge q+\rho\ge n$, and so $\bar l=n$ and
$\bar \rho+\bar s=\bar \rho+q\ge \bar l=n$,
which 
implies  extension of bound (\ref{eqk+})
to this case.
 Obtain (\ref{eqk+nwn}) because 
$\sigma_{\bar \rho}(\bar A)=||\bar A^+||=n^+_{l,s,\rho,A}$.
\end{proof}


The theorem indicates that appending Gaussian rows in addition to Gaussian columns 
as well as appending Gaussian columns in addition to Gaussian rows 
is not likely to increase the norm of the Moore-Penrose generalized inverse of a 
normalized matrix.
 
By combining Theorems \ref{thnrthwst}, \ref{thsignorm}, and \ref{thsiguna},
we obtain the following  bounds.
\begin{corollary}\label{conwdpr} Keep the assumptions of Theorem \ref{thnrthwst}; 
in particular keep the definitions of the integers 
$\widehat l$, $\widehat\rho$, $\bar l$, and $\bar\rho$.
Write $e=2.71828\dots$. Then

$$\mathbb E(||K||)< 3+\sqrt q+\sqrt s+\min \{\sqrt m+\sqrt {n+q},\sqrt n+\sqrt {m+s}\}$$
and $\mathbb E(||K^+||)$ satisfies either bound (\ref{eqk+e})
if $q+\rho>m$ or the same bound but with the integer parameter $s$ replacing $q$ if $s+\rho>n$.
\end{corollary}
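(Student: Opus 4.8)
The plan is to pass to expectations in the three inequalities that Theorem \ref{thnrthwst} has already established --- the norm bound (\ref{eqnrmknw}) and the two pseudoinverse bounds (\ref{eqk+nwm}) and (\ref{eqk+nwn}) --- and then to replace each resulting expectation of a Gaussian norm $\nu_{a,b}$ or of a Gaussian pseudoinverse norm $\nu^+_{a,b}$ by the estimates of Theorems \ref{thsignorm} and \ref{thsiguna}. No new matrix analysis is needed here: since Theorem \ref{thnrthwst} bounds $||K||$ and $||K^+||$ by explicit expressions in the random variables $\nu_{a,b}$ and $\nu^+_{a,b}$, the whole task reduces to bounding a handful of expectations, in close parallel with the derivation of Corollary \ref{cocndpr} from Theorem \ref{thwst}.

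For the bound on $\mathbb E(||K||)$ I would apply monotonicity of expectation together with the elementary inequality $\mathbb E(\min\{X,Y\})\le\min\{\mathbb E(X),\mathbb E(Y)\}$ to (\ref{eqnrmknw}), obtaining
$$\mathbb E(||K||)\le 1+\min\{\mathbb E(\nu_{m,q})+\mathbb E(\nu_{s,n+q}),~\mathbb E(\nu_{s,n})+\mathbb E(\nu_{m+s,q})\},$$
and then substitute the bound of Theorem \ref{thsignorm} for $\mathbb E(\nu_{a,b})$ (essentially $\sqrt a+\sqrt b$) into each of the four Gaussian norms. In whichever branch of the minimum is attained, the leading $1$ and the cumulative additive slack coming from the two Gaussian norms in that branch account for the constant $3$, while the surviving square roots are exactly $\sqrt q+\sqrt s$ plus the appropriate term of $\min\{\sqrt m+\sqrt{n+q},\sqrt n+\sqrt{m+s}\}$.

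For $\mathbb E(||K^+||)$ I would treat the two cases separately. If $q+\rho>m$, then (\ref{eqk+nwm}) gives $||K^+||\le n^+_{m,q,\rho,A}$, which is precisely the random variable already analyzed in Theorem \ref{thwst} and Corollary \ref{cocndpr}; moreover $q+\rho\ge m$ forces the relabelled parameter $\widehat l$ of Theorem \ref{thnrthwst} to equal $m$, so it suffices to reuse the proof of Corollary \ref{cocndpr} verbatim with $l$ read as $m$. Concretely, write $n^+_{m,q,\rho,A}=\max\{1,\nu^+_{m-\rho,q}\}\frac{1+\nu_{\rho,m-\rho}}{\sigma_{\rho}(A)}$; from the Gaussian-diagonalization construction in the proof of Theorem \ref{thwst}, the factor $\max\{1,\nu^+_{m-\rho,q}\}$ is a function of the block $\bar U$ alone, while $\nu_{\rho,m-\rho}=||U_{00}||$ is, conditionally on $\bar U$, the norm of a matrix that is Gaussian and independent of $\bar U$ (Lemma \ref{lepr3}). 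The tower property then factors the expectation as $\frac{1}{\sigma_{\rho}(A)}\,\mathbb E(\max\{1,\nu^+_{m-\rho,q}\})\cdot(1+\mathbb E(\nu_{\rho,m-\rho}))$; the strictness of $q+\rho>m$ guarantees $m-\rho\ne q$, so Theorem \ref{thsiguna} applies to the first factor and Theorem \ref{thsignorm} to the second, yielding bound (\ref{eqk+e}) with $l=m$. The case $s+\rho>n$ is entirely symmetric: (\ref{eqk+nwn}) gives $||K^+||\le n^+_{n,s,\rho,A^T}$, and since $\sigma_{\rho}(A^T)=\sigma_{\rho}(A)$ the identical computation yields (\ref{eqk+e}) with $s$ in place of $q$ and $l=n$.

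I expect the genuinely finicky point --- not deep, but needing care --- to be the bookkeeping around the minimum and around the $\max\{1,\cdot\}$ term: one must keep straight which relabelled parameters ($\widehat l,\widehat\rho$ versus $\bar l,\bar\rho$) govern each case, verify that the two Gaussian blocks feeding $n^+_{m,q,\rho,A}$ are genuinely conditionally independent so that the expectation factors, and absorb the $\max\{1,\cdot\}$ term and the rounding of the various $\sqrt{\cdot}$-estimates into the stated additive constants, which the strict inequalities in the statement comfortably permit.
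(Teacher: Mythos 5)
Your proposal is correct and follows exactly the route the paper intends (the paper itself offers no proof beyond ``combine Theorems \ref{thnrthwst}, \ref{thsignorm}, and \ref{thsiguna}''): pass the three bounds of Theorem \ref{thnrthwst} to expectations, invoke the $\mathbb E(\min)\le\min(\mathbb E)$ inequality for (\ref{eqnrmknw}), and reuse the Corollary \ref{cocndpr} argument for (\ref{eqk+nwm}) and its $s$-for-$q$ analogue via (\ref{eqk+nwn}). Your explicit appeal to the tower property and Lemma \ref{lepr3} to justify factoring $\mathbb E\bigl(\max\{1,\nu^+_{m-\rho,q}\}\,(1+\nu_{\rho,m-\rho})\bigr)$ into a product of expectations is a genuine and necessary step that the paper silently assumes, since those two Gaussian-derived random variables are not unconditionally independent.
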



\subsection{Analysis of weakly randomized northwestern augmentation}\label{sirnw}


In the next section we analyze randomized  
additive preprocessing 
by linking it to northwestern augmentation
 (\ref{eqnwaug}), for $m=n$ and $r=q=s=n-\rho$, which we  modify by
   choosing $W=I_r$ rather than $W\in \mathcal G^{m\times n}$
and  where we allow the Gaussian matrices $U$ and $V$
to depend on one another and even to share all their entries. We call such 
northwestern  augmentation
{\em weakly randomized}.
Next we extend to it Theorem \ref{thnrthwst}.

\begin{theorem}\label{thbpr1} 
Suppose that an 
 $n\times n$  matrix $A$ is normalized and  has numerical rank $\rho$,
 $K$ is the matrix of (\ref{eqnwaug}),
 $W=I_r$, and
$U,V\in \mathcal G^{n\times r}$ where $r=n-\rho$.
Then
$$||K||\le ||A||+||U||+||V||+||W||=2+2\nu_{r,n},$$
 the matrix $K$ is nonsingular,
and 
\begin{equation}\label{eqbrn}
|| K^{-1}||\le 1.5 \bar n,~{\rm for}~
\bar n= 
(1+\nu_{\rho,r})\Big (1+\frac{\nu_{\rho,r}}{\sigma_{\rho}(A)}\Big )
\max \{1,\frac{\nu^+_{r,r}}{\sigma_{\rho}(A)}\}\max \{1,\nu^+_{r,r}\}.
\end{equation}
\end{theorem}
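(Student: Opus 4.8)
The plan is to mimic the structure of the proof of Theorem~\ref{thnrthwst}, but now accounting for the fact that $W=I_r$ is fixed (not Gaussian) and that $U$ and $V$ may coincide. The norm bound $||K||\le 2+2\nu_{r,n}$ is immediate from the triangle inequality applied to the block decomposition of $K$, using $||A||=1$, $||W||=1$, and $||U||=||V||=\nu_{r,n}$. Nonsingularity of $K$: since $W=I_r$ is invertible, a Schur-complement argument shows $K$ is nonsingular if and only if the Schur complement $A-UW^{-1}V^T=A-UV^T$ is nonsingular; and $A-UV^T$ (or rather its full-rank-ness) follows because $A$ has rank $\rho$, $U,V\in\mathcal G^{n\times r}$ with $r=n-\rho$, so by Theorem~\ref{thdgr} and Assumption~1 the additive correction $UV^T$ generically raises the rank to $n$. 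Alternatively one argues directly on $K$ via Theorem~\ref{thdgr} applied to the relevant $n\times n$ minor.

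For the bound on $||K^{-1}||$, I would use the block inverse formula for
\[
K=\begin{pmatrix} I_r & V^T\\ U & A\end{pmatrix},
\]
whose inverse is built from $I_r$, the Schur complement $S=A-UV^T$ and its inverse, and the products $V^TS^{-1}$, $S^{-1}U$. Thus $||K^{-1}||$ is controlled by a small universal constant times a product of $1$, $1+||U||$, $1+||V||$, $||S^{-1}||$, and cross terms — this is where the factor $1.5$ and the shape of $\bar n$ come from. The key sub-estimate is therefore a bound on $||S^{-1}|| = ||(A-UV^T)^{-1}||$. Here I would invoke Theorem~\ref{thnmb1}-style / Gaussian-diagonalization reasoning exactly as in the proof of Theorem~\ref{thwst}: reduce $A$ to $\diag(\Sigma_\rho,\Sigma'_{r,r})$ by an orthogonal (SVD) change of basis, which transforms $U$ and $V$ into Gaussian matrices of the same shape by Lemma~\ref{lepr3}, and then read off the inverse of the resulting $2\times2$ block matrix. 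This produces the factors $1+\nu_{\rho,r}$, $1/\sigma_\rho(A)$, $\max\{1,\nu^+_{r,r}\}$, and $\max\{1,\nu^+_{r,r}/\sigma_\rho(A)\}$ that appear in $\bar n$, after collecting like terms and bounding the universal constant by $1.5$.

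The main obstacle is the dependence between $U$ and $V$. In Theorem~\ref{thnrthwst} one could treat $W$, $U$, $V$ as an independent Gaussian family and freely apply Lemma~\ref{lepr3} to each separately; here, after the Gaussian-diagonalization step, $U$ and $V$ are transformed by different orthogonal matrices ($S_A^T U$ on the left, and a right multiplication for $V$), so one must check that each of $\nu_{\rho,r}$ and $\nu^+_{r,r}$ still has the same distribution as in the independent case — which it does, since rotating a Gaussian matrix on one side preserves its law marginally, and the bound we want is deterministic in terms of these marginal random variables (no independence is used once the reduction is done). I would therefore spell out carefully that the block-inverse estimate only needs norms of the \emph{individual} transformed blocks, so sharing entries between $U$ and $V$ costs nothing. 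The remaining work — combining the block-inverse norm bounds, handling the $\max\{1,\cdot\}$ truncations coming from the possibly-larger-than-$1$ diagonal entries $\Sigma'_{r,r}$, and checking the numerical constant is at most $1.5$ — is routine.
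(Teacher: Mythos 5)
Your bound $||K||\le 2+2\nu_{r,n}$ and the nonsingularity argument via the Schur complement of $W=I_r$ are fine (and your observation that only marginal laws of the transformed blocks matter, so $U=V$ costs nothing, is correct and matches the paper). But the route you propose for $||K^{-1}||$ has two genuine gaps.

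\textbf{The Schur-complement bound is too lossy.} The block-LU factorization gives
\[
K^{-1}=\begin{pmatrix} I_r & -V^T\\ O & I_n\end{pmatrix}
\begin{pmatrix} I_r & O \\ O & S^{-1}\end{pmatrix}
\begin{pmatrix} I_r & O\\ -U & I_n\end{pmatrix},\qquad S=A-UV^T,
\]
so the natural estimate from this route is $||K^{-1}||\le (1+||V||)\max\{1,||S^{-1}||\}(1+||U||)$, which contains $||U||=||V||=\nu_{n,r}$. These random variables do not appear anywhere in $\bar n$: the target bound involves only $\nu_{\rho,r}$ and $\nu^+_{r,r}$, coming from the sub-blocks $U_0,V_0\in\mathcal G^{\rho\times r}$ and $U_1,V_1\in\mathcal G^{r\times r}$ after diagonalizing $A$. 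The reason the extra $(1+\nu_{n,r})^2$ factor disappears is a cancellation you have not used: after the change of basis, $S^{-1}U=-\begin{pmatrix}O_{\rho,r}\\ V_1^{-T}\end{pmatrix}$ and $V^TS^{-1}=-\bigl(O_{r,\rho}\ \ U_1^{-T}\bigr)$, so $||S^{-1}U||$ and $||V^TS^{-1}||$ are far smaller than $||S^{-1}||\,||U||$ and $||V||\,||S^{-1}||$. Exploiting this requires writing out the entries of $S^{-1}U$ and $V^TS^{-1}$ explicitly, i.e.\ effectively performing the $3\times 3$ block inversion that the paper does directly on $\diag(I_r,S_A^T)\,K\,\diag(I_r,T_A)$. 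As stated, your plan would prove $||K^{-1}||\le 1.5(1+\nu_{n,r})^2\bar n'$ for some $\bar n'$ similar to $\bar n$, which is a strictly weaker statement.

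\textbf{The $1.5$ factor has a different origin.} You attribute the constant $1.5$ and the $\max\{1,\cdot\}$ truncations to ``possibly-larger-than-$1$ diagonal entries $\Sigma'_{r,r}$.'' This mischaracterizes both. The trailing diagonal block $\Sigma'_{r,r}$ has all entries bounded by $\sigma_{\rho+1}(A)$, which is \emph{small} because $\nrank(A)=\rho$. The paper first proves $||K^{-1}||\le\bar n$ in the exact-rank case $\rank(A)=\rho$ (where $\Sigma'_{r,r}=O$), and then passes to the general numerical-rank case by a perturbation argument: the exact-rank matrix is a perturbation of $K$ of small norm $\sigma_{\rho+1}(A)$, and Theorem~\ref{thpert} with $\theta\le 1/3$ yields the multiplicative constant $1.5$. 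The $\max\{1,\cdot\}$ truncations come from norms of block-diagonal factors such as $\diag(I_n,U_1^{-1})$, not from $\Sigma'_{r,r}$. Your proposal does not contain this perturbation step, so even if the exact-rank computation were carried out correctly, the extension to $\nrank(A)=\rho$ would be unsupported.
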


\begin{proof}
We 
only estimate the norm $||K^{-1}||$.
At first let $\nrank(A)=\rank(A)=\rho$
and
then 
reduce our study to the case where
 $A=\Sigma_A=\diag(\Sigma_{\rho,A},O_{r,r})$
by combining
Gaussian diagonalization
$K\rightarrow\diag(I_r, S_A^T)K\diag(I_r, T_A)$
and Lemma \ref{lepr3}.
Write
$$K=\begin{pmatrix}
I_r   &   V_0^T  & V_1^T \\
U_0   &  \Sigma_{\rho}  & O_{\rho,r}  \\
U_1   &   O_{r,\rho}  &  O_{r,r}
\end{pmatrix}$$
where
$ U_0, V_0\in \mathcal G^{\rho\times r}$, 
$U_1,V_1\in \mathcal G^{r\times r}$,
and the matrix $K$ is nonsingular
(cf. Theorem \ref{thdgr} and Assumption 1).
Express the inverse $K^{-1}$ as follows,
$$K^{-1}=
\begin{pmatrix}
O_{r,r}   &   O_{r,\rho}  & U_1^{-1} \\
O_{\rho,r}   &  \Sigma_{\rho}^{-1}  & -\Sigma_{\rho}^{-1}U_0U_1^{-1}  \\
V_1^{-T}   &  -V_1^{-T}V_0^T\Sigma_{\rho}^{-1}    &  V_1^{-T}(I_r-V_0^T\Sigma_{\rho}^{-1}U_0)U_1^{-1}
\end{pmatrix}=$$ 
$$\diag(I_r,\Sigma_{\rho}^{-1},V_1^{-T})\Big (I_{n+r}+\diag(O_{n,n},I_r)-
\diag(O_{r,r},F_{\rho})\Big )\diag(I_n,U_1^{-1})$$
where  $$F_{\rho}=\begin{pmatrix}
  O_{\rho,\rho}   & U_0  \\
  V_0^T\Sigma_{\rho}^{-1}    &  V_0^T\Sigma_{\rho}^{-1}U_0
\end{pmatrix}= \diag(I_{\rho},V_0^T\Sigma_{\rho}^{-1})\diag(O_{\rho,\rho},I_r)\diag(I_{\rho},U_0).$$
 Combine the above expressions and deduce that
 $|| K^{-1}||\le \bar n$, for $\bar n$
of equation (\ref{eqbrn}). 

This is the upper bound of
Theorem \ref{thbpr1} decreased by a factor of 1.5.

By sacrificing this factor, we relax 
the  assumption that  $\nrank(A)=\rank(A)$.

Namely, without this assumption, our previous argument implies that
$$K=\begin{pmatrix}
I_r   &  V_0^T  & V_1^T \\
U_0   &  \Sigma_{\rho}  & O_{\rho,r}  \\
U_1   &   O_{r,\rho}  &  \Sigma'_{r,r}
\end{pmatrix}$$
 where the value
$||\Sigma'_{r,r}||=\sigma_{\rho+1}(A)$
 is small since $\nrank (A)=\rho$.
 Apply Theorem \ref{thpert} for $\theta<1/3$
and obtain that $||K^{-1}||< 1.5 \bar n$.
\end{proof}


\begin{remark}\label{rewknw} 
Our  upper bound on the norm $||K^{+}||$ 
involves the factor 
$(\nu_{r,r}^+)^2$. For larger integers $r$, this makes the bound 
inferior to those of the previous two subsections:
the random variable $\nu_{r,r}^+$.  
 has no expected value; its upper bound in part 2 of Theorem \ref{thsiguna},
 although meaningful, is inferior to the bounds on 
the random variables $\nu_{i,j}$ and $\nu_{r,s}^+$
as long as the
integer $|s-r|$ is not close to 0.
 \end{remark} 


\section{Analysis of Randomized Additive Preprocessing}\label{sapaug}


In this section we analyze randomized additive preprocessing 
\begin{equation}\label{eqrnap}
A\rightarrow C=A+UV^T~{\rm for}~A,C\in \mathbb R^{n\times n}~{\rm and}~
U,V\in \mathcal G^{n\times r}, 
\end{equation}
where the entries of the matrices $U$ and $V$
may depend on each other, and we even allow $U=V$.

We immediately observe the following properties.

\begin{theorem}\label{thaddpr}
Suppose that $A$, $C$, $U$, and $V$
are four matrices of (\ref{eqrnap}).
Then 
\begin{equation}\label{eqapnrm}
||C||\le ||A||+||U||~||V||\le ||A||+\nu_{n,r}^2, 
\end{equation}
the matrix $C$ is nonsingular 
if and only if $\rank (A)+r\ge n$
 (cf.  Assumption 1 and Theorem \ref{thdgr} or \cite{PQ10}), 
and in this case the matrix $C$ is ill-conditioned if
 $\nrank (A)+r<n$. 
\end{theorem}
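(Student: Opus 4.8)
The plan is to prove the three separate claims of Theorem \ref{thaddpr} in turn, each being a short consequence of earlier machinery. First I would establish the norm bound \eqref{eqapnrm}: by the triangle inequality $\|C\| \le \|A\| + \|UV^T\|$, and submultiplicativity of the spectral norm gives $\|UV^T\| \le \|U\|\,\|V\|$; since $U, V \in \mathcal G^{n\times r}$ we have $\|U\| = \nu_{n,r}$ and $\|V\| = \nu_{n,r}$ as random variables (using that the distribution of $\|U\|$ coincides with that of $\|V\|$, even though $U$ and $V$ may be dependent), so $\|UV^T\| \le \nu_{n,r}^2$. This step is completely routine.

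Next I would handle the nonsingularity characterization. The ``if'' direction is exactly Theorem \ref{thnmb1}: when $\rank(A) = \rho$ and $\rho + r \ge n$, the Gaussian-ness of $U$ and $V$ together with Theorem \ref{thdgr} (and Assumption 1, so that ``with probability 1'' is suppressed) forces $C = A + UV^T$ to have full rank $n$, hence to be nonsingular since $C$ is square. For the ``only if'' direction I would argue by rank: $\rank(C) = \rank(A + UV^T) \le \rank(A) + \rank(UV^T) \le \rho + r$, so if $\rho + r < n$ then $\rank(C) < n$ and $C$ is singular. Alternatively one can cite \cite{PQ10} directly, as the statement already suggests. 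So the iff is just the combination of a generic-rank result in one direction and the subadditivity of rank in the other.

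Finally I would treat the ill-conditioning claim under $\nrank(A) + r < n$. The idea is a perturbation argument: set $E$ to be the matrix obtained from $A$ by zeroing all but the $\rho$ largest singular values, where $\rho = \nrank(A)$, so that $\rank(A - E) = \rho$ and $\|E\| = \sigma_{\rho+1}(A)$, which is small since $\rho$ is the numerical rank. Then $C - E = (A - E) + UV^T$ has $\rank(A-E) + r = \rho + r < n$, so by the first part of this very theorem $C - E$ is singular, i.e. $\sigma_n(C - E) = 0$. By Weyl's perturbation inequality for singular values, $\sigma_n(C) \le \sigma_n(C - E) + \|E\| = \sigma_{\rho+1}(A)$, which is small; combined with $\|C\| \ge \sigma_1(A) - \nu_{n,r}^2$ being of order at least a positive constant (recall $A$ is normalized, $\|A\| = 1$, so $\sigma_1(A) = 1$, and one expects $\nu_{n,r}^2$ not to overwhelm this — or one simply notes $\|C\|$ is not tiny generically), the condition number $\|C\|\,\|C^{-1}\|$ — if $C$ is nonsingular — or more precisely $\sigma_1(C)/\sigma_n(C)$ is large. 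The main obstacle, and the place I would be most careful, is stating precisely the sense in which $C$ is ``ill-conditioned'': under $\nrank(A)+r<n$ we have $\rho + r$ possibly still $\ge n$ (if $\rho < \nrank(A)$ is impossible, but the numerical rank $\nrank(A)$ can exceed the exact rank), so $C$ may well be nonsingular, and the honest claim is that $\sigma_n(C) \le \sigma_{\rho+1}(A)$ is small while $\sigma_1(C)$ is not, forcing a large condition number — I would spell this out rather than leave ``ill-conditioned'' informal, and I would lean on Theorem \ref{thpert}-type reasoning exactly as in the proof of Theorem \ref{thbpr1}.
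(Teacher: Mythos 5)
The paper offers no proof at all for Theorem \ref{thaddpr}: the preceding sentence is ``We immediately observe the following properties,'' and the parenthetical in the statement (Assumption~1, Theorem~\ref{thdgr}, \cite{PQ10}) is the whole of the supporting argument. Your reconstruction is therefore filling a real gap, and two of your three parts are exactly what is needed. The norm bound is routine triangle inequality plus submultiplicativity, and you correctly flag the mild notational abuse in writing $||U||\,||V||\le\nu_{n,r}^2$ when $U$ and $V$ may be dependent. The ill-conditioning argument is also right and is the honest way to make the informal claim precise: with $E$ the tail of the SVD of $A$ and $\rho=\nrank(A)$, $\rank((A-E)+UV^T)\le\rho+r<n$ so $\sigma_n(C-E)=0$, and Weyl's inequality (the paper's Theorem~\ref{they}) gives $\sigma_n(C)\le||E||=\sigma_{\rho+1}(A)$, which is small, while $\sigma_1(C)$ is not; and you are correct that $C$ may well be nonsingular here since $\rank(A)\ge\nrank(A)$ is possible, so ``ill-conditioned'' rather than ``singular'' is the right conclusion.

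The one misstep is the ``if'' direction of the nonsingularity equivalence. You write that ``the `if' direction is exactly Theorem~\ref{thnmb1},'' but Theorem~\ref{thnmb1} \emph{assumes} that $C=A+UV^T$ has full rank $n$ as a hypothesis; it cannot establish it. If anything, its part~(a) conclusion $r\ge n-\rho$ is the contrapositive of your \emph{only if} direction, not a route to the \emph{if} direction. Likewise Theorem~\ref{thdgr} as stated only asserts that a Gaussian matrix has full rank with probability~1; it does not directly say anything about $A+UV^T$. What the ``if'' direction actually needs is the same measure-zero argument used to prove Theorem~\ref{thdgr}, applied to the determinant of $A+UV^T$ as a polynomial in the entries of $(U,V)$: for fixed $A$ of rank $\rho$ with $\rho+r\ge n$ the set $\{(U,V):\det(A+UV^T)=0\}$ is a proper algebraic subvariety of $\mathbb R^{n\times r}\times\mathbb R^{n\times r}$ (properness is checked by exhibiting one pair making $C$ nonsingular, e.g.\ by padding the SVD of $A$), hence has Gaussian measure zero. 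You do hedge by saying one can instead cite \cite{PQ10} directly, and that is precisely what the paper does; I would either lean on that citation or spell out the genericity argument, and in either case not attribute the ``if'' direction to Theorems~\ref{thnmb1} or~\ref{thdgr} as stated.
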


Next we estimate the norm $||C^{-1}||$
provided that $\nrank(A)+r\ge n$. 
We do this at first by linking   
 additive preprocessing
to augmentation, then directly.


\subsection{Estimation of the norm $||C^{-1}||$ via a link to augmentation}\label{sauggen}


The following theorem 
links 
augmentation  (\ref{eqnwaug}), for $W=I_r$, to
additive  
preprocessing of (\ref{eqrnap}).


\begin{theorem}\label{th5.2exp}
Suppose that $A\in \mathbb R^{n\times n}$,
$U,V\in \mathbb R^{n\times r}$, 
$K
=\begin{pmatrix}
I_r  &   V^T   \\
U   &   A
\end{pmatrix}$,
 and $C=A+UV^T$. 
Write
$\widehat U=\begin{pmatrix}
O_{r,n}  &   I_r  \\
I_n   &  U 
\end{pmatrix}$,
$\widehat V=\begin{pmatrix}
  O_{n,r} &  I_n    \\
 I_r     &  V^T 
\end{pmatrix}$, 
$\widehat U^{-1}=\begin{pmatrix}
  -U  &  I_n   \\
 I_r  &  O_{r,n} 
\end{pmatrix}$,  
$\widehat V^{-1}=\begin{pmatrix}
-V^T  &   I_r  \\
I_n   &   O_{n,r}
\end{pmatrix}$, and
 $D_{n,r}=\diag(I_n,O_{r,r})$. Then 
\begin{equation}\label{eqk}
K=\widehat U\diag(C,I_r) \widehat V,
~C=D_{n,r}\widehat U^{-1}K\widehat V^{-1}D_{n,r}.
\end{equation}

Furthermore both matrices $C$ and $K$ are singular 
or nonsingular simultaneously.

 They 
are singular if $r+\rank (A)<n$. 

If  the matrices $C$ and $K$ are nonsingular, then 
$$K^{-1}= \widehat V^{-1}\diag(C^{-1},I_r)\widehat U^{-1},~
 C^{-1}=D_{n,r}\widehat V K^{-1}\widehat UD_{n,r},$$ 
$||C^{-1}||\le (1+||U||)(1+||V||)||K^{-1}||$,
and $||K^{-1}||\le (1+||U||)~(1+||V||)~\max\{1,||C^{-1}||\}$.
 \end{theorem}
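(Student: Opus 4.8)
The plan is to verify the factorization \eqref{eqk} by direct block-matrix multiplication, and then read off every subsequent claim as a formal consequence. First I would expand the product $\widehat U\,\diag(C,I_r)\,\widehat V$ block by block. Writing $\widehat U\,\diag(C,I_r) = \begin{pmatrix} O_{r,n} & I_r \\ C & U\end{pmatrix}$ and then multiplying on the right by $\widehat V = \begin{pmatrix} O_{n,r} & I_n \\ I_r & V^T\end{pmatrix}$, the $(1,1)$ block is $I_r$, the $(1,2)$ block is $V^T$, the $(2,1)$ block is $U$, and the $(2,2)$ block is $C + UV^T$ \dots\ wait, it is $C\cdot O_{n,r} + U\cdot I_r$ in one slot and $C\cdot I_n + U V^T$ in the other; being careful with which factor sits where, the $(2,2)$ block comes out to $C + UV^T$ only if $C$ is defined as $A$, so I must instead track that with $C=A+UV^T$ the bottom-right block of the product equals $A$, matching $K$. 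This is the one spot demanding genuine care: getting the block positions and the definition of $C$ to line up so that the $(2,2)$ entry of the product is exactly $A$, not $C$. The second identity $C = D_{n,r}\widehat U^{-1}K\widehat V^{-1}D_{n,r}$ then follows by left-multiplying the first by $\widehat U^{-1}$, right-multiplying by $\widehat V^{-1}$, and compressing with $D_{n,r}$, which simply extracts the leading $n\times n$ corner of $\diag(C,I_r)$.

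Next I would confirm that the claimed formulas for $\widehat U^{-1}$ and $\widehat V^{-1}$ are correct — again a short block multiplication, e.g. $\widehat U\,\widehat U^{-1} = \begin{pmatrix} O_{r,n} & I_r \\ I_n & U\end{pmatrix}\begin{pmatrix} -U & I_n \\ I_r & O_{r,n}\end{pmatrix} = \begin{pmatrix} I_r & O_{r,n} \\ -U + U & I_n\end{pmatrix} = I_{n+r}$, and symmetrically for $\widehat V$. In particular $\widehat U$ and $\widehat V$ are invertible with unit determinant up to sign, so \eqref{eqk} exhibits $K$ as a product of $\diag(C,I_r)$ with two invertible matrices; hence $K$ is nonsingular if and only if $\diag(C,I_r)$ is, i.e. if and only if $C$ is. The statement that both are singular when $r + \rank(A) < n$ is then immediate from Theorem \ref{thaddpr} (or directly: $\rank(UV^T)\le r$, so $\rank(C)\le \rank(A)+r < n$).

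For the inverse formulas, when $C$ and $K$ are nonsingular I would simply invert \eqref{eqk}: $K^{-1} = \widehat V^{-1}\,\diag(C^{-1},I_r)\,\widehat U^{-1}$, and then compress with $D_{n,r}$ to get $C^{-1} = D_{n,r}\widehat V K^{-1}\widehat U D_{n,r}$, using $D_{n,r}\,\widehat V^{-1\,-1}\cdots$ — more precisely, apply $D_{n,r}\widehat V(\cdot)\widehat U D_{n,r}$ to $K^{-1}$ and note $D_{n,r}\widehat V \widehat V^{-1}\diag(C^{-1},I_r)\widehat U^{-1}\widehat U D_{n,r} = D_{n,r}\diag(C^{-1},I_r)D_{n,r}$, whose leading corner is $C^{-1}$. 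Finally the norm bounds: from $C^{-1} = D_{n,r}\widehat V K^{-1}\widehat U D_{n,r}$ and submultiplicativity of $\|\cdot\|$ together with $\|D_{n,r}\|=1$, $\|\widehat U\|\le 1+\|U\|$ and $\|\widehat V\|\le 1+\|V\|$ (each of $\widehat U,\widehat V$ being $I$ on the diagonal blocks plus a single off-diagonal block $U$ resp. $V^T$, whose norm contributes additively after a triangle-inequality split into $\diag$-part plus off-diagonal part), we get $\|C^{-1}\|\le (1+\|U\|)(1+\|V\|)\|K^{-1}\|$. The reverse bound $\|K^{-1}\|\le (1+\|U\|)(1+\|V\|)\max\{1,\|C^{-1}\|\}$ comes the same way from $K^{-1} = \widehat V^{-1}\diag(C^{-1},I_r)\widehat U^{-1}$, with $\|\widehat U^{-1}\|\le 1+\|U\|$, $\|\widehat V^{-1}\|\le 1+\|V\|$, and $\|\diag(C^{-1},I_r)\| = \max\{1,\|C^{-1}\|\}$. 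The main obstacle is purely bookkeeping: keeping the four block positions, the two distinct off-diagonal blocks $U$ and $V^T$, and the role of $D_{n,r}$ straight throughout; there is no analytic content beyond the triangle inequality and submultiplicativity.
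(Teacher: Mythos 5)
Your overall strategy — verify the block factorization (\ref{eqk}) by direct multiplication, then read off the singularity equivalence, the inverse formulas, and the norm bounds as formal consequences — is the right one, and most of the individual steps (the check that $\widehat U\widehat U^{-1}=I_{n+r}$ and $\widehat V\widehat V^{-1}=I_{n+r}$, the compression by $D_{n,r}$, and the submultiplicativity argument using $||\widehat U||,||\widehat U^{-1}||\le 1+||U||$, $||\widehat V||,||\widehat V^{-1}||\le 1+||V||$, and $||\diag(C^{-1},I_r)||=\max\{1,||C^{-1}||\}$) are correctly carried out.

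There is, however, a genuine gap precisely at the one step you yourself flagged as demanding care. You correctly compute that the $(2,2)$ block of $\widehat U\,\diag(C,I_r)\,\widehat V$ equals $C\cdot I_n + U\cdot V^T = C+UV^T$, but you then write that ``with $C=A+UV^T$ the bottom-right block of the product equals $A$, matching $K$.'' That conclusion does not follow: with $C=A+UV^T$ the block equals $A+2UV^T$, not $A$. The factorization $K=\widehat U\,\diag(C,I_r)\,\widehat V$, with $K$, $\widehat U$, $\widehat V$ as literally written, holds if and only if $C+UV^T=A$, i.e. $C=A-UV^T$ (equivalently, one checks directly that $\widehat U^{-1}K\widehat V^{-1}=\diag(A-UV^T,\,I_r)$). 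Your paragraph noticed the tension and then asserted the opposite of what the computation shows, rather than concluding that the theorem's stated relation $C=A+UV^T$ is incompatible with (\ref{eqk}) as written. The correct move is to flag the sign discrepancy explicitly: read $C=A-UV^T$ in this theorem, or equivalently replace $V$ by $-V$ (which is harmless when $V$ is Gaussian and leaves $||V||$ unchanged, so all downstream norm bounds survive). Once that sign is repaired, every other line of your argument goes through verbatim and the remaining claims of the theorem follow exactly as you describe.
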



By combining Theorems \ref{thbpr1} and  \ref{th5.2exp} we extend our results 
for weakly randomized northwestern augmentation of (\ref{eqnwaug})
to randomized 
additive preprocessing.


\begin{corollary}\label{coapaug}
Suppose that $A$, $C$, $U$, and $V$
are  matrices of (\ref{eqrnap}), $||A||=1$, 
and $\nrank(A)+r\ge n$, and so 
the matrix $C$ is nonsingular (with probability 1)
(cf. Theorem \ref{thaddpr}). Define  
 $\bar n$ by (\ref{eqbrn}). Then
$$||C^{-1}||\le 1.5 (1+\nu_{n,r})^2\bar n.$$ 
\end{corollary}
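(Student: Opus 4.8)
The plan is to simply combine Theorem~\ref{th5.2exp} (the link between additive preprocessing $A\rightarrow C=A+UV^T$ and weakly randomized northwestern augmentation with $W=I_r$) with Theorem~\ref{thbpr1} (the bound on $\|K^{-1}\|$ for that augmentation). First I would observe that under the hypotheses $\|A\|=1$, $U,V\in\mathcal G^{n\times r}$, and $\nrank(A)+r\ge n$ with $r=n-\rho$, the matrix $C$ is nonsingular with probability~1 by Theorem~\ref{thaddpr}, so by the first half of Theorem~\ref{th5.2exp} the associated matrix $K=\begin{pmatrix} I_r & V^T \\ U & A\end{pmatrix}$ is nonsingular as well, and moreover $\|C^{-1}\|\le (1+\|U\|)(1+\|V\|)\|K^{-1}\|$.

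Next I would substitute the norm estimates. We have $\|U\|\le\nu_{n,r}$ and $\|V\|\le\nu_{n,r}$ (recall $\nu_{n,r}=\nu_{r,n}$), so $(1+\|U\|)(1+\|V\|)\le (1+\nu_{n,r})^2$. It remains to bound $\|K^{-1}\|$. This is exactly the content of Theorem~\ref{thbpr1}: with $W=I_r$ and $U,V\in\mathcal G^{n\times r}$ (the theorem explicitly permits $U$ and $V$ to depend on one another, which matches the provision in~(\ref{eqrnap})), it gives $\|K^{-1}\|\le 1.5\,\bar n$ with $\bar n$ defined by~(\ref{eqbrn}). Chaining the two inequalities yields $\|C^{-1}\|\le 1.5\,(1+\nu_{n,r})^2\,\bar n$, which is the claimed bound.

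\emph{The only point requiring care} is the bookkeeping of the parameters when invoking the two theorems: Theorem~\ref{thbpr1} is stated for the case $r=n-\rho$ exactly, and one must check that the hypothesis $\nrank(A)+r\ge n$ in the corollary, together with the genericity supplied by Assumption~1, puts us in (or reduces us to) that regime; if $r>n-\rho$ one passes to a submatrix / appeals to the fact that extra Gaussian columns only help conditioning, as in the proofs of Theorems~\ref{thwst} and~\ref{thnrthwst}. One also verifies that the normalization $\|A\|=1$ used in Theorem~\ref{thbpr1} is the same normalization assumed here, so $\sigma_\rho(A)$ appearing inside $\bar n$ is the genuine $\rho$-th singular value. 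Since all of these are routine, the corollary follows immediately once the two cited results are lined up. $\qquad\blacksquare$
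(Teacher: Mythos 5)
Your proof is correct and follows exactly the route the paper intends: the paper's "proof" is just the sentence preceding the corollary ("By combining Theorems \ref{thbpr1} and \ref{th5.2exp} we extend our results...") and you have supplied the routine chaining of $\|C^{-1}\|\le(1+\|U\|)(1+\|V\|)\|K^{-1}\|$ from Theorem \ref{th5.2exp} with $\|K^{-1}\|\le 1.5\bar n$ from Theorem \ref{thbpr1}. Your closing caveat about $r>n-\rho$ is well placed — Theorem \ref{thbpr1} really is restricted to $r=n-\rho$ (so that the $r\times r$ corner block of $K$ is $O_{r,r}$), and the paper silently glosses over this when stating the hypothesis as $\nrank(A)+r\ge n$; the strict-inequality case is treated by different means in Theorem \ref{thcca10}.
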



\subsection{Direct estimation of the norm $||C^{-1}||$}\label{spmt2}


 At first we  bound  the 
ratio $\frac{\kappa(C)}{\kappa(A)}$ in the case where 
$\rank(A)+r=n$, then extend the bound to 
 the case where $\nrank(A)+r=n$ and
in the next subsection to 
 the case where $\nrank(A)+r\ge n$. 
 
\begin{theorem}\label{th5.2} 
Suppose that $A,S,T\in \mathbb R^{n\times n}$ and 
$U,V\in \mathbb R^{n\times r}$
for two positive integers $r$ and $n$, $r\le n$,
$A=S\Sigma T^T$ is SVD of the matrix $A$ (cf. (\ref{eqsvd})),
  $S$ and  $T$
are  square
orthogonal matrices,
$\Sigma = \diag(\sigma_j)_{j=1}^n$, 
$\rho=\rank (A)=n-r$,
$\sigma_{\rho}>0$,
 and
the matrix $C=A+UV^T$ 
is nonsingular.
Towards Gaussian diagonalization of the matrix $C$, introduce the matrices
\begin{equation}\label{eqstuv}  
S^TU =     \begin{pmatrix}
                        \bar U    \\
                        U_{r}
                \end{pmatrix},
 ~ T^TV =     \begin{pmatrix}
                        \bar V    \\
                        V_r
                \end{pmatrix},
 ~ R_U =        \begin{pmatrix}
                        I_{\rho}     &    \bar U     \\
                        O_{r,\rho}       &       U_{r}
                \end{pmatrix},
 ~ R_V =        \begin{pmatrix}
                        I_{\rho}     &      \bar V    \\
                        O_{r,\rho}       &       V_r
                \end{pmatrix},
\end{equation}
where $U_r$ and $V_r$ are $r\times r$ matrices. Then 

${\rm (a)}~R_U\Sigma R_V^T=\Sigma$,  $R_U\diag(O_{\rho,\rho},I_r)R_V^T=S^TUV^TT$, 
and so 
\begin{equation}\label{eqcauv} 
C = SR_UDR_V^TT^T,~D=\Sigma+\diag(O_{\rho,\rho},I_{r})=\diag (d_j)_{j=1}^n
\end{equation}
where
$d_j=\sigma_j$ for $j=1,\dots,\rho$, $d_j=1$ for $j=\rho+1,\dots,n$.




Furthermore 
suppose  that $||A||=1$ 
and  the  $r\times r$ matrices $U_{r}$ 
and $V_r$ are  nonsingular.
Write 
\begin{equation}\label{eqpuv} 
p=||R_U^{-1}||~||R_V^{-1}||~
{\rm and}~f_r=\max \{1,||U_r^{-1}||\}~\max\{1,||V_r^{-1}||\}.
\end{equation}
 Then 

${\rm (b)}~1\le \frac{\sigma_{\rho}(A)}{\sigma_{n}(C)}\le p$ and


${\rm (c)}
~ p\le (1+||U||)(1+||V||)f_r$.

\end{theorem}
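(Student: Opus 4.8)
\textbf{Proof proposal for Theorem \ref{th5.2}.}

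The plan is to verify the three claims in order, since each builds on the previous one. First I would prove part (a) by direct block computation. The matrix $R_U$ has the form $\diag(I_\rho, U_r)$ plus the off-diagonal block $\bar U$ in the top-right corner; multiplying $R_U \Sigma R_V^T$ and using that $\Sigma = \diag(\Sigma_\rho, O_{r,r})$ kills all the contributions involving $U_r$, $V_r$, $\bar U$, $\bar V$ except the leading $\rho \times \rho$ block, which reproduces $\Sigma_\rho$; hence $R_U \Sigma R_V^T = \Sigma$. Similarly, $R_U \diag(O_{\rho,\rho}, I_r) R_V^T$ selects exactly the trailing columns/rows, and a short computation identifies it with $\binom{\bar U}{U_r}\binom{\bar V}{V_r}^T = S^T U V^T T$. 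Adding these two identities and recalling $A = S\Sigma T^T$, $C = A + UV^T$ gives $C = S(R_U \Sigma R_V^T + R_U \diag(O,I_r) R_V^T)T^T = S R_U (\Sigma + \diag(O_{\rho,\rho},I_r)) R_V^T T^T = S R_U D R_V^T T^T$, which is (\ref{eqcauv}). This step is purely mechanical.

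Next, part (b). From the factorization $C = S R_U D R_V^T T^T$ with $S$, $T$ orthogonal, we get $\sigma_n(C) = \sigma_{\min}(R_U D R_V^T)$, and $C^{-1} = T R_V^{-T} D^{-1} R_U^{-1} S^T$, so $\sigma_n(C) = 1/\|C^{-1}\| = 1/\|R_V^{-T} D^{-1} R_U^{-1}\| \geq 1/(\|R_V^{-1}\|\,\|D^{-1}\|\,\|R_U^{-1}\|)$. Since $\|A\| = 1$ we have $\sigma_1 = 1 \geq \sigma_\rho$, so every diagonal entry $d_j$ of $D$ satisfies $d_j \geq \sigma_\rho(A)$ (the first $\rho$ entries are the $\sigma_j \geq \sigma_\rho$, the rest are $1 \geq \sigma_\rho$), hence $\|D^{-1}\| = 1/\min_j d_j = 1/\sigma_\rho(A)$. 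Combining, $\sigma_n(C) \geq \sigma_\rho(A)/p$, i.e. $\sigma_\rho(A)/\sigma_n(C) \leq p$. For the lower bound $1 \leq \sigma_\rho(A)/\sigma_n(C)$: since $\rank(A) = \rho = n - r$ and $C = A + UV^T$ with $UV^T$ of rank at most $r$, one checks that $\sigma_n(C) \leq \sigma_{n-r}(A) = \sigma_\rho(A)$ by the standard interlacing/rank-perturbation inequality (adding a rank-$r$ matrix cannot raise the smallest singular value above the $(n-r)$th singular value of the original). This gives (b).

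Finally, part (c): bound $p = \|R_U^{-1}\|\,\|R_V^{-1}\|$. The key is that $R_U = \diag(I_\rho, U_r)\bigl(I_n + \diag(O_{\rho,\rho},O) \text{-type shift}\bigr)$; more precisely one writes $R_U = \begin{pmatrix} I_\rho & \bar U \\ O & U_r \end{pmatrix}$ and factors out the block-diagonal part: $R_U = \begin{pmatrix} I_\rho & O \\ O & U_r \end{pmatrix}\begin{pmatrix} I_\rho & \bar U \\ O & I_r\end{pmatrix}$, whence $R_U^{-1} = \begin{pmatrix} I_\rho & -\bar U \\ O & I_r\end{pmatrix}\diag(I_\rho, U_r^{-1})$, so $\|R_U^{-1}\| \leq (1 + \|\bar U\|)\max\{1,\|U_r^{-1}\|\}$. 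Now $\|\bar U\| \leq \|S^T U\| = \|U\|$ (a submatrix of a block has norm at most the whole), so $\|R_U^{-1}\| \leq (1 + \|U\|)\max\{1,\|U_r^{-1}\|\}$, and symmetrically $\|R_V^{-1}\| \leq (1 + \|V\|)\max\{1,\|V_r^{-1}\|\}$. Multiplying yields $p \leq (1+\|U\|)(1+\|V\|) f_r$, which is (c). The main obstacle I anticipate is getting the triangular factorization of $R_U$ and the resulting norm split exactly right — in particular being careful that the off-diagonal block $\bar U$ (not $U_r^{-1}\bar U$) is what appears after pulling out $\diag(I_\rho, U_r)$ on the \emph{left}, which is what makes the clean bound $\|\bar U\| \le \|U\|$ usable without an extra factor of $\|U_r^{-1}\|$.
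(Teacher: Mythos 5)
Your proof is correct, and parts (a), (c), and the upper bound in (b) follow essentially the same route as the paper: the block verification of the factorization $C=SR_UDR_V^TT^T$, the submultiplicative bound $\|C^{-1}\|\le\|R_V^{-T}\|\,\|D^{-1}\|\,\|R_U^{-1}\|$ with $\|D^{-1}\|=1/\sigma_\rho(A)$ (using $\|A\|=1$), and the factorization
$$R_U^{-1}=\begin{pmatrix}I_\rho & -\bar U\\ O & I_r\end{pmatrix}\diag(I_\rho,U_r^{-1})$$
giving $\|R_U^{-1}\|\le(1+\|\bar U\|)\max\{1,\|U_r^{-1}\|\}\le(1+\|U\|)\max\{1,\|U_r^{-1}\|\}$.

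Where you genuinely diverge is the lower bound $1\le\sigma_\rho(A)/\sigma_n(C)$ in part (b). The paper computes $R_V^{-T}D^{-1}R_U^{-1}$ explicitly as a $2\times 2$ block matrix, observes that its leading $\rho\times\rho$ block is exactly $\Sigma_\rho^{-1}$, and then invokes the submatrix norm inequality (Lemma \ref{faccondsub}) to conclude $\|C^{-1}\|\ge\|\Sigma_\rho^{-1}\|=1/\sigma_\rho(A)$. You instead apply the Weyl inequality for singular values, $\sigma_{i+j-1}(A+B)\le\sigma_i(A)+\sigma_j(B)$, with $i=\rho$, $j=r+1$, and $\rank(UV^T)\le r$ forcing $\sigma_{r+1}(UV^T)=0$, which gives $\sigma_n(C)\le\sigma_\rho(A)$ directly. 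Your argument is shorter, does not require writing out the block structure of the inverse, and has the advantage of not depending on $\|A\|=1$ or on the explicit factorization at all — it holds for any rank-$\le r$ additive perturbation of a rank-$\rho$ matrix with $\rho+r=n$. The paper's calculation, on the other hand, produces the explicit block form of $C^{-1}$ as a byproduct, which is informative in its own right. Both are valid.
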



\begin{proof}
Part (a) is readily verified. 

Let us prove part (b). Combine the equations  
$S^{-1}=S^T$, $T^{-1}=T^T$ and 
(\ref{eqcauv}) 
and obtain
$C^{-1}= TR_V^{-T}D^{-1}R_U^{-1}S^T$.

Apply bound (\ref{eqnorm2}), substitute $||S^T||=||T||=1$,
and obtain
$||C^{-1}||\le ||R_V^{-T}||~||D^{-1}||~||R_U^{-1}||$.

Substitute equations (\ref{eqpuv}),
 $||D^{-1}||=\frac{1}{\sigma_{\rho}(A)}$
(implied by 
the equations $||A||=1$ and (\ref{eqcauv})),
and $||C^{-1}||=\frac{1}{\sigma_{n}(C)}$
and obtain that 
$\frac{\sigma_{\rho}(A)}{\sigma_{n}(C)}\le p$.

Next deduce from (\ref{eqstuv}) and (\ref{eqcauv}) 
that
$$R_V^{-T} =        \begin{pmatrix}
                        I_{\rho}     &    O_{\rho,r}    \\
                       -V_{r}^{-T}\bar V^T      &       V_{r}^{-T}
                \end{pmatrix},
D^{-1}=\Sigma^{-1}+\diag(O_{\rho,\rho},I_r),
 ~ R_U^{-1}=        \begin{pmatrix}
                        I_{\rho}     &      -\bar U  U_r^{-1}  \\
                        O_{r,\rho}       &       U_r^{-1}
                \end{pmatrix}.$$
Substitute these expressions
into  the matrix product
$R_V^{-T}D^{-1}R_U^{-1}$
and obtain that $R_V^{-T}D^{-1}R_U^{-1}= \begin{pmatrix}
                         \Sigma^{-1}    &      X  \\
                       Y     &       Z
                \end{pmatrix}$. Consequently
$\frac{1}{\sigma_{n}(C)}=||C^{-1}||=||R_V^{-T}D^{-1}R_U^{-1}||\ge  ||\Sigma^{-1}||=\frac{1}{\sigma_{n}(A)}$.

This completes the proof of part (b).

(c) Observe that  $R_U^{-1} =        \begin{pmatrix}
                        I_{\rho}     &     -\bar U    \\
                        O       &       I_r
                \end{pmatrix}\begin{pmatrix}
                        I_{\rho}     &     O   \\
                        O       &       U_r^{-1}
                \end{pmatrix}$,
 ~ $R_V^{-1}=        \begin{pmatrix}
                        I_{\rho}     &       -\bar V     \\
                        O       &      I_r
                \end{pmatrix}\begin{pmatrix}
                        I_{\rho}     &     O   \\
                        O       &       V_r^{-1}
                \end{pmatrix}$,
$||\bar U||\le ||U||$ and $||\bar V||\le ||V||$.
Then combine these relationships with (\ref{eqpuv}). 
\end{proof}


\begin{corollary}\label{cocca} 
Suppose that $A\in \mathbb R^{n\times n}$ and 
$U,V\in \mathbb R^{n\times r}$
for two positive integers $n$ and $r$ such that $\rho=\rank(A)=n-r$,
and $C=A+UV^T$. Then 
\begin{equation}\label{eqcca} 
||C^+||\le 
(1+||U||)(1+||V||)~
\max \{1,||U_r^{-1}||\}~\max\{1,\frac{||V_r^{-1}||}{\sigma_{n}(A)}\}.
\end{equation}
\end{corollary}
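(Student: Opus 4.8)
The plan is to derive Corollary \ref{cocca} directly from Theorem \ref{th5.2} by a simple normalization argument together with the identity $\|C^+\| = 1/\sigma_n(C)$, which holds because $C$ is $n\times n$ and nonsingular under the hypothesis $\rho = \rank(A) = n-r$ (Theorem \ref{thaddpr}). The only gap between the two statements is that Theorem \ref{th5.2} assumes $\|A\|=1$, whereas the corollary does not. So the first move is to reduce to the normalized case: set $\alpha = \|A\|$, and if $A = O$ the claim is degenerate (then $C = UV^T$ has rank at most $r < n$, contradicting nonsingularity, so in fact $A \neq O$ and $\alpha > 0$). Replace $A$ by $A/\alpha$ and correspondingly $U$ (say) by $U/\alpha$, leaving $V$ fixed, so that $C/\alpha = (A/\alpha) + (U/\alpha)V^T$ has the form required by Theorem \ref{th5.2} with a normalized first summand.

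Next I would track how each quantity in Theorem \ref{th5.2}(b)--(c) scales under this substitution. Writing tildes for the normalized objects: $\tilde C = C/\alpha$, $\sigma_n(\tilde C) = \sigma_n(C)/\alpha$, $\sigma_\rho(\tilde A) = \sigma_\rho(A)/\alpha$. For the factors in (\ref{eqpuv}): the matrix $S^T(U/\alpha) = (S^TU)/\alpha$ has blocks $\bar U/\alpha$ and $U_r/\alpha$, so $\tilde U_r = U_r/\alpha$ and $\|\tilde U_r^{-1}\| = \alpha\|U_r^{-1}\|$, while $V$ and hence $V_r$ are untouched. Also $\|\tilde U\| = \|U\|/\alpha$ and $\|\tilde V\| = \|V\|$. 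Feeding these into the chain $1 \le \sigma_\rho(\tilde A)/\sigma_n(\tilde C) \le p \le (1+\|\tilde U\|)(1+\|\tilde V\|)f_r$ from parts (b) and (c) of Theorem \ref{th5.2}, and then multiplying through to clear the $\alpha$'s, should land exactly on (\ref{eqcca}). The bookkeeping to watch is that $\|C^+\| = 1/\sigma_n(C)$ and $1/\sigma_\rho(\tilde A) = \alpha/\sigma_\rho(A)$, and $\sigma_\rho(A) = \sigma_n(A)$ here since $\rho = n-r$ forces the $\rho$-th singular value to be the smallest nonzero one — though actually one wants $\sigma_\rho(A)$, not $\sigma_n(A)$, in the final bound, so I should use part (b)'s left inequality (or just the direct bound $\|C^{-1}\| \le \|R_V^{-T}\|\,\|D^{-1}\|\,\|R_U^{-1}\|$ with $\|D^{-1}\| = 1/\sigma_\rho(\tilde A)$) rather than the $\sigma_n$ refinement.

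A cleaner route, which I would actually prefer to present, is to not re-derive anything but simply invoke Theorem \ref{th5.2}(b)--(c) on the normalized matrix and substitute. Concretely: apply Theorem \ref{th5.2} to $A/\|A\|$, $U/\|A\|$, $V$; conclude $\|(C/\|A\|)^{-1}\| \le p \le (1+\|U\|/\|A\|)(1+\|V\|)\max\{1,\|(U_r/\|A\|)^{-1}\|\}\max\{1,\|V_r^{-1}\|\}$; then multiply both sides by $1/\|A\|$, absorb the $\|A\|$ factors (using $\|A\| \ge \sigma_n(A)$ to bound $\|A\|/\sigma_n(A) \cdot (\text{stuff}) \le \max\{1, \|V_r^{-1}\|/\sigma_n(A)\}$ appropriately and using $1 + \|U\|/\|A\| \le$ something clean only if $\|A\|\ge 1$ — hmm, that last point is the subtlety). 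The genuinely delicate step, and the one I'd expect to be the main obstacle, is exactly this algebraic absorption of the normalization constant: making sure the $\|A\|$-factors redistribute so that the final bound has $\sigma_n(A)$ in precisely one denominator (inside the last $\max$) and nowhere else, without an extraneous $\|A\|$ or $1/\|A\|$ floating around — this requires using $\|A\| = \sigma_1(A) \ge \sigma_n(A)$ and, for the $(1+\|U\|)$ factor, noting $\max\{1,\|U_r^{-1}\|/\|A\|\} \cdot \|A\| \le \max\{\|A\|,\|U_r^{-1}\|\}$, which is only bounded by $\max\{1,\|U_r^{-1}\|\}$ if $\|A\| \le 1$. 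So in fact the corollary as stated implicitly seems to want $\|A\| \le 1$ as well, or else a slightly different reading; I would resolve this by carefully re-examining which normalization ($\|A\|=1$ exactly versus $\|A\|\le 1$) makes the substitution close, and state the reduction accordingly — that verification is where the real work lies, the rest being substitution.
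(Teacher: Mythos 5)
Your proposal and the paper's proof take the same route: the paper disposes of the corollary in one line, saying that equation (\ref{eqpuv}) and parts (b)--(c) of Theorem \ref{th5.2} imply the bound. Most of your effort goes into worrying about the normalization $\|A\|=1$, which Theorem \ref{th5.2} assumes and the corollary's statement does not restate, and you correctly observe that the scaling argument does not close cleanly without it. But the paper's intent is simply that the corollary inherits the standing hypotheses of the theorem it follows --- in particular $\|A\|=1$ and the nonsingularity of $U_r$, $V_r$. Under that reading your entire rescaling maneuver is unnecessary: one just chains $\|C^{-1}\|=1/\sigma_n(C)\le p/\sigma_\rho(A)$ from part (b) with $p\le(1+\|U\|)(1+\|V\|)f_r$ from part (c).

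Two inaccuracies in the proposal deserve flagging. First, the remark that ``$\sigma_\rho(A)=\sigma_n(A)$ since $\rho=n-r$ forces the $\rho$-th singular value to be the smallest nonzero one'' conflates two different things: under $\rank(A)=\rho<n$ we have $\sigma_n(A)=0$, whereas $\sigma_\rho(A)>0$ is the smallest \emph{nonzero} singular value, so the two are not equal. You partially retract this by noting one actually wants $\sigma_\rho(A)$; indeed the $\sigma_n(A)$ in the corollary's display must be read as a typo for $\sigma_\rho(A)$ (as in the statement, a zero denominator would make the bound vacuous, and the quotation of this corollary inside the proof of Corollary \ref{cocca1} uses $\sigma_\rho(A)$). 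Second, there is a gap that neither your outline nor the paper's one-liner addresses: combining (b) and (c) directly gives the factor $\max\{1,\|V_r^{-1}\|\}/\sigma_\rho(A)$, whereas the corollary places $\sigma_\rho(A)$ inside the $\max$, giving $\max\{1,\|V_r^{-1}\|/\sigma_\rho(A)\}$. With $\|A\|=1$ we have $\sigma_\rho(A)\le1$, so the corollary's form is genuinely smaller whenever $\sigma_\rho(A)<\|V_r^{-1}\|<1$, meaning ``(b) and (c) imply'' does not literally yield the displayed inequality; the bound one actually obtains by that argument --- and what your plan, once normalized, would also yield --- carries $\max\{1,\|V_r^{-1}\|\}/\sigma_\rho(A)$ in that slot.
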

\begin{proof}
 Equation (\ref{eqpuv}) and parts (b) and (c) of 
Theorem \ref{th5.2} together imply  (\ref{eqcca}).
\end{proof}


\begin{corollary}\label{cocca1}  
Keep the assumptions of Corollary \ref{cocca},
but assume that $\nrank(A)=\rho\le \rank(A)$ and 
$U,V\in \mathcal G^{n\times r}$. Then 

{\rm (i)} the matrix $C$ is nonsingular with probability 1
and
$${\rm (ii)}~||C^{-1}||\le 1.5 (1+\nu_{n,r}^2)\max\{1,\nu_{n,r}^+\}\max\{1,
\frac{\nu^+_{r,r}}{\sigma_{\rho}(A)}\}.~~~~~~~~~~~~~~~~~~~~~~~~~~~~~~~~~~~~~~~~~~~$$ 
\end{corollary}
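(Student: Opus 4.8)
The plan is to derive Corollary \ref{cocca1} from the already-established Corollary \ref{cocca} by a perturbation argument, handling the gap between $\nrank(A)$ and $\rank(A)$ in the same style as the proof of Theorem \ref{thbpr1}. First I would observe that part (i) is immediate: since $\nrank(A)=\rho$ we have $\rank(A)\ge\rho$, hence $\rank(A)+r\ge\rho+r=n$, so by Theorem \ref{thaddpr} (or \cite{PQ10}) the matrix $C=A+UV^T$ is nonsingular with probability $1$ for Gaussian $U,V$. This also means the right-hand side of (ii) is a genuine bound on $\|C^{-1}\|$, and the random variables $\nu_{n,r}$, $\nu_{n,r}^+$, $\nu^+_{r,r}$ are all finite with probability $1$ by Theorem \ref{thdgr} and Assumption 1.

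For part (ii), the main step is to reduce to the full-rank case of Corollary \ref{cocca}. Set $E$ to be the matrix obtained by zeroing out all but the $\rho$ largest singular values of $A$, so $\rank(A-E)=\rho=n-r$ and $\|E\|=\sigma_{\rho+1}(A)$, which is small by the numerical rank assumption (say, bounded by some $\theta$ with $\theta<1/3$ after the usual normalization bookkeeping). Apply Corollary \ref{cocca} to the full-rank matrix $A-E$ in place of $A$: writing $C_E=(A-E)+UV^T$ and using the Gaussian diagonalization notation of Theorem \ref{th5.2} adapted to $A-E$, we get $\|C_E^{-1}\|\le (1+\|U\|)(1+\|V\|)\max\{1,\|U_r^{-1}\|\}\max\{1,\|V_r^{-1}\|/\sigma_n(A-E)\}$. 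Now substitute the Gaussian estimates $\|U\|,\|V\|\le\nu_{n,r}$, $\|U_r^{-1}\|\le\nu^+_{r,r}$ (since $U_r\in\mathcal G^{r\times r}$ by Lemma \ref{lepr3}), $\|V_r^{-1}\|\le\nu^+_{r,r}$, and $\sigma_n(A-E)=\sigma_\rho(A)$; this yields $\|C_E^{-1}\|\le (1+\nu_{n,r})^2\max\{1,\nu^+_{r,r}\}\max\{1,\nu^+_{r,r}/\sigma_\rho(A)\}$, which one can further coarsen to $(1+\nu_{n,r}^2)\max\{1,\nu_{n,r}^+\}\max\{1,\nu^+_{r,r}/\sigma_\rho(A)\}$ absorbing constants into the stated form.

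Finally I would pass from $C_E$ back to $C=C_E+E$ by a first-order perturbation bound. Since $C=C_E+E$ with $\|E\|=\sigma_{\rho+1}(A)$ small, and since we want the factor $1.5$ exactly as in Theorem \ref{thbpr1} and Corollary \ref{coapaug}, I would invoke Theorem \ref{thpert} with perturbation parameter $\theta=\|E\|\,\|C_E^{-1}\|<1/3$ (valid because $\sigma_{\rho+1}(A)$ is small in context relative to $\sigma_\rho(A)$), which gives $\|C^{-1}\|\le 1.5\,\|C_E^{-1}\|$, and the corollary follows.

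The main obstacle is the usual care required in the perturbation step: one has to argue that $\|E\|\,\|C_E^{-1}\|$ is indeed below the threshold $1/3$, which is not automatic --- it needs the quantitative hypothesis that $\sigma_{\rho+1}(A)$ is small relative to $\sigma_\rho(A)$ (the numerical-rank assumption), and strictly speaking the bound on $\|C_E^{-1}\|$ itself involves random variables, so the statement should be read with the tacit ``with probability $1$'' convention of Assumption 1 and the understanding that the threshold condition holds on the relevant event. A secondary, purely cosmetic obstacle is matching the exact shape of the stated right-hand side, $(1+\nu_{n,r}^2)$ rather than $(1+\nu_{n,r})^2$ and a single $\max\{1,\nu_{n,r}^+\}$ absorbing one of the $\nu^+_{r,r}$ factors --- this is just a matter of crude majorization ($\nu^+_{r,r}\le$ is not generally $\le\nu_{n,r}^+$, so one should instead bound $1+\|U\|)(1+\|V\|)\le 1+\nu_{n,r}^2$ after using $(1+x)(1+y)\le 1+$ something, and keep the two $\nu^+_{r,r}$ factors, then note one of the $\max\{1,\cdot\}$ can be written with $\nu_{n,r}^+$ if one prefers a uniform notation). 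I would state the clean inequality with the two $\nu^+_{r,r}$ factors and then remark it is majorized by the displayed form.
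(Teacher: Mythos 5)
Your proposal follows essentially the same route as the paper's proof: part~(i) from Theorem~\ref{thdgr} (via the rank condition $\rank(A)+r\ge n$), part~(ii) by first treating the exactly rank-deficient case $\rank(A)=\rho=n-r$ via Corollary~\ref{cocca} and Lemma~\ref{lepr3} (so that $U_r,V_r\in\mathcal G^{r\times r}$), and then passing to the general case $\nrank(A)=\rho\le\rank(A)$ by perturbing $A$ by $E$ with $\|E\|=\sigma_{\rho+1}(A)$ and invoking Theorem~\ref{thpert} with $\theta\le 1/3$ to obtain the factor $1.5$, exactly as in the closing step of Theorem~\ref{thbpr1}. You also correctly flag the mismatch between what Corollary~\ref{cocca} actually produces, namely $(1+\|U\|)(1+\|V\|)\max\{1,\|U_r^{-1}\|\}\max\{1,\|V_r^{-1}\|/\sigma_\rho(A)\}$ with both $\|U_r^{-1}\|$ and $\|V_r^{-1}\|$ giving $\nu^+_{r,r}$, and the displayed bound's factors $(1+\nu_{n,r}^2)$ and $\max\{1,\nu_{n,r}^+\}$; since $(1+\nu_{n,r})^2\ge 1+\nu_{n,r}^2$ and $\nu^+_{r,r}\ge\nu^+_{n,r}$ (by Lemma~\ref{faccondsub} applied to the block $U_r$ of $S^TU$), the displayed form is in fact a weakening beyond what the derivation supports, so your instinct to state the clean inequality with two $\nu^+_{r,r}$ factors and $(1+\nu_{n,r})^2$ is the right one; the displayed expression appears to contain typos rather than a genuinely sharper bound.
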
  

\begin{proof}
Part (i) follows from Theorem \ref{thdgr}.

Next note that $U_r$ and $V_r$ are Gaussian matrices 
by virtue of Lemma \ref{lepr3}
because  $U,V\in \mathcal G^{n\times r}$.

At first let
the matrix $A$ be  rank deficient and well-conditioned,  
such that $\nrank(A)=\rank(A)$.

Then 
$||C^+||\le (1+\nu_{n,r}^2)\max\{1,\nu_{n,r}^+\}\max\{1,\frac{\nu^+_{r,r}}{\sigma_{\rho}(A)}\}$
by virtue of Corollary \ref{cocca},
and part (ii) of  Corollary \ref{cocca1} follows from 
 Theorems
\ref{thsignorm}
and  \ref{thsiguna}.
 
Finally, as at the end of our proof of Theorem \ref{thbpr1}, 
apply a small norm perturbation of this matrix and
extend this estimate to the general case where  $\nrank(A)=\rho\le \rank(A)$.
\end{proof}


\begin{remark}\label{readdcnd} 
The upper bound of the corollary on the norm $||C^{-1}||$
is quite reasonable. It is proportional to $\nu^+_{r,r}$,
which makes it superior to the bound 
 of Corollary \ref{coapaug},
proportional to $(\nu^+_{r,r})^2$
(cf. Remark \ref{rewknw}).
Moreover in our extensive tests the norm $||C^{-1}||$
has consistently stayed at 
a substantially lower level, namely at the level of 
 the norms $||K^+||$ of the matrices generated
from the same input matrices $A$  by means of
northwestern augmentation. 
\end{remark}


\subsection{Extension to additive preprocessing of rectangular matrices
 with multipliers of larger sizes}\label{srect}


We have bounded the condition number $\kappa (C)$ of the matrix 
$C=A+UV^T$ in two ways -- by linking additive preprocessing 
to augmentation and directly -- and in both cases,  
under the assumptions that $m=n$
 and $\rho=\nrank (A)>n-r$. Next we remove both of these assumptions,
at the price of increasing our upper bound on the norm $||C^+||$ well
above the square 
of the bounds of Corollaries \ref{coapaug} and \ref{cocca1}.
Such an increase may be due to some technicalities of our proof,
such as application of 
the Sherman---\-Mor\-rison--\-Wood\-bury 
formula\footnote{Hereafter we use the acronym {\em SMW}.} 
(cf. \cite[page 65]{GL13}) 
\begin{equation}\label{eqsmw}
C^{-1}=(\Sigma_{C_-}+\bar U\bar V^T)^{-1}=
\Sigma_{C_-}^{-1}-\Sigma_{C_-}^{-1}\bar U(I_{r-r_-}+\bar V^T\Sigma_{C_-}^{-1}\bar U)^{-1}\bar V^T\Sigma_{C_-}^{-1},
\end{equation} 
and this poses a research challenge of improving 
our estimates. 


\begin{theorem}\label{thcca10}  
Keep the assumptions of Corollary \ref{cocca1}, 
but allow that $\rho=\nrank (A)>n-r$.
 Then
\begin{equation}\label{eqaddpr} 
||C^{-1}||\le \nu_{n,n}^+\nu_{n}^+~{\rm if}~r\ge 2n-\rho
\end{equation} 
where $\nu_{n}$ and $\nu_{n,n}$ are bounded in (\ref{eqsst06})
and in part 2 of Theorem \ref{thsiguna}. 

If  $n-\rho<r< 2n-\rho$,
then
\begin{equation}\label{eqaddpr+}
||C^{-1}||\le (1+\gamma\nu_{n,r+\rho-n}^2||C_-^{-1}||)||C_-^{-1}||,~{\rm for}~
\gamma\le \nu_{r+\rho-n}^+\nu_{r+\rho-n,r+\rho-n}^+||C||,
 \end{equation}
 for an auxiliary matrix  $C_-$ such that the 
bounds of Corollaries \ref{coapaug} and \ref{cocca1}
on the norm  $||C^{-1}||$ apply to the norm $||C_-^{-1}||$ as well.
\end{theorem}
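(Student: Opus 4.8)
The plan is to reduce Theorem \ref{thcca10} to the already-established Corollaries \ref{coapaug} and \ref{cocca1} by splitting the rank-$r$ additive term $UV^T$ into two parts: one part that compensates exactly for the rank deficiency of the $\rho$-truncation of $A$, and a second ``surplus'' part that is then handled by a perturbation/SMW argument. First I would pass, as in the proofs of Theorems \ref{thbpr1} and \ref{th5.2}, to the diagonalized picture $A=\Sigma_A$ via Gaussian diagonalization (Lemma \ref{lepr3}), so that $C = \Sigma_A + UV^T$ with $U,V\in\mathcal G^{n\times r}$ and, after absorbing the small tail $\sigma_{\rho+1}(A)$ as at the end of the proof of Theorem \ref{thbpr1}, treat $A$ as having exact rank $\rho$ at the cost of the universal factor $1.5$. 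Partition $U=(U_1~|~U_2)$ and $V=(V_1~|~V_2)$ with $U_1,V_1\in\mathcal G^{n\times(n-\rho)}$ and $U_2,V_2\in\mathcal G^{n\times(r+\rho-n)}$, and set $C_- = \Sigma_A + U_1V_1^T$. By Corollary \ref{cocca1} (and Corollary \ref{coapaug}) the matrix $C_-$ is nonsingular with probability $1$ and $\|C_-^{-1}\|$ obeys the stated bounds, since $\rank(\Sigma_A) + (n-\rho) = n$.

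Next I would write $C = C_- + U_2 V_2^T$ and apply the SMW formula \eqref{eqsmw} with $\Sigma_{C_-}$ there playing the role of $C_-$ and $r-r_- = r+\rho-n$: this gives
$$C^{-1} = C_-^{-1} - C_-^{-1}U_2\bigl(I_{r+\rho-n} + V_2^TC_-^{-1}U_2\bigr)^{-1}V_2^TC_-^{-1}.$$
Taking norms yields $\|C^{-1}\| \le \bigl(1 + \|U_2\|\,\|V_2\|\,\|C_-^{-1}\|\,\|(I+V_2^TC_-^{-1}U_2)^{-1}\|\bigr)\|C_-^{-1}\|$, and $\|U_2\|,\|V_2\| \le \nu_{n,r+\rho-n}$. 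The surviving task is to bound the ``capacitance'' inverse $\|(I+V_2^TC_-^{-1}U_2)^{-1}\|$; I would identify $I + V_2^TC_-^{-1}U_2$ (up to the orthogonal factors from diagonalization and up to a harmless $\|C\|$ factor coming from relating $C_-^{-1}$ to a normalized piece) with a matrix of the form ``identity plus Gaussian times inverse times Gaussian'' of size $(r+\rho-n)\times(r+\rho-n)$, so that its inverse is controlled by $\nu^+_{r+\rho-n,r+\rho-n}$ together with $\nu^+_{r+\rho-n}$ (the norm of the inverse of an $(r+\rho-n)$-square matrix plus a Gaussian square matrix, estimated via \eqref{eqsst06}). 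Collecting the constants into $\gamma \le \nu^+_{r+\rho-n}\,\nu^+_{r+\rho-n,r+\rho-n}\,\|C\|$ gives exactly \eqref{eqaddpr+}. For the extreme case $r \ge 2n-\rho$, the same splitting can be pushed all the way: the surplus block alone has at least $n-\rho$ extra columns beyond what is needed, so $C$ itself is, after diagonalization, within the reach of the estimate $\|C^{-1}\|\le \nu^+_{n,n}\nu^+_n$, where $\nu^+_{n,n}$ comes from part 2 of Theorem \ref{thsiguna} applied to an $n\times n$ Gaussian factor and $\nu^+_n$ from \eqref{eqsst06} applied to $\Sigma_A$ plus a square Gaussian matrix; this yields \eqref{eqaddpr}.

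The main obstacle I expect is the bookkeeping that identifies the capacitance matrix $I + V_2^TC_-^{-1}U_2$ (and, in the $r\ge 2n-\rho$ case, the relevant square block of $C$) as genuinely of the ``$I +$ Gaussian $\cdot$ (bounded inverse) $\cdot$ Gaussian'' shape to which Theorem \ref{thsiguna} and \eqref{eqsst06} apply: the matrix $C_-^{-1}$ is \emph{not} independent of $U_2,V_2$, so one must argue via the independence of the column blocks of $U$ and $V$ (and Lemma \ref{lepr3}) that, conditioned on $U_1,V_1$, the blocks $U_2,V_2$ remain Gaussian and independent of $C_-$. Once that conditioning step is set up cleanly, the norm estimates are routine applications of the submultiplicativity of $\|\cdot\|$ and the cited Gaussian tail bounds, and the factor $\|C\|$ (bounded by $1+\nu_{n,r}^2$ via Theorem \ref{thaddpr}) absorbs the normalization mismatch between $C$ and $C_-$.
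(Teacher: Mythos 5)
Your high-level plan is the same as the paper's — split the rank-$r$ perturbation into a ``rank-completing'' block of width $n-\rho$ plus a surplus block, then apply the SMW formula — but both of your key steps are left at the level of assertion, and in each case the assertion is not self-evident.

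For $r\ge 2n-\rho$, you say the bound $\nu^+_{n,n}\nu^+_n$ comes ``from \eqref{eqsst06} applied to $\Sigma_A$ plus a square Gaussian matrix.'' But after splitting $U=(U_+\,|\,U_n)$, $V=(V_+\,|\,V_n)$, you have $C=\Sigma_A+U_+V_+^T+U_nV_n^T$, which is not ``$\Sigma_A$ plus a square Gaussian'': the middle term $U_+V_+^T$ is in the way. The step that actually makes this work is the factorization $C=U_n\bigl(U_n^{-1}C_++V_n^T\bigr)$ with $C_+=\Sigma_A+U_+V_+^T$, so that the inner factor is a (conditionally fixed) matrix plus the square Gaussian $V_n^T$ and \eqref{eqsst06} applies to it, while $\|U_n^{-1}\|=\nu^+_{n,n}$ supplies the other factor. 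Without this factorization the product $\nu^+_{n,n}\nu^+_n$ has no route to $\|C^{-1}\|$.

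For $n-\rho<r<2n-\rho$, you reduce correctly to bounding the capacitance inverse $\gamma=\|(I_{p}+V_2^TC_-^{-1}U_2)^{-1}\|$ with $p=r+\rho-n$, but then merely assert that the shape ``identity plus Gaussian $\cdot$ inverse $\cdot$ Gaussian'' is ``controlled by $\nu^+_{p,p}$ together with $\nu^+_{p}$.'' That shape alone does not give a bound; you need the specific algebraic factorization the paper uses. After a \emph{second} Gaussian diagonalization (of $C_-$, not of $A$ — this matters, because the SMW identity then has the diagonal $\Sigma_{C_-}$ in the middle, and conditionally on $U_1,V_1$ the transformed $U_2,V_2$ remain Gaussian by Lemma \ref{lepr3}), one partitions $U_2,V_2$ into a leading $p\times p$ block and the rest, splits $\Sigma_{C_-}=\diag(\Sigma_1,\Sigma_2)$ with $\Sigma_1\in\mathbb R^{p\times p}$, writes $S=(I+V_{2,2}^T\Sigma_2^{-1}U_{2,2})+V_{2,1}^T\Sigma_1^{-1}U_{2,1}$, and then factors $S=V_{2,1}^T\Sigma_1^{-1}\bar B$ with $\bar B=\Sigma_1V_{2,1}^{-T}(I+V_{2,2}^T\Sigma_2^{-1}U_{2,2})+U_{2,1}$. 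The three factors $\|\bar B^{-1}\|$, $\|\Sigma_1\|$, $\|V_{2,1}^{-T}\|$ are exactly what yield $\nu^+_{p}$, the $\|C\|$-type term, and $\nu^+_{p,p}$. This factorization is the nontrivial heart of the proof and cannot be skipped. Finally, the independence concern you flag as the main obstacle is actually a non-issue: $C_-$ depends only on $(U_1,V_1)$, which are independent of $(U_2,V_2)$ by construction, so conditioning on $(U_1,V_1)$ and invoking Lemma \ref{lepr3} settles it immediately. The genuine obstacle is the algebraic factorization of $S$, which your sketch omits.
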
  

\begin{proof}
In the proof we encounter  matrices that are nonsingular with probability 1, by virtue of 
Theorem \ref{thdgr}. Due to Assumption 1,
we invert them with no further comments.

At first write $r_-=n-\rho$, fix $r_+\ge r_-$,
let $$r=r_++n\ge 2n-\rho,$$
and partition the matrices $U$ and $V$  as follows,
 $$U=(U_+~|~U_n)~{\rm and}~V=(V_+~|~V_n)$$ where 
$$U_+,V_+\in \mathcal G^{n\times r_+}~{\rm and}~ 
U_n,V_n\in \mathcal G^{n\times n}.$$ 

Note that  $$C=C_++U_nV_n^T=U_n(U_n^{-1}C_++V_n^T)~{\rm for}~C_+=A+U_+V_+^T.$$ Hence 
$$C^{-1}=(U_n^{-1}C_++V_n^T)^{-1}U_n^{-1}~{\rm and}~
||C^{-1}||\le ||(U_n^{-1}C_++V_n^T)^{-1}||~||U_n^{-1}||.$$
 
Recall that $||(U_n^{-1}C_++V_n^T)^{-1}||=\nu_n^+$ and $||U_n^{-1}||=\nu^+_{n,n}$,
and obtain bound (\ref{eqaddpr}).

\medskip

Next we prove bound (\ref{eqaddpr+}). Assume that 
 $$r_-\le r<r_-+n=2n-\rho,$$ 
and partition the matrices $U$ and $V$ as follows,
 $$U=(U_-~|~\bar U)~{\rm and}~V=(V_-~|~\bar V)$$ where 
$$U_-,V_-\in \mathcal G^{n\times r_-}~{\rm and}~ 
\bar U,\bar V\in \mathcal G^{n\times (r-r_-)}.$$ 

Furthermore, write  $C_-=A+U_- V_-^T$ and $C=C_-+\bar U\bar V^T$ and, by applying
Gaussian diagonalization, reduce our study to the case where $C_-$ 
 is the $n\times n$
diagonal matrix of its singular values,
 $C_-=\Sigma_{C_-}$. 

Represent the matrix $C^{-1}$ by applying the SMW formula 
 (\ref{eqsmw}), write

$$S_{r-r_-}=I_{r-r_-}+\bar V^T\Sigma_{C_-}^{-1}\bar U~{\rm and}~\gamma=||(S_{r-r_-}^{-1}||,$$
recall that
 $||\bar U||=\nu_{n,r-r_-}$ and $||\bar V||=\nu_{n,r-r_-}$, 
 and obtain
$$||C^{-1}||\le
 (1+||\bar U||~\gamma~||\bar V||~||C_-^{-1}||)~||C_-^{-1}||=
(1+\gamma~\nu_{n,r-r_-}^2||C_-^{-1}||)~||C_-^{-1}||$$
where the upper bounds of Corollaries \ref{coapaug} and \ref{cocca1}
hold for $||C^{-1}||$ replaced  by   $||C_-^{-1}||$.

\medskip

It remains to
estimate $\gamma$.
Partition the matrices $\bar U$, $\bar V$,   and $\Sigma_{C_-}$ as follows,
$$ \bar V^T=(\bar V^T_{r-r_-}~|~\bar V^T_{n-r+r_-}),~
\bar U^T=(\bar U^T_{r-r_-}~|~\bar U^T_{n-r+r_-}),~{\rm and}~
\Sigma_{C_-}=\diag(\Sigma_{C_{-,r-r_-}},\Sigma_{C_{-,n-r+r_-}})$$
where 
$$\bar U^T_{k},\bar V^T_{k}\in \mathcal G^{(n-r+r_-)\times k}~{\rm and}~
\Sigma_{C_{-,k}}\in \mathbb R^{k\times k},~{\rm for}~k=r-r_-,n-r+r_-.$$
Write $B=I_{r-r_-}+\bar V^T_{n-r+r_-}\Sigma_{C_{-,n-r+r_-}}^{-1}\bar U_{n-r+r_-}
$ and
$\bar B=\Sigma_{C_{-,r-r_-}}\bar V^{-T}_{r-r_-}B+\bar U_{r-r_-}$
and note that
$$S=I_{r-r_-}+\bar V^T\Sigma_{C_-}^{-1}\bar U=
B+\bar V^T_{r-r_-}\Sigma_{C_{-,r-r_-}}^{-1}\bar U_{r-r_-}=
\bar V^T_{r-r_-}\Sigma_{C_{-,r-r_-}}^{-1}\bar B,$$
and so
 $$S^{-1}=\bar B^{-1}\Sigma_{C_{-,r-r_-}}\bar V^{-T}_{r-r_-}~{\rm and}~ 
\gamma=
||S^{-1}||\le ||\bar B^{-1}||~||\Sigma_{C_{-,r-r_-}}||~||\bar V^{-T}_{r-r_-}||.$$
Note that 
$$||\bar V^{-T}_{r-r_-}||=\nu_{r-r_-,r-r_-}^+,~||\Sigma_{C_{-,r-r_-}}||=||C||,
~{\rm and}~ ||\bar B^{-1}||=\nu_{r-r_-}^+,$$
and so $$\gamma\le \nu_{r-r_-,r-r_-}^+\nu_{r-r_-}^+||C||.$$
This completes our proof of bound (\ref{eqaddpr+}).
\end{proof}

Next we
extend Theorem \ref{thcca10}
to the case where $m\neq n$. With no loss of generality we let
$m\ge n$.

\begin{theorem}\label{th5.3}  
Assume that $A$ is an
$m\times n$ matrix such that $||A||=1$,
$\nrank(A)=\rho\ge n-r$, 
$m\ge n>\rho$, $U\in \mathcal G^{m\times r}$,
$V\in \mathcal G^{n\times r}$, and $C=A+UV^T$.

Then 
\begin{equation}\label{eqapnrmn}
||C||\le ||A||+||U||~||V||\le ||A||+\nu_{m,r}\nu_{n,r}, 
\end{equation}
the matrix $C$ has full rank (with probability 1), and
bounds of Theorem  \ref{thcca10} apply to the norm $||C^+||$
replacing the norm $||C^{-1}||$.
\end{theorem}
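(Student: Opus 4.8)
The plan is to reduce the rectangular case $m \ge n > \rho$ to the square case covered by Theorem \ref{thcca10} via a padding-and-augmentation trick, so that essentially no new estimates are needed. First I would verify \eqref{eqapnrmn}: this is immediate from the triangle inequality $\|C\| \le \|A\| + \|UV^T\| \le \|A\| + \|U\|\,\|V\|$ together with $\|U\| = \nu_{m,r}$ and $\|V\| = \nu_{n,r}$. Full column rank of $C$ with probability 1 follows from Theorem \ref{thdgr} in the same way as in Theorem \ref{thaddpr} and Corollary \ref{cocca1}: since $\nrank(A) = \rho \ge n - r$, the rank-$\rho$ truncation $A - E$ of $A$ satisfies $\rank(A-E) + r \ge n$, and a Gaussian additive perturbation of a matrix of this rank is full-rank almost surely; a small-norm perturbation argument (as used repeatedly in Theorems \ref{thbpr1} and \ref{thcca10}) extends this to $C = A + UV^T$ itself. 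Note that because $\|A\| = 1$ and $m \ge n$, we have $\|C^+\| = 1/\sigma_n(C)$, so bounding $\|C^+\|$ is bounding $1/\sigma_n(C)$.

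The core reduction is as follows. Write the compact SVD $A = S_A \Sigma_A T_A^T$ and apply Gaussian diagonalization — the orthogonal map $C \to S_A^T C \,T_A$, which by Lemma \ref{lepr3} sends $U \to S_A^T U \in \mathcal G^{m\times r}$ and $V \to T_A^T V \in \mathcal G^{n\times r}$ and preserves all singular values of $C$ — to assume $A = \diag(\widehat\Sigma_A, O_{m-\rho,\,n-\rho})$, i.e. the northwestern $\rho\times\rho$ block carries the large singular values and everything else is a block of size $\sigma_{\rho+1}(A)$. Now partition $U = \binom{U'}{U''}$ with $U' \in \mathcal G^{n\times r}$, $U'' \in \mathcal G^{(m-n)\times r}$, so that $C = \binom{A' + U'V^T}{U''V^T}$ where $A'$ is the top $n\times n$ block of $A$. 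The matrix $C' := A' + U'V^T$ is exactly an instance of the square additive preprocessing of \eqref{eqrnap} with $\nrank(A') = \rho \ge n - r$, so Theorem \ref{thcca10} bounds $\|(C')^+\| = \|(C')^{-1}\|$ (in both regimes $r \ge 2n-\rho$ and $n-\rho < r < 2n-\rho$). Since $C'$ is obtained from $C$ by deleting the bottom $m-n$ rows, Lemma \ref{faccondsub} gives $\sigma_n(C) \ge \sigma_n(C')$ when $C'$ is square nonsingular — equivalently $\|C^+\| \le \|(C')^{-1}\|$ — and hence every bound of Theorem \ref{thcca10} transfers verbatim to $\|C^+\|$, with the small-norm perturbation handling the distinction between $\nrank$ and $\rank$ as before.

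I expect the main obstacle to be the bookkeeping around the two regimes of $r$ in Theorem \ref{thcca10} and making sure the auxiliary matrix $C_-$ and the constant $\gamma$ in \eqref{eqaddpr+} are still well-defined after the row-deletion reduction: one must check that deleting rows of $C$ commutes appropriately with the SMW-based splitting $C = C_- + \bar U \bar V^T$, or equivalently perform the splitting on the square block $C'$ first and only then invoke Lemma \ref{faccondsub}. The cleaner route — and the one I would write up — is to do the entire reduction to a square matrix up front (Gaussian diagonalization, then discard the bottom $m-n$ rows, then Lemma \ref{faccondsub}), and invoke Theorem \ref{thcca10} as a black box on the resulting $n\times n$ matrix, so that no part of the rectangular structure survives into the delicate SMW estimate. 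The norm bound \eqref{eqapnrmn} and the rank statement are routine and can be dispatched in one or two lines each.
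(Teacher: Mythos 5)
Your proposal follows essentially the same route as the paper: Gaussian diagonalization to replace $A$ by $\Sigma_A$, then discard the bottom $m-n$ rows (the paper pre-multiplies by $I_{n,m}=(I_n\,|\,O_{n,m-n})$, which is the same thing), then note $\sigma_j(C)\ge \sigma_j(C_n)$ so $\|C^+\|\le\|C_n^{-1}\|$, and finally invoke Theorem~\ref{thcca10} on the square $n\times n$ matrix $C_n=\Sigma_{A,n}+U_nV^T$. Your extra remarks on the two regimes of $r$ and on performing the row deletion before the SMW splitting match the paper's ``reduce first, then apply Theorem~\ref{thcca10} as a black box'' structure.
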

\begin{proof}
We only estimate the norm $||C^+||$.

By applying Gaussian diagonalization reduce the problem to the case where the  matrix 
$A$ is replaced by the diagonal matrix $\Sigma_A$ of its singular values.

Pre-multiply the equation $C=A+UV^T$ by the matrix $I_{n,m}=(I_n~|~O_{n,m-n})$,
write $C_n=I_{n,m}C$, $\Sigma_{A,n}=I_{n,m}\Sigma_A$, 
and $U_n=I_{n,m}U$, and obtain that $C_n=\Sigma_{A,n}+U_nV^T$,
$\sigma_j(C)\ge \sigma_j(C_n)$ for all $j$, and so $||C^+||\le||C_n^{-1}||$.
Apply Theorem \ref{thcca10} to the matrices $\Sigma_{A,n}$, 
$U_n$, and $C_n$ replacing the matrices $A$, $U$,  and $C$, respectively.
\end{proof}


\section{Can We Weaken Randomness?}\label{sweak}


\subsection{Structured and sparse randomization: missing formal support for 
its empirical power}\label{sspstremp}


Would the results of the previous two sections
 still hold if
we weaken randomness of
the matrices $U$,  $V$ and $W$
by choosing them sparse, structured, or
defined under other probability distributions
rather than Gaussian?
For the goal of producing
 matrices of full rank 
(with probability 1),
the answer is ``yes"
(cf. \cite[Section 2.13]{BP94}, \cite{PZ15}, 
 \cite{PZa}, and  \cite{PZb}),
but would the pre-processed matrices be also well-conditioned?

The affirmative answer is
known for low-rank approximation by means of random oversampling, 
but only for a narrow 
class of structured preprocessing, and such results have only been proven
at the price of allowing a much greater 
probability of failure versus Gaussian preprocessing. 
These results can be readily extended to augmentation and additive preprocessing
(see the next subsection).

In our tests we have observed consistently that replacing Gaussian  preprocessors 
by sparse and structured preprocessors of a much wider class
neither weakens the efficiency of our preprocessing
nor increases the frequency  of its failure. 
In Sections \ref{sdual}--\ref{sdualnw} we 
formally support these observations,
by applying our techniques
of duality and derandomization.


\subsection{Structured randomization with SRFT 
and subcirculant matrices}\label{sweak1}

 
Preprocessing with SRFT  
structured matrices (cf. Appendix \ref{ssrft})
is efficient for
 low-rank approximation
of a matrix by means of random 
oversampling  (cf. \cite[Section 11]{HMT11}).
By using Theorem \ref{thsrft} and Remark 
 \ref{resrft}, we readily extend this property to the case of randomized 
augmentation and additive preprocessing.
Namely, as in the case of low-rank
approximation, SRFT preprocessing still works
 efficiently for the worst case input 
with a probability close to 1, 
although this is proven
only for SRFT  matrices of larger size
 (due to using the oversampling parameter $\rho_+-\rho$
in Theorem  \ref{thsrft}
and Remark 
 \ref{resrft}) and at the price of accepting a 
greater probability of failure compared 
to the case of Gaussian  preprocessing
(see Remark \ref{refprob}). 
 \cite[Section 4.6]{HMT11} lists a few other 
classes of structured matrices
as alternatives that have power similar to the SRFT matrices. 

In the case of western augmentation  with SRFT, our analysis
 boils down to 
bounding the norms 
$||U_{00}||$ and $||\bar U^{+}||$ where
the matrices  $U_{00}$ and $\bar U$
are the blocks of the matrix
$S_A^TU=\begin{pmatrix}
U_0   \\
U_1   
\end{pmatrix}$, $S_A$ is the orthogonal matrix  
of the left singular vectors of the $m\times n$ input matrix $A$, 
and $U$ is an $n\times q$ SRFT matrix, for  $q$ satisfying
$$4\Big (\sqrt{m-\rho}+\sqrt{8(m-\rho)m}\Big)^2\log ({m-\rho})\le q\le m.$$
It remains 
to analyze randomized western augmentation based on 
Theorem  \ref{thsrft},
which implies that the probability of failure is $O(\frac{1}{m-\rho})$
in our case.
If $m-\rho\gg \log (m)$, then we can obtain a little
more favorable lower estimates for $q$, based on  Remark \ref{resrft}. 

The result is readily extended to the case of northern 
and then northwestern augmentation with SRFT. 
Similarly we can extend our  analysis of additive preprocessing 
based  on  Theorem  \ref{thsrft} and Remark \ref{resrft}.
We omit the details.

Fact \ref{facsrft} implies that
Theorem \ref{thsrft} and Remark 
 \ref{resrft} still hold if
 we replace an $n\times \rho_+$ SRFT matrix
by the matrix $\frac{n}{l_+}~CR$, that is, by the 
scaled product 
of an $n\times n$ random circulant matrix 
$Z=(z_{i-j\mod n})_{i,j=0}^{n-1}$ and an
 $n\times \rho_+$  random matrix $R$ of  Theorem \ref{thsrft}.
If we further  
substitute the matrices $(I_{\rho_+}~|~O_{n,\rho})^T$
or $(|~O_{n,\rho}~|~I_{\rho_+})^T$ for
the factor $R$ of the SRFT, then instead of 
SRFT matrices we arrive at subcirculant matrices (defined in Appendix \ref{scsct}).
If the input matrix $A$ is subcirculant or, more generally, has 
structure of Toeplitz type (cf. \cite{P01} on these matrices), then 
using  subcirculant preprocessing
 is  attractive
because this preserves matrix structure.
We cannot extend the proofs of 
Theorem   \ref{thsrft} and Remark 
 \ref{resrft} from SRFT matrices to such blocks,
but in our extensive tests the  impact of
   our preprocessing    
on the condition numbers of  the input
matrices
remained about the same when
 we properly
scaled these blocks
and used  them
 instead of 
SRFT or Gaussian matrices.
In the next subsections we provide some formal support for 
these empirical observations.


\subsection{Dual additive preprocessing}\label{sdual}


According to our study,  Gaussian and SRFT  augmentation and additive
preprocessing are {\em universal}, that is, produce well-conditioned 
matrices of full rank
with a probability close to 1
{\em for any $m\times n$ input matrix} having numerical rank $\rho<\min\{m,n\}$.

Next we observe (cf. Section \ref{sintro}) 
that additive preprocessing with any well-conditioned matrix of full rank
applied to {\em average input matrix} defined under the Gaussian probability distribution
 is as efficient as  Gaussian  preprocessing.
It follows that preprocessing with a sparse and structured well-conditioned matrix of full rank
 is 
efficient when it is applied to a statistically typical input matrix,
that is, to almost any matrix with a narrow class of exceptions.

Let us specify our duality argument. Assume that we are
given three positive integers $m$, $n$ and $r$, where $m\ge n\ge r$, 
a pair of $n\times r$ 
matrices $U\in \mathbb R^{m\times r}$ and $V\in \mathbb R^{n\times r}$,
and another pair of matrices $\bar U\in \mathbb R^{m\times \rho}$ 
and $\bar V\in \mathbb R^{n\times \rho}$, for $\rho=n-r$.
Then write $A=\bar U\bar V^T$ 
and consider additive preprocessing 
$$A=\bar U\bar V^T\rightarrow C=A+UV^T.$$
So far we assumed that $U$ and $V$ were Gaussian matrices,
and the matrices $\bar U$ and $\bar V$ were fixed.
In the {\em dual case} we assume that  $U$ and $V$ is any
pair of well-conditioned matrices of full rank $r$ and 
that the matrices
$\bar U$ and $\bar V$ are Gaussian; 
then we call the matrix $A=\bar U\bar V^T$
{\em factor Gaussian of rank} $\rho$.
Furthermore 
we call a matrix $\tilde A=A+E$ 
a {\em small-norm perturbation of a factor Gaussian matrix of rank} $\rho$
if the norm $||E||$ is small in context.

Clearly,
our analysis in the previous section can be immediately extended to the case
where additive preprocessing is applied to
a small-norm perturbation $\tilde A$ of
 average  factor Gaussian matrix $\tilde U\tilde V^T$ having rank $\rho$,
\begin{equation}\label{eqapavr}
\tilde A=\bar U\bar V^T+E\rightarrow \tilde C=\tilde A+UV^T,
\end{equation}
for matrices $\bar U$, $\bar V^T$, and $E$ specified above and for
 any fixed pair of $n\times r$ well-conditioned normalized 
matrices $U$ and $V$ of full rank $r$.
Here we assume that average matrix $\tilde A$
is defined over all pairs of Gaussian matrices $\bar U$
and $\bar V$, which may 
depend on one another and may even coincide with 
one another.
This result promises {\em sidnificant simplification of additive preprocessing}
by means of
 enforcing desired 
structure and  patterns of sparseness 
  onto the
matrices $U$ and $V$
and should motivate substantial research 
effort in this direction. 


\subsection{Dual western augmentation}\label{sdualw}


Next we extend our duality results to western augmentation
$A\rightarrow K=(U~|~A)$.
 So far we studied the case where $A$ was a fixed $m\times n$ matrix having
numerical rank $\rho$ and $U\in \mathcal G^{m\times q}$, but  
our next theorem (cf. also Remark \ref{rewstdlprt}) enables us to
extend our analysis to the map 
\begin{equation}\label{eqaugdl} 
\tilde A=\bar U\bar V^T+E\rightarrow \tilde K=(U~|~\tilde A)
\end{equation}
where $\tilde A$ is the same
matrix of (\ref{eqapavr}), that is, a small-norm perturbation of a factor Gaussian
matrix, 
and $U$ is any normalized well-conditioned matrix of full rank.


\begin{theorem}\label{thwstdl}
Assume
that an $m\times q$  matrix $U$ 
is normalized and has full numerical rank $l=\min\{m,q\}$.
Define its randomized 
western augmentation 
by the map
$A\Longrightarrow K=(U~|~A)$ 
for  $A=\bar U\bar V^T$, $\bar U\in \mathcal G^{m\times \rho}$,
and $\bar V\in \mathcal G^{n\times \rho}$.
Then 
\begin{equation}\label{eqnrmkdl}
||K||\le ||A||+||U||= 1+\nu_{m,\rho}\nu_{\rho,n},
\end{equation}
for the random variables $\nu_{m,\rho}$ and $\nu_{\rho,n}$
of Definition \ref{defenu}. 
Furthermore 

(i) the matrix $K$ is 
 rank deficient or ill-conditioned if $q+\rho<m$.

(ii) Otherwise 
it has full rank 
and 

(iii) satisfies the following bounds,

\begin{equation}\label{equdl} 
||K^+||\le ||U^+||~{\rm if}~q\ge m,
\end{equation}
\begin{equation}\label{eqk+dl} 
||K^+||\le ||U^+||\max\{1,\nu_{m-q,\rho}^+\nu_{\rho,n}^+\}(1+\nu_{q,\rho}\nu_{q,n})~{\rm if}~m-\rho\le q<m.
\end{equation}
\end{theorem}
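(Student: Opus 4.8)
The plan is to mirror, in the "dual" setting, the three-step structure of the proof of Theorem~\ref{thwst} (western augmentation), exchanging the roles of the fixed and the Gaussian factors. First I would dispense with the norm bound \eqref{eqnrmkdl}: since $A=\bar U\bar V^T$ we have $\|A\|\le\|\bar U\|\,\|\bar V\|=\nu_{m,\rho}\nu_{\rho,n}$, and $\|U\|=1$ by normalization, so $\|K\|=\|(U\mid A)\|\le\|U\|+\|A\|$ gives the claim. Part~(i) is the trivial rank count: if $q+\rho<m$ then $K$ has at most $q+\rho<m$ columns contributing rank through either block's column space dimension, so $K$ cannot have full row rank $m$; part~(ii) is Theorem~\ref{thdgr} applied to the Gaussian factors $\bar U,\bar V$, which make $\mathrm{rank}(A)=\rho$ with probability~1 and then $\mathrm{rank}(K)=m$ when $q+\rho\ge m$.

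For \eqref{equdl}, the case $q\ge m$: here $U$ already has full row rank $m$, so $(U\mid A)$ has a right inverse obtained from $U^+$ alone, namely $\binom{U^+}{O}$ (a right inverse of $K$), whence $\|K^+\|\le\|U^+\|$. The substance is \eqref{eqk+dl}. I would follow the Gaussian-diagonalization template of Theorem~\ref{thwst} but applied to $U$ rather than to $A$: write a compact SVD $U=S_U\Sigma_U T_U^T$ and left/right multiply $K$ by orthogonal matrices $\mathrm{diag}(\cdot)$ so that the $U$-block becomes $\mathrm{diag}(\Sigma_U,\,0)$ while the $A$-block is replaced by $\widehat A=\bar U_0\bar V^T$ with $\bar U_0$ still Gaussian by Lemma~\ref{lepr3}; the orthogonal transformations preserve all singular values of $K$ and of $U$. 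After deleting the zero rows coming from the rectangular $U$-block (which only drops $\|K^+\|$ by Lemma~\ref{faccondsub}), one is left with a matrix of the shape
$$\bar K=\begin{pmatrix} \Sigma_U & \ast \\ O & \widehat A_{\text{low}} \end{pmatrix}$$
where $\widehat A_{\text{low}}$ is the bottom $(m-q)\times n$ block of $S_U^T A$, which equals (a Gaussian reshuffle of) $\bar U_1\bar V^T$ with $\bar U_1\in\mathcal G^{(m-q)\times\rho}$. I would then bound $\|\bar K^{-1}\|$ (or $\|\bar K^+\|$) by the standard $2\times2$ block-triangular inverse formula: the diagonal blocks contribute $\|\Sigma_U^{-1}\|=\|U^+\|$ and $\|\widehat A_{\text{low}}^+\|$, and the off-diagonal coupling contributes the factor $(1+\|U_{\text{coupling}}\|)$. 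Finally, $\|\widehat A_{\text{low}}^+\|=\|(\bar U_1\bar V^T)^+\|\le\|\bar U_1^+\|\,\|\bar V^+\|=\nu_{m-q,\rho}^+\,\nu_{\rho,n}^+$ (using $m-q\le\rho$ and $\rho\le n$ so the factors have no deficiency with probability~1), and the coupling norm is bounded by the norm of the relevant $q\times\rho$ and $q\times n$ Gaussian pieces, i.e.\ by $\nu_{q,\rho}\nu_{q,n}$; combining gives \eqref{eqk+dl}. The small-norm perturbation $E$ (so that $\tilde A=\bar U\bar V^T+E$) is then absorbed, exactly as in the end of the proof of Theorem~\ref{thbpr1}, by a Theorem~\ref{thpert}-type argument with a mild constant, which is why \eqref{eqaugdl} inherits the same bound up to a harmless factor.

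The main obstacle I anticipate is the careful bookkeeping in the diagonalization when $U$ is rectangular with $q<m$: one must track which blocks of $S_U^T A$ survive the deletion of the zero rows of the transformed $U$-block, and verify that the surviving block is genuinely a product of \emph{independent} Gaussian factors of the stated shapes (so that $\nu^+_{m-q,\rho}$ and $\nu^+_{\rho,n}$ are legitimate and Theorems~\ref{thsignorm}--\ref{thsiguna} apply), rather than a correlated submatrix of a larger Gaussian. This is where Lemma~\ref{lepr3} must be invoked precisely — once for the left orthogonal action on $\bar U$ and once for the right action on $\bar V^T$ — and where the hypothesis $m-\rho\le q$ is used to guarantee $\bar K$ is square (hence its pseudoinverse is an inverse) so that the block-triangular inversion formula is available. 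Everything else is the same routine norm chasing as in Section~\ref{saug}.
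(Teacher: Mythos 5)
Your strategy is the same as the paper's: normalize the norm bound and part~(i) directly, get part~(ii) from Theorem~\ref{thdgr}, handle $q\ge m$ via the explicit right inverse $\binom{U^+}{O}$, and for $m-\rho\le q<m$ apply Gaussian diagonalization to $U$ (not to $A$) so that $K$ becomes block upper-triangular with $\Sigma_U$ in the $(1,1)$ corner and a Gaussian product $F=G_{m-q,\rho}G_{\rho,n}$ in the $(2,2)$ corner. The target bound $\|F^+\|\le\nu^+_{m-q,\rho}\nu^+_{\rho,n}$ and the $(1+\|\text{coupling}\|)\max\{1,\|\Sigma_U^{-1}\|\}\max\{1,\|F^+\|\}$ shape are also what the paper derives. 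So the route is right.

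There is a concrete gap, though, precisely at the point you flag as the ``main obstacle.'' After diagonalizing $U$, the matrix $\bar K=\begin{pmatrix}\Sigma_U&\ast\\O&F\end{pmatrix}$ is $m\times(q+n)$, i.e.\ genuinely rectangular (since $q<m\le n+q$). Your claim that the hypothesis $m-\rho\le q$ ``guarantees $\bar K$ is square'' is false: that hypothesis only ensures $F$ has full row rank $m-q$, not that $q+n=m$. Consequently the $2\times 2$ block-triangular \emph{inverse} formula you invoke is not available; you would need a pseudoinverse of a rectangular block-triangular matrix, and that does not factor through the diagonal blocks in the way the inverse formula does. The paper closes exactly this gap by taking the compact SVD $F=S_F\Sigma_FT_F^T$ (with $S_F,\Sigma_F\in\mathbb R^{(m-q)\times(m-q)}$, possible because $\rho\ge m-q$), and then \emph{right}-multiplying $K$ by the column-orthonormal matrix $\diag(I_q,T_F)$ to obtain a \emph{square} $m\times m$ upper block-triangular matrix $\widehat K$ with $\|K^+\|\le\|\widehat K^{-1}\|$ (since $K(TT^T)K^T\preceq KK^T$ for column-orthonormal $T$). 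Only then does the block inverse formula apply cleanly, giving the factors $\|\Sigma_U^{-1}\|$, $\|\Sigma_F^{-1}\|$ and the coupling. Relatedly, your remark about ``deleting the zero rows coming from the rectangular $U$-block'' is confused: those rows of the $U$-block are zero but the corresponding rows of $K$ are not (they carry the $F$ block), so nothing can be deleted there; the reduction is in the \emph{column} dimension via $T_F$. Finally, the step $\|(\bar U_1\bar V^T)^+\|\le\|\bar U_1^+\|\,\|\bar V^+\|$ is correct in this rank configuration, but it does not follow from the identity $(AB)^+=B^+A^+$ (which fails here since $\bar U_1$ has full row rank, not column rank); the paper gives a short SVD-based argument for it, and you should too, or at least cite the inequality $\sigma_{\min}(\bar U_1\bar V^T)\ge\sigma_{m-q}(\bar U_1)\,\sigma_{\rho}(\bar V)$ which delivers it directly.
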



\begin{remark}\label{rewstdlprt}
By using Theorem \ref{thpert} one can
extend Theorem  \ref{thwstdl} (and similarly Theorem  \ref{thnwstdl}) 
to the case where $\tilde A=\bar U\bar V^T+E$
for a perturbation matrix $E$ of small norm replaces matrix $A=\bar U\bar V^T$.
\end{remark}

\begin{proof}
 Readily verify bounds (\ref{eqnrmkdl}) and (\ref{equdl}) 
and
part (i). 
Deduce part (ii) from Theorem \ref{thdgr}. 
 
It remains to prove bound (\ref{eqk+dl})
provided that $m\le q+\rho$.

By applying Gaussian diagonalization,
reduce this task to the case where
 the matrix  
$U$ is the diagonal matrix $\Sigma_U$ of its singular values,
that is,  
   
$$
K=\begin{pmatrix}
\Sigma_U  &  G_{q,\rho} G_{\rho,n}  \\
O_{m-q,q}   &   G_{m-q,\rho} G_{\rho,n} 
\end{pmatrix}$$ where $G_{i,j}\in \mathcal G^{i\times j}$,
for $i=q$ and $i=m-q$ and for $j=\rho$ and $j=n$.

Write $F=G_{m-q,\rho} G_{\rho,n}$ and let $F=S_F\Sigma_FT_F^T$
be SVD. 

Here $S_F$ and $\Sigma_F$ are $(m-q)\times (m-q)$
matrices (cf. Theorem \ref{thdgr} and Assumption 1)
because $\rho\ge m-q$ by assumption, and $T_F^T\in \mathbb R^{(m-q)\times n}$.

Define the map 
$\bar K\rightarrow \widehat K=\diag(I_q,S_F^T) \bar K\diag(I_q,T_F)=
\begin{pmatrix}
\Sigma_U  &     G_{q,\rho} G_{\rho,n}  \\
O_{m-q,q}   &   \Sigma_F 
\end{pmatrix}$
and note that the matrix $\widehat K$ is nonsingular 
and that
$$||K^+||\le || \widehat K^{-1}||$$
 because the matrices
$\diag(I_q,S_F^T)$ and $\diag(I_q,T_F)$ 
are orthogonal and because $n\ge \rho\ge m-q$.  

Deduce readily  that 
$$||\widehat  K^{-1}||\le 
(1+||G_{q,\rho} G_{\rho,m-q}||)\max\{1,||\Sigma_U^{-1}||\}\max\{1,||\Sigma_F^{-1}||\}.$$

\noindent Recall that $||U^+||=||\Sigma_U^{-1}||\ge 1$ because $||U||=1$ and that
$||G_{q,\rho} G_{\rho,m-q}||\le \nu_{q,\rho} \nu_{\rho,m-q}$.
Hence 
$$||K^+||\le||\widehat   K^{-1}||\le (1+\nu_{q,\rho} \nu_{\rho,m-q})||U^+||\max\{1,||\Sigma_F^{-1}||\}.$$

Obtain bound (\ref{eqk+dl}) by combining this inequality with the 
estimate  
$||\Sigma_F^{-1}||\le \nu_{m-q,\rho}^+\nu_{m-q,n}^+$.

In order to prove the latter estimate, write SVDs 
$G_{m-q,\rho}=S \Sigma T^T$ and $G_{\rho,n}=\bar S \bar \Sigma \bar T^T$
and observe that $S, \Sigma, \bar S, \bar \Sigma \in \mathbb R^{(m-q)\times (m-q)}$
because $\rho\ge m-q$. 

Write $F_-=\Sigma T^T\bar S \bar \Sigma$ and
let $F_-=S_{F_-}\Sigma_{F_-}T^T_{F_-}$ be SVD.
Then $S_{F_-},\Sigma_{F_-},T^T_{F_-}\in \mathbb R^{(m-q)\times (m-q)}$,
and so $SS_{F_-}$ and $T^T_{F_-} T^T$
 are orthogonal matrices. 

Hence we can write $S_F=SS_{F_-}$, $\Sigma_F=\Sigma_{F_-}$, and $T_F^T=T^T_{F_-} T^T$,
defining SVD $F=S_F\Sigma_FT_F^T$. 

Therefore $\Sigma_F=\Sigma_{F_-}$,
$\Sigma_F^{-1}=\Sigma_{F_-}^{-1}$, and so
 $||\Sigma_F^{-1}||=||\Sigma_{F_-}^{-1}||\le ||\Sigma^+||~||\bar \Sigma^+||$.

Substitute $||\Sigma^+||=||G_{m-q,\rho}^+||=\nu_{m-q,\rho}^+$
and $||\bar \Sigma^+||=||G_{\rho,n}^+||=\nu_{\rho,n}^+$
and obtain the claimed estimate for $\Sigma_F^{-1}$.
This completes the proof of bound (\ref{eqk+dl}) and of the theorem.
\end{proof}

Next we combine Theorems \ref{thwstdl}, \ref{thsignorm}, and \ref{thsiguna}
    and obtain the following upper bounds on the
expected values of the norms $||K||$ and $||K^+||$
(excluding the  case where $q+\rho=m$ and the auxiliary random variable 
$\nu^+_{m-\rho,q}$ 
has no expected value).

\begin{corollary}\label{cocndprdl} Under the assumptions of Theorem \ref{thwstdl}, it holds that
$$\mathbb E(||K||)<1+(1+\sqrt m+\sqrt \rho)(1+\sqrt n
+\sqrt \rho),$$
$$\mathbb E(||K^+||)\le \mathbb E(||U^+||)~{\rm if}~q\ge m,$$
and if $\rho<n$ and $m-\rho<q<m$, then
$$\mathbb E(||K^+||)<(1+(1+\sqrt {\rho}+\sqrt {q})(1+\sqrt {n}+\sqrt {q})~
\max\{1,\frac{(m-q)~\rho~e^2}{(\rho+q-m)(n-\rho)}\},~e=2.71828\dots.$$
\end{corollary}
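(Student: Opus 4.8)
The plan is to derive Corollary \ref{cocndprdl} by combining the deterministic bounds of Theorem \ref{thwstdl} with the expected-value estimates for the spectral norms of Gaussian matrices and their pseudoinverses recorded in Theorems \ref{thsignorm} and \ref{thsiguna}. The only tools needed are the triangle inequality, submultiplicativity, monotonicity of expectation, and independence of the various Gaussian blocks appearing in the factorizations; there is no new matrix-analytic content, only bookkeeping with the random variables $\nu_{i,j}$, $\nu_{i,j}^+$, $\nu_{F,i,j}$.

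First I would treat $\mathbb{E}(||K||)$. Starting from (\ref{eqnrmkdl}), namely $||K||\le 1+\nu_{m,\rho}\nu_{\rho,n}$, I take expectations and use independence of $\bar U$ and $\bar V$ (so that $\nu_{m,\rho}$ and $\nu_{\rho,n}$ are independent) to get $\mathbb{E}(||K||)\le 1+\mathbb{E}(\nu_{m,\rho})\,\mathbb{E}(\nu_{\rho,n})$. Then I substitute the bound $\mathbb{E}(\nu_{i,j})\le 1+\sqrt i+\sqrt j$ from Theorem \ref{thsignorm}, which after noting $\nu_{m,\rho}=\nu_{\rho,m}$ yields $\mathbb{E}(||K||)<1+(1+\sqrt m+\sqrt\rho)(1+\sqrt n+\sqrt\rho)$, exactly the claimed first bound.

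Next, for $\mathbb{E}(||K^+||)$ I split into the two cases of Theorem \ref{thwstdl}. If $q\ge m$, then (\ref{equdl}) gives $||K^+||\le||U^+||$, hence $\mathbb{E}(||K^+||)\le\mathbb{E}(||U^+||)$ with no further work (here $U$ is a fixed well-conditioned matrix, so this is really just monotonicity of expectation applied to a constant when $U$ is deterministic, or a genuine expectation if one later randomizes $U$). If $m-\rho<q<m$, I start from (\ref{eqk+dl}), $||K^+||\le||U^+||\max\{1,\nu_{m-q,\rho}^+\nu_{\rho,n}^+\}(1+\nu_{q,\rho}\nu_{q,n})$; since $||U||=1$ forces $||U^+||\ge 1$ and in the intended application $U$ is normalized, I pull out $||U^+||$ and bound the remaining expectation. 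Using $\max\{1,xy\}\le\max\{1,x\}\max\{1,y\}$ and independence of the four Gaussian blocks $G_{m-q,\rho}$, $G_{\rho,n}$, $G_{q,\rho}$, $G_{q,n}$ (which follows because they are disjoint sub-blocks of the independent Gaussian factors $\bar U,\bar V$), I factor the expectation into a product and apply $\mathbb{E}(\nu_{i,j})\le 1+\sqrt i+\sqrt j$ for the norm terms and $\mathbb{E}(\nu_{i,j}^+)\le\frac{e\sqrt{\min\{i,j\}}}{|i-j|}$ (from part~1 of Theorem \ref{thsiguna}, valid since $i\ne j$ here because $m-q\ne\rho$ and $\rho\ne n$) for the pseudoinverse terms. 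Multiplying $\frac{e\sqrt{m-q}}{|\rho-(m-q)|}=\frac{e\sqrt{m-q}}{\rho+q-m}$ by $\frac{e\sqrt\rho}{n-\rho}$ and replacing $\sqrt{m-q}\sqrt\rho\le$ (the product form written in the corollary) gives the factor $\max\{1,\frac{(m-q)\rho\,e^2}{(\rho+q-m)(n-\rho)}\}$, while the $(1+\nu_{q,\rho}\nu_{q,n})$ term contributes $1+(1+\sqrt q+\sqrt\rho)(1+\sqrt q+\sqrt n)$; combining and using Jensen/independence once more yields the stated third bound. The main obstacle — really the only subtlety — is justifying that $\max\{1,\nu_{m-q,\rho}^+\nu_{\rho,n}^+\}$ has finite expectation and splitting it cleanly: one must check that $m-q\neq\rho$ (i.e. $q+\rho\neq m$, which is the excluded degenerate case explicitly flagged in the corollary's parenthetical) and $\rho\neq n$, so that Theorem \ref{thsiguna} part~1 applies and no heavy-tailed $\nu_{k,k}^+$ appears; once that is in place the rest is routine arithmetic with the displayed estimates.
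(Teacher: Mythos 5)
Your proof follows exactly the strategy the paper intends: the paper's justification for the corollary is the single sentence ``combine Theorems \ref{thwstdl}, \ref{thsignorm}, and \ref{thsiguna},'' and you supply the bookkeeping — taking expectations of the deterministic bounds (\ref{eqnrmkdl}), (\ref{equdl}), (\ref{eqk+dl}), invoking independence of the Gaussian factors $\bar U$ and $\bar V$ to split the products, and substituting the estimates $\mathbb{E}(\nu_{i,j})<1+\sqrt i+\sqrt j$ and $\mathbb{E}(\nu^+_{i,j})\le e\sqrt{\min\{i,j\}}/|i-j|$. You also correctly flag the role of the hypotheses $m-q\neq\rho$ and $\rho\neq n$, which is precisely what the paper's parenthetical about $q+\rho=m$ is guarding against. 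In two respects you are actually more careful than the statement as printed: (a) you retain the factor $||U^+||$ coming from (\ref{eqk+dl}), which the printed corollary appears to have dropped (it would only be correct to omit it if $U$ were orthogonal up to scaling, in which case $||U^+||=1$); and (b) you observe that the Gaussian estimates actually give $\sqrt{(m-q)\rho}$ in the numerator, which the corollary then loosens to $(m-q)\rho$.

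There is one step that neither you nor the paper handles fully cleanly, and it is worth naming: you need to pass from $\mathbb{E}\bigl(\max\{1,\nu^+_{m-q,\rho}\nu^+_{\rho,n}\}\bigr)$ to something expressed in terms of $\mathbb{E}(\nu^+_{m-q,\rho})\mathbb{E}(\nu^+_{\rho,n})$. Since $\max\{1,\cdot\}$ is convex, Jensen gives $\mathbb{E}(\max\{1,X\})\ge\max\{1,\mathbb{E}(X)\}$ — the wrong direction — so one must instead use $\max\{1,X\}\le 1+X$ and land on $1+\mathbb{E}(\nu^+_{m-q,\rho})\mathbb{E}(\nu^+_{\rho,n})$, or otherwise accept the displayed $\max\{1,\cdot\}$ form as an informal abbreviation. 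This is a flaw in the corollary as stated (inherited from the paper's terse derivation), not a flaw in your plan, but since you explicitly call the handling of that $\max$ ``the only subtlety,'' it is worth being precise that the clean upper bound uses $1+X$ rather than $\max\{1,\mathbb{E}(X)\}$.
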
 

 
\begin{remark}\label{rewstdl}
Theorem \ref{thwstdl}  shows that
dual western augmentation is likely to produce
a well-conditioned matrix $K$, particularly where
the matrix $U$ 
is well-conditioned,   
the integers $m$ and $n$ 
are not  large, and 
the ratio $\frac{(m-q)~\rho}{(\rho+q-m)(n-\rho)}$ is small.
We can partly control this ratio by choosing the integer parameter $q$,
and we can readily choose a 
well-conditioned or even orthogonal 
matrix $U$.
\end{remark}

By applying the theorem to the matrix $A^T$, we can extend it
to northern augmentation,
that is, to appending a Gaussian block of $s\ge n-\rho$ rows on the top of the matrix $A$.


\subsection{Dual northwestern augmentation}\label{sdualnw}


Our next subject is northwestern augmentation given by  the map
\begin{equation}\label{eqnwaugdl} 
A\rightarrow K=
\begin{pmatrix}
O_{s,q}   &   V^T   \\
 U   &  A
\end{pmatrix}, 
\end{equation}
which is the map
(\ref{eqnwaugdl})
for $W=O_{s,q}$. 


\begin{theorem}\label{thnwstdl} (Cf. Remarks \ref{rewstdlprt} and \ref{renwstdl}.)
Assume that $U\in \mathbb R^{m\times q}$, $V\in \mathbb R^{n\times s}$, 
$||U||=||V||=1$, the matrices $U$ and $V$ have full rank,
$A=\bar U\bar V^T$, 
$\bar U\in \mathcal G^{m\times \rho}$, 
$\bar V\in \mathcal G^{n\times \rho}$, and $K$ is a matrix of (\ref{eqnwaugdl}).

 (i) Then $||K||\le ||U||+||V||+\nu_{m,\rho}\nu_{\rho,n}$.

(ii) If $q+\rho<m$ and $s+\rho<n$, then the matrix $K$ is rank deficient.
Otherwise it has full rank.

\medskip

(iii) If $q\ge m$ or $s\ge n$, then 
 $||K^+||\le ||U^+||~||V^+||~(1+\nu_{m,\rho}\nu_{\rho,n})$.

\medskip

(iv)  If
$m-\rho\le q\le m$ or $n-\rho\le s\le n$,
then there is an auxiliary nonsingular $(\rho+q+s)\times(\rho+q+s)$  matrix $\bar K$
such that 
\begin{equation}\label{eqk+dlnw}
||K^+||\le ||U^+||~\max\{||V^+||,\nu^+_{m-q,\rho}\nu^+_{\rho,n-s}\}~||\bar K^{-1}||
\end{equation}
and 
\begin{equation}\label{eqbrkdl}
||\bar K^{-1}||\le 1+\nu_{l,\rho}~\max\{\nu_{q,\rho},\nu^+_{m-q,\rho}\nu^+_{\rho,n-s}\nu_{\rho,s}\}~+
\nu_{q,\rho}\nu_{\rho,s}(1+\nu_{\rho,l}^2)\nu^+_{m-q,\rho}\nu^+_{\rho,n-s},
\end{equation}
 for $l=\min\{m-q,n-s\}$.
\end{theorem}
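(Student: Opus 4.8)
The plan is to mimic the structure of the proof of Theorem \ref{thwstdl}, handling northwestern augmentation by reducing it to western augmentation applied to a suitable matrix, exactly as was done in the proof of Theorem \ref{thnrthwst}. First I would dispose of the easy parts: claim (i) is just the triangle inequality together with $||A||=||\bar U\bar V^T||\le ||\bar U||\,||\bar V||=\nu_{m,\rho}\nu_{\rho,n}$ and $||U||=||V||=1$; claims (ii) are immediate from Theorem \ref{thdgr} and Assumption 1, since the rank of $K$ is governed by whether the Gaussian off-diagonal blocks can ``fill in'' the deficiency of $A$. For claim (iii), when $q\ge m$ I would rewrite $K=(\widehat U\mid \widehat A)$ with $\widehat U=\begin{pmatrix}O_{s,q}\\ U\end{pmatrix}$ and $\widehat A=\begin{pmatrix}V^T\\ A\end{pmatrix}$ and apply the western-augmentation bound (\ref{equdl}) of Theorem \ref{thwstdl}; one must check that $\widehat U$ is normalized-up-to-scaling and has full column rank (it does, since $U$ does and the extra zero rows do not change the column space), and that the relevant $\sigma$ of $\widehat A$ is controlled — here the block $V^T$ guarantees full row rank and $||\widehat A^+||\le||V^+||$ up to the Gaussian factors, giving the stated product $||U^+||\,||V^+||(1+\nu_{m,\rho}\nu_{\rho,n})$. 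The symmetric case $s\ge n$ follows by transposition.

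The substantive case is (iv). I would apply Gaussian diagonalization twice, as in the proofs of Theorems \ref{thwst}, \ref{thbpr1}, and \ref{thwstdl}: first replace $U$ by $\Sigma_U$ and $V$ by $\Sigma_V$ using orthogonal maps on both sides (legitimate because, writing $A=\bar U\bar V^T$, the maps send $\bar U,\bar V$ to fresh Gaussian matrices by Lemma \ref{lepr3}), then perform a further SVD-based diagonalization of the $(m-q)\times(n-s)$ Gaussian-times-Gaussian block $G_{m-q,\rho}G_{\rho,n-s}$, whose rank is $m-q$ or $n-s$ (whichever is smaller) because $\rho\ge m-q$ (or $\rho\ge n-s$). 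After these reductions $K$ becomes a block matrix whose only ``large'' pieces are $\Sigma_U^{-1}$, $\Sigma_V^{-1}$, and the pseudo-inverse of the diagonalized Gaussian product, whose norm I would bound by $\nu^+_{m-q,\rho}\nu^+_{\rho,n-s}$ exactly as in the last paragraph of the proof of Theorem \ref{thwstdl} (split the SVDs of $G_{m-q,\rho}$ and $G_{\rho,n-s}$, observe the inner matrices are square, and read off $||\Sigma_F^{-1}||\le||\Sigma^+||\,||\bar\Sigma^+||$). The remaining ``core'' matrix $\bar K$ — of size $(\rho+q+s)\times(\rho+q+s)$ — is a fixed block pattern built from identities and bounded Gaussian blocks $G_{q,\rho}$, $G_{\rho,s}$, $G_{l,\rho}$, $G_{\rho,l}$; I would write out $\bar K^{-1}$ explicitly via block elimination (the same algebra as in the proof of Theorem \ref{thbpr1}, where a $3\times 3$ block inverse was displayed) and bound its norm termwise, collecting the pieces into the three summands displayed in (\ref{eqbrkdl}): the constant $1$, a term $\nu_{l,\rho}\max\{\nu_{q,\rho},\nu^+_{m-q,\rho}\nu^+_{\rho,n-s}\nu_{\rho,s}\}$, and the cross term $\nu_{q,\rho}\nu_{\rho,s}(1+\nu_{\rho,l}^2)\nu^+_{m-q,\rho}\nu^+_{\rho,n-s}$. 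Putting $||K^+||\le||\widehat K^{-1}||$, pulling out the orthogonal factors, and multiplying the three controlling norms $||U^+||$, $\max\{||V^+||,\nu^+_{m-q,\rho}\nu^+_{\rho,n-s}\}$, and $||\bar K^{-1}||$ yields (\ref{eqk+dlnw}).

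The main obstacle I anticipate is keeping the bookkeeping of the two-stage Gaussian diagonalization correct in the rectangular, rank-deficient setting: one must be careful about which Gaussian blocks survive as genuine i.i.d.-Gaussian matrices after the orthogonal conjugations (this is where Lemma \ref{lepr3} is invoked repeatedly, and it is easy to mis-track dimensions when $\rho$, $q$, $s$, $m$, $n$ are all different), and about whether the second diagonalization step is applied to the $G_{m-q,\rho}G_{\rho,n-s}$ block or, in the symmetric sub-case $n-s\le q<m$ versus $m-q\le s<n$, to its transpose. Once the reductions are set up correctly, the explicit block inversion of $\bar K$ and the norm estimate are routine but lengthy; I would present the $\bar K^{-1}$ formula in a single display and then bound each entry, exactly in the style of Theorems \ref{thbpr1} and \ref{thwstdl}, rather than grinding through it line by line. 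A final remark (paralleling Remark \ref{rewstdlprt}) would note that Theorem \ref{thpert} extends everything to $\tilde A=\bar U\bar V^T+E$ with $||E||$ small, and Remark \ref{renwstdl} would record that, as in the Gaussian case, the bound shows dual northwestern augmentation is likely to be well-conditioned when $U$ and $V$ are well-conditioned and $m,n$ are moderate.
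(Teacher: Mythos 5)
Your proof of part (iv) matches the paper's proof essentially step for step: Gaussian diagonalization to replace $U,V$ by $\Sigma_U,\Sigma_V$ (valid by Lemma \ref{lepr3}), SVD of the Gaussian product $F=G_{m-q,\rho}G_{\rho,n-s}$ with the factor $\|\Sigma_F^{-1}\|\le\nu^+_{m-q,\rho}\nu^+_{\rho,n-s}$, reduction to a square nonsingular matrix, factorization into a diagonal matrix times the block-lower-triangular $\bar K$ times another diagonal matrix, explicit inversion of $\bar K$ by block elimination, and termwise bounds yielding (\ref{eqbrkdl}). The only steps the paper makes explicit that your sketch leaves implicit are (a) the row and column interchanges that bring $K$ into a block triangular-like shape $\widehat K$ (singular-value preserving, hence free) and (b) the deletion of $|m-q-n+s|$ rows or columns to pass from the rectangular $\widehat K$ to a square nonsingular $K'$ with $\|K^+\|\le\|(K')^{-1}\|$ via Lemma \ref{faccondsub}; both are routine bookkeeping and you correctly flag them as the bookkeeping hazard.

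One caveat about part (iii), which the paper explicitly declines to prove (``We will only prove part (iv)''): your proposed reduction of $K$ to western augmentation $K=(\widehat U\mid\widehat A)$ with $\widehat A=\begin{pmatrix}V^T\\A\end{pmatrix}$ does not literally fall under Theorem \ref{thwstdl}, because that theorem requires the augmented matrix to be factor Gaussian, and $\widehat A$ has the fixed block $V^T$ on top. To make (iii) rigorous you would instead factor out $\diag(\Sigma_{V^T},\Sigma_U)$ directly (after diagonalizing $U$ and $V$) and bound the remaining purely Gaussian block; that short argument, rather than a citation of (\ref{equdl}), produces the stated $\|U^+\|\,\|V^+\|(1+\nu_{m,\rho}\nu_{\rho,n})$.
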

\begin{proof}
We will only prove part (iv).
By applying Gaussian diagonalization, 
reduce the task to the case where the matrices 
$U$ and $V$ are replaced by the diagonal matrices of their 
singular values.
Consequently we arrive at the matrix 
$$
\begin{pmatrix}
O_{s,q}   &   \Sigma_{V^T} & O_{s,n-s}  \\
\Sigma_{U}&G_{q,\rho}G_{\rho,s}&G_{q,\rho}G_{\rho,n-s}  \\
O_{m-q,q}   &  G_{m-q,\rho}G_{\rho,s}&G_{m-q,\rho}G_{\rho,n-s} 
\end{pmatrix} $$
where $G_{i,j}\in \mathcal G^{i\times j}$ for $i=q,\rho,m-q$ and $j=s,\rho,n-s$.

By performing row and column interchange we successively arrive at the matrices
$$\begin{pmatrix}
 \Sigma_{V^T} & O_{s,q}   & O_{s,n-s}  \\
G_{q,\rho}G_{\rho,s}&\Sigma_{U}&G_{q,\rho}G_{\rho,n-s}  \\
G_{m-q,\rho}G_{\rho,s}&O_{m-q,q}   & G_{m-q,\rho}G_{\rho,n-s} 
\end{pmatrix},~
\begin{pmatrix}
 \Sigma_{V^T} & O_{s,n-s}   & O_{s,q}  \\
G_{q,\rho}G_{\rho,s}&G_{q,\rho}G_{\rho,n-s}&\Sigma_{U}  \\
G_{m-q,\rho}G_{\rho,s} & G_{m-q,\rho}G_{\rho,n-s}&O_{m-q,q}   
\end{pmatrix},$$ and
$$\widehat K=\begin{pmatrix}
 \Sigma_{V^T} & O_{s,n-s}   & O_{s,q}  \\
G_{m-q,\rho}G_{\rho,s} & G_{m-q,\rho}G_{\rho,n-s}&O_{m-q,q}   \\
G_{q,\rho}G_{\rho,s}&G_{q,\rho}G_{\rho,n-s}&\Sigma_{U}  
\end{pmatrix}.$$ 
Note  that $\sigma_j(\widehat K)=\sigma_j(K)$, for all $j$, and thus  $||K^+||=||\widehat K^+||$.
  
Write $F= G_{m-q,\rho}G_{\rho,n-s}$ and let $F=S_F\Sigma_FT^T_F$ be SVD.
  
For $l=\min\{m-q,n-s\}$,   
write $\Sigma_{l,F}=\diag(\sigma_j(F))_{j=1}^{l}$  and either
$\Sigma_F=(\Sigma_{l,F}~|~O_{l,n-s-l})$
if $m-q\le n-s$
or  $\Sigma_F^T=(\Sigma_{l,F}~|~O_{l-n+s,n-1}^T)$ 
if $m-q\ge n-s=l$. 

By deleting $n-s-m+q$ columns of the matrices $F$ and $\widehat K$  if $m-q\le n-s$ or
their $m-q-n+s$ rows if $m-q\ge n-s$, we obtain nonsingular matrices 
$\bar F$ and 
$$K'=\begin{pmatrix}
 \Sigma_{V^T} & O_{s,l}   & O_{s,q}  \\
G_{l,\rho}G_{\rho,s} & \bar F &O_{l,q}   \\
G_{q,\rho}G_{\rho,s}&G_{q,\rho}G_{\rho,l}&\Sigma_{U}  
\end{pmatrix},$$
with SVD $\bar F=S_{\bar F}\Sigma_{\bar F}T^T_{\bar F}$,  
 where $S_{\bar F},\Sigma_{\bar F}=\Sigma_F,T^T_{\bar F},\bar F\in \mathbb R^{l\times l}$
and $||K^+||=||\widehat K^+||\le ||(K')^{-1}||$ by virtue of Lemma \ref{faccondsub}.  

Note that 
$$K'=\diag(\Sigma_{V^T},\bar F,I_q)\bar K\diag(I_{l+s},\Sigma_{U}),$$ for

$$\bar K=\begin{pmatrix}
I_s & O_{s,l}   & O_{s,q}  \\
\bar F^{-1}G_{l,\rho}G_{\rho,s} & I_l&O_{l,q}   \\
G_{q,\rho}G_{\rho,s}&G_{q,\rho}G_{\rho,l}&I_q  
\end{pmatrix}=I_{l+q+s}+\begin{pmatrix}
O_{s,s} & O_{s,l}   & O_{s,q}  \\
\bar F^{-1}G_{l,\rho}G_{\rho,s} & O_{l,l}&O_{l,q}   \\
G_{q,\rho}G_{\rho,s}&G_{q,\rho}G_{\rho,l}&O_{q,q}  
\end{pmatrix}.$$ Hence 
$(K')^{-1}=\diag(I_{l+s},\Sigma_{U}^{-1})\bar K^{-1}\diag(\Sigma_{V^T}^{-1},\bar F^{-1},I_q)$
 where 
$$\bar K^{-1}=\begin{pmatrix}
I_s & O_{s,l}   & O_{s,q}  \\
-\bar F^{-1}G_{l,\rho}G_{\rho,s} & I_l &O_{l,q}   \\
H&-G_{q,\rho}G_{\rho,l}&I_q  
\end{pmatrix}=I_{l+q+s}-\begin{pmatrix}
O_{s,s} & O_{s,l}   & O_{s,q}  \\
\bar F^{-1}G_{l,\rho}G_{\rho,s} & O_{l,l}&O_{l,q}   \\
H&G_{q,\rho}G_{\rho,l}&O_{q,q}  
\end{pmatrix}$$ and
$$H=G_{q,\rho}G_{\rho,l}\bar F^{-1}G_{l,\rho}G_{\rho,s}-G_{q,\rho}G_{\rho,s}.$$
Therefore

$$||K^+||\le \max\{1,||\Sigma_{U}^{-1}||\}~||\bar K^{-1}||~\max\{1,||\Sigma_{V^T}^{-1}||,||\bar F^{-1}||\},$$

\noindent and so $$||K^+||\le ||U^+||~||\bar K^{-1}||~\max\{||V^+||,||\bar F^{-1}||\}$$

\noindent because $||\Sigma_{U}^{-1}||=||U^+||\ge 1$ and $||\Sigma_{V^{T}}^{-1}||=||V^+||\ge 1$
since $||U||=||V||=1$.

\medskip

Substitute

$$||\bar K^{-1}||\le 1+||G_{l,\rho}||\max\{||F^{-1}||~||G_{\rho,s}||,||G_{q,\rho}||\}+||H||=
1+\nu_{l,\rho}\max\{||F^{-1}||\nu_{\rho,s},\nu_{q,\rho}\}+||H||,$$

$$||H||\le ||G_{q,\rho}||~||G_{\rho,s}||(1+||G_{\rho,l}||^2~||\bar F^{-1}||)\le
\nu_{q,\rho}\nu_{\rho,s}(1+\nu_{\rho,l}^2~||\bar F^{-1}||),$$

\noindent and

$$||\bar F^{-1}||\le||F^{+}||\le||G_{m-q,\rho}^+||~||G_{\rho,n-s}^+||=\nu^+_{m-q,\rho} \nu^+_{\rho,n-s}.$$

By combining the above bounds obtain part (iv) of the theorem.
\end{proof}

 
Combine Theorems \ref{thnwstdl}, \ref{thsignorm}, and \ref{thsiguna},
exclude the case where $q+\rho=m$ or $s+\rho=n$,
in which the auxiliary random variable $\nu^+_{m-\rho,q}$ or $\nu^+_{n-\rho,s}$
has no expected value, and obtain the following  bounds.
\begin{corollary}\label{conwcndprdl} 
 It holds that
$$\mathbb E(||K||)<2+(1+\sqrt m+\sqrt \rho)(1+\sqrt n+\sqrt \rho)$$
under the assumptions of part (i) of Theorem \ref{thwstdl},
$$\mathbb E(||K^+||)\le \mathbb E(||U^+||)~\mathbb E(||V^+||)(1+
(1+\sqrt {\rho}+\sqrt {m})(1+\sqrt {\rho}+\sqrt {n}))$$
under the assumptions of its part (iii), and
$$\mathbb E(||K^+||)\le\mathbb E(||U^+||)~\max\{\mathbb E(||V^+||,\mathbb E(||F^+||)\}
\mathbb E(||\bar K^{-1}||),$$
under the assumptions of part (iv) of Theorem \ref{thwstdl} provided that
$$\mathbb E(||F^+||)\le\frac{\sqrt{(m-q)\rho}}{|q+\rho-m|~|s+\rho-n|},$$
$$\mathbb E(||\bar K^{-1}||\le 1+(1+\sqrt {l}+\sqrt {\rho})~
\max\{(1+\sqrt {\rho}+\sqrt {q}),(1+\sqrt {\rho}+\sqrt {s})\mathbb E(||F^+||)\}+$$
$$(1+\sqrt {\rho}+\sqrt {q})(1+\sqrt {\rho}+\sqrt {s})(1+(1+\sqrt {\rho}+\sqrt {l})^2)\mathbb E(||F^+||),$$
 $q+\rho>m$ or $s+\rho>n$,
and $(q+\rho-m)(s+\rho-n)\neq 0$.
\end{corollary}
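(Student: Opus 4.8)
The plan is to do for Theorem~\ref{thnwstdl} exactly what Corollaries~\ref{cocndpr}, \ref{conwdpr} and \ref{cocndprdl} did for the earlier augmentation theorems: apply the expectation operator to the deterministic-looking norm inequalities of that theorem and then replace every factor of the form $\nu_{i,j}=||G_{i,j}||$ or $\nu_{i,j}^+=||G_{i,j}^+||$ by the bound on its mean supplied by Theorem~\ref{thsignorm} or by Theorem~\ref{thsiguna}, respectively. The three displayed inequalities in the statement correspond, in order, to parts (i), (iii) and (iv) of Theorem~\ref{thnwstdl}.

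First I would treat $\mathbb E(||K||)$: take $\mathbb E$ in part~(i), use $||U||=||V||=1$, note that $\bar U$ and the column block $\bar V^T$ are Gaussian, so that $\mathbb E(\nu_{m,\rho}\nu_{\rho,n})$ factors when $\bar U,\bar V$ are independent (and is otherwise controlled by Cauchy--Schwarz), and finish with Theorem~\ref{thsignorm}. For the part~(iii) bound I would pull the deterministic factors $||U^+||=\mathbb E(||U^+||)$ and $||V^+||=\mathbb E(||V^+||)$ out of the expectation in $||K^+||\le ||U^+||~||V^+||~(1+\nu_{m,\rho}\nu_{\rho,n})$ and bound the remaining expectation as before.

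For part~(iv) I would first substitute $||\bar F^{-1}||\le||F^+||\le\nu^+_{m-q,\rho}\nu^+_{\rho,n-s}$ into (\ref{eqk+dlnw}) and (\ref{eqbrkdl}), and then take expectations, replacing each $\max\{X,Y\}$ by $X+Y$ (so that the ``$\max$ of expectations'' in the statement is read as an upper bound up to a constant), factoring products of the genuinely independent Gaussian blocks — the disjoint row blocks of $\bar U$, the disjoint column blocks of $\bar V^T$, and, when available, the $\bar U$- versus $\bar V$-derived factors — as products of their expectations, and applying Cauchy--Schwarz/H\"older to the remaining factors that are functions of overlapping blocks, after which Theorems~\ref{thsignorm} and \ref{thsiguna} (or their second-moment analogues) close each term. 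The cases $q+\rho=m$ and $s+\rho=n$ are excluded precisely because then one of $\nu^+_{m-q,\rho},\nu^+_{\rho,n-s}$ is a square Gaussian pseudo-inverse with no finite mean; keeping $q+\rho-m$ and $s+\rho-n$ bounded away from $0$ is exactly what makes $\mathbb E(||F^+||)$ finite and gives the displayed estimate $\mathbb E(||F^+||)\le\sqrt{(m-q)\rho}/(|q+\rho-m|~|s+\rho-n|)$.

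The step I expect to be the real obstacle is the dependence structure inside (\ref{eqbrkdl}): the matrix $\bar F^{-1}$ — hence the entire $\nu^+$-part of the bound — is a function of the very same two Gaussian blocks that also generate $\nu_{l,\rho}$ and $\nu_{\rho,l}$, and the factors $\bar U,\bar V$ may themselves be dependent or even equal, so the ``expectation of a product equals product of expectations'' pattern that the displayed inequalities suggest is not literally valid. Making it rigorous forces either the H\"older detour above (replacing means by higher moments of $\nu$ and $\nu^+$) or an explicit accounting of which Gaussian blocks are shared; the corollary should be understood as the clean bound obtained when the factors are treated as independent and maxima are bounded by sums, which is the form in which the estimate is actually used elsewhere in the paper.
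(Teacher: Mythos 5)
Your plan — substitute the mean bounds of Theorems~\ref{thsignorm} and \ref{thsiguna} into the norm inequalities of Theorem~\ref{thnwstdl} — is exactly what the paper does; its entire "proof" is the single sentence preceding the corollary ("Combine Theorems~\ref{thnwstdl}, \ref{thsignorm}, and \ref{thsiguna}... and obtain the following bounds"). The gaps you flag are real and are present in the paper's own argument as well: the norm bound (\ref{eqbrkdl}) is a product of $\nu$-factors built from overlapping Gaussian blocks (e.g.\ $\bar F^{-1}$ and $\nu_{\rho,l}$ both come from sub-blocks of $\bar V$), so $\mathbb E$ does not literally distribute over the product, and $\mathbb E(\max\{X,Y\})\le\max\{\mathbb E X,\mathbb E Y\}$ is false in general. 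Your remedies (H\"older for the dependent factors, bounding $\max$ by a sum) are the right way to make the statement rigorous; the paper simply skips this step and treats the $\nu$'s as if independent, so your proposal is both faithful to the paper's route and more candid about where it leaves work to be done.
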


 
\begin{remark}\label{renwstdl}
The upper estimates of Theorem \ref{thnwstdl} and Corollary \ref{conwcndprdl}
are a little greater than  those of Theorem \ref{thwstdl} and Corollary \ref{cocndprdl},
but still  show that
 the dual northwestern augmentation is 
likely to produce
a well-conditioned matrix $K$, particularly where
the matrices $U$ and $V$ (of our choice)
are well-conditioned,  
the integers $m$ and $n$ 
are not  large, and
the ratio $\frac{\sqrt{(m-q)\rho}}{|q+\rho-m|~|s+\rho-n|}$ is small,
which we can partly control by choosing the integer parameters $q$ and $s$.
\end{remark}


\subsection{Some policies of derandomization}\label{sdrnd}


The main advantage of dual augmentation and additive preprocessing 
is a chance for simplifying the computations by means of 
choosing sparse and structured auxiliary matrices $U$ and $V$.
The matrices $U$ and $V$ of Section \ref{sprec} 
can be examples: they are
 extremely sparse, very much structured, orthogonal up to scaling,
 and have supported efficient preprocessing in our extensive tests.
Further examples of simple but empirically highly efficient 
preprocessors can be found in \cite{PZa} and \cite{PZb}.

Here is a caveat, however.
Consider
western  augmentation (\ref{eqaugdl})
with a fixed sparse and structured 
preprocessor  $U$ having full numerical rank $\rho_+$.
Although its application is proven to be efficient for average  $m\times n$
matrix $\tilde A$  having numerical rank $\rho\le \rho_+$, it may fail 
for most or all such matrices $\tilde A$ 
from a selected  input
 class.  Similar problems can occur 
for northwestern augmentation and additive preprocessing.

We are likely to exclude running into such bad inputs
 if we  choose universal preprocessing, e.g., with SRFT matrices.
The user and the algorithm designer, however, should weight this 
benefit 
versus simplification of the computations 
with non-universal sparse and structured  preprocessors.

The following two  sample policies 
keep preprocessing less restricted than universal preprocessing:
they do not exclude but just narrow the chances 
for running into bad inputs.

(i) To any fixed input matrix,  apply 
augmentation or additive preprocessing  successively or
concurrently, for a small number of distinct preprocessors,
 pairs of preprocessors, or policies of
preprocessing, assuming that the user 
accepts the output of even a single
successful application.

(ii) Alternatively  choose
 a preprocessor or a pair of preprocessors
 at random from a fixed class of sparse or structured matrices.
Empirically this approach  consistently produces 
 desired outputs  for a variety of inputs
(see Table \ref{tabprec}).
This should encourage
choosing preprocessors at random  
from the classes of matrices defined 
by a small number of real or complex
random parameters, 
or even just by the signs $\pm$
of some integer parameters, as in the tests 
reported in Table \ref{tabprec}. 

\medskip
 
\medskip

\medskip

{\Large \bf \em PART III: Numerical Tests, Summary, and Extensions}


\section{Numerical Experiments}\label{sexp}

 
Our numerical experiments 
have been performed in the Graduate Center of the City University of New York 
on a Dell server with a dual core 1.86 GHz
Xeon processor and 2G memory running Windows Server 2003 R2. The test
of the next subsection have been performed by using
Fortran code compiled with the GNU gfortran compiler within the Cygwin
environment, and  all random numbers 
have been generated with the random\_number
intrinsic Fortran function, assuming  the standard Gaussian 
 probability distribution.
The tests 
have been performed with MATLAB,
 using its build-in Gaussian random
 number generating function ''randn()'',
except for the random choice of signs $-$ and $+$
 specified at the end of Section \ref{sprec}.
We applied no
iterative refinement in the tests.
Their results are in rather  good accordance with
the  results of our formal analysis.


\subsection{Approximation of the leading and trailing singular spaces,
computation of numerical ranks, and 
low-rank appro\-xi\-ma\-tion of 
a matrix }\label{stails}  


Tables \ref{tabSVD_TAIL}--\ref{tabSVD_TAILnmb}  
show the results of our tests where we approximated 
the bases for  the leading and trailing singular spaces 
$\mathbb T_{\rho,A}$ and
$\mathbb T_{A,\rho}$ 
of an $n\times n$ matrix $A$, respectively.
 The matrix had numerical 
rank $\rho$ and the condition number $\kappa(A)=10^{10}$.

We performed the tests for various pairs of $n$ and $\rho$ and
observed reasonably close approximations, having the error norms in the range
from $10^{-6}$ to  $10^{-9}$. The results were similar for 
Gaussian multipliers and Gaussian subcirculant multipliers. 
The latter multiplier
is a leftmost block of an $n\times n$ circulant matrix that 
contains the entire first column of a circulant matrix 
filled with $n$ i.i.d. standard Gaussian variables (cf. Appendix \ref{scsct}). 

Next we  
 describe the tests
in some detail.

\medskip

{\em GENERATION OF THE INPUTS.}

We generated every  $n\times n$ input matrix $A$ 
for our tests of this subsection as follows 
(cf. \cite[Section 28.3]{H02}).
At first we fixed $n$ nonnegative values
$\sigma_1,\dots,\sigma_n$ and the matrix 
$\Sigma_A=\diag(\sigma_j)_{j=1}^n$, then
generated 
 $n\times n$ random 
orthogonal matrices $S_A$ and $T_A$ 
(as the $Q$-factors of Gaussian matrices),
and finally multiplied the three matrices together
with infinite precision
to output
 the matrix $A=S_A\Sigma_AT^A$. 
We performed all the other  computations of this subsection
 with double 
precision, and also rounded all Gaussian values
to 
double 
precision. 

Our
 $n\times n$
matrices $A$ have numerical rank $\rho=n-r$ and
numerical nullity  $r=n-\rho$  (cf. Appendix \ref{sosvdi})
for
$n=64, 128, 256$,  $\rho=1,8,32$. 
We have chosen 
$\sigma_j=1/j,~{\rm for}~j=1,\dots,\rho,$ and
$\sigma_j=10^{-10},~{\rm for}~j=\rho+1,\dots,n,$
 which implied that $||A||=1$ and $\kappa (A)=10^{10}$.

\medskip

{\em APPROXIMATION OF A BASIS FOR THE TRAILING SINGULAR SPACE DIRECTLY.}

At first we  applied Algorithms 4.1.1--4.1.3 and then computed the matrix
 $B_{A,r}Y_{A,r}$ being a least-squares approximation to the matrix $T_{A,r}$.  
Table \ref{tabSVD_TAIL} 
displays  the data from these tests, 
namely,
the average (mean) values of 
 the error norms  
${\rm rn}=||B_{A,\rho}Y_{A,\rho}-T_{A,\rho}||$ 
and of the standard deviations 
observed in
1000 runs of our tests for every pair of $n$ and $r$.
{\em The tests show superior accuracy of the approximations computed
based on randomized northern  augmentation.}
This is in good accordance with the estimates of Theorems \ref{thwst} and \ref{thsiguna}.
In particular the latter theorem implies that an $m\times n$ Gaussian matrix is likely to become 
better conditioned as  
the value $|m-n|$ increases from 1. 

In our tests the accuracy of the outputs has not varied
 much when we replaced Gaussian  matrices by
Gaussian subcirculant ones (of Appendix \ref{scsct}).


\begin{table}[h] 
  \caption{Error  norms of the approximation of the trailing singular space directly}
\label{tabSVD_TAIL}
  \begin{center}
    \begin{tabular}{| *{8}{c|}}
      \hline
	&		&	\multicolumn{3}{|c}{\bf Gaussian Multipliers}					&	\multicolumn{3}{|c|}{\bf Gaussian Subcirculant Multipliers}					\\ \hline
n	&	r	&	\bf Alg. 3.1.1	&	\bf Alg 3.1.2	&	\bf Alg 3.1.3	&	\bf Alg 3.1.1	&	\bf Alg 3.1.2	&	\bf Alg 3.1.3	\\ \hline
64	&	2	&	7.91e-07	&	7.91e-07	&	2.77e-14	&	1.35e-07	&	1.35e-07	&	3.03e-14	\\ \hline
64	&	4	&	2.46e-07	&	2.46e-07	&	4.18e-14	&	3.26e-07	&	3.26e-07	&	4.76e-14	\\ \hline
64	&	8	&	2.70e-07	&	2.70e-07	&	6.48e-14	&	4.90e-07	&	4.90e-07	&	8.93e-14	\\ \hline
128	&	2	&	4.64e-07	&	4.64e-07	&	6.03e-14	&	8.41e-07	&	8.41e-07	&	6.29e-14	\\ \hline
128	&	4	&	5.33e-07	&	5.33e-07	&	1.27e-13	&	1.01e-06	&	1.01e-06	&	1.12e-13	\\ \hline
128	&	8	&	2.88e-06	&	2.88e-06	&	1.79e-13	&	8.82e-07	&	8.82e-07	&	1.81e-13	\\ \hline
256	&	2	&	2.16e-06	&	2.16e-06	&	7.29e-13	&	1.34e-06	&	1.34e-06	&	6.10e-13	\\ \hline
256	&	4	&	2.07e-06	&	2.07e-06	&	2.97e-13	&	3.38e-06	&	3.38e-06	&	4.60e-13	\\ \hline
256	&	8	&	3.66e-06	&	3.66e-06	&	5.86e-13	&	3.80e-06	&	3.80e-06	&	5.06e-13	\\ \hline

    \end{tabular}
  \end{center}
\end{table}



{\em APPROXIMATION OF A BASIS FOR THE LEADING SINGULAR SPACE
AND LOW-RANK APPROXIMATION OF A MATRIX.}

We have also performed 
 similar tests for the approximation
of the leading singular spaces $\mathbb T_{\rho,A}$ 
of the same $n\times n$
matrices $A$, which had numerical rank $\rho$, 
 and for the
 approximation of such a matrix $A$ with a matrix of rank $\rho$. 
At first we generated $n\times \rho$ Gaussian
  matrices $U$ 
and Gaussian subcirculant $n\times \rho$ matrices $\bar U$
(in both cases for $\rho=8$ and  $\rho=32$) and
then 
 successively computed the matrices 
$B_{\rho,A}=A^TU$ 
and $B_{\rho,A}=A^T\bar U$ (in order to obtain approximate matrix bases 
for the leading singular space $\mathbb T_{\rho,A}$),  
$B_{\rho,A}Y_{\rho,A}$  as a least-squares approximation to $T_{\rho,A}$, 
$Q_{\rho,A}=Q(B_{\rho,A})$, and 
$A-AQ_{\rho,A}(Q_{\rho,A})^T$, which is the error matrix
of the approximation of the matrix $A$ based on 
the approximation of a basis for its leading singular space.
Table \ref{tabSVD_HEAD1} displays the data on
the average  error  norms 
${\rm rn}_1=||B_{\rho,A}Y_{\rho,A}-T_{\rho,A}||$ 
and
${\rm rn}_2=||A-AQ_{\rho,A}(Q_{\rho,A})^T||$ 
obtained in 1000 runs of our tests
for every pair of $n$ and $\rho$.
For our choice of $B_{\rho,A}=A^TU$ 
and $B_{\rho,A}=A^T\bar U$,
the computed error  norms were equally small and  
about as small as in Table  \ref{tabSVD_TAIL}.


\begin{table}[h] 
  \caption{Error  norms of the approximation of
 the leading singular spaces and of
low-rank approximation of a matrix}
\label{tabSVD_HEAD1}
  \begin{center}
    \begin{tabular}{| *{7}{c|}}
      \hline
     \multicolumn{3}{|c}{} & \multicolumn{2}{|c|}{\bf Gaussian Multipliers} & \multicolumn{2}{|c|}{\bf  Subcirculant Multipliers} \\ \hline
$\rho$  & ${\rm rn}_i$ & n & \bf{mean} & \bf{std} & \bf{mean} & \bf{std}\\ \hline
		
8 & ${\rm rn}_1$ & 64 &  4.26e-07   &   8.83e-07  &   1.43e-07   &   9.17e-07 	\\ \hline	
8 & ${\rm rn}_1$ & 128 &   4.30e-08   &   1.45e-07  &  4.87e-07   &   4.39e-06  \\ \hline		
8 & ${\rm rn}_1$ & 256 &   3.40e-08   &   5.11e-08   &   6.65e-08   &   3.12e-07  \\ \hline		
8 & ${\rm rn}_2$ & 64 &   5.77e-09   &   1.06e-08  &   6.37e-08   &   4.11e-07 \\ \hline		
8 & ${\rm rn}_2$ & 128&   1.86e-08   &   5.97e-08   &   1.90e-07   &   1.67e-06 \\ \hline		
8 & ${\rm rn}_2$ & 256 &   1.59e-08   &   2.47e-08   &   2.92e-08   &   1.28e-07 \\ \hline				
32 & ${\rm rn}_1$ & 64 &   1.01e-07   &   3.73e-07   &   4.06e-08   &   6.04e-08 \\ \hline		
32 & ${\rm rn}_1$ & 128 &   1.28e-07   &   6.76e-07   &   2.57e-07   &   8.16e-07 \\ \hline		
32 & ${\rm rn}_1$ & 256 &   1.02e-07   &   1.54e-07    &   1.18e-07   &   2.03e-07 \\ \hline		
32 & ${\rm rn}_2$ & 64&   2.30e-08   &   8.28e-08   &   9.66e-09   &   1.48e-08  \\ \hline		
32 & ${\rm rn}_2$ & 128 &   2.87e-08   &   1.45e-07  &   5.50e-08   &   1.68e-07  \\ \hline		
32 & ${\rm rn}_2$ & 256 &   2.37e-08   &   3.34e-08  &   2.74e-08   &   4.48e-08   \\ \hline	
    \end{tabular}
  \end{center}
\end{table}



{\em EXTENSION FROM THE LEADING TO THE TRAILING SINGULAR SPACES.}
 
Finally we approximated the 
 trailing singular spaces $\mathbb T_{A,\rho}$ for
 the same input matrices $A$ as for Table \ref{tabSVD_TAIL},
where  $\rho=n-r$ and $r=1,2,4$,
but applied Algorithm 3.1t.
 At first we
applied Algorithm \ref{algldbs}, which
outputs an approximate matrix basis $B_{\rho,A}$
for the leading singular space 
$\mathbb T_{\rho,A}$.
Then we
applied \cite[Algorithm 4.1]{PQ12}
in  order to compute the matrix $B_{A,\rho}=\nmb(B_{\rho,A})$, 
being an
approximate matrix basis 
for the trailing singular space $\mathbb T_{A,\rho}$.
Table  \ref{tabSVD_TAILnmb} displays
the least-squares error  norms  
${\rm rn}=||B_{A,\rho}Y_{A,\rho}-T_{A,\rho}||$. 
They slightly exceed those of Table
 \ref{tabSVD_TAIL}.



\begin{table}[h] 
  \caption{Error  norms of approximate bases of 
 the trailing singular spaces computed as the nmbs of the bases for the 
leading singular spaces
}
\label{tabSVD_TAILnmb} 
 \begin{center}
    \begin{tabular}{| c | c | c | c |c|}
      \hline

$r$  & $n$& \bf{mean} & \bf{std}\\\hline

1  	 &64&2.13e-07&6.87e-07\\\hline
1 	 &128	&3.12e-07	&7.20e-07\\\hline
1 	 &256	&9.41e-07	&1.49e-06\\\hline
2 	 &64	&1.74e-07	&3.02e-07\\\hline
2 	 &128	&4.79e-07	&1.12e-06\\\hline
2 	 &256	&1.33e-07	&3.04e-06\\\hline
4 	 &64	&7.49e-07	&3.90e-06\\\hline
4 	 &128	&7.18e-07	&2.63e-06\\\hline
4 	 &256	&3.37e-06	&9.27e-06\\\hline

    \end{tabular}
  \end{center}
\end{table}



\subsection{Preconditioning tests}\label{sprecondtests}\label{sprec}


Table \ref{tabprec}  covers our tests for the preconditioning  by  means
of randomized additive preprocessing  and augmentation.
The tests show great power of both additive preprocessing and augmentation,
even though we limited randomization to choosing the signs $+$ and $-$
for the nonzero entries of some very sparse and highly structured matrices
$U$, $V$, and $W$. Namely, both our additive preprocessing and augmentation
consistently decreased the condition numbers of the input matrices from about $10^{16}$
to the values in the range from $10^2$ to $5*10^5$.

\medskip

{\em GENERATION OF THE INPUTS.}

We have tested the input matrices of the following classes.


1n. {\em Nonsymmetric matrices $A$ of type I with numerical nullity $r=n-\nrank (A)$.}
 $A=S\Sigma_{r}T^T$ are $n\times n$
matrices where $S$ and $T$ are $n\times n$ random orthogonal matrices, that is,   
the factors $Q$ in the QR factorizations of random real matrices;
$\Sigma_{r}=\diag (\sigma_j)_{j=1}^n$ is the diagonal matrix such that $\sigma_{j+1}\leq \sigma_j$ for 
$j=1,\dots,n-1, ~\sigma_1=1$, 
the values $\sigma _2,\dots, \sigma_{n-r-1}$ are randomly sampled in the semi-open
interval $[0.1,1)$, $~\sigma_{n-r}=0.1,~\sigma_j =10^{-16}$ for $j=n-r+1,\dots, n,$
and therefore $\kappa (A)=10^{16}$  \cite[Section 28.3]{H02}. 


1s. {\em Symmetric matrices of type I with numerical nullity $r$.}
The same as in part 1n, but for $S=T$.


The matrices of the six other classes have been constructed in the form of $\frac{A}{||A||}+\beta I$,
with the recipes for defining the matrices $ A$ and scalars $\beta$  specified below.

2n. {\em Nonsymmetric matrices of type II with numerical nullity $r$.}
 $ A=(W~|~WZ)$ where $W$ and $Z$ are random orthogonal matrices of sizes $n\times (n-r)$ and 
$(n-r)\times r$, respectively.


2s. {\em Symmetric matrices of type II with numerical nullity $r$.}
 $ A=WW^T$ where $W$ are random orthogonal matrices of size $n\times (n-r)$.


3n. {\em Nonsymmetric Toeplitz-like matrices with numerical nullity $r$.} 
$ A=c(T~|~TS)$ for random Toeplitz matrices $T$ of size $n\times (n-r)$ and $S$ 
of size $(n-r)\times r$ and for a positive scalar $c$ such that $||A||\approx 1$.

 
3s. {\em Symmetric Toeplitz-like matrices with numerical nullity $r$.} 
$ A=cTT^T$ for random Toeplitz matrices $T$ of size $n\times (n-r)$ and 
a positive scalar $c$ such that $||A||\approx 1$.

 
4n. {\em Nonsymmetric Toeplitz matrices with numerical nullity $1$.} 
$ A=(a_{i,j})_{i,j=1}^{n}$ is a Toeplitz  $n\times n$ matrix. Its entries 
$a_{i,j}=a_{i-j}$ are random for $i-j<n-1$, and so the matrix
$ A_{n-1}=(a_{i,j})_{i,j=1}^{n-1}$
is nonsingular (with probability 1)
and was indeed nonsingular in all our tests. The entry $a_{n,1}$ is selected
to annihilate or nearly annihilate $\det A$, that is, to fulfill 
\begin{equation}\label{eqtz0}
\det A=0~{\rm or} \det A\approx 0,
\end{equation}
in which  case the matrix $A$ is singular or ill-conditioned.


4s. {\em Symmetric Toeplitz matrices with numerical nullity $1$.} 
$ A=(a_{i,j})_{i,j=1}^{n}$ is a Toeplitz  $n\times n$ matrix. Its entries 
$a_{i,j}=a_{i-j}$ are random for $|i-j|<n-1$, while the entry $a_{1,n}=a_{n,1}$ 
was selected to satisfy equation (\ref{eqtz0}), which is
 the quadratic equation in this entry. Occasionally it had no real roots, but
then  we repeatedly generated the
 matrix $A$.

We set $\beta=10^{-16}$ for symmetric matrices $ A$
in the classes 2s, 3s, and 4s, so that $\kappa (A)=10^{16}+1$ in these cases.  
For nonsymmetric matrices $ A$ we defined the scalar $\beta$ by an iterative
process such that $||A||\approx 1$ and $10^{-18}||A||\le \kappa (A)\le 10^{-16}||A||$
\cite[Section 8.2]{PIMR10}.

\medskip

{\em RANDOMIZED PREPROCESSING AND TEST RESULTS.}

Table \ref{tabprec} displays the average values of 
the condition numbers $\kappa (C)$ 
and $\kappa (K)$
of the matrices 
$C=A+UV^T$ and $K=\begin{pmatrix} W  & V^T  \\
U & A \end{pmatrix}$
over 1000 tests for the inputs in the above classes, 
$r=1,2,4,8$ and $n=128$. 
Here 
  
$$U=\frac{\bar U}{||\bar U||},
~\bar U^T=(\pm I_r~|~O_{r,r}~|~ \pm I_r~|~ O_{r,r}~|~\dots~|~O_{r,r}~|~\pm I_r~|~O_{r,s}),$$
 $s$ is such that $\bar U \in \mathbb R^{n\times r}$,
$$V=\frac{\bar V}{||\bar V||},~\bar V^T=(2I_r~|~O_{r,r}~|~ 2I_r~|~ O_{r,r}~|~\dots~|~O_{r,r}~|~2I_r~|~O_{r,s})-U^T,$$
$W=\frac{\bar W}{||\bar W||}\in \mathbb R^{r\times r}$,
$\bar W$ are circulant matrices, 
  each defined by its first column,
filled with $\pm 1$, and here as well as in the expression for $\bar U$, all signs
$\pm$ turn into $+$ and $-$
with the same probability 0.5, independently of each other.

In our further tests the condition numbers of the matrices $C=A+10^pUV^T$ 
for $p=-10,-5,5,10$ were steadily growing within a factor $10^{|p|}$
as the value $|p|$ was growing. This  showed the importance of proper scaling 
of the additive preprocessor $UV^T$.

Table \ref{tabprec} also displays the results of the similar tests with 
Gaussian matrices $U$, $V$, and  $W$. The results show 
similar power of Gaussian preprocessors and our 
random sparse and structured preprocessors.


\begin{table}[!ht]
\caption{Preconditioning tests}
\label{tabprec}
\begin{center}
\begin{tabular}{|c|c|c|c|c|c|c|c|}
\hline	Type 	&	 r 	&	$\kappa(C)$, Gaussian 	&	$\kappa(K)$, Gaussian 	&	 $\kappa(C)$, structured 	&  $\kappa(K)$, structured 	\\ \hline
$	1n 	$&	1	&	 1.38e+04 	&	 1.80e+04 	&	 1.80e+04 	&	 2.47e+04 	\\ \hline
$	1n 	$&	2	&	 9.07e+03 	&	 9.66e+03 	&	 8.60e+03 	&	 2.17e+04 	\\ \hline
$	1n 	$&	4	&	 6.91e+04 	&	 7.14e+04 	&	 4.94e+04 	&	 2.15e+05 	\\ \hline
$	1n 	$&	8	&	 2.03e+04 	&	 2.20e+04 	&	 2.81e+04 	&	 1.72e+05 	\\ \hline
$	1s 	$&	1	&	 4.48e+03 	&	 5.76e+03 	&	 3.02e+03 	&	 1.95e+04 	\\ \hline
$	1s 	$&	2	&	 2.32e+04 	&	 1.95e+04 	&	 1.43e+04 	&	 8.19e+04 	\\ \hline
$	1s 	$&	4	&	 2.38e+04 	&	 1.89e+04 	&	 5.67e+03 	&	 7.85e+04 	\\ \hline
$	1s 	$&	8	&	 7.49e+04 	&	 3.32e+04 	&	 1.26e+04 	&	 1.62e+05 	\\ \hline
$	2n 	$&	1	&	 6.75e+03 	&	 7.38e+03 	&	 3.79e+03 	&	 4.27e+03 	\\ \hline
$	2n 	$&	2	&	 1.78e+04 	&	 1.75e+04 	&	 1.74e+04 	&	 3.92e+04 	\\ \hline
$	2n 	$&	4	&	 3.91e+04 	&	 4.44e+04 	&	 1.63e+05 	&	 1.78e+06 	\\ \hline
$	2n 	$&	8	&	 4.57e+04 	&	 3.00e+04 	&	 4.72e+04 	&	 4.56e+05 	\\ \hline
$	2s 	$&	1	&	 1.35e+04 	&	 1.72e+04 	&	 6.17e+03 	&	 1.04e+04 	\\ \hline
$	2s 	$&	2	&	 1.07e+04 	&	 8.81e+03 	&	 8.27e+03 	&	 3.68e+04 	\\ \hline
$	2s 	$&	4	&	 2.01e+04 	&	 1.23e+04 	&	 2.93e+04 	&	 1.74e+05 	\\ \hline
$	2s 	$&	8	&	 2.99e+04 	&	 1.77e+04 	&	 1.65e+04 	&	 2.26e+05 	\\ \hline
$	3n 	$&	1	&	 4.62e+04 	&	 6.49e+04 	&	 1.26e+04 	&	 2.02e+04 	\\ \hline
$	3n 	$&	2	&	 2.68e+06 	&	 2.98e+06 	&	 2.61e+04 	&	 5.96e+04 	\\ \hline
$	3n 	$&	4	&	 4.29e+04 	&	 6.28e+04 	&	 3.75e+05 	&	 1.15e+06 	\\ \hline
$	3n 	$&	8	&	 1.22e+05 	&	 1.79e+05 	&	 1.04e+05 	&	 4.00e+05 	\\ \hline
$	3s 	$&	1	&	 5.34e+05 	&	 7.67e+05 	&	 8.43e+05 	&	 1.32e+06 	\\ \hline
$	3s 	$&	2	&	 2.88e+06 	&	 4.07e+06 	&	 1.52e+06 	&	 3.06e+06 	\\ \hline
$	3s 	$&	4	&	 1.44e+06 	&	 1.99e+06 	&	 3.97e+05 	&	 1.30e+06 	\\ \hline
$	3s 	$&	8	&	 9.63e+05 	&	 1.32e+06 	&	 5.95e+05 	&	 2.88e+06 	\\ \hline
$	4n 	$&	1	&	 4.26e+03 	&	 3.67e+03 	&	 3.51e+03 	&	 3.49e+03 	\\ \hline
$	4n 	$&	2	&	 6.51e+03 	&	 9.84e+03 	&	 7.06e+03 	&	 5.58e+04 	\\ \hline
$	4n 	$&	4	&	 4.22e+03 	&	 1.45e+04 	&	 4.03e+03 	&	 1.78e+05 	\\ \hline
$	4n 	$&	8	&	 4.39e+03 	&	 3.40e+04 	&	 4.72e+03 	&	 3.97e+04 	\\ \hline
$	4s 	$&	1	&	 4.06e+05 	&	 4.14e+05 	&	 2.61e+06 	&	 2.50e+06 	\\ \hline
$	4s 	$&	2	&	 1.34e+06 	&	 3.79e+04 	&	 1.09e+05 	&	 3.24e+04 	\\ \hline
$	4s 	$&	4	&	 1.30e+05 	&	 1.51e+04 	&	 1.49e+04 	&	 4.69e+04 	\\ \hline
$	4s 	$&	8	&	 2.85e+04 	&	 1.17e+04 	&	 1.04e+04 	&	 6.95e+04 	\\ \hline

\end{tabular}
\end{center}
\end{table}


\clearpage

\section{Conclusions}\label{srel}


We studied randomized preprocessing for the acceleration of
computations with singular and ill-conditioned  matrices.
We assumed that
an $m\times n$ input matrix $A-E$ of rank $\rho$
has been represented by its approximation $A$, 
with a small perturbation norm $||E||$, 
so that the matrix $A$ had numerical rank $\rho$.
Then we approximated some bases
for the range 
and the null space 
of the matrix $A-E$, which were
the leading and
trailing singular spaces  $\mathbb T_{\rho,A}$ and $\mathbb T_{A,\rho}$
 of 
the matrix $A$,
respectively, associated with its $\rho$ largest singular values and 
 its remaining singular values, respectively.

The customary numerical algorithms solve these problems
by using pivoting, orthogonalization, or  
SVD, but by extending our earlier study in \cite{PQ10}, \cite{PQ12}, an \cite{PQZC}
we applied randomization
instead of these costly techniques
and obtain accurate solution at a 
significantly lower computational cost.
Our null space algorithms reduce the solution of
  homogeneous rank deficient and ill-conditioned linear systems  
 of equations to the similar tasks
for well-conditioned  linear systems of full rank,
which significantly improves the known algorithms for 
this fundamental computational problem. 

Our work continued the study in a stream of our earlier papers,
which empirically demonstrated the preconditioning power 
of randomized augmentation and additive preprocessing. 
Now  we  supplied 
detailed formal analysis 
which supported these empirical observations.

In particular our study has shown greater efficiency of 
western and northern augmentation (that is, appending a block of
random rows or columns to the given matrix)
versus northwestern augmentation
(that is, appending two blocks of
random rows or columns simultaneously)
and additive preprocessing. This can
properly direct  randomized  preprocessing.

Our formal results have been in good accordance with our 
previous and present numerical tests, which have consistently shown
 that great variety 
of  random sparse and structured preprocessors
(even where randomization was very limited)
 usually  are
 as efficient preconditioners as Gaussian ones.
Similar observations  
have been made by ourselves and by
many other researchers about the power of random sparse and structured multipliers versus 
Gaussian  multipliers in
 their applications to low-rank approximation of a matrix and to GENP. 

For a long while formal support for these empirical observations has been missing, 
but  our novel duality techniques has
provided formal support for these empirical
observations. 

Our results motivate derandomization of our preprocessing 
and bolder application of sparse and structured 
preprocessing for the computational problems studied in this paper
 as well as for some other
important problems of matrix computations.
This promises significant acceleration of the known algorithms.

Promising and in some cases surprising findings of this kind 
have been presented also in \cite{PZa} and \cite{PZb}),
and it is a major challenge to find  new classes of efficient preprocessors 
and new areas where our techniques can 
increase substantially the efficiency of the known algorithms.


In the rest of this section, we outline our novel 
application
 of randomized augmentation and additive preprocessing
to supporting GENP.
The papers \cite{PQZ13},  \cite{PQY15}, and  \cite{PZ15} 
cover alternative randomized multiplicative
support of GENP, its motivation and  history.

Suppose that we are given an $n\times n$ matrix $A$
and we try to apply to it GENP and to
avoid  limitations of multiplicative preprocessing
(cf. \cite{PQZ13},  \cite{PQY15}, and  \cite{PZ15}).
Fix  a positive integer $h<n$
and a pair of $n\times h$ matrices $U$ and $V$
and consider
northwestern augmentation and additive preprocessing 
given by the maps  

\begin{equation}\label{eqaugap}
A\rightarrow K=\begin{pmatrix}
I_h  &  V^T  \\
U  & A
\end{pmatrix}~{\rm and}~C=A-UV^T,
\end{equation}
 respectively.
Gaussian augmentation and additive preprocessing
 generate $2hn$ Gaussian parameters each;
additive preprocessing requires in addition 
$(2h-1)n^2$ flops. By choosing structured
(e.g., Toeplitz)  matrices $U$ 
and $V$, we can decrease these bounds to
$O(n)$ random parameters and $O(n\log(n))$
flops.

\begin{theorem}\label{thaugap}
Let  $h$ and $n$ be two positive integers.
Let 
 $A$ be an $n\times n$ matrix normalized so that $||A||\approx 1$
and let   $\eta$ denote the maximum numerical nullity 
of its leading square blocks.
Let 
 $U$ and $V$ be the pair of $n\times h$ Gaussian matrices
such that either $U=V$ or 
these two matrices $U$ and $V$ are independent of one another. 
Suppose that equation (\ref{eqaugap})
defines northwestern 
augmentation  and additive preprocessing 
of the matrix $A$,
producing the matrices $K$ and $C$.

(i) Then these  
 matrices  are  
nonsingular with probability 1, and 
their condition numbers can be estimated from above
according to the probabilistic estimates of Sections \ref{saug} and \ref{sapaug}.   

(ii) One can apply the probabilistic estimates of Section \ref{sweak} instead
if  $U$ and $V$ are SRFT matrices, if 
we  choose $h\ge q=cn$, for a 
sufficiently large constant $c$,
 and if 
$$4\Big(\sqrt {\eta}+\sqrt {8\log_2(\eta n)}\Big)^2\log_2(\eta)\le h.$$
\end{theorem}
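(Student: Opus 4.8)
The plan is to reduce the claim to the blockwise analysis already carried out in Sections \ref{saug}--\ref{sweak}. Recall that GENP applied to a matrix $M$ is well defined and numerically safe precisely when all leading principal square submatrices of $M$ are nonsingular, and in that case the relative output error is controlled by $\max_j\kappa(M_j)$, the maximum being over those submatrices (cf. \cite{PQY15}, \cite{PZ15}). So it suffices to prove that, with probability $1$, every leading block of $K$ (respectively of $C$) is nonsingular, and that, with probability close to $1$, all these blocks are well-conditioned, with the bounds of Sections \ref{saug}--\ref{sapaug} in case (i) and of Section \ref{sweak} in case (ii); in particular this covers $\kappa(K)$ and $\kappa(C)$ themselves, which are the condition numbers of the largest leading blocks.

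The key reduction is the following. Write $A_k$ for the leading $k\times k$ block of $A$, write $U_k,V_k\in\mathbb R^{k\times h}$ for the first $k$ rows of $U,V$, set $C_k=A_k-U_kV_k^T$ and $\rho_k=\nrank(A_k)$, so that $C_k$ is the leading $k\times k$ block of $C$ and $\|A_k\|\le\|A\|\approx1$. For $K$ of (\ref{eqaugap}), the leading blocks of order at most $h$ are the identity matrices $I_1,\dots,I_h$, while for $1\le k\le n$ the leading block of order $h+k$ equals $\begin{pmatrix} I_h & V_k^T\\ U_k & A_k\end{pmatrix}$, whose Schur complement with respect to the block $I_h$ is exactly $C_k$; hence, by Theorem \ref{th5.2exp} with $n$ and $r$ there replaced by $k$ and $h$, this block is nonsingular if and only if $C_k$ is, and its condition number is bounded in terms of $\kappa(C_k)$, $\|U_k\|\le\|U\|$, $\|V_k\|\le\|V\|$ and $\|A_k\|\le\|A\|$. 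Thus both parts reduce to bounding $\kappa(C_k)=\kappa(A_k-U_kV_k^T)$ for $k=1,\dots,n$, and since each $A_k$ has numerical nullity $k-\rho_k\le\eta$, once $h\ge\eta$ (the regime in which the construction is of interest, and which part (ii) enforces outright) each pair $(A_k,C_k)$ satisfies the hypotheses of Corollary \ref{cocca1} and Theorem \ref{thcca10} with $r=h$, so that bound (\ref{eqaddpr}) or (\ref{eqaddpr+}) applies according to the size of $h$ relative to $k$.

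For part (i) the submatrices $U_k,V_k$ are Gaussian $k\times h$ matrices by Lemma \ref{lepr3}, and the analysis of Section \ref{sapaug} admits dependent factors, hence the case $U=V$. Each $\det C_k$ — equivalently each determinant of a leading block of $K$ — is a nonzero polynomial in the entries of $U,V$, so Theorem \ref{thdgr} together with Assumption 1 gives nonsingularity of all these blocks, hence of $C$ and $K$, with probability $1$. The condition-number bounds of Sections \ref{saug}--\ref{sapaug} then apply to each $C_k$ and each $K_{h+k}$, and a union bound over $k=1,\dots,n$, using the polynomially small per-block failure probabilities of those sections such as (\ref{eqsst06}), yields simultaneous well-conditioning of all leading blocks, and hence numerical safety of GENP, with probability close to $1$.

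For part (ii) I would replace the Gaussian estimates of Section \ref{sapaug} by the SRFT estimates behind Section \ref{sweak} — Theorem \ref{thsrft} and Remark \ref{resrft} — extended to additive preprocessing as indicated, without details, in Section \ref{sweak1}. The hypotheses $h\ge q=cn$ and $4\big(\sqrt{\eta}+\sqrt{8\log_2(\eta n)}\big)^2\log_2(\eta)\le h$ are exactly what makes the oversampling and admissibility requirements of Theorem \ref{thsrft} hold for every leading block $A_k$ simultaneously: the deficiency $k-\rho_k\le\eta$ and the size $k\le n$ enter that requirement, and the factor $n$ inside the logarithm drives the per-block failure probability low enough that the union over the roughly $n$ leading blocks still fails only with probability bounded away from $1$. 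The hard part is precisely this uniformity over all $n$ leading blocks and, more delicately, the fact that a leading-row submatrix $U_k$ of an SRFT matrix is not itself an SRFT matrix, so the estimates of \cite{HMT11} cannot be quoted verbatim for $C_k$; instead one must work, as sketched in Section \ref{sweak1}, with the product of the trailing left-singular block $S_{A_k,\rho_k}$ of $A_k$ (a fixed matrix with orthonormal columns) and $U_k$ (a structured matrix built from a partial discrete Fourier transform), and re-derive there the norm bounds of Theorem \ref{th5.2}. Once those blockwise SRFT bounds are in hand, collecting them, taking the union bound, and passing from $\kappa(C_k)$ back to $\kappa(K_{h+k})$ via Theorem \ref{th5.2exp} is routine.
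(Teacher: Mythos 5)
Your proposal takes essentially the same route as the paper — exploit the Schur-complement identity to transfer results about $K$ to results about $C$, and defer the actual norm bounds to Sections~4--6 — but where the paper's own justification is two terse sentences (``The claimed results are readily verified for augmentation\ldots We extend them to matrices $C$\ldots by applying GENP to the matrix $K$''), you make the argument explicit and go considerably further. In particular you correctly identify that the relevant object for the intended application (GENP safety) is not just $\kappa(K)$ and $\kappa(C)$ but the whole family of leading blocks, you exhibit the block $\begin{pmatrix} I_h & V_k^T\\ U_k & A_k\end{pmatrix}$ of order $h+k$ together with its Schur complement $C_k=A_k-U_kV_k^T$, and you invoke Theorem~\ref{th5.2exp} at scale $k$ and the estimates of Section~\ref{sapaug} for each $C_k$ — all of which the paper leaves unsaid even though it is implicitly what ``arriving at the same task for $C$ in $h$ elimination steps'' requires. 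You also honestly flag the most serious issue in part~(ii), which the paper's proof silently skips over: the leading $k$-row submatrix $U_k$ of an SRFT matrix is \emph{not} itself an SRFT matrix, so the bounds of Theorem~\ref{thsrft} and \cite{HMT11} cannot be applied verbatim to the blocks $C_k$, and neither the paper nor your sketch actually fills this in (the paper's own Section~\ref{sweak1} explicitly writes ``we omit the details'' for the additive-preprocessing SRFT case). So your proposal is, if anything, more complete than the paper's proof, and it identifies exactly where the remaining technical debt lies. Two small points: citing Lemma~\ref{lepr3} for the Gaussianity of $U_k$ is unnecessary (a $k$-row submatrix of a Gaussian matrix is Gaussian by definition, no orthogonal transformation is involved); and you should be a touch more careful that ``$h\ge\eta$'' gives $k-\rho_k\le\eta\le h$ which satisfies $\rho_k\ge k-h$, whereas Theorem~\ref{thcca10} wants the strict inequality $\rho_k>k-h$, so you really need $h>\eta$ or an appeal to the $r=n-\rho$ case of Corollary~\ref{cocca1} when equality holds — a harmless boundary detail, but worth noting.
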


The claimed results are readily verified for augmentation with Gaussian 
and SRFT matrices producing matrices $K$. We extend them to 
matrices $C$, produced with additive preprocessing,
by applying GENP to the matrix $K$. Indeed  
we arrive at the same task for the matrix $C$ in $h$ elimination steps.
 
In part (i) of Theorem \ref{thaugap}
we can set $h=\eta$ if we know the bound $\eta$,
but otherwise we can try to guess such a bound {\em by actions}.
Namely, assume at first that $\eta\le 1$, set 
$h=1$, apply GENP to the matrix $K$or $C$, and
in the case of failure, increase (e.g., double) $h$ recursively.
 
Let us motivate this policy. 
Define the $\eta$-{\em family} of matrices as the set of all
matrices with the maximal numerical nullity at least $\eta$
for its leading square blocks.
Then already the 1-family
makes up a small fraction of all matrices, and 
 the size of the $\eta$-family 
 decreases very fast as $\eta$ grows.
 
This randomized preprocessing is universal 
and allows us to use  SRFT structure, but 
supports the application of GENP to
the matrices $K$ and  $C$,  
rather than to the original matrix $A$.
Our next goal is 
 the inversion of the  matrix $A$
or the solution of a linear system $A{\bf x}={\bf b}$
simplified by using the output of the above applications.


A potential tool is the 
 SMW formula (\ref{eqsmw}), which we can extend by 
expressing the inverse $A^{-1}$ through 
the inverse $K^{-1}$  rather than  $C^{-1}$.

If the assumptions of Theorem \ref{thaugap}
have been satisfied, then the matrix $C$ is likely to be well-conditioned,
but using the SMW formula may still cause numerical problems
at the stages of computing and inverting the matrix
$I_h+V^TC^{-1}U$. 

For a natural antidote, we can perform 
the computations  at these stages with  extended precision.
They involve $O(hn^2)$ flops, versus the order of $n^3$ flops involved
at the other stages and performed with double precision.
This can be attractive when $h\ll n$.

For a large class of well-conditioned matrices $A$,
we can try to avoid numerical problems 
by scaling the matrices $U$ and $V$. 
This is a research challenge, and next we outline some recipes and obstacles.

If the ratio
$\frac{||A||}{||UV^T||}$ is sufficiently large, 
then
$||VC^{-1}U||\le \theta<1$ for a 
 constant $\theta$ not close to 1,
and the diagonally dominant
 matrix $I_h+V^TC^{-1}U$  
  can be computed and inverted with no numerical problems. 
The power of that recipe is limited, however,
because our randomized preprocessing
does not work if the ratio $\frac{||A||}{||UV^T||}$ is too large.

Application of the homotopy continuation techniques 
(cf. \cite[Section 6.9]{P01}, \cite{PKRK06}, \cite{P10})
may help to extend the power of this recipe.

For two other policies pointed out below,
we must also scale  
the matrices $U$ and $V$
in order to have a sufficiently large
ratio $\frac{||A||}{||UV^T||}$,
and then again this 
scaling 
can be in conflict with obtaining
our randomized  support for GENP for the  
matrices
$K$,  $K'$, and/or  $C$.

(i) If we achieve scaling such that $||I-C^{-1}A||\le \theta <1$ for a  
 constant $\theta$ not close to 1,
then Newton's iteration $X_{i+1}=2X_i-X_iAX_i$, $i=0,1,\dots$, initialized
at $X_0=C^{-1}$, converges quadratically right from the start 
to the inverse $A^{-1}$ (cf. \cite[Chapter 6]{P01}).

(ii) Suppose that we seek the solution of a linear system $A{\bf x}={\bf b}$
and that GENP, applied to the matrix $C=A+UV^T$, has output its LU factorization 
being close 
 to the LU factorization of 
the matrix $A$. Then we can
solve the linear system  $A{\bf x}={\bf b}$ accurately
by applying iterative refinement.

\bigskip

\bigskip
\bigskip
\bigskip
\bigskip

\bigskip
\bigskip
\bigskip


{\bf {\LARGE {Appendix}}}
\appendix 

\section{Some Basic Definitions and Properties of Matrix Computations}\label{sosvdi}


A real matrix $Q$ is   
orthogonal if $Q^TQ=I$ 
 or $QQ^T=I$.


$||M||_F$ is the Frobenius norm of a matrix $M$.


$A^+=T_A\diag(\widehat \Sigma_A^{-1},O_{n-\rho,m-\rho})S_A^T$ is the 
Moore--Penrose 
pseudo-inverse of the matrix $A$ of (\ref{eqsvd}). 

$\kappa (A)=\frac{\sigma_1(A)}{\sigma_{\rho}(A)}=||A||~||A^+||$ is the condition 
number of an $m\times n$ matrix $A$ of rank $\rho$. Such matrix is 
{\em ill-conditioned} 
if the ratio $\frac{\sigma_1(A)}{\sigma_{\rho}(A)}=||A||~||A^+||$ is large
and otherwise is {\em well-conditioned}. 



The {\em numerical rank} of an $m\times n$ matrix $A$, denoted $\nrank(A)$,
is the minimal rank of its nearby matrices, and
 $\nnul(A)=n-\nrank(A)$ is
the numerical nullity of $A$.

\medskip

Recall the following basic properties. 

\begin{equation}\label{eqnorm2}
||A^T||=||A||\le ||A||_F=||A^T||_F\le {\sqrt n} ||A||,~~||AB||\le ||A||~||B||,~~||AB||_F\le ||A||_F~||B||_F, 
\end{equation}

\begin{equation}\label{eqnormdiag}
||\diag(M_j)_j||=\max_j ||M_j||~{\rm for~any~set~of~matrices}~M_j.
\end{equation}

\begin{equation}\label{eqnrm+}
||A^+||=\frac{1}{\sigma{_\rho}(A)}.
\end{equation}

\begin{lemma}\label{lepr1} 
Suppose $\Sigma=\diag(\sigma_i)_{i=1}^{n}$, $\sigma_1\ge \sigma_2\ge \cdots \ge \sigma_n$,
$F\in \mathbb R^{r\times n}$, and  $H\in \mathbb R^{n\times r}$.
Then 
\begin{itemize}
\item 
  $\sigma_{j}(F\Sigma)\ge\sigma_{j}(F)\sigma_n$,
$\sigma_{j}(\Sigma H)\ge\sigma_{j}(H)\sigma_n$ for all $j$. 
\item 
 If also $\sigma_n>0$, then 
 $\rank (F\Sigma)=\rank (F)$ and $\rank (\Sigma H)=\rank (H)$.
\end{itemize}
\end{lemma}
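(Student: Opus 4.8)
The plan is to reduce everything to the variational (min-max) characterization of singular values, which is the standard and cleanest route here. Recall that for any $p\times q$ matrix $M$,
$$\sigma_j(M)=\max_{\dim \mathcal V=j}\ \min_{\substack{x\in\mathcal V\\ x\neq 0}}\ \frac{\|Mx\|}{\|x\|},$$
the maximum being over $j$-dimensional subspaces $\mathcal V$ of the domain. First I would prove the inequality $\sigma_j(\Sigma H)\ge \sigma_j(H)\sigma_n$. For any vector $x$ one has $\|\Sigma H x\|\ge \sigma_n\|Hx\|$, because $\Sigma$ is diagonal with entries of absolute value at least $\sigma_n$ (here I use $\sigma_1\ge\cdots\ge\sigma_n$, so $\sigma_n$ is the smallest diagonal entry, hence $\sigma_n=\|\Sigma^{-1}\|^{-1}$ when $\sigma_n>0$, and more elementarily $\|\Sigma y\|^2=\sum_i\sigma_i^2y_i^2\ge\sigma_n^2\|y\|^2$ for every $y$). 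Plugging $y=Hx$ gives the pointwise bound, and then taking the same optimizing subspace $\mathcal V$ in the min-max formula for $\sigma_j(H)$ yields $\sigma_j(\Sigma H)\ge \min_{x\in\mathcal V}\|\Sigma Hx\|/\|x\|\ge \sigma_n\min_{x\in\mathcal V}\|Hx\|/\|x\|=\sigma_n\sigma_j(H)$.

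For the companion inequality $\sigma_j(F\Sigma)\ge\sigma_j(F)\sigma_n$, I would simply transpose: $\sigma_j(F\Sigma)=\sigma_j((F\Sigma)^T)=\sigma_j(\Sigma^T F^T)=\sigma_j(\Sigma F^T)$, since $\Sigma^T=\Sigma$, and then apply the already-proved bound with $H=F^T$ to get $\sigma_j(\Sigma F^T)\ge\sigma_n\sigma_j(F^T)=\sigma_n\sigma_j(F)$. This uses only $\|M^T\|$-type facts from (\ref{eqnorm2}) applied entrywise to singular values (i.e. $\sigma_j(M)=\sigma_j(M^T)$).

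For the second bullet, assume $\sigma_n>0$, so $\Sigma$ is invertible. Then $\rank(\Sigma H)\le\rank(H)$ trivially, while $\rank(H)=\rank(\Sigma^{-1}(\Sigma H))\le\rank(\Sigma H)$, giving equality; symmetrically $\rank(F\Sigma)=\rank(F)$. Alternatively this also drops out of the first bullet: $\rank(M)$ equals the number of indices $j$ with $\sigma_j(M)>0$, and $\sigma_j(\Sigma H)\ge\sigma_n\sigma_j(H)>0$ whenever $\sigma_j(H)>0$, combined with the obvious reverse rank inequality. I do not anticipate a genuine obstacle in this lemma; the only point requiring a little care is getting the direction of the diagonal-entry bound right — one wants the \emph{smallest} diagonal entry $\sigma_n$ as the multiplicative factor, and the ordering hypothesis $\sigma_1\ge\cdots\ge\sigma_n$ guarantees that $\sigma_n$ is indeed that smallest entry, so the pointwise estimate $\|\Sigma y\|\ge\sigma_n\|y\|$ is valid for all $y$.
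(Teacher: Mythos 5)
Your proof is correct. The paper itself gives no proof of Lemma \ref{lepr1} (it is stated without argument in the Appendix as a basic fact), so there is no author's proof to compare against; your argument is a clean and standard one. The pointwise bound $\|\Sigma y\|\ge\sigma_n\|y\|$ combined with the Courant--Fischer characterization $\sigma_j(M)=\max_{\dim\mathcal V=j}\min_{0\neq x\in\mathcal V}\|Mx\|/\|x\|$ gives $\sigma_j(\Sigma H)\ge\sigma_n\sigma_j(H)$, the transposition $\sigma_j(F\Sigma)=\sigma_j(\Sigma F^T)$ handles the companion case, and when $\sigma_n>0$ the rank equalities follow either from invertibility of $\Sigma$ or, as you also note, by counting nonzero singular values via the inequality just proved together with $\rank(\Sigma H)\le\rank(H)$.
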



\begin{lemma}\label{lepr2}
$\sigma_{j}(SM)=\sigma_j(MT)=\sigma_j(M)$ for all $j$ if $S$ and $T$ are square orthogonal matrices.
\end{lemma}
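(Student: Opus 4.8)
The plan is to reduce both claims to the singular value decomposition together with the fact that a product of orthogonal matrices is orthogonal. Write $M \in \mathbb R^{m\times n}$, so that $S$ is $m\times m$ and $T$ is $n\times n$, and let $M = S_M\Sigma_M T_M^T$ be an SVD of $M$ as in~(\ref{eqsvd}), with $\Sigma_M$ carrying the singular values $\sigma_j(M)$ on its diagonal. First I would treat left multiplication: since $S_M$ is orthogonal and $S$ is orthogonal, the product $SS_M$ is again a square orthogonal matrix (this is immediate from $(SS_M)^T(SS_M) = S_M^T S^T S S_M = I$). Hence $SM = (SS_M)\Sigma_M T_M^T$ is itself an SVD of $SM$, and reading off the diagonal of $\Sigma_M$ gives $\sigma_j(SM) = \sigma_j(M)$ for all $j$.

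For right multiplication I would argue symmetrically. Write $MT = S_M\Sigma_M T_M^T T = S_M \Sigma_M (T^T T_M)^T$. Because $T$ and $T_M$ are orthogonal, so is $T^T T_M$, and therefore $S_M \Sigma_M (T^T T_M)^T$ is an SVD of $MT$, whence $\sigma_j(MT) = \sigma_j(M)$ for all $j$. Combining the two displays yields $\sigma_j(SM) = \sigma_j(MT) = \sigma_j(M)$, as claimed.

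An equivalent and equally short route, which I could mention as an alternative, is to use the characterization of singular values via the symmetric matrices $M^TM$ and $MM^T$: one has $(SM)^T(SM) = M^T(S^TS)M = M^TM$ and $(MT)(MT)^T = M(TT^T)M^T = MM^T$, so $SM$ shares the spectrum of $M^TM$ with $M$, and $MT$ shares the spectrum of $MM^T$ with $M$; taking nonnegative square roots of the eigenvalues gives the coincidence of singular values.

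There is essentially no obstacle here — the statement is a routine invariance property of singular values under orthogonal transformations, and the only point requiring (trivial) verification is that the relevant products of orthogonal matrices remain orthogonal of the correct size. I would keep the write-up to the first paragraph above.
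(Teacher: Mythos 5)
Your proof is correct, and the approach — writing the SVD $M = S_M\Sigma_M T_M^T$ and absorbing the orthogonal factors $S$ and $T$ into the orthogonal SVD factors — is the standard one; the paper itself states this lemma without proof, treating it as a basic fact, so there is no competing argument to compare against. Both your primary route and the alternative via the eigenvalues of $M^TM$ and $MM^T$ are sound, and either would serve as a complete justification.
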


 
\begin{lemma}\label{faccondsub}
For a matrix $A$, its submatrix $A_0$, and a subscript $j$, it holds that  
$\sigma_{j} (A)\ge \sigma_{j} (A_0)$.
\end{lemma}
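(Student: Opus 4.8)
The plan is to realize the passage from $A$ to a submatrix $A_0$ as multiplication on the left and on the right by matrices of spectral norm at most $1$, and then to invoke the monotonicity of the singular values under such multiplication. Throughout, a submatrix $A_0$ of $A$ is determined by an index set $I$ of retained rows and an index set $J$ of retained columns; writing $P$ for the matrix whose rows are the standard basis row vectors $e_i^T$, $i\in I$, and $Q$ for the matrix whose columns are the standard basis vectors $e_j$, $j\in J$, we have $A_0=PAQ$ with $PP^T=I$ and $Q^TQ=I$, so that $||P||=||Q||=1$. (We may assume $A_0$ is nonempty, and we interpret $\sigma_j(M)$ as $0$ once $j$ exceeds the smaller dimension of $M$, so that both sides of the claimed inequality are always meaningful.)

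Next I would establish the auxiliary claim that $\sigma_j(XMY)\le ||X||~\sigma_j(M)~||Y||$ for every $j$ and all conformable $X$, $M$, $Y$. One clean route uses the Eckart--Young characterization $\sigma_j(M)=\min\{||M-N||:\rank(N)\le j-1\}$: choosing $N$ with $\rank(N)\le j-1$ and $||M-N||=\sigma_j(M)$, the matrix $N'=XNY$ satisfies $\rank(N')\le j-1$, while $||XMY-N'||=||X(M-N)Y||\le ||X||~||M-N||~||Y||$ by (\ref{eqnorm2}); hence $\sigma_j(XMY)\le ||XMY-N'||\le ||X||~\sigma_j(M)~||Y||$. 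Alternatively one argues directly from the Courant--Fischer min--max description of the singular values of $M$, again using only submultiplicativity of the spectral norm; either way the step is routine.

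Combining the two observations gives $\sigma_j(A_0)=\sigma_j(PAQ)\le ||P||~\sigma_j(A)~||Q||=\sigma_j(A)$, which is the assertion. I expect the only point requiring any care to be the auxiliary monotonicity claim (deciding between the Eckart--Young route and the min--max route and citing whichever is cleaner) together with the bookkeeping of index ranges so that the inequality remains valid when $j$ is large; everything else is immediate from (\ref{eqnorm2}) and the identification $A_0=PAQ$.
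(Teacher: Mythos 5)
The paper states this lemma without proof, treating it as a standard fact about singular values of submatrices, so there is no paper proof to compare against. Your argument is correct: the factorization $A_0=PAQ$ with $P$ having orthonormal rows and $Q$ orthonormal columns (hence $||P||=||Q||=1$), combined with the general bound $\sigma_j(XMY)\le ||X||\,\sigma_j(M)\,||Y||$ that you derive cleanly from the Eckart--Young characterization of $\sigma_j(M)$ as the spectral-norm distance to the set of matrices of rank at most $j-1$, gives exactly the claim. The bookkeeping you flag, namely interpreting $\sigma_j(M)=0$ once $j$ exceeds the smaller dimension of $M$, is the right convention and makes the inequality hold for all $j$. (A common alternative is to prove the one-row-deletion interlacing inequality via Courant--Fischer and iterate, but the route you chose is at least as clean and fully rigorous.)
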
 

 
\begin{theorem}\label{they} 
We have $|\sigma_{j} (C)-\sigma_{j} (C+E)|\le ||E||$
for all $m\times n$ matrices $C$ and $E$ 
and  all $j$.
\end{theorem}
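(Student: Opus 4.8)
The plan is to prove this classical Weyl-type perturbation bound via the Courant--Fischer min--max characterization of singular values. Recall that for an $m\times n$ matrix $M$ and each index $j$,
\begin{equation*}
\sigma_j(M)=\min_{\substack{\mathcal V\subseteq \mathbb R^n\\ \dim \mathcal V=n-j+1}}~\max_{\substack{x\in \mathcal V\\ \|x\|=1}}\|Mx\|.
\end{equation*}
I would take this as known (it is the standard extremal description of singular values, obtainable from the SVD (\ref{eqsvd}) by applying the usual Courant--Fischer theorem to the symmetric matrix $M^TM$, whose eigenvalues are $\sigma_j(M)^2$).

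First I would fix $j$ and choose a subspace $\mathcal V^\ast\subseteq\mathbb R^n$ with $\dim \mathcal V^\ast=n-j+1$ that attains the minimum for $C$, so that $\max_{x\in\mathcal V^\ast,\,\|x\|=1}\|Cx\|=\sigma_j(C)$. For every unit vector $x\in\mathcal V^\ast$ the triangle inequality and the submultiplicativity bound (\ref{eqnorm2}) give $\|(C+E)x\|\le\|Cx\|+\|Ex\|\le\sigma_j(C)+\|E\|$. Taking the maximum over unit $x\in\mathcal V^\ast$ yields $\max_{x\in\mathcal V^\ast,\,\|x\|=1}\|(C+E)x\|\le\sigma_j(C)+\|E\|$, and since $\sigma_j(C+E)$ is the minimum of this maximum over all admissible subspaces, it is bounded by the value on the particular subspace $\mathcal V^\ast$. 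Hence $\sigma_j(C+E)\le\sigma_j(C)+\|E\|$.

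Next I would invoke symmetry: writing $C=(C+E)+(-E)$ and using $\|-E\|=\|E\|$, the same argument with the roles of $C$ and $C+E$ interchanged gives $\sigma_j(C)\le\sigma_j(C+E)+\|E\|$. Combining the two inequalities gives $|\sigma_j(C)-\sigma_j(C+E)|\le\|E\|$ for every $j$, which is the claim. There is essentially no serious obstacle here; the only point requiring a little care is the bookkeeping in the min--max statement (the correct codimension $n-j+1$ and the fact that the extremal subspace exists by compactness), which a reader may take for granted, and whether to phrase it for $m\ge n$ or in full generality. An alternative route I could use instead is to apply Weyl's eigenvalue inequality to the Hermitian dilations $\begin{pmatrix}0&C\\ C^T&0\end{pmatrix}$ and $\begin{pmatrix}0&C+E\\ (C+E)^T&0\end{pmatrix}$, whose nonzero eigenvalues are $\pm\sigma_j(\cdot)$ and which differ by a matrix of norm $\|E\|$; but the min--max argument above is self-contained and shorter.
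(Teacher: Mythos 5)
Your proof is correct. The paper itself does not supply an argument for Theorem~\ref{they}: it simply cites \cite[Corollary 8.6.2]{GL13} and \cite[Corollary 4.3.2]{S98} and moves on. So the comparison is less "different route" than "you proved it, the paper outsourced it." That said, your min--max argument is the standard proof of Weyl's perturbation inequality for singular values, and it is essentially what those references contain: fix an extremal subspace $\mathcal V^\ast$ of dimension $n-j+1$ for $C$, use the triangle inequality and $\|Ex\|\le\|E\|$ to bound $\max_{x\in\mathcal V^\ast,\|x\|=1}\|(C+E)x\|$, conclude $\sigma_j(C+E)\le\sigma_j(C)+\|E\|$, then swap roles of $C$ and $C+E$. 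One small bookkeeping point worth making explicit if you wanted to be fully rigorous within the paper's conventions: the min--max formula as you wrote it is the one valid for $j\le\min\{m,n\}$ over subspaces of $\mathbb R^n$, which covers all nonzero singular values; for $j>\min\{m,n\}$ both $\sigma_j(C)$ and $\sigma_j(C+E)$ are zero (or undefined) and the inequality is trivial. Your alternative via the Hermitian dilation $\begin{pmatrix}0&C\\C^T&0\end{pmatrix}$ and Weyl's eigenvalue inequality is also standard and equally valid, and buys you a direct reduction to the symmetric case at the cost of doubling the matrix size and tracking the $\pm\sigma_j$ eigenvalue pairing. Either way, nothing is missing and the proof is sound.
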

\begin{proof}
See \cite[Corollary 8.6.2]{GL13} or \cite[Corollary 4.3.2]{S98}. 
\end{proof}





\begin{theorem}\label{thpert} 
Suppose $C$ and $C+E$ are two nonsingular matrices of the same size
and $||C^{-1}E||  =\theta<1$. Then
\begin{itemize}
\item 
$||I-(C+E)^{-1}C||  \le \frac{\theta}{1-\theta}$ and
$\||(C+E)^{-1}-C^{-1}||\le \frac{\theta}{1-\theta}||C^{-1}||$.
\item 
In particular, $\||(C+E)^{-1}-C^{-1}||\le 0.5||C^{-1}||$
if $\theta\le 1/3$.
\end{itemize}
\end{theorem}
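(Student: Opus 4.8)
\textbf{Proof proposal for Theorem \ref{thpert}.}

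The plan is to work from the identity $(C+E)^{-1}C = (I + C^{-1}E)^{-1}$, which I obtain by writing $C+E = C(I + C^{-1}E)$ and inverting. Setting $N = C^{-1}E$ with $\|N\| = \theta < 1$, the Neumann series $(I+N)^{-1} = \sum_{k\ge 0}(-N)^k$ converges, so $(I+N)^{-1}$ exists (this incidentally reconfirms that $C+E$ is nonsingular). First I would estimate $\|I - (I+N)^{-1}\|$. From $I - (I+N)^{-1} = (I+N)^{-1}N$, I get $\|I-(I+N)^{-1}\| \le \|(I+N)^{-1}\|\,\|N\|$, and the Neumann bound $\|(I+N)^{-1}\| \le \sum_{k\ge 0}\theta^k = \frac{1}{1-\theta}$ then yields $\|I - (C+E)^{-1}C\| \le \frac{\theta}{1-\theta}$, the first claimed inequality.

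For the second inequality, I would write $(C+E)^{-1} - C^{-1} = \big((C+E)^{-1}C - I\big)C^{-1}$, take norms, and apply the bound just established together with submultiplicativity (\ref{eqnorm2}): $\|(C+E)^{-1} - C^{-1}\| \le \|(C+E)^{-1}C - I\|\,\|C^{-1}\| \le \frac{\theta}{1-\theta}\|C^{-1}\|$. The special case is then immediate: if $\theta \le 1/3$, then $\frac{\theta}{1-\theta}$ is increasing in $\theta$ on $[0,1)$, so it is at most $\frac{1/3}{1-1/3} = \frac{1}{2}$, giving $\|(C+E)^{-1} - C^{-1}\| \le 0.5\,\|C^{-1}\|$.

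There is no real obstacle here; the only point requiring a little care is making sure the Neumann series argument is invoked cleanly — i.e., that $\|N\|<1$ genuinely guarantees both invertibility of $I+N$ and the geometric bound on $\|(I+N)^{-1}\|$ — rather than silently assuming $C+E$ is invertible. Since the hypothesis already states $C+E$ is nonsingular, I could alternatively skip the series and argue purely algebraically from $I = (C+E)^{-1}(C+E) = (C+E)^{-1}C + (C+E)^{-1}E$, but I find the Neumann route cleaner for extracting the factor $\frac{1}{1-\theta}$. Either way the whole argument is a few lines of norm manipulation.
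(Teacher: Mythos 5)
Your proof is correct. The paper proves this theorem by simply citing Stewart \cite[Corollary 1.4.19]{S98} with $P = -C^{-1}E$; your self-contained Neumann-series argument is exactly the standard derivation of that corollary, so it supplies in full the proof that the paper delegates to the reference.
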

\begin{proof}
See \cite[Corollary 1.4.19]{S98} for $P= -C^{-1}E$.
\end{proof}


\begin{theorem}\label{thpertq}  \cite[Theorem 5.1]{S95}.
Assume a pair of
 $m\times n$ matrices $A$ and $A+E$, 
and let the norm $||E||$ be small. Then 
$||Q(A+E)-Q(A)||_F\le \sqrt 2 ||A^+||~||E||_F+O(||E||_F^2$.
\end{theorem}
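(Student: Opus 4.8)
My plan is to deduce the statement from \cite[Theorem 5.1]{S95} and, for completeness, to also record the short first--order argument behind it, which explains both the constant $\sqrt 2$ and the factor $||A^+||$. First I would note that we may assume $A$ has full column rank, so $Q(A)$ is defined, and that for $||E||$ small enough $A+E$ again has full column rank, so $Q(A+E)$ is defined as well (this last point is part of what \cite{S95} establishes). Writing the QR factorizations $A=QR$ and $A+E=\widehat Q\widehat R$ with $R,\widehat R$ upper triangular with positive diagonal entries, and putting $\Delta Q=\widehat Q-Q$, $\Delta R=\widehat R-R$, I would expand $A+E=(Q+\Delta Q)(R+\Delta R)$ and drop the quadratic term $\Delta Q\,\Delta R$ to obtain the first--order identity $E=Q\,\Delta R+\Delta Q\,R+O(||E||_F^2)$.

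Next I would resolve $\Delta Q$ along $Q$ and along an orthonormal basis $Q_{\perp}$ of the orthogonal complement of $\mathcal R(Q)$, writing $\Delta Q=QW+Q_{\perp}N$ with $W=Q^T\Delta Q$ and $N=Q_{\perp}^T\Delta Q$; differentiating $\widehat Q^T\widehat Q=I$ shows that $W$ is skew--symmetric up to $O(||E||_F^2)$. Multiplying the first--order identity on the left by $Q^T$ and by $Q_{\perp}^T$ and on the right by $R^{-1}$ gives, modulo $O(||E||_F^2)$,
$$Q^TE\,R^{-1}=\Delta R\,R^{-1}+W,\qquad N=Q_{\perp}^TE\,R^{-1}.$$
In the first equation $\Delta R\,R^{-1}$ is upper triangular while $W$ is skew--symmetric, so $W$ is uniquely forced by the strictly lower triangle of $M:=Q^TE\,R^{-1}$, which yields $||W||_F^2=2\sum_{i>j}M_{ij}^2\le 2\,||M||_F^2$.

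The main bookkeeping step is then: since $[\,Q\ Q_{\perp}\,]$ is a square orthogonal matrix, $||\Delta Q||_F^2=||W||_F^2+||N||_F^2$ and $||Q^TE||_F^2+||Q_{\perp}^TE||_F^2=||E||_F^2$. Using the submultiplicative bound $||XY||_F\le ||X||_F\,||Y||$ together with $||R^{-1}||=1/\sigma_{\min}(A)=||A^+||$ (cf. (\ref{eqnrm+})), one gets $||W||_F^2\le 2\,||A^+||^2||Q^TE||_F^2$ and $||N||_F^2\le ||A^+||^2||Q_{\perp}^TE||_F^2$, hence
$$||\Delta Q||_F^2\le ||A^+||^2\bigl(2\,||Q^TE||_F^2+||Q_{\perp}^TE||_F^2\bigr)\le 2\,||A^+||^2\,||E||_F^2,$$
that is, $||\Delta Q||_F\le\sqrt2\,||A^+||\,||E||_F$. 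Reinstating the dropped quadratic contributions turns this into $||Q(A+E)-Q(A)||_F\le\sqrt2\,||A^+||\,||E||_F+O(||E||_F^2)$. The one nontrivial point, which is precisely where I would simply defer to \cite{S95}, is making the $O(||E||_F^2)$ term rigorous and verifying that $\widehat Q$ exists and varies smoothly with $E$ for all sufficiently small $||E||_F$; the first--order estimate itself is otherwise elementary.
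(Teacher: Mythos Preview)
The paper does not actually prove this theorem: it merely states the result with the citation to \cite[Theorem 5.1]{S95} and moves on. In that sense your proposal already matches the paper's treatment at the point where you say you would defer to \cite{S95} for rigor.

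Your first--order sketch is correct and is essentially Sun's argument: the decomposition $\Delta Q=QW+Q_\perp N$ with $W$ skew--symmetric to first order, the identification of $W$ from the strictly lower triangle of $Q^TER^{-1}$, and the bookkeeping via $||Q^TE||_F^2+||Q_\perp^TE||_F^2=||E||_F^2$ are exactly how the constant $\sqrt 2$ arises. The only remark is that your bound $||W||_F^2\le 2\,||A^+||^2\,||Q^TE||_F^2$ uses $||Q^TER^{-1}||_F\le ||Q^TE||_F\,||R^{-1}||$, which is fine; one could also bound $||W||_F^2+||N||_F^2$ directly by $2\,||ER^{-1}||_F^2$ without splitting, but your route is equivalent. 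So your proposal is correct and strictly more informative than what the paper records.
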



\section{A Gaussian 
Matrix. Estimates for Its Rank, Norm and Condition Number}\label{srrm}


\begin{definition}\label{defrndm}  
A matrix is said to be {\em standard Gaussian random} (hereafter referred to just as
{\em Gaussian}) if it is filled with i.i.d.
Gaussian random
variables  having mean $0$ and variance $1$. 
$\mathcal G^{m\times n}$ denotes  the class of $m\times n$ Gaussian matrices.
\end{definition}


\begin{lemma}\label{lepr3} {\rm Invariance of the products of Gaussian matrices  
under orthogonal
multiplications.}  \\
Suppose that
 $H\in \mathbb R^{m\times n}$,
$S\in \mathbb R^{k\times m}$, and $T\in \mathbb R^{n\times k}$
for some positive integers $k$, $m$, and $n$,
and suppose that the  matrices $S$ and $T$ are orthogonal.
Then 

(i) $SH\in \mathcal G^{k\times n}$ and $HT\in \mathcal G^{m\times k}$
if $H\in \mathcal G^{m\times n}$ and 

(ii) $SH\in \mathcal G_{k,n}$ and $HT\in \mathcal G_{m,k}$
if $H\in \mathcal G_{m,n}$.
\end{lemma}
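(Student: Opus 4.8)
The plan is to reduce the claim to a single elementary fact about the multivariate normal law: the image $Pg$ of a standard Gaussian vector $g\sim N(\mathbf 0,I_m)$ under a fixed matrix $P$ with orthonormal rows is again a standard Gaussian vector, $Pg\sim N(\mathbf 0,I)$. First I would observe that it suffices to treat $SH$: the companion statement for $HT$ follows by transposition, since $HT=(T^TH^T)^T$, the matrix $H^T$ lies in $\mathcal G^{n\times m}$ exactly when $H\in\mathcal G^{m\times n}$, and $T^T$ has orthonormal rows precisely when $T$ has orthonormal columns. At this point I would also make explicit the orthonormality convention under which the statement is true, namely $SS^T=I_k$ and $T^TT=I_k$ (in the square case $k=m=n$ these reduce to the usual one).

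Next I would argue for $SH$ column by column. Write $H=(h_1\mid\cdots\mid h_n)$; by Definition \ref{defrndm} the columns $h_1,\dots,h_n$ are independent and each $h_j\sim N(\mathbf 0,I_m)$. Since $S$ is a fixed matrix, the vectors $Sh_1,\dots,Sh_n$ are again independent, each is Gaussian as a linear image of a Gaussian vector, with mean $\mathbf 0$ and covariance $S I_m S^T=SS^T=I_k$. Hence the columns of $SH=(Sh_1\mid\cdots\mid Sh_n)$ are i.i.d. $N(\mathbf 0,I_k)$, which is precisely the defining property of a $k\times n$ standard Gaussian matrix, i.e. $SH\in\mathcal G^{k\times n}$; the statement $HT\in\mathcal G^{m\times k}$ then follows by the transposition reduction of the previous paragraph (equivalently, by the same argument applied to the rows of $H$). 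For part (ii), with $\mathcal G_{m,n}$ the relevant companion class, the identical computation goes through verbatim, because the only inputs used are the orthogonal invariance of the multivariate normal and the fact that a fixed linear map preserves independence.

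An alternative coordinate-free argument is worth recording in the square case: if $S$ and $T$ are square orthogonal, the linear bijection $H\mapsto SHT$ of $\mathbb R^{m\times n}\cong\mathbb R^{mn}$ has matrix $T^T\otimes S$ under column-wise vectorization, which is orthogonal since $(T^T\otimes S)(T^T\otimes S)^T=(T^TT)\otimes(SS^T)=I$; hence this map preserves Lebesgue measure and the Frobenius norm, and therefore preserves the density $\propto\exp(-\tfrac12\|H\|_F^2)$, so $SHT$ and $H$ are identically distributed.

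The main obstacle I anticipate is bookkeeping rather than depth: pinning down the correct orthonormality convention for the rectangular matrices $S$ and $T$ so that the covariances collapse to $I_k$ (the opposite convention, e.g. $S^TS=I_m$ with $k>m$, yields a rank-deficient covariance $SS^T$ and the conclusion fails), and checking that independence of the columns (resp. rows) is genuinely inherited after multiplication by $S$ (resp. $T$). Both points are routine once stated carefully.
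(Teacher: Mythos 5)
The paper states Lemma~\ref{lepr3} without giving a proof, so there is nothing internal to compare against; taking the lemma as a standard-but-unproved fact, your argument fills the gap correctly and completely. Your main line (reduce $HT$ to $SH$ by transposition; observe that a fixed linear map preserves independence of the columns $h_1,\dots,h_n$ of $H$; compute the covariance $SI_mS^T=SS^T=I_k$ of each $Sh_j$) is exactly the right elementary argument, and your secondary Kronecker-product argument ($T^T\otimes S$ is orthogonal on $\mathbb R^{mn}$, hence preserves the density $\propto\exp(-\tfrac12\|H\|_F^2)$) is a clean coordinate-free alternative for the square case. Your remark about the orthonormality convention is a genuine and necessary point: the paper's appendix defines ``orthogonal'' as ``$Q^TQ=I$ or $QQ^T=I$,'' and for a rectangular $S\in\mathbb R^{k\times m}$ with $k<m$ satisfying only $S^TS=I_m$ the conclusion would be false (the covariance $SS^T$ is then a rank-$m$ projection, not $I_k$); pinning down $SS^T=I_k$ and $T^TT=I_k$ is what makes the covariance computation collapse. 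In the paper's actual uses of the lemma, $S$ and $T$ are square (factors of an SVD), so the distinction is silent there, but your explicit flagging of it is a worthwhile precision. The only minor loose end is the class $\mathcal G_{m,n}$ in part (ii), which the paper never defines; your treatment of it as an unspecified companion class to which the identical computation applies is the reasonable reading, and you correctly note that the only ingredients used are orthogonal invariance of the normal law and preservation of independence under fixed linear maps.
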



Hereafter we call a vector ${\bf t}$ unit if $||{\bf t}||=1$.


\begin{lemma}\label{leprob2} Cf. \cite[Lemma A.2]{SST06}.
Assume two positive integers $n$ and $r$, a real $\mu$, a positive $x$,
a  unit vector ${\bf u}\in \mathbb R^{k\times 1}$, and 
two independent Gaussian vectors 
${\bf g}_k\in \mathcal G^{k\times 1}$ for $k=r$ and $k=r$. 
Then
{\rm Probability}$\{|{\bf u}^T{\bf g}_n-\mu|\le x\}\le \sqrt{\frac{2}{\pi}}x$.
\end{lemma}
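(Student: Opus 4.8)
The plan is to reduce the claim to the elementary fact that a fixed unit-norm linear functional of a standard Gaussian vector is a standard normal scalar, and then to bound the normal density by its peak value. First I would set $\mathbf{g}=(g_1,\dots,g_k)^T$ with i.i.d.\ $\mathcal N(0,1)$ coordinates and $\mathbf{u}=(u_1,\dots,u_k)^T$ with $\|\mathbf{u}\|=1$, and observe that the scalar $y:=\mathbf{u}^T\mathbf{g}=\sum_{i=1}^k u_ig_i$ is a linear combination of independent Gaussians, hence Gaussian, with $\mathbb E(y)=0$ and $\mathrm{Var}(y)=\sum_{i=1}^k u_i^2=\|\mathbf{u}\|^2=1$; therefore $y\sim\mathcal N(0,1)$. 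Equivalently, this follows from Lemma \ref{lepr3} on invariance under orthogonal multiplication: extend $\mathbf{u}$ to the first column of an orthogonal matrix $T$, so that $T^T\mathbf{g}$ is again standard Gaussian and its first coordinate, which equals $\mathbf{u}^T\mathbf{g}$, is $\mathcal N(0,1)$.

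Next I would express the probability as an integral against the $\mathcal N(0,1)$ density $\varphi(t)=\frac{1}{\sqrt{2\pi}}e^{-t^2/2}$:
\[
\mathrm{Probability}\{|y-\mu|\le x\}=\int_{\mu-x}^{\mu+x}\varphi(t)\,dt.
\]
Since $\varphi(t)\le\varphi(0)=\frac{1}{\sqrt{2\pi}}$ for all real $t$, the integrand is bounded by $\frac{1}{\sqrt{2\pi}}$ on an interval of length $2x$, which gives $\mathrm{Probability}\{|y-\mu|\le x\}\le\frac{2x}{\sqrt{2\pi}}=\sqrt{\frac{2}{\pi}}\,x$, exactly the asserted bound.

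I do not expect a genuine obstacle here: the only point deserving a sentence of care is the identification of the law of $\mathbf{u}^T\mathbf{g}$ as \emph{exactly} $\mathcal N(0,1)$, since the hypothesis $\|\mathbf{u}\|=1$ enters precisely in the variance computation; after that, the estimate is merely the crude ``density $\le$ peak value, integrated over an interval of length $2x$'' bound. The second Gaussian vector mentioned in the hypotheses plays no role in this lemma, so I would simply not invoke it; the same argument shows more generally that for an arbitrary nonzero fixed $\mathbf{u}$ the probability is at most $\sqrt{2/\pi}\,x/\|\mathbf{u}\|$.
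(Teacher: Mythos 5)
Your proof is correct and is precisely the standard argument for this fact; the paper itself supplies no proof and simply defers to \cite[Lemma A.2]{SST06}, whose argument is the same one you give. The key identification $\mathbf{u}^T\mathbf{g}_n\sim\mathcal N(0,1)$ (via $\|\mathbf u\|=1$, or equivalently via the rotational-invariance observation you note), followed by bounding the integral of the standard normal density over an interval of length $2x$ by its peak value $\frac{1}{\sqrt{2\pi}}$, is exactly what is needed. You are also right to observe that the second Gaussian vector and the integer $r$ in the hypothesis are never used — the statement as printed contains a typo (``$k=r$ and $k=r$''), and your proof shows that only $\mathbf g_n$ and the normalization of $\mathbf u$ matter.
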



\begin{theorem}\label{thdgr}
A Gaussian matrix  has full rank with probability 1.
\end{theorem}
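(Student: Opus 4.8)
The statement is that an $m\times n$ Gaussian matrix $G$ has full rank with probability $1$. I would argue by reducing to the claim that a certain polynomial in the entries of $G$ does not vanish almost surely. Let $l=\min\{m,n\}$. Having full rank means having rank $l$, which is equivalent to the existence of at least one $l\times l$ submatrix with nonzero determinant. Fix any particular $l\times l$ submatrix $G_0$ (say the leading one); its determinant $\det(G_0)$ is a polynomial $p$ in the $l^2$ i.i.d.\ standard Gaussian entries of $G_0$, and $p$ is not the identically zero polynomial — for instance it takes the value $1$ on the identity matrix. So it suffices to show that a nonzero polynomial evaluated at independent (absolutely continuous) random variables is almost surely nonzero.

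\textbf{Key steps in order.} First, record the reduction above: $\{\rank(G)<l\}\subseteq\{\det(G_0)=0\}$ for the fixed submatrix $G_0$, so it is enough to bound $\prob\{\det(G_0)=0\}=0$. Second, prove the auxiliary fact: if $q(x_1,\dots,x_k)$ is a polynomial not identically zero and $x_1,\dots,x_k$ are independent real random variables each having a density with respect to Lebesgue measure, then $\prob\{q(x_1,\dots,x_k)=0\}=0$. I would do this by induction on the number of variables $k$: for $k=1$ a nonzero univariate polynomial has finitely many roots, a Lebesgue-null set, so a continuous random variable avoids them almost surely; for the inductive step, write $q$ as a polynomial in $x_k$ with coefficients that are polynomials in $x_1,\dots,x_{k-1}$, pick a coefficient that is not the zero polynomial, apply the inductive hypothesis to conclude that coefficient is almost surely nonzero, and then condition on $x_1,\dots,x_{k-1}$ and apply the $k=1$ case to the resulting nonzero polynomial in $x_k$; finally integrate (Fubini / tower property). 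Third, apply this with $q=\det(G_0)$ and $k=l^2$, noting that the standard Gaussian density exists, to conclude $\prob\{\det(G_0)=0\}=0$, hence $\prob\{\rank(G)<l\}=0$, i.e.\ $G$ has full rank with probability $1$.

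\textbf{Main obstacle.} None of this is deep; the only point requiring care is the measure-theoretic bookkeeping in the induction — specifically making the conditioning argument rigorous (using independence so that the conditional distribution of $x_k$ given $x_1,\dots,x_{k-1}$ is still its density, and handling the event that the chosen leading coefficient vanishes, which is itself a null event by induction). A cleaner alternative that sidesteps the explicit induction is to invoke that the zero set $\{q=0\}$ of a nonzero polynomial is a proper algebraic subvariety of $\mathbb{R}^{k}$ and therefore has Lebesgue measure zero, so any random vector whose law is absolutely continuous (as is the case for i.i.d.\ Gaussians) puts zero mass on it. I would present the short self-contained induction for completeness, then remark on this geometric rephrasing.
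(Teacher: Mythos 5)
Your proof is correct and rests on the same conceptual core as the paper's: the rank-deficient locus is contained in a proper algebraic subvariety of $\mathbb R^{m\times n}$, hence has Lebesgue measure zero, and the Gaussian measure is absolutely continuous with respect to Lebesgue measure, so the event also has Gaussian probability zero. The one genuine (and useful) difference is in how the subvariety is produced: you observe that it suffices for a \emph{single} fixed $l\times l$ minor to vanish, reduce to the statement that the zero set of one nonzero polynomial in $l^2$ variables is Lebesgue-null, and sketch a self-contained induction proving that fact; the paper instead notes that rank deficiency forces \emph{all} maximal minors to vanish simultaneously and then appeals to a dimension count (Fact \ref{far1}) to conclude that this determinantal variety has lower dimension. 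Your route is a bit more elementary and self-contained, since it does not invoke the dimension of the rank-$\rho$ variety, whereas the paper's version is shorter by deferring to that cited fact. Both are complete and correct; there is no gap in your argument.
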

\begin{proof}
At first recall that an algebraic variety of a dimension $d\le N$
in the space $\mathbb  R^{N}$ is defined by $N-d$
polynomial equations and cannot be defined by fewer equations.
(Fact \ref{far1} specifies the dimension of the algebraic variety of 
$m\times n$ matrices of rank $\rho$.)
Now assume a rank deficient $m\times n$ matrix where $m\ge n$, say.
Then the determinants 
of all its $n\times n$ submatrices vanish. This implies 
$\begin{pmatrix} m \\n \end{pmatrix}$
polynomial
equations on the entries, that is, rank deficient matrices form
an algebraic variety of a lower dimension in the linear space 
$\mathbb R^{m\times n}$.
Clearly, such a variety has Lebesgue (uniform) and
Gaussian measures 0, both being absolutely continuous
with respect to one another.
\end{proof}

\begin{theorem}\label{thsignorm} See \cite[Theorem II.7]{DS01}
and our Definition \ref{defenu}.

Suppose  that
$h=\max\{m,n\}$, $t\ge 0$.
 Then

(i)   {\rm Probability}$\{\nu_{m,n}>t+\sqrt m+\sqrt n\}\le
\exp(-t^2/2)$, and so

(ii) $\mathbb E(\nu_{m,n})< 1+\sqrt m+\sqrt n$. 
\end{theorem}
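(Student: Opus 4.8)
The plan is to derive the tail bound (i) from two classical ingredients of the local theory of Gaussian random matrices: the Gaussian concentration inequality for Lipschitz functionals and a Gaussian comparison bound on $\mathbb{E}(\nu_{m,n})$. View a Gaussian matrix $G\in\mathcal G^{m\times n}$ as a standard Gaussian vector in $\mathbb{R}^{mn}$ with the Euclidean norm, which on matrices coincides with the Frobenius norm $||\cdot||_F$. The functional $G\mapsto ||G||$ is $1$-Lipschitz for this norm, because $||A||-||B||\le ||A-B||\le ||A-B||_F$ (and likewise with $A$ and $B$ interchanged); equivalently, $||G||=\sup\{{\bf y}^TG{\bf x}:||{\bf x}||=||{\bf y}||=1\}$ is a supremum of linear functionals of $G$, each having gradient ${\bf y}{\bf x}^T$ of Frobenius norm $1$.

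First I would invoke the Gaussian concentration inequality: for every $1$-Lipschitz $f\colon\mathbb{R}^N\to\mathbb{R}$ and standard Gaussian vector ${\bf g}$ one has ${\rm Probability}\{f({\bf g})\ge \mathbb{E}(f({\bf g}))+t\}\le \exp(-t^2/2)$ for $t\ge 0$; applied to $f(G)=||G||$ this gives ${\rm Probability}\{\nu_{m,n}\ge \mathbb{E}(\nu_{m,n})+t\}\le \exp(-t^2/2)$. It then remains to establish $\mathbb{E}(\nu_{m,n})\le \sqrt m+\sqrt n$, since then $\sqrt m+\sqrt n+t\ge \mathbb{E}(\nu_{m,n})+t$ and (i) follows a fortiori. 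For this mean bound I would use the Sudakov--Fernique (Gordon-type) Gaussian comparison principle: with independent standard Gaussians ${\bf g}\in\mathbb{R}^n$ and ${\bf h}\in\mathbb{R}^m$, compare the centered Gaussian processes $X_{{\bf x},{\bf y}}={\bf y}^TG{\bf x}$ and $Y_{{\bf x},{\bf y}}={\bf g}^T{\bf x}+{\bf h}^T{\bf y}$ indexed by unit vectors ${\bf x}\in\mathbb{R}^n$, ${\bf y}\in\mathbb{R}^m$. Writing $a={\bf x}^T{\bf x}'$ and $b={\bf y}^T{\bf y}'$, a direct covariance computation gives $\mathbb{E}((X_{{\bf x},{\bf y}}-X_{{\bf x}',{\bf y}'})^2)=2-2ab$ and $\mathbb{E}((Y_{{\bf x},{\bf y}}-Y_{{\bf x}',{\bf y}'})^2)=4-2(a+b)$, whose difference is $2(1-a)(1-b)\ge 0$; hence $\mathbb{E}(\nu_{m,n})=\mathbb{E}\sup X\le\mathbb{E}\sup Y=\mathbb{E}(||{\bf g}||)+\mathbb{E}(||{\bf h}||)\le\sqrt n+\sqrt m$, the last inequality by Jensen applied to the concave square root.

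Part (ii) then follows at once: $\mathbb{E}(\nu_{m,n})\le\sqrt m+\sqrt n<1+\sqrt m+\sqrt n$. (One can instead obtain (ii) directly from the tail bound (i) by integrating, $\mathbb{E}(\nu_{m,n})=\int_0^\infty{\rm Probability}\{\nu_{m,n}>s\}\,ds\le\sqrt m+\sqrt n+\int_0^\infty e^{-t^2/2}\,dt=\sqrt m+\sqrt n+\sqrt{\pi/2}$, but this yields the slightly larger constant $\sqrt{\pi/2}=1.2533\dots$, so the sharper mean bound from the comparison step is what is needed to land exactly on the stated constant $1$.) I expect the mean estimate $\mathbb{E}(\nu_{m,n})\le\sqrt m+\sqrt n$ to be the main obstacle: the concentration and Jensen steps are routine, but the sharp constant here genuinely requires the Gaussian process comparison theorem (equivalently Chevet's inequality), whereas a cruder $\varepsilon$-net argument would only give $C(\sqrt m+\sqrt n)$ with $C>1$; the alternative, taken in the paper, is simply to cite this bound from \cite{DS01}.
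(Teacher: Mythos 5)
Your argument is correct, and it is in fact the same argument that underlies the paper's cited reference: the paper offers no proof of its own but simply points to \cite[Theorem~II.7]{DS01}, and Davidson--Szarek establish that result via exactly the route you take (Gaussian concentration for the $1$-Lipschitz map $G\mapsto\|G\|$ in the Frobenius metric, plus the Sudakov--Fernique/Chevet comparison with $Y_{{\bf x},{\bf y}}={\bf g}^T{\bf x}+{\bf h}^T{\bf y}$ and Jensen to get $\mathbb E(\nu_{m,n})\le\sqrt m+\sqrt n$). In short, you have correctly reconstructed and supplied the proof that the paper chose to leave implicit by citation, including the observation that part (ii) follows from the mean bound alone and not merely from integrating the tail.
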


\begin{theorem}\label{thsiguna} 
Suppose that
$m\ge n$ and $x>0$ and let 
$\Gamma(x)=
\int_0^{\infty}\exp(-t)t^{x-1}dt$ 
and 
$\zeta(t)=
t^{m-1}m^{m/2}2^{(2-m)/2}\exp(-mt^2/2)/\Gamma(m/2)$
denote the Gamma function. 
Then 
\begin{enumerate}
\item 
 {\rm Probability} $\{\nu_{m,n}^+\ge m/x^2\}<\frac{x^{m-n+1}}{\Gamma(m-n+2)}$
for $n\ge 2$,
\item 
  {\rm Probability} $\{\nu_{n,n}^+\ge x\}\le 2.35 {\sqrt n}/x$ 
for $n\ge 2$ (cf. Remark \ref{resst}), 
\item 
 {\rm Probability} $\{\nu_{m,1}^+\ge x\}\le
(m/2)^{(m-2)/2}/(\Gamma(m/2)x^m)$ for $m\ge 2$, and
\item 
 $\mathbb E((\nu^+_{F,m,n})^2)=m/|m-n-1|$
provided that $n>1$ and $m-n>1$,
while

$\mathbb E(\nu^+_{m,n})\le e\sqrt{l}/|m-n|$ for $e=2.71828\dots$,
$l=\min\{m,n\}$, and $m\neq n$.
\end{enumerate}
\end{theorem}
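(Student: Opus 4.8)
The plan is to handle the four parts separately, since each reduces to a classical distributional fact about Gaussian (equivalently Wishart) matrices. Part 2 requires no new work: it is exactly inequality (\ref{eqsst06}) of \cite{SST06} specialized to $A=O_{n,n}$, for which $\nu_n^+=\|(A+G)^{+}\|=\|G^{+}\|=\nu_{n,n}^+$. Before doing the rest I would record the elementary ingredient used throughout: if $g\in\mathcal G^{k\times 1}$ then $\|g\|^2$ has the $\chi^2_k$ distribution, and integrating its density over $[0,s]$ while dropping the factor $e^{-t/2}\le 1$ gives ${\rm Probability}\{\|g\|^2\le s\}\le s^{k/2}/(2^{k/2}\Gamma(k/2+1))$.

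Part 3 is then immediate. For $n=1$ the matrix $G\in\mathcal G^{m\times 1}$ is a Gaussian column with $G^{+}=G^T/\|G\|^2$, so $\nu_{m,1}^+=1/\|G\|$ and $\{\nu_{m,1}^+\ge x\}=\{\|G\|^2\le x^{-2}\}$; applying the displayed chi-square bound with $k=m$ and $s=x^{-2}$ and simplifying the $\Gamma$-factor via $\Gamma(m/2+1)=(m/2)\Gamma(m/2)$ yields the stated estimate (in fact with room to spare, since $m^{m/2}\ge 1$). Part 1 is the corresponding small-ball estimate for the least singular value $\sigma_n(G)=1/\nu_{m,n}^+$ of a rectangular Gaussian matrix. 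I would obtain it, following \cite{SST06} and \cite{CD05}, from the standard inequality $\sigma_n(G)\ge \tfrac{1}{\sqrt n}\min_i{\rm dist}(g_i,\,{\rm span}(g_j:j\ne i))$ for the columns $g_i$ of $G$: conditioned on the remaining columns, each such distance is the norm of the projection of a Gaussian vector onto an $(m-n+1)$-dimensional subspace, hence is $\sqrt{\chi^2_{m-n+1}}$-distributed, so a union bound over $i$ together with the chi-square tail above bounds ${\rm Probability}\{\sigma_n(G)\le\epsilon\}$ by a constant multiple of $\epsilon^{m-n+1}$; the substitution $\epsilon=x^2/m$ (i.e. $\nu_{m,n}^+\ge m/x^2$) then puts it in the form of part 1. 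The sharpest constants come from the exact density of $\sigma_n(G)$, for which I would cite \cite{CD05} (equivalently the Wishart eigenvalue computations of \cite{E88}).

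Part 4 has two halves. For the Frobenius half, $\nu_{F,m,n}^+=\|G^{+}\|_F$ and, assuming $m\ge n$ (the case $m<n$ follows by transposition), $\|G^{+}\|_F^2=\operatorname{tr}\big((G^TG)^{-1}\big)$, where $G^TG$ is a Wishart matrix; its inverse is inverse-Wishart, each diagonal entry of which is distributed as $1/\chi^2_{m-n+1}$ and hence has expectation $1/(m-n-1)$ whenever $m-n>1$, so summing over the diagonal gives the displayed value of $\mathbb E((\nu_{F,m,n}^+)^2)$; this is the classical inverse-Wishart trace formula of \cite{E88} and \cite{ES05}. The spectral half, $\mathbb E(\nu_{m,n}^+)\le e\sqrt l/|m-n|$, is the part I expect to be the real obstacle. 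The natural route is $\mathbb E(\nu_{m,n}^+)=\int_0^\infty{\rm Probability}\{\nu_{m,n}^+>t\}\,dt$, splitting at an optimally chosen threshold and feeding the tail of part 1 into the upper range, with Stirling's approximation applied to the $\Gamma(m-n+2)$ normalizer producing the constant $e$. The delicate point is convergence at infinity, which the part-1 tail secures only when $m-n\ge 2$; the boundary case $m-n=1$ (where $\mathbb E(\nu_{m,n}^+)$ is still finite although $\mathbb E((\nu_{m,n}^+)^2)$ is not) needs the sharper near-origin asymptotics of the density of $\sigma_n(G)$, and for the clean stated constant I would cite \cite{CD05} (equivalently \cite{E88}, \cite{ES05}) rather than reprove it from scratch.
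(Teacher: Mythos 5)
Your parts 1--3 track the paper. Part 2 is the same specialization of (\ref{eqsst06}). Part 3 uses the same idea (bound the near-origin CDF by dropping the exponential factor), but you work with the $\chi^2_m$ CDF directly; this is cleaner and in fact yields a bound tighter than the stated one by a factor of $m^{m/2}$, essentially because the density $\zeta$ the paper integrates is that of $\|G\|/\sqrt m$ rather than of $\|G\|$. For part 1 the paper simply cites \cite[Proof of Lemma 4.1]{CD05}; your column-distance sketch is a reasonable reconstruction of that argument and defers to the same source for the sharp constant, which is fine.

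Part 4 is where you go wrong. The paper handles both halves with a single citation, \cite[Proposition 10.2]{HMT11}. Your inverse-Wishart computation for the Frobenius half is correct as far as it goes, but summing the $n$ diagonal entries of $(G^TG)^{-1}$, each of expectation $1/(m-n-1)$, gives $n/(m-n-1)=l/|m-n-1|$ and \emph{not} the displayed $m/|m-n-1|$. Your claim that it ``gives the displayed value'' is therefore false, and catching the discrepancy matters, because it reveals that the theorem as printed cannot be literally right: the cited proposition of \cite{HMT11} indeed gives $l/|m-n-1|$. The spectral-norm half has the complementary defect. For $m\gg n$ one has $\mathbb E\|G^+\|\approx 1/\sqrt m$, which violates the stated $e\sqrt l/|m-n|\approx e\sqrt n/m$ once $m$ exceeds about $e^2 n$, so the correct bound is $e\sqrt{\max\{m,n\}}/|m-n|$, again as in \cite[Proposition 10.2]{HMT11}. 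Your plan to rederive the spectral bound from the tail of part 1 plus Stirling, with a separate treatment of $m-n=1$, is far heavier machinery than the intended one-line citation, and had you carried it through you would have collided with the same $\sqrt{\max}$-versus-$\sqrt{\min}$ mismatch. In short, the Wishart route is the right one, but you needed to check your result against the stated formula instead of asserting agreement.
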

\begin{proof}
 See \cite[Proof of Lemma 4.1]{CD05} for part 1
and \cite[Proposition 10.2]{HMT11} for part 4.
Part 2 follows from 
(\ref{eqsst06}) for $A=O_{n,n}$.

Let us deduce part 3. $G\in \mathbb R^{m\times 1}$ is a vector of length $m$.
 So, with probability 1 it holds that
$G\neq 0$, 
$\rank (G)=1$, $||G^+||=1/||G||$.
 Consequently,
$$ {\rm Probability}\{||G^+||\ge x\}=
 {\rm Probability}\{||G||\le 1/x\}\le\int_{0}^{1/x} \zeta(t) dt~{\rm for}~x>0.$$ 

\noindent Note that $\exp(-mt^2/2)\le 1$. Hence
$\int_{0}^{1/x} \zeta(t) dt< c_m\int_{0}^{1/x}t^{m-1}dt=c_m/(mx^m)$ 
where
$\zeta(t)=
t^{m-1}m^{m/2}2^{(2-m)/2}\exp(-mt^2/2)/\Gamma(m/2)$
is the Zeta function and $c_m=m^{m/2}2^{(2-m)/2}/\Gamma(m/2)$.
\end{proof}


\begin{remark}\label{resst}
Part 2 of Theorem \ref{thsiguna} provides some
bound on the random variable $\nu_{n,n}^+$, although this bound  
is weaker than the bounds in other parts of the theorem, and
the random variable $\nu_{n,n}^+$
has no expected value. 
\end{remark}

Theorems \ref{thsignorm} and \ref{thsiguna} 
together imply that the expected value of 
the condition number of an $m\times n$ Gaussian matrix 
decreases 
 quite fast as the integer  $|m-n|$
increases from 1. This implies  greater efficiency
of western  and northern  augmentation versus
northwestern one.

Quite tight estimates for the condition numbers $\kappa_{m,n}$
can be found in 
 \cite{D88}, \cite{E88},  \cite[Theorem 4.5]{CD05},
and \cite{ES05}. 


\section{SRFT Matrices}\label{ssrft}


Next we recall the definition and some basic properties of {\em SRFT} matrices, by following 
 \cite[Section 11.1]{HMT11}. 
An SRFT is an 
$n\times \rho$ complex matrix of the form $H=\sqrt{n/\rho_+}~D~\Omega~R$
where
\begin{itemize}
\item
$D=\diag(d_i)_{i=0}^{n-1}$ is the $n\times n$ is a
diagonal matrix, whose diagonal entries $d_i$ are independent and
uniformly distributed on the complex unit circle  $\{z:~|z|=1\}$;
\item
$\Omega$ is the $n\times n$ unitary 
matrix of discrete Fourier transform, $\Omega=\frac{1}{\sqrt n}(\omega^{ij})_{i,j=0}^{n-1}$
for a primitive root of unity $\omega=\exp(2\pi\sqrt{-1}/n)$; and
\item
$R^T$ is a random $\rho_+\times n$ matrix that restricts an $n$-dimensional vector to 
$\rho_+$ coordinates, chosen uniformly at random, for $\rho_+\ge \rho$.
\end{itemize}
Up to scaling, an SRFT is just a section of a unitary matrix; it satisfies the norm
identity $||H||=\sqrt{n/\rho_+}$. The critical fact is that an 
appropriately designed SRFT approximately preserves the geometry of
{\em  an entire subspace of vectors.}

\begin{theorem}\label{thsrft}
{\rm The SRFT multiplier is  likely to preserve the rank
and the condition number.} Fix a $\rho\times n$ orthogonal matrix $U$
and generate an $n\times \rho_+$ SRFT matrix $H$,  where the parameter
$\rho_+=\rho_+(\rho,n)\ge \rho$  satisfies

$$4\Big (\sqrt {\rho}+\sqrt {8\log (\rho n)}\Big )^2\log(\rho)\le \rho_+\le n.$$
Then

$$0.40\le \sigma_{\rho }(UH)~~{\rm and}~~ \sigma_{1 }(UH)\le1.48$$
with the failure probability at most $O(1/\rho)$.
\end{theorem}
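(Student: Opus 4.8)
The statement is, up to the exact numerical constants, a restatement of \cite[Theorem 11.2]{HMT11} applied with $k=\rho$, $\ell=\rho_+$, and $V^T=U$, so the plan is to reproduce that argument. It proceeds in two stages: a reduction to uniform column subsampling of a matrix with controlled coherence, followed by a matrix concentration estimate.

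First I would unfold the definition $H=\sqrt{n/\rho_+}\,D\,\Omega\,R$ and write $UH=\sqrt{n/\rho_+}\,\widetilde U R$, where $\widetilde U:=UD\Omega$. Since $D$ and $\Omega$ are unitary and $U$ has orthonormal rows, $\widetilde U$ is again a $\rho\times n$ matrix with $\widetilde U\widetilde U^*=I_\rho$, and $R$ merely selects $\rho_+$ of its columns uniformly at random. A direct computation using $\mathbb E[\overline d_a d_b]=\delta_{ab}$ and $\mathrm{tr}(U^*U)=\mathrm{tr}(UU^*)=\rho$ gives $\mathbb E_D\,||\widetilde U e_j||_2^2=\rho/n$ for every column index $j$. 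The crucial \emph{coherence estimate} is that, with probability $1-O(1/\rho)$, one has $\max_{1\le j\le n}||\widetilde U e_j||_2^2\le C\,\rho\,\log(\rho n)/n$ for an absolute constant $C$; this I would obtain from a tail bound for the Euclidean norm of a Steinhaus (random‑phase) sum of fixed vectors — which is precisely where the randomness of $D$ is used — together with a union bound over the $n$ columns (cf. \cite[Lemma 11.3]{HMT11}).

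Conditioning on that event, I would then write
$$ (UH)(UH)^* \;=\; \frac{n}{\rho_+}\sum_{i=1}^{\rho_+} \bigl(\widetilde U e_{j_i}\bigr)\bigl(\widetilde U e_{j_i}\bigr)^*, $$
a sum of independent, identically distributed positive semidefinite $\rho\times\rho$ matrices whose expectation is $\tfrac{n}{\rho_+}\cdot\rho_+\cdot\tfrac1n\,\widetilde U\widetilde U^*=I_\rho$ and whose summands have operator norm at most $\tfrac{n}{\rho_+}\cdot\tfrac{C\rho\log(\rho n)}{n}$ by the coherence bound. A matrix Chernoff inequality (as used in \cite[Section 11]{HMT11}) then controls $||(UH)(UH)^*-I_\rho||$, hence bounds $\sigma_\rho(UH)$ from below and $\sigma_1(UH)$ from above, as soon as $\rho_+$ exceeds a quantity of order $\rho\log(\rho n)\log\rho$; choosing the constants so that the admissibility threshold reads $4\bigl(\sqrt\rho+\sqrt{8\log(\rho n)}\bigr)^2\log\rho$ yields the explicit numerical bounds. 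A final union bound over the coherence‑failure event and the Chernoff‑failure event keeps the total failure probability at $O(1/\rho)$.

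The hard part is purely quantitative: because the statement commits to explicit constants and to the precise threshold for $\rho_+$, both concentration inequalities must be invoked in their sharp forms — the scalar/vector tail bound for Steinhaus sums in the coherence step, and the sharp matrix Chernoff bound in the subsampling step — with all constants tracked carefully through the two steps. The structural part (the unitary reduction to column subsampling and the identification of the mean of the random semidefinite sum) is routine and follows immediately from Lemma \ref{lepr2} and elementary computation.
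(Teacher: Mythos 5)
Your proposal correctly identifies that this is not a result proved in the paper at all: Theorem~\ref{thsrft} is stated in Appendix~\ref{ssrft} as a verbatim recall of Theorem~11.2 of \cite{HMT11} (the appendix opens with ``we recall \dots by following \cite[Section 11.1]{HMT11}''), and the paper supplies no proof of its own. Your two-stage reconstruction --- unitary invariance reducing $UH$ to uniform column subsampling of the flattened matrix $\widetilde U = UD\Omega$, a Steinhaus-sum coherence bound showing $\max_j\|\widetilde U e_j\|_2^2 = O(\rho\log(\rho n)/n)$ with probability $1-O(1/\rho)$, and a matrix Chernoff inequality applied to the resulting sum of rank-one positive semidefinite summands with mean $I_\rho$ --- is a faithful sketch of the argument behind \cite[Theorem 11.2]{HMT11}, including the correct computation $\mathbb E_D\|\widetilde U e_j\|_2^2 = \rho/n$ and the role of the threshold $4(\sqrt\rho + \sqrt{8\log(\rho n)})^2\log\rho$. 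The only imprecision worth flagging is that the columns are sampled \emph{without} replacement (that is what ``restricts to $\rho_+$ coordinates chosen uniformly at random'' means), so the summands $(\widetilde U e_{j_i})(\widetilde U e_{j_i})^*$ are not literally i.i.d.; one must invoke the standard transfer argument (Gross--Nesme, as in \cite{HMT11}) that sampling without replacement enjoys at least as strong a matrix Chernoff bound as sampling with replacement. With that one-line patch your outline matches the source.
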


In words, the null space of an $n\times \rho_+$ SRFT matrix
with $\rho_+$ of order $(\rho+\log(n)\log (\rho)$ is unlikely to intersect
a fixed $\rho$-dimensional subspace. 

\begin{remark}\label{resrft}
The logarithmic
factor $\log (\rho)$ in the lower bound on $\rho_+$ can
be decreased for larger $n$  (see below), but
in contrast with the Gaussian case,
cannot 
generally be removed, that is, 
with SRFT matrices we involve 
a positive {\em oversampling integer parameter} $\rho_+-\rho$. 
For large problems, one can  reduce the numerical
constants of Theorem \ref{thsrft}. 
If $\rho\gg \log(n)$ and $\delta$ is a small positive number, then sampling
$\rho_+\ge (1+\delta) \rho \log(\rho))$
coordinates is sufficient in order to ensure that $\sigma_{\rho}(UH)\ge \delta $ with failure probability $O(\rho^{-\delta c})$ for a positive constant $c$.
 Moreover, according to \cite[Section 11.2]{HMT11}, 
the choice of $\rho_+=\rho+20$ 
is adequate in almost all applications.
\end{remark}

\begin{remark}\label{refprob}
In the case of using SRFT multipliers, 
Theorem
\ref{thsrft} bounds the failure probability by
$O(1/\rho)$.  For comparison, in the case of using Gaussian multipliers,
 the upper bound  on  the failure probability
has order $1/2^{n-\rho}$ 
by virtue of 
 Theorem \ref{thsiguna}.
\end{remark}


\section{Circulant, subcirculant, and Toeplitz matrices}\label{scsct}


An $n\times n$ {\em circulant} matrix
$Z=(z_{i-j\mod n})_{i,j=0}^{n-1}=\Omega^{-1}D\Omega$ 
is defined by its first column ${\bf z}=(z_{i})_{i=0}^{n-1}$ or by the diagonal matrix
 $D=\diag(d_i)_{i=0}^{n-1}$ where $(d_i)_{i=0}^{n-1}=\sqrt {n}~\Omega~ {\bf z}$
and $\Omega^{-1}=\Omega^H$ is the Hermitian transpose of $\Omega$.
The following fact links circulant and SRFT matrices.
\begin{fact}\label{facsrft}
$\sqrt{n/\rho_+}\Omega ZR$ is a SRFT matrix for 
$Z=\Omega^{-1}D\Omega$ provided that
the
 diagonal entries $d_0,\dots,d_{n-1}$ of the matrix $D$ are independent and
uniformly distributed on the complex unit circle $\{x:~|x|=1\}$
and $R$ is the random $n\times \rho$ matrix defined in the beginning of the
previous section.
\end{fact}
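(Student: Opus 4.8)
\textbf{Proof proposal for Fact \ref{facsrft}.} The plan is to unwind both definitions and observe that they literally coincide up to a relabeling of the random data. First I would write out the claimed product: with $Z=\Omega^{-1}D\Omega$ we have $\sqrt{n/\rho_+}\,\Omega Z R = \sqrt{n/\rho_+}\,\Omega\Omega^{-1}D\Omega R = \sqrt{n/\rho_+}\,D\,\Omega\,R$, where the middle simplification uses $\Omega\Omega^{-1}=I_n$ (recall $\Omega^{-1}=\Omega^H$ from Appendix \ref{scsct}). This is exactly the form $H=\sqrt{n/\rho_+}\,D\,\Omega\,R$ appearing in the definition of an SRFT matrix in Appendix \ref{ssrft}, with the three factors $D$, $\Omega$, $R$ playing precisely their prescribed roles.

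Second, I would check that the distributional hypotheses match the SRFT definition factor by factor. The matrix $\Omega$ here is the $n\times n$ unitary discrete Fourier transform matrix $\frac{1}{\sqrt n}(\omega^{ij})_{i,j=0}^{n-1}$, which is exactly the $\Omega$ required in the SRFT definition. The factor $R$ is, by the hypothesis of the Fact, ``the random $n\times\rho$ matrix defined in the beginning of the previous section,'' i.e.\ $R^T$ restricts an $n$-vector to $\rho_+$ coordinates chosen uniformly at random — again exactly the SRFT requirement. The factor $D=\diag(d_i)_{i=0}^{n-1}$ is assumed to have diagonal entries $d_0,\dots,d_{n-1}$ independent and uniformly distributed on the complex unit circle $\{x:\,|x|=1\}$, which is verbatim the distributional condition imposed on $D$ in the SRFT definition. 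Hence every structural and probabilistic condition in the SRFT definition is met, and $\sqrt{n/\rho_+}\,\Omega Z R$ is an SRFT matrix.

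There is essentially no obstacle here: the only thing to be careful about is bookkeeping of the scaling constant $\sqrt{n/\rho_+}$ and confirming that the circulant diagonalization $Z=\Omega^{-1}D\Omega$ is being used consistently with the convention $(d_i)_{i=0}^{n-1}=\sqrt n\,\Omega\,{\bf z}$ stated in Appendix \ref{scsct}, so that the $D$ appearing inside $Z$ is genuinely diagonal with the stated distribution rather than some conjugate of it. Since $Z$ is \emph{defined} via that diagonalization in the appendix, this is automatic. Thus the proof is a one-line algebraic cancellation together with a line-by-line comparison of hypotheses, and I would present it in that compact form.
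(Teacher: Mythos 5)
Your proof is correct and is essentially the only natural argument: substituting $Z=\Omega^{-1}D\Omega$ and cancelling $\Omega\Omega^{-1}$ gives $\sqrt{n/\rho_+}\,D\Omega R$, which matches the SRFT definition factor by factor. The paper leaves this Fact without an explicit proof precisely because it is this one-line cancellation, so your write-up matches the intended reasoning.
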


A circulant matrix $Z=Z({\bf z})$
is real if and only if its first column ${\bf z}$ is real.

$k\times l$ {\em Toeplitz} matrices $T=(t_{i,j})_{i,j=0}^{m-1,n-1}$ extend 
the class of circulant matrices and 
can be defined as block submatrices of $(k+l)\times (k+l)$ circulant matrices.
Such a matrix is defined by the $k+l-1$ entries of its first row and its first column.

An $n\times n$ random circulant  matrix $Z=Z({\bf z})$ tends to be 
 well-conditioned \cite{PSZ15},
and hence so do its $n\times k$ and $k\times n$ Toeplitz blocks $B$
(we call them {\em subcirculant}), defined
by the $n$ entries of their first row or column. 
Indeed, $\kappa(B)\le \kappa (Z({\bf z}))$ for such blocks $B$.

The known upper bounds on the condition number of
 a  random $n\times k$ Toeplitz matrix, defined by $n+k-1$
random entries
 of the first row and the first column, are much greater 
 (cf. \cite{PSZ15}).

We only need $O(n\log (n))$ flops in order to multiply  by a vector the $n\times n$ matrix $\Omega$,
and therefore $n\times n$ SRFT, circulant, subcirculant, and Toeplitz matrices as well.
Similar properties hold for  
$f$-circulant matrices for a complex scalar $f$ such that $|f|=1$
(cf. \cite[Section 2.6]{P01}), 
which turn into circulant matrices for $f=1$.
Using such matrices (for a fixed or random value $f$),
instead of circulant ones,
allows further variations of our algorithms.


\section{Matrices Having Small Rank or Small Numerical Rank}\label{savrnk}




\begin{fact}\label{far1} (Cf. \cite[Proposition 1]{BV88}.)
The set $\mathbb A$ of $m\times n$ matrices of rank  $\rho$
is an algebraic variety of dimension  $(m+n-\rho)\rho$
in the space $\mathbb  R^{m\times n}$. (Clearly, $(m+n-\rho)\rho<mn$
for $\rho<\min\{m,n\}$.)
\end{fact}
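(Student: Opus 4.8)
The plan is to realize $\mathbb A$ --- the set of real $m\times n$ matrices of rank exactly $\rho$ --- as the image of a bilinear parametrization and to pin down its dimension by tracking the fibers of that map, and separately to record the standard fact that $\mathbb A$ is locally closed in the determinantal variety, which is what justifies calling it an ``algebraic variety'' and gives the irreducibility behind the dimension count.

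First I would set up the parametrization. Let $\Omega\subset\mathbb R^{m\times\rho}\times\mathbb R^{n\times\rho}$ be the Zariski-open, nonempty set of pairs $(X,Y)$ with $\rank X=\rank Y=\rho$, and define $\pi(X,Y)=XY^T$. Every $M\in\mathbb A$ factors this way: take $X$ to be any matrix basis for $\mathcal R(M)$, solve $XY^T=M$ for $Y^T$, and note $\rank Y=\rho$; hence $\pi\colon\Omega\to\mathbb A$ is surjective and $\dim\Omega=(m+n)\rho$. Next I would describe the fibers. If $XY^T=X_1Y_1^T$ with all four factors of full column rank, then $\mathcal R(X)=\mathcal R(X_1)$ forces $X_1=XG$ for a unique invertible $\rho\times\rho$ matrix $G$, whence $Y_1=YG^{-T}$; conversely any such pair lies in the same fiber. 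So each fiber is a single $GL_\rho$-orbit, smooth of dimension $\rho^2$. Feeding $\dim\Omega$ and the \emph{constant} fiber dimension $\rho^2$ into the dimension formula for a surjective morphism --- equivalently, checking that $\pi$ has constant rank $(m+n)\rho-\rho^2$ on $\Omega$ and invoking the constant-rank theorem --- I conclude that $\dim\mathbb A=(m+n)\rho-\rho^2=(m+n-\rho)\rho$.

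To justify ``algebraic variety'' and to double-check the count, I would observe that $\mathbb A=V_\rho\setminus V_{\rho-1}$, where $V_k$ is the affine variety of matrices of rank $\le k$, cut out by the vanishing of all $(k+1)\times(k+1)$ minors; since $V_{\rho-1}$ is a proper closed subset of the irreducible $V_\rho$, the set $\mathbb A$ is dense in $V_\rho$ and has the same dimension, and on the chart where a fixed $\rho\times\rho$ minor (say the leading one, $A_{11}$) is nonzero, $\mathbb A$ is exactly the graph of the Schur-complement map $(A_{11},A_{12},A_{21})\mapsto A_{21}A_{11}^{-1}A_{12}$, an algebraic set with $\rho^2+\rho(n-\rho)+(m-\rho)\rho=(m+n-\rho)\rho$ free coordinates --- an independent confirmation. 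The parenthetical inequality is then immediate from $mn-(m+n-\rho)\rho=(m-\rho)(n-\rho)>0$ for $\rho<\min\{m,n\}$. The one point that genuinely needs care, rather than routine linear algebra, is the passage from the naive count $\dim\Omega-\dim(\text{fiber})$ to a rigorous dimension statement over $\mathbb R$: I would handle it either by semicontinuity of fiber dimension in the Zariski topology (painless here, since all fibers share the same dimension) or by the smooth constant-rank theorem, rather than by quoting the complex fiber-dimension theorem verbatim.
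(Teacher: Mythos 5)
Your proposal is correct, and it actually contains the paper's argument embedded inside it: the ``double-check'' paragraph, where you pass to the chart on which the leading $\rho\times\rho$ block $A_{11}$ is nonsingular and observe that $\mathbb A$ is there the graph of the Schur-complement map with $\rho^2+\rho(n-\rho)+(m-\rho)\rho=(m+n-\rho)\rho$ free coordinates (equivalently, cut out by the $(m-\rho)(n-\rho)$ equations $A_{22}=A_{21}A_{11}^{-1}A_{12}$), is precisely the proof the paper gives. Your \emph{primary} route is different: you parametrize $\mathbb A$ by $\pi(X,Y)=XY^T$ on the open set of full-column-rank pairs, identify each fiber with a $GL_\rho$-orbit of dimension $\rho^2$, and subtract. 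The chart/Schur approach is shorter and computes the dimension directly as a codimension count, which is all the paper needs (it only invokes this fact to argue that rank-deficient matrices have measure zero). The parametrization approach buys a cleaner global picture --- it immediately explains why $\mathbb A$ is irreducible and why the dimension is independent of which $\rho\times\rho$ minor you localize at --- and sidesteps the (minor but real) bookkeeping the paper sweeps under ``similar argument can be applied'' for the other charts. You are also right to flag that passing from $\dim\Omega-\dim(\text{fiber})$ to a dimension statement over $\mathbb R$ deserves a word; the constant-rank theorem is the cleanest way to discharge it.
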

\begin{proof}
Let $A$ be an $m\times n$ matrix of a rank $\rho$
with a nonsingular leading $\rho\times \rho$ block $B$
and write $A=\begin{pmatrix}
B   &   C  \\
D  &   E
\end{pmatrix}$.
Then the $(m-\rho)\times (n-\rho)$ {\em Schur complement} $E-DB^{-1}C$
must vanish, which imposes $(m-\rho)(n-\rho)$ algebraic 
equations on the entries of the matrix $A$. 
Similar argument can be applied in the case where any $\rho\times \rho$
submatrix of the matrix $A$ 
(among $\begin{pmatrix}
m      \\
\rho
\end{pmatrix}\begin{pmatrix}
n     \\
\rho
\end{pmatrix}$ such submatrices)
is nonsingular. Therefore 
$\dim \mathbb A=mn-(m-\rho)(n-\rho)=(m+n-\rho)\rho$.
\end{proof}  

 
\begin{remark}\label{reclas}
How large is the class of $m\times n$ matrices 
having numerical rank $\rho$?
We characterize it 
indirectly, by noting that 
by virtue of Fact \ref{far1}
the nearby matrices
of rank $\rho$
form a variety 
of dimension $(m+n-\rho)\rho$, which increases as $\rho$
increases.
\end{remark}

{\bf Acknowledgements:}
Our work has been supported by NSF Grant CCF--1116736 and
PSC CUNY Awards 4512--0042 and 65792--0043. We are also grateful to
 a reviewer for valuable comments.



\bigskip




\end{document}